\newtheorem{maintheorem}{Theorem}
\newtheorem{proposition}{Proposition}[section]
\newtheorem{theorem}[proposition]{Theorem}
\newtheorem{lemma}[proposition]{Lemma}
\newtheorem{fact}[proposition]{Fact}
\newtheorem{corollary}[proposition]{Corollary}
\newtheorem*{claim*}{Claim}
\theoremstyle{definition}
\newtheorem*{problem*}{Problem}
\theoremstyle{remark}
\newtheorem{remark}[proposition]{Remark}
\newtheorem{example}[proposition]{Example}
\numberwithin{equation}{section}
\DeclareMathOperator{\Aut}{Aut}
\DeclareMathOperator{\Gal}{Gal}
\DeclareMathOperator{\Char}{Char}
\DeclareMathOperator{\cl}{cl}
\DeclareMathOperator{\divv}{div}
\DeclareMathOperator{\trdeg}{tr.deg}
\DeclareMathOperator{\Supp}{Support}
\DeclareMathOperator{\image}{Image}
\DeclareMathOperator{\Spec}{Spec}
\DeclareMathOperator{\Proj}{Proj}
\renewcommand{\H}{\operatorname{H}}
\DeclareMathOperator{\D}{D}
\DeclareMathOperator{\I}{I}
\DeclareMathOperator{\Hom}{Hom}
\DeclareMathOperator{\Out}{Out}
\DeclareMathOperator{\Inn}{Inn}
\newcommand{\Z}{\mathbb{Z}}
\newcommand{\Q}{\mathbb{Q}}
\newcommand{\G}{\mathbb{G}}
\newcommand{\U}{{\rm U}}
\newcommand{\Gbb}{\mathbb{G}}
\newcommand{\Abb}{\mathbb{A}}
\newcommand{\Pbb}{\mathbb{P}}
\newcommand{\Kbb}{\mathbb{K}}
\newcommand{\Vc}{\mathcal{V}}
\newcommand{\Gc}{\mathcal{G}}
\newcommand{\Dcal}{\mathscr{D}}
\newcommand{\Oc}{\mathcal{O}}
\newcommand{\Lcal}{\mathcal{L}}
\newcommand{\Ccal}{\mathcal{C}}
\newcommand{\Fcal}{\mathcal{F}}
\newcommand{\Rcal}{\mathcal{R}}
\newcommand{\Ucal}{\mathcal{U}}
\newcommand{\Hcal}{\mathcal{H}}
\newcommand{\abf}{\mathbf{a}}
\newcommand{\tbf}{\mathbf{t}}
\newcommand{\vbf}{\mathbf{v}}
\newcommand{\wbf}{\mathbf{w}}
\newcommand{\Kfrak}{\mathfrak{K}}
\newcommand{\Gfrak}{\mathfrak{G}}
\newcommand{\Ufrak}{\mathfrak{U}}
\newcommand{\mf}{\mathfrak{m}}
\newcommand{\Dfrak}{\mathfrak{D}}
\newcommand{\gfrak}{\mathfrak{g}}
\newcommand{\Vfrak}{\mathfrak{V}}
\newcommand{\Frob}{\operatorname{Frob}}
\newcommand{\Kummer}{\mathcal{K}}
\newcommand{\Autm}{\Aut^{\rm M}}
\newcommand{\Autc}{\Aut^{\rm c}}
\newcommand{\UAut}{\underline{\Aut}}
\newcommand{\UAutc}{\underline{\Aut}^{\rm c}}
\newcommand{\UAutm}{\underline{\Aut}^{\rm M}}
\renewcommand{\k}{\operatorname{k}^{\rm M}}
\newcommand{\K}{\operatorname{K}^{\rm M}}
\newcommand{\dimm}{\dim^{\rm M}}
\newcommand{\Profinite}{{\mathbf{Prof}_{\rm Out}}}
\newcommand{\Homc}{\Hom^{\rm c}}
\newcommand{\UHomc}{\underline{\Hom}^{\rm c}}
\newcommand{\one}{\mathbf{1}}
\newcommand{\Var}{\mathbf{Var}}
\newcommand{\AbC}{\mathbf{AbC}_{\ell}}
\newcommand{\ac}{{\rm ac}}
\newcommand{\msup}{{\sup}^{\rm M}}
\newcommand{\Galk}{{\Gal_{k_0}}}
\title{The Galois action on geometric lattices and the mod-$\ell$ I/OM}
\author{Adam Topaz}
\address{Adam Topaz \vskip 0pt
     Mathematical Institute \vskip 0pt
     University of Oxford \vskip 0pt
     Andrew Wiles Building \vskip 0pt
     Radcliffe Observatory Quarter \vskip 0pt
     Woodstock Road \vskip 0pt
     Oxford OX2 6GG \vskip 0pt
     United Kingdom 
     }
\email{topaz@maths.ox.ac.uk}
\urladdr{http://adamtopaz.com}
\date{\today}
\thanks{Research supported in part by NSF postdoctoral fellowship DMS-1304114, and in part by EPSRC programme grant EP/M024830/1 Symmetries and Correspondences}
\subjclass[2010]{12F10, 12G, 12J10, 19D45.}
\keywords{\'Etale fundamental groups, anabelian geometry, pro-$\ell$ groups, Galois groups, Milnor K-theory, Galois cohomology, abelian-by-central, function fields, prime divisors, combinatorial geometry}
\begin{document}


\maketitle

\begin{abstract}
  This paper studies the Galois action on a special lattice of geometric origin, which is related to mod-$\ell$ abelian-by-central quotients of geometric fundamental groups of varieties.
  As a consequence, we formulate and prove the mod-$\ell$ abelian-by-central variant/strengthening of a conjecture due to {\sc Ihara/Oda-Matsumoto}.
\end{abstract}

\tableofcontents

\section{Introduction}
\label{section: intro}


Our story begins with a question of {\sc Ihara} from the 1980s, which asked for a combinatorial description of the absolute Galois group of $\Q$.
More precisely, this combinatorial description should be in the spirit of {\sc Grothendieck's} \emph{Esquisse d'un Programme} \cite{Grothendieck:1997}, which suggested studying absolute Galois groups via their action on objects of ``geometric origin,'' and specifically the geometric fundamental group of algebraic varieties.
{\sc Ihara} asked whether the absolute Galois group of $\Q$ is isomorphic to the automorphism group of the geometric fundamental group functor on $\Q$-varieties, and {\sc Oda-Matsumoto} \cite{Matsumoto:1997} later conjectured that the answer is affirmative, based on motivic evidence.
We will henceforth refer to this question/conjecture (and its various variants) as the ``I/OM.''

The original I/OM conjecture, which deals with the full geometric fundamental group, and which we call ``the absolute I/OM'' below, was proven by {\sc Pop} in an unpublished manuscript from the 1990s.
A variant of the I/OM over $p$-adic fields, using \emph{tempered fundamental groups}, was then developed and proved by {\sc Andr\'e} \cite{Andre:2003}. 
Later on, {\sc Pop} formulated and proved a strengthening of the absolute I/OM, which instead deals with the maximal \emph{pro-$\ell$ abelian-by-central} quotient of the geometric fundamental group.
The pro-$\ell$ abelian-by-central I/OM implies the absolute I/OM, and both contexts are treated by {\sc Pop} in \cite{Pop:2014}.

In this paper, we develop and prove a further strengthening of I/OM, which deals with the \emph{mod-$\ell$ abelian-by-central} quotient of the geometric fundamental group.
This mod-$\ell$ context strengthens both the pro-$\ell$ abelian-by-central and the absolute situations.
Furthermore, the mod-$\ell$ abelian-by-central quotient is the \emph{smallest possible} functorial (pro-$\ell$) quotient which remains non-abelian.
In this sense, the mod-$\ell$ context yields the strongest possible results that one could hope for.

Most importantly however, the mod-$\ell$ abelian-by-central context gets much closer to the spirit of {\sc Ihara's} original question of finding a combinatorial description of absolute Galois groups.
Indeed, the geometric fundamental group of a variety and its pro-$\ell$ abelian-by-central quotient are both finitely-generated profinite resp. pro-$\ell$ groups, and the topology of such groups plays a crucial role in both situations.
In contrast to this, the mod-$\ell$ abelian-by-central quotient can be seen as a (discrete) finite-dimensional $\Z/\ell$-vector space endowed with some extra linear structure.
In other words, the mod-$\ell$ abelian-by-central quotient of a geometric fundamental group is an object of a \emph{purely combinatorial nature}, being a finite-dimensional linear object over $\Z/\ell$.

The precise notation and context of the paper is somewhat involved.
For the sake of the reader, we now give some brief (and mostly unmotivated) definitions in order to state the primary main theorem of the paper.
The rest of the Introduction will provide the full detailed notation and motivation.
Let $\ell$ be a fixed prime.
Let $k_0$ be a field of characteristic $\neq \ell$, and let $X$ be a normal, geometrically-integral $k_0$-variety.
For such an $X$, we write (see \S\ref{subsection: intro / mod-ell-category}, \S\ref{subsection: intro / mod-ell-IOM}):
\begin{enumerate}
  \item $\bar\pi_1(X) := \pi_1^\text{\'et}(\bar X,\bar x)$ for the geometric fundamental group of $X$, i.e. the \'etale fundamental group of the base-change $\bar X$ of $X$ to $\bar k_0$, with respect to some geometric point $\bar x$.
  \item $\pi^a(X)$ for the maximal \emph{mod-$\ell$ abelian} quotient of $\bar\pi_1(X)$.
  \item $\pi^c(X)$ for the maximal \emph{mod-$\ell$ abelian-by-central} quotient of $\bar\pi_1(X)$.
\end{enumerate}
We consider $\Autc(\pi^a(X))$, the set of automorphisms of $\pi^a(X)$ which arise\footnote{This condition will be made precise in \S\ref{subsection: intro / mod-ell-category} below.} from some automorphism of $\pi^c(X)$.

For an essentially small category $\Vc$ of normal geometrically-integral $k_0$-varieties, we consider the group $\Autc(\pi^a|_\Vc)$ which consists of systems $(\phi_X)_{X \in \Vc}$, where $X$ varies over the objects of $\Vc$, and the $\phi_X \in \Autc(\pi^a(X))$ are compatible with morphisms arising from $\Vc$.
Since $\pi^a(X)$ is a $\Z/\ell$-vector space for all $X \in \Vc$, and the morphisms $\pi^a(X) \rightarrow \pi^a(Y)$ arising from morphisms $X \rightarrow Y$ in $\Vc$ are $\Z/\ell$-linear, we obtain a canonical action of $(\Z/\ell)^\times$ on $\Autc(\pi^a|_\Vc)$ by left-multiplication.
We write $\UAutc(\pi^a|_\Vc) := \Autc(\pi^a|_\Vc)/(\Z/\ell)^\times$ for the quotient by this canonical action of $(\Z/\ell)^\times$.

The action of $\Galk := \Gal(\bar k_0|k_0)$ on $\bar X$ for $X$ as above yields a canonical (outer) action on $\pi^a(X)$ and $\pi^c(X)$, hence also a canonical Galois representation
\[ \rho_{k_0,X}^c : \Galk \rightarrow \Autc(\pi^a(X)). \]
Finally, by collecting the $\rho_{k_0,X}^c$ for all $X$ in $\Vc$, we obtain a canonical Galois representation
\[ \rho_{k_0,\Vc}^c : \Galk \rightarrow \Autc(\pi^a|_\Vc) \twoheadrightarrow \UAutc(\pi^a|_\Vc). \]
With this notation, the main theorem of the present paper reads as follows.

\vskip 5pt
\noindent{\bf Main Theorem:}
{\em Let $k_0$ be an infinite perfect field of characteristic $\neq \ell$, and let $\Vc$ be a ``sufficiently large'' category of normal geometrically-integral $k_0$-varieties.
Then the canonical map 
\[ \rho_{k_0,\Vc}^c : \Galk \rightarrow \UAutc(\pi^a|_\Vc) \]
is an isomorphism.
}
\vskip 5pt

We will define precisely what we mean by ``sufficiently large'' in \S\ref{subsection: intro / main-result-connected}, where the precise assumption/terminology is that $\Vc$ should be ``$5$-connected.''
However, we note here that both the full category of all normal geometrically-integral $k_0$-varieties, and the full category of geometrically-integral smooth quasi-projective $k_0$-varieties, are ``sufficiently large'' in this sense.
The above Main Theorem is proved for such $5$-connected categories in Theorem \ref{maintheorem: intro / main-result / iom-main-connected}.
In fact, our approach is \emph{birational}, and we obtain the above Main Theorem for \emph{many more} categories $\Vc$.
The following is a concrete example which follows directly from Theorem \ref{maintheorem: intro / main-result / iom-main-opens}.
\begin{example}
  Let $(H_i)_i$ be a cofinal system of (possibly non-integral) $\Q$-hypersurfaces in $\Pbb_\Q^5$, and put $U_i := \Pbb_\Q^5 \smallsetminus H_i$.
  For each $t \in \Q(\Pbb_\Q^5) \smallsetminus \Q$ and $U_i$ sufficiently small, let $p_t : U_i \rightarrow \Gbb_m$ be the dominant morphism corresponding to $t$.
  Let $\Vc$ be the category whose objects are $\{U_i\}_i \cup \{\Gbb_m\}$, and whose morphisms are the inclusions among the $U_i$, the identity on $\Gbb_m$, and the various $p_t : U_i \rightarrow \Gbb_m$ above.
  Then the canonical map 
  \[ \rho_{\Q,\Vc}^c : \Gal_\Q \rightarrow \UAutc(\pi^a|_\Vc) \]
  is an isomorphism.
\end{example}

The four main theorems of the paper (Theorems \ref{maintheorem: intro / main-result / iom-main-opens}, \ref{maintheorem: intro / main-result / iom-main-connected}, \ref{maintheorem: intro / galois-variant / galois-main} and \ref{maintheorem: intro / milnor-variant / milnor-main}) all require some variant of a ``dimension $\geq 5$'' assumption; this also leads to the ``$5$-connectedness'' condition mentioned above.
It is natural to ask whether these results hold true for dimension $d$, with $1 < d < 5$. 
However, the lower bound of $5$ seems to be the best that one can presently hope for, in the mod-$\ell$ context.
As explained below, this assumption first arises in the results of {\sc Evans-Hrushovski} \cite{Evans:1991}, \cite{Evans:1995} and {\sc Gismatullin} \cite{Gismatullin:2008}, which play a crucial role in our proofs.
In these works, this assumption is required in a technical step arising from the use of the \emph{group configuration theorem}.
The same assumption also arises, \emph{for different reasons,} in two key Lemmas \ref{lemma: mainproof / base-case / main-base-case}, \ref{lemma: mainproof / general-case / all-acceptable} of the present paper.
Nevertheless, the main \emph{large} categories of geometrically-integral normal $k_0$-varieties, which are of interest for the I/OM, are indeed ``$5$-connected,'' as we have mentioned above.

\subsection{Automorphism groups of functors}
\label{subsection: intro / automorphism-groups}

We first introduce some general notation and terminology which will help simplify the exposition.
Let $\Ccal$ be an essentially small category, and let $\Fcal : \Ccal \rightarrow \mathcal{D}$ be an arbitrary functor to another category $\mathcal{D}$.
We consider $\Aut(\Fcal)$, the \emph{automorphism group of the functor} $\Fcal$.
To be explicit, the elements of $\Aut(\Fcal)$ are systems $(\phi_X)_{X \in \Ccal}$ with $\phi_X \in \Aut(\Fcal(X))$ parameterized by the objects $X$ of $\Ccal$, such that for every morphism $f : X \rightarrow Y$ of $\Ccal$, one has the following commutative diagram in $\mathcal{D}$
\[
\xymatrix{
  \Fcal(X) \ar[r]^{\phi_X} \ar[d]_{\Fcal(f)} & \Fcal(X) \ar[d]^{\Fcal(f)} \\
  \Fcal(Y) \ar[r]_{\phi_Y} & \Fcal(Y). 
}
\]

For a subcategory $\Ccal_0$ of $\Ccal$, we write $\Fcal|_{\Ccal_0}$ for the restriction of $\Fcal$ to $\Ccal_0$, and we will frequently consider the automorphism group $\Aut(\Fcal|_{\Ccal_0})$ of this restriction.
Note that this restriction yields a canonical restriction morphism
\[\phi \mapsto \phi|_{\Ccal_0} : \Aut(\Fcal) \rightarrow \Aut(\Fcal|_{\Ccal_0}). \]
In explicit terms, this restriction morphism sends a system $(\phi_X)_{X \in \Ccal}$, as above, to the restricted system $(\phi_X)_{X \in \Ccal_0}$.

We will frequently consider several different target categories $\mathcal{D}$.
To keep the notation consistent throughout, if some symbol/notation is used to denote the automorphism groups of objects in $\mathcal{D}$, then we will use the same symbol/notation to denote the automorphism group of a functor whose values are in $\mathcal{D}$.

\subsection{The Absolute I/OM}
\label{subsection: intro / absolute-IOM}

Throughout the paper, we will work with a fixed infinite perfect field $k_0$, and $k = \bar k_0$ will denote the algebraic closure of $k_0$.
Furthermore, $\Galk := \Gal(k|k_0)$ will denote the absolute Galois group of $k_0$.
We will only consider normal geometrically-integral $k_0$-varieties, and we denote the category of all such varieties by $\Var_{k_0}$.
Namely, the objects of $\Var_{k_0}$ are schemes which are normal, geometrically-integral, separated and of finite-type over $k_0$.
The morphisms in $\Var_{k_0}$ are just morphisms of $k_0$-schemes.

For every $X \in \Var_{k_0}$, we write 
\[ \bar X = X \otimes_{k_0} k = X \times_{\Spec k_0} \Spec k \]
for the base-change of $X$ to $k$.
Since $k_0$ is perfect, it follows that $\bar X$ is a normal $k$-variety for every $X \in \Var_{k_0}$.
Furthermore, recall that $\Galk$ acts on $\bar X$ in the usual canonical way.
Namely, $\sigma \in \Galk$ acts on $\bar X$ via the automorphism $\one \times \Spec(\sigma^{-1})$ of 
\[ X \times_{\Spec k_0} \Spec k = \bar X. \]

We let $\Profinite$ denote the category of profinite groups with \emph{outer-morphisms}.
Namely, the objects of $\Profinite$ are just the profinite groups, but the set of morphisms $G \rightarrow H$ in $\Profinite$ is given by 
\[ \Hom_{\Out}(G,H) := \Hom_{\rm cont}(G,H)/\Inn(H). \]
In particular, the automorphism group of a profinite group $G$ in $\Profinite$ is precisely $\Out(G)$, the outer-automorphism group of $G$.

For every $X \in \Var_{k_0}$, consider the \emph{geometric (\'etale) fundamental group} of $X$,
\[ \bar\pi_1(X) := \pi_1^\text{\'et}(\bar X,\bar x) \]
with respect to some geometric point $\bar x$.
We will consider $\bar\pi_1(X)$ as an object of $\Profinite$, and because of this, the choice of geometric point becomes irrelevant.
We therefore omit the geometric point $\bar x$ from the notation.
To summarize, we obtain a canonical functor
\[ \bar\pi_1 : \Var_{k_0} \rightarrow \Profinite. \]

Observe that the action of $\Galk$ on $\bar X$ and the functoriality of $\pi_1^\text{\'et}$ with values in $\Profinite$, yields a canonical \emph{outer} Galois representation
\[ \rho_{k_0,X} =: \rho_{k_0} : \Galk \rightarrow \Out(\bar\pi_1(X)). \]
Naturally, this outer representation agrees with the one arising from the \emph{fundamental short exact sequence}
\[ 1 \rightarrow \bar\pi_1(X) \rightarrow \pi_1^\text{\'et}(X,\bar x) \rightarrow \Galk \rightarrow 1. \]
Moreover, it is clear that if $X \rightarrow Y$ is a morphism in $\Var_{k_0}$, then the induced morphism $\bar\pi_1(X) \rightarrow \bar\pi_1(Y)$ in $\Profinite$ is compatible with the action of $\Galk$.
In particular, $\Galk$ acts on \emph{the functor} $\bar\pi_1$ itself.

Similarly, if $\Vc$ is any (essentially small) subcategory of $\Var_{k_0}$, then $\Galk$ acts on $\bar\pi_1|_\Vc$.
Following our notational convention mentioned in \S\ref{subsection: intro / automorphism-groups}, we denote the automorphism group of $\bar\pi_1|_\Vc$ by $\Out(\bar\pi_1|_\Vc)$.
We therefore obtain a canonical \emph{outer} Galois representation
\[ \rho_{k_0,\Vc} : \Galk \rightarrow \Out(\bar\pi_1|_\Vc). \]
With this notation, the \emph{Absolute I/OM} refers to the following general question.

\vskip 5pt
\noindent{\bf The Absolute I/OM:}
{\em For which fields $k_0$ and subcategories $\Vc$ of $\Var_{k_0}$ as above, is the Galois representation 
\[ \rho_{k_0,\Vc} : \Galk \rightarrow \Out(\bar\pi_1|_\Vc) \]
an isomorphism?}
\vskip 5pt

The nature of the map $\rho_{k_0,\Vc}$ in general is still quite mysterious.
Nevertheless, for $k_0 = \Q$, the \emph{injectivity} of this morphism has been extensively studied.
For instance {\sc Drinfeld} \cite{Drinfeld:1990} observed that Belyi's theorem \cite{Belyi:1980} implies $\rho_{\Q,\Vc}$ is injective as soon as $\Vc$ contains \emph{the tripod}, $\Pbb^1_\Q \smallsetminus \{0,1,\infty\}$.
More generally, it follows from the work of {\sc Voevodsky} \cite{Voevodsky:1991} and {\sc Matsumoto} \cite{Matsumoto:1996} in the affine case, and {\sc Hoshi-Mochizuki} \cite{Hoshi:2011} in general, that $\rho_{\Q,\Vc}$ is injective as soon as $\Vc$ contains a (possibly affine) hyperbolic curve.

The surjectivity of $\rho_{k_0,\Vc}$ is much less understood, even in the case $k_0 = \Q$.
For instance, if one takes $\Vc = \{\mathcal{M}_{0,n}\}_n$ with the ``connecting morphisms'' (or certain smaller subcategories) then $\Out(\bar\pi_1|_\Vc)$ is the intensively studied \emph{Grothendieck-Teichm\"uller group}.
The surjectivity of $\rho_{\Q,\Vc}$ in this case (and for other subcategories of the Teichm\"uller modular tower) is still a \emph{major open question} in modern Galois theory.

In any case, the original question of {\sc Ihara}, and the subsequent conjecture of {\sc Oda-Matsumoto} \cite{Matsumoto:1997}, predict that $\rho_{\Q,\Vc}$ is an isomorphism in the case where $\Vc = \Var_\Q$.
In 1999, {\sc Pop} proved in an unpublished manuscript that $\rho_{k_0,\Vc}$ is an isomorphism for more general fields $k_0$ in the case where $\Vc = \Var_{k_0}$.
{\sc Pop's} proof was eventually released in \cite{Pop:2014}, along with a stronger \emph{pro-$\ell$ abelian-by-central} variant which we now summarize.

\subsection{The Pro-$\ell$ Abelian-by-Central I/OM}
\label{subsection: intro / pro-ell-IOM}

In order to get closer to the spirit of {\sc Ihara's} original question of finding a combinatorial description of the absolute Galois group of $\Q$, it makes sense to replace the geometric fundamental group by certain \emph{smaller functorial quotients.}
The first such strengthening was formulated and proved by {\sc Pop} \cite{Pop:2014} who uses the maximal \emph{pro-$\ell$ abelian-by-central} quotient of $\bar\pi_1$.
We will give only a very brief summary of the pro-$\ell$ abelian-by-central context, since the purpose of this paper is to develop a \emph{mod-$\ell$ variant/strengthening} of loc.~cit.

Let $\ell$ be a prime which is different from $\Char k_0$.
For $X \in \Var_{k_0}$, we consider $\bar\pi_1^\ell(X)$ the maximal pro-$\ell$ quotient of $\bar\pi_1(X)$.
Next, consider the maximal {\bf pro-$\ell$ abelian} resp. {\bf pro-$\ell$ abelian-by-central} quotients of $\bar\pi_1(X)$, which are defined and denoted as follows:
\[ \Pi^A(X) := \frac{\bar\pi_1^\ell(X)}{[\bar\pi_1^\ell(X),\bar\pi_1^\ell(X)]} \ \text{ resp. } \ \Pi^C(X) = \frac{\bar\pi_1^\ell(X)}{[\bar\pi_1^\ell(X),[\bar\pi_1^\ell(X),\bar\pi_1^\ell(X)]]}. \]
Note that both $\Pi^A(X)$ and $\Pi^C(X)$ are functorial in $X$.
Moreover, note that $\Z_\ell^\times$ acts on $\Pi^A(X)$ by left-multiplication.
It turns out (by general group-theoretical facts) that this action lifts to $\Pi^C(X)$.
We consider the set of \emph{liftable automorphisms} of $\Pi^A(X)$, defined by
\[ \Aut^{\rm C}(\Pi^A(X)) := \image(\Aut(\Pi^C(X)) \rightarrow \Aut(\Pi^A(X))), \]
as well as the quotient $\UAut^{\rm C}(\Pi^A(X)) := \Aut^{\rm C}(\Pi^A(X))/\Z_\ell^\times$ by the canonical action of $\Z_\ell^\times$.

Similar to the absolute context, for every $X \in \Var_{k_0}$, one has canonical representations
\[ \rho^{\rm C}_{k_0,X} : \Galk \rightarrow \Aut^{\rm C}(\Pi^A(X)) \rightarrow \UAut^{\rm C}(\Pi^A(X)). \]
Given a subcategory $\Vc$ of $\Var_{k_0}$, one defines $\Aut^{\rm C}(\Pi^A|_\Vc)$ as the set of systems $(\phi_X)_{X \in \Vc}$, $\phi_X \in \Aut^{\rm C}(\Pi^A(X))$ which are compatible with morphisms from $\Vc$, similar to the absolute context.
We may also consider the quotient by the canonical action of $\Z_\ell^\times$:
\[ \UAut^{\rm C}(\Pi^A|_\Vc) := \Aut^{\rm C}(\Pi^A|_\Vc)/\Z_\ell^\times \]
similar to our definition of $\UAut^{\rm C}(\Pi^A(X))$.
As before, we obtain canonical Galois representations
\[ \rho^{\rm C}_{k_0,\Vc} : \Galk \rightarrow \Aut^{\rm C}(\Pi^A|_\Vc) \rightarrow \UAut^{\rm C}(\Pi^A|_\Vc). \]
With this notation, the \emph{pro-$\ell$ abelian-by-central I/OM} is completely analogous to the absolute I/OM, as it refers to the following general question.

\vskip 5pt
\noindent{\bf The Pro-$\ell$ Abelian-by-Central I/OM:}
{\em For which fields $k_0$ and subcategories $\Vc$ of $\Var_{k_0}$ as above, is the Galois representation 
\[ \rho^{\rm C}_{k_0,\Vc} : \Galk \rightarrow \UAut^{\rm C}(\Pi^A|_\Vc) \]
an isomorphism?}
\vskip 5pt

{\sc Pop} shows in \cite{Pop:2014} that $\rho^{\rm C}_{k_0,\Vc}$ is an isomorphism for certain categories $\Vc = \Vc_X$ which are similar to the categories of the form $\Ucal_\abf$ (see \S\ref{subsection: intro / main-result-birational-systems} below).
Loc. cit. also shows that $\rho^{\rm C}_{k_2,\Vc}$ is an isomorphism for so-called ``connected'' subcategories $\Vc$ of $\Var_{k_0}$.
This ``connectedness'' condition holds, in particular, for $\Var_{k_0}$ itself, as well as for the category of geometrically-integral smooth quasi-projective $k_0$-varieties.
Although this notion of connectedness is somewhat technical, we note that it is similar to what we call ``$2$-connected'' in \S\ref{subsection: intro / main-result-connected} below.

The pro-$\ell$ abelian-by-central context gets closer to a truly combinatorial description of absolute Galois groups than the absolute context.
However, the groups considered in this context, $\Pi^A(X)$ and $\Pi^C(X)$, are still quite large, as they still have a non-trivial profinite topology which plays a very significant role.

In this paper, we develop a further strengthening of the I/OM, by considering the {\bf mod-$\ell$ abelian-by-central} quotient of $\bar\pi_1(X)$.
As mentioned before, this quotient of $\bar\pi_1$ which we consider is the \emph{smallest} functorial pro-$\ell$ quotient of $\bar\pi_1$ which remains non-abelian; in particular, it is a quotient of $\Pi^C(X)$, and it can be seen as a purely combinatorial (i.e. finite and discrete) object.
Thus, the mod-$\ell$ abelian-by-central context is essentially the best one could hope for, in the profinite context.
In more broad terms, considering the I/OM with other variants of the geometric fundamental group could lead to further substantial developments in various facets of Galois theory.

In this paper, we will prove that the mod-$\ell$ abelian-by-central I/OM holds for so-called ``$5$-connected'' subcategories $\Vc$ of $\Var_{k_0}$.
Similar to {\sc Pop's} notion of connectedness, our notion of $5$-connectedness applies to $\Var_{k_0}$ itself, as well as to the full category of geometrically-integral smooth quasi-projective $k_0$-varieties.
However, for the time being, it is unclear whether the mod-$\ell$ I/OM holds true for $d$-connected categories with $1 < d < 5$.

It is particularly important to note that {\sc Pop} \cite{Pop:2014} uses ideas related to  \emph{Bogomolov's Program} \cite{Bogomolov:1991} in birational anabelian geometry, which considers pro-$\ell$ abelian-by-central Galois groups of higher-dimensional function fields over $k$.
In a few words, the proof of the pro-$\ell$ abelian-by-central I/OM first reduces to a \emph{birational} context, and eventually uses both the \emph{local theory} \cite{Bogomolov:2002}, \cite{Pop:2010} and \emph{global theory} \cite{Pop:2007} from pro-$\ell$ abelian-by-central birational anabelian geometry.

The initial step in our proof of the mod-$\ell$ abelian-by-central I/OM is more-or-less the same as the pro-$\ell$ context, in the sense that we will reduce the mod-$\ell$ I/OM to a birational context.
We will then use techniques from the \emph{mod-$\ell$ abelian-by-central variant} of Bogomolov's Program, including both the \emph{mod-$\ell$ local theory} \cite{Pop:2010a}, \cite{Topaz:2013a}, \cite{Topaz:2015} and the \emph{mod-$\ell$ global theory} \cite{Topaz:2014}.

Because of this strategy, we run into precisely the same problems/difficulties that arise when one passes from the pro-$\ell$ to the mod-$\ell$ abelian-by-central variants of Bogomolov's Program.
These fundamental differences between the pro-$\ell$ and mod-$\ell$ context were described in detail in the introduction of \cite{Topaz:2014}, and we refer the reader there for these details.
Nevertheless, we mention here that, in the pro-$\ell$ context, one eventually uses the \emph{Fundamental Theorem of Projective Geometry} applied to an infinite-dimensional $k$-projective space embedded in $\H^1(K,\Z_\ell(1))$, where $K$ is a function field over $k$.
The main difficulty in the mod-$\ell$ context is that $\H^1(K,\Z/\ell(1))$ \emph{contains no such $k$-projective space}.
Therefore, our proof of the mod-$\ell$ I/OM is fundamentally different than the proof of the pro-$\ell$ variant.
See \S\ref{section: summary} for a detailed summary of the proof of the mod-$\ell$ I/OM, and see the introduction of \cite{Topaz:2014} for more on the comparison between the pro-$\ell$ and mod-$\ell$ contexts.
We now introduce the mod-$\ell$ abelian-by-central context in detail.

\subsection{The Mod-$\ell$ Abelian-by-Central Quotients}
\label{subsection: intro / mod-ell-category}

For a profinite group $\Gc$, we let $\Gc^{(i)}$ denote the $i$-th term of the {\bf mod-$\ell$ Zassenhaus filtration} of $\Gc$.
We will only need to consider the first two non-trivial terms of this filtration, which are defined explicitly as follows:
\begin{enumerate}
  \item $\Gc^{(2)} := [\Gc,\Gc] \cdot \Gc^\ell$.
  \item $\Gc^{(3)} := [\Gc,\Gc^{(2)}] \cdot \Gc^{\delta \cdot \ell}$ where $\delta = 1$ if $\ell \neq 2$ and $\delta = 2$ if $\ell = 2$.
\end{enumerate}
We will consistently denote the quotients $\Gc/\Gc^{(*)}$ for $* = 2,3$ as follows:
\[ \Gc^a := \Gc/\Gc^{(2)}, \ \ \Gc^c := \Gc/\Gc^{(3)}. \]
We call $\Gc^a$ resp. $\Gc^c$ the {\bf mod-$\ell$ abelian} resp. {\bf mod-$\ell$ abelian-by-central} quotient of $\Gc$.
Indeed, note that $\Gc^a$ is an $\ell$-elementary abelian pro-$\ell$ group and $\Gc^c$ is a central extension of $\Gc^a$ by an $\ell$-elementary abelian pro-$\ell$ group.

\begin{remark}
  The primary reason for the distinction between odd/even $\ell$ in the definition of $\Gc^{(3)}$ is that we require $\Gc^c = \Gc/\Gc^{(3)}$ to be \emph{non-abelian}.
  Nevertheless, $\Gc^c$ is the smallest pro-$\ell$ quotient of $\Gc$ which is \emph{functorial} in $\Gc$, and which is non-abelian.
  In cohomological terms, this distinction between odd/even $\ell$ is related to the fact that the mod-$2$ Bockstein morphism agrees with the Steenrod square ${\rm Sq}^1$, whereas no such relationship exists for odd $\ell$.
  For more on the mod-$\ell$ Zassenhaus filtration and its connection with mod-$\ell$ (group) cohomology, we refer the reader to \cite{Efrat:2014} and \cite{Efrat:2015}.
\end{remark}

Suppose now that $\sigma,\tau \in \Gc^a$ are given, and choose lifts $\tilde\sigma,\tilde\tau \in \Gc^c$ of $\sigma,\tau$.
Since $\Gc^c$ is a central extension of $\Gc^a$, it follows that the commutator
\[ [\sigma,\tau] := \tilde\sigma^{-1}\tilde\tau^{-1}\tilde\sigma\tilde\tau \]
depends only on $\sigma,\tau \in \Gc^a$ and not on the choice of lifts $\tilde\sigma,\tilde\tau \in \Gc^c$ of $\sigma,\tau$.
Next, define the {\bf completed wedge product} of $\Gc^a$ with itself as 
\[ \widehat\wedge^2 (\Gc^a) = \varprojlim_H \wedge^2(\Gc^a/H), \]
where $H$ varies over the open subgroup of $\Gc^a$, and where $\wedge^2 (M) = M \otimes M/\langle x \otimes x : x \in M \rangle$ for a discrete $\Z/\ell$-module $M$.
Then the commutator defined above extends linearly to define a canonical morphism 
\[ [\bullet,\bullet] : \widehat\wedge^2(\Gc^a) \rightarrow \Gc^{(2)}/\Gc^{(3)}, \]
and we define $\Rcal(\Gc)$ as the kernel of this canonical map.
Note that one has $\Gc^a = (\Gc^c)^a$ and $\Rcal(\Gc) = \Rcal(\Gc^c)$, so the datum $(\Gc^a,\Rcal(\Gc))$ is completely determined by the quotient $\Gc^c$.

Suppose now that $\Gc_1,\Gc_2$ are two profinite groups, and that $f : \Gc_1^a \rightarrow \Gc_2^a$ is a morphism.
In this context, we say that $f$ is {\bf compatible with $\Rcal$} if the induced map 
\[ \widehat\wedge^2 (f) : \widehat\wedge^2(\Gc_1^a) \rightarrow \widehat\wedge^2(\Gc_2^a) \]
restricts to a map $\Rcal(\Gc_1) \rightarrow \Rcal(\Gc_2)$.
We write $\Homc(\Gc_1^a,\Gc_2^a)$ for the collection of morphisms $f : \Gc_1^a \rightarrow \Gc_2^a$ which are compatible with $\Rcal$.
Similarly, for a profinite group $\Gc$, we write $\Autc(\Gc^a)$ for the collection of automorphisms of $\Gc^a$ which are compatible with $\Rcal$ and whose inverse is also compatible with $\Rcal$.

The definitions above can be summarized by defining the {\bf mod-$\ell$ abelian-by-central category}, denoted $\AbC$, to be the category whose objects consist of pairs $(\Gc^a,\Rcal)$ where $\Gc^a$ is a profinite group such that $(\Gc^a)^{(2)} = 1$, and $\Rcal$ is a closed subgroup of $\widehat\wedge^2(\Gc^a)$.
A morphism from $(\Gc_1^a,\Rcal_1)$ to $(\Gc_2^a,\Rcal_2)$ in $\AbC$ is simply a morphism $f : \Gc_1^a \rightarrow \Gc_2^a$ of profinite groups such that the induced map 
\[ \widehat\wedge^2 (f) : \widehat\wedge^2(\Gc_1^a) \rightarrow \widehat\wedge^2(\Gc_2^a) \]
restricts to a map $\Rcal_1 \rightarrow \Rcal_2$.
Finally, we have a canonical functor 
\[ (\bullet)^\ac : \Profinite \rightarrow \AbC \]
defined on objects by $\Gc^\ac := (\Gc^a,\Rcal(\Gc))$.
The $\Rcal$-compatible morphisms/automorphisms are then given by the morphisms/automorphism in $\AbC$:
\begin{itemize}
  \item $\Homc(\Gc_1^a,\Gc_2^a) = \Hom_{\AbC}(\Gc_1^\ac,\Gc_2^\ac)$.
  \item $\Autc(\Gc^a) = \Aut_{\AbC}(\Gc^\ac)$.
\end{itemize}
Finally, as noted above, for any profinite group, one has $\Gc^\ac = (\Gc^c)^\ac$ as objects of $\AbC$.
In other words, the functor $(\bullet)^\ac$ factors through the endofunctor $\Gc \mapsto \Gc^c$ of $\Profinite$.

Similar to the pro-$\ell$ context, we will consistently use \emph{underlines} to denote the process of modding out by the left-multiplication action of $(\Z/\ell)^\times$.
For instance, note that $(\Z/\ell)^\times$ acts on $\Gc^a$ by multiplication on the left, and that this action is always compatible with $\Rcal$.
Thus, $(\Z/\ell)^\times$ acts on $\Homc(\Gc_1^a,\Gc_2^a)$ and we write
\[ \UHomc(\Gc_1^a,\Gc_2^a) = \Homc(\Gc_1^a,\Gc_2^a)/(\Z/\ell)^\times. \]
Similarly, $(\Z/\ell)^\times$ acts on $\Autc(\Gc^a)$ and we write $\UAutc(\Gc^a) := \Autc(\Gc^a)/(\Z/\ell)^\times$.

\subsection{The Mod-$\ell$ Abelian-by-Central I/OM}
\label{subsection: intro / mod-ell-IOM}

For any $X \in \Var_{k_0}$, we consider the {\bf mod-$\ell$ abelian} resp. {\bf mod-$\ell$ abelian-by-central} geometric fundamental groups of $X$, which are defined and denoted as follows:
\[ \pi^a(X) := (\bar\pi_1(X))^a \ \text{ resp. } \ \pi^c(X) := (\bar\pi_1(X))^c. \]
We will also consider the associated \emph{abelian-by-central datum}
\[ (\bar\pi_1(X))^\ac = (\pi^a(X),\Rcal(\bar\pi_1(X))) = (\pi^a(X),\Rcal(\pi^c(X))) = (\pi^c(X))^\ac \]
as discussed above.

\begin{remark}
  \label{remark: intro / etale-cup-combinatorial}
  In practice, the object $(\bar\pi_1(X))^\ac$ can be explicitly computed using cohomology with $\Z/\ell$ coefficients.
  Indeed, for any $X \in \Var_{k_0}$, one has $\pi^a(X) = \H_{\text{\'et}}^1(\bar X,\Z/\ell)^\vee$, and the inclusion $\Rcal := \Rcal(\pi^c(X)) \hookrightarrow \wedge^2 (\pi^a(X))$ is dual to (the image of) 
  \[ \wedge^2 (\H_{\text{\'et}}^1(\bar X,\Z/\ell)) \xrightarrow{\cup} \H_{\text{\'et}}^2(\bar X,\Z/\ell). \]
  In other words, one can completely compute the object $(\bar\pi_1(X))^\ac$ using the \emph{finite-dimensional} $\Z/\ell$-vector space $\H_{\text{\'et}}^1(\bar X,\Z/\ell)$, along with the kernel of the cup-product map above. 
  In particular, $\Aut^c(\pi^a(X))$ is a subgroup of the \emph{finite group} $\Aut(\pi^a(X)) = \operatorname{GL}_{\Z/\ell}(\pi^a(X))$.
  In this respect, one may view $(\bar\pi_1(X))^\ac$ as a combinatorial object, defined by a \emph{finite} vector space with some additional linear data.
\end{remark}

Note that for every $X \in \Var_{k_0}$, the Galois group $\Galk$ acts on $\pi^c(X)$ by outer-automorphisms, since $\pi^c(X)$ is a characteristic quotient of $\bar\pi_1(X)$.
Since $\pi^a(X)$ is a further characteristic quotient of $\pi^c(X)$, we see that $\Galk$ acts on $(\pi^c(X))^\ac = (\pi^a(X),\Rcal(\pi^c(X)))$ as an object of $\AbC$, so that we obtain canonical Galois representations
\[ \rho_{k_0,X}^c : \Galk \rightarrow \Autc(\pi^a(X)) \rightarrow \UAutc(\pi^a(X)).\]

Suppose now that $\Vc$ is an essentially small subcategory of $\Var_{k_0}$.
In this context, we define $\Autc(\pi^a|_\Vc)$ to be the automorphism group of the functor
\[ \Vc \hookrightarrow \Var_{k_0} \xrightarrow{\bar\pi_1} \Profinite \xrightarrow{(\bullet)^\ac} \AbC. \]
In other words, $\Autc(\pi^a|_\Vc)$ consists of system $(\phi_X)_{X \in \Vc} \in \Aut(\pi^a|_\Vc)$ with $\phi_X \in \Autc(\pi^a(X))$ for all $X \in \Vc$, such that the $\phi_X$ are compatible with morphisms arising from $\Vc$.
As before, $(\Z/\ell)^\times$ acts on $\Autc(\pi^a|_\Vc)$ in a canonical way by left-multiplication, and we will denote the quotient by this action with an underline as $\UAutc(\pi^a|_\Vc) := \Autc(\pi^a|_\Vc)/(\Z/\ell)^\times$.
Finally, we can combine all of the $\rho_{k_0,X}^c$ as before to obtain canonical Galois representations
\[ \rho_{k_0,\Vc}^c : \Galk \rightarrow \Autc(\pi^a|_\Vc) \twoheadrightarrow \UAutc(\pi^a|_\Vc). \]
With this notation, the \emph{mod-$\ell$ abelian-by-central I/OM} is completely analogous to the absolute and pro-$\ell$ contexts, and it refers to the following general question.

\vskip 5pt
\noindent{\bf The Mod-$\ell$ Abelian-by-Central I/OM:}
{\em For which fields $k_0$ and subcategories $\Vc$ of $\Var_{k_0}$ as above, is the Galois representation 
\[ \rho^c_{k_0,\Vc} : \Galk \rightarrow \UAutc(\pi^a|_\Vc) \]
an isomorphism?}
\vskip 5pt

\subsection{The Main Result -- Birational Systems}
\label{subsection: intro / main-result-birational-systems}

With the preparation above, we will now introduce the subcategories of $\Var_{k_0}$ which we consider in this paper.
Let $X \in \Var_{k_0}$ have dimension $\geq 1$, and let $\Ucal_X$ be a basis of open neighborhoods of the generic point of $X$.
We will always consider $\Ucal_X$ as a subcategory of $\Var_{k_0}$ whose objects are the elements of $\Ucal_X$ and whose morphisms are the inclusions among them as open subsets of $X$.
Moreover, we write $\Ucal_X^+ = \Ucal_X \cup \{X\}$ for the basis of open neighborhoods of the generic point of $X$ which also includes $X$ as a terminal object.

Let $X \in \Var_{k_0}$ be an object.
To simplify the exposition, we will say that a subcategory $\Ucal_X$ of $\Var_{k_0}$ is a {\bf birational system of $X$} if $\Ucal_X$ is a basis of open neighborhoods of the generic point of $X$.
We will use the notation $\Ucal_X^+$ as above to denote the existence of $X$ as a terminal object.
In other words, while a birational system $\Ucal_X$ of $X$ need not have a terminal object, the birational system $\Ucal^+_X$ always has $X$ as a terminal object.
We say that $\Ucal$ is a {\bf birational system} if $\Ucal$ is a birational system of $X$ for some $X \in \Var_{k_0}$.
The {\bf dimension} of a birational system $\Ucal$, denoted $\dim \Ucal$, is defined to be the dimension of one (hence all) of the objects in $\Ucal$.
In particular, $\dim\Ucal_X = \dim X$.

Now let $r \geq 0$ be given, and let $\abf = (a_1,\ldots,a_r)$ be a (possibly empty) finite tuple of elements $a_i \in k_0^\times$.
We denote the complement of $\abf$ in $\Gbb_m$ (over $k_0$) as
\[ \U_\abf := \Gbb_{m,k_0} \smallsetminus \{a_1,\ldots,a_r\}. \]
For instance, $\U_\abf = \Gbb_m$ if $\abf = \varnothing$ is empty, and $\U_\abf = \Pbb^1 \smallsetminus \{0,1,\infty\}$ is the tripod if $\abf = (1)$.

We will furthermore consider a small category $\Ucal_\abf$ which is constructed from a positive-dimensional birational system $\Ucal$ and $\U_\abf$ as follows:
\begin{enumerate}
  \item The objects of $\Ucal_\abf$ are given by $\Ucal \cup \{\U_\abf\}$.
  \item The morphisms in $\Ucal_\abf$ are the inclusions among the objects in $\Ucal$, the identity on $\U_\abf$, and all of the dominant morphisms $U \rightarrow \U_\abf$ for $U \in \Ucal$.
\end{enumerate}
Our first main theorem concerning the mod-$\ell$ I/OM as stated above shows the bijectivity of the Galois representation for such categories, if the birational system has sufficiently large dimension.

\begin{maintheorem}
\label{maintheorem: intro / main-result / iom-main-opens}
  Let $k_0$ be an infinite perfect field of characteristic $\neq \ell$.
  Let $\Ucal$ be a birational system of dimension $\geq 5$, and let $\abf = (a_1,\ldots,a_r)$ be a (possibly empty) finite tuple of elements of $k_0^\times$.
  Then the canonical Galois representation 
  \[ \rho^c_{k_0,\Ucal_\abf} : \Galk \rightarrow \UAutc(\pi^a|_{\Ucal_\abf}) \]
  is an isomorphism.
\end{maintheorem}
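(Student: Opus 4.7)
The plan is to follow the strategy outlined in the introduction: first pass from $\Ucal_\abf$ to a birational statement about the absolute Galois group $\Gc := \Gal(K^{\rm sep}/K)$ of the function field $K := k(\Ucal)$, then apply the mod-$\ell$ abelian-by-central variant of Bogomolov's Program (both the local theory \cite{Pop:2010a, Topaz:2013a, Topaz:2015} and the global theory \cite{Topaz:2014}), and finally use the presence of $\U_\abf$ to descend the resulting field automorphism from $k$ to an element of $\Galk$.

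For surjectivity, let $\phi = (\phi_U)_{U \in \Ucal_\abf}$. The inclusions $U' \hookrightarrow U$ inside $\Ucal$ induce surjections on $\bar\pi_1$, and hence on mod-$\ell$ abelian-by-central data, so that the inverse limit of the system $\{\bar\pi_1(U)^\ac\}_{U \in \Ucal}$ in $\AbC$ recovers $\Gc^\ac$. The compatibility of $\phi$ with these inclusions then yields a canonical $\Rcal$-compatible automorphism $\tilde\phi$ of $\Gc^\ac$, well-defined modulo the $(\Z/\ell)^\times$-scalar action. The key external input is now the mod-$\ell$ global reconstruction of \cite{Topaz:2014}: since $\trdeg(K|k) = \dim\Ucal \geq 5$, every such $\tilde\phi$ is induced, modulo $(\Z/\ell)^\times$, by a genuine field automorphism $\psi$ of $K$.

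To upgrade $\psi$ to an element of $\Galk$, we exploit the object $\U_\abf$ and the dominant morphisms $U \to \U_\abf$ in $\Ucal_\abf$. By Kummer theory, $\pi^a(\U_\abf)$ is naturally dual to the subgroup of $k_0(t)^\times/\ell$ generated by $t, t-a_1,\ldots,t-a_r$, and $\Galk$ acts there only through the cyclotomic character (because $\abf \subset k_0^\times$). The constraint that $\phi$ be compatible with every dominant morphism $U \to \U_\abf$ then translates, via the reconstruction, into the requirement that $\psi$ stabilize $k_0 \subset K$ and restrict on $k = \bar k_0$ to some $\sigma \in \Galk$; a direct verification gives $\rho^c_{k_0,\Ucal_\abf}(\sigma) = \phi$. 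Injectivity is parallel: a nontrivial $\sigma$ in the kernel would act trivially on $\Gc^\ac/(\Z/\ell)^\times$, contradicting the faithfulness of the $\Galk$-action on the Kummer theory of $K$ over $k_0$, which the dimension bound and the presence of $\U_\abf$ together suffice to detect.

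The main obstacle --- and the technical heart of the paper --- is the birational reconstruction step, i.e. producing the field automorphism $\psi$ from the abstract $\Rcal$-compatible automorphism $\tilde\phi$ of $\Gc^\ac$. In the pro-$\ell$ setting (as in \cite{Pop:2014}, \cite{Pop:2007}) one exploits an infinite-dimensional $k$-projective space sitting inside $\H^1(K,\Z_\ell(1))$; no such structure exists inside $\H^1(K,\Z/\ell(1))$, and its role must be played by the combinatorial ``geometric lattice'' of $\ell$-divisible subgroups evoked in the title of the paper. Controlling how $\tilde\phi$ interacts with this lattice --- in particular, showing that it respects divisorial inertia (via the mod-$\ell$ local theory) and is compatible with projections to rational one-dimensional subfunctions (via the mod-$\ell$ global theory) --- is the delicate part; the remaining steps are essentially formal once this reconstruction is in place.
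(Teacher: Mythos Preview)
Your reduction from $\Ucal_\abf$ to the birational setting is correct and matches \S\ref{subsection: finalproof / ion-main-opens}: one does take limits over $\Ucal$ to land in $\Autc(\Gc_K^a)$, observes that the $\U_\abf$-compatibility forces the image into $\Autc_\abf(\Gc_K^a)$, and then invokes Theorem~\ref{maintheorem: intro / galois-variant / galois-main}. The injectivity argument is also essentially right (the paper argues via Lemma~\ref{lemma: strgen / galois-action / faithfulness}).

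The gap is in your second paragraph. You invoke \cite{Topaz:2014} as a black box producing a field automorphism $\psi$ of $K$ from $\tilde\phi$, and then propose to descend $\psi$ to $\Galk$ afterward using the $\U_\abf$-data. That is not what \cite{Topaz:2014} supplies, and it is not how the paper proceeds. The birational statement (Theorems~\ref{maintheorem: intro / galois-variant / galois-main} and~\ref{maintheorem: intro / milnor-variant / milnor-main}) is the main content of \emph{this} paper, developed in \S\S\ref{section: milnorff}--\ref{section: mainproof}; \cite{Topaz:2014} is mined for techniques and individual lemmas, not for a ready-made reconstruction theorem. Your own final paragraph concedes that this reconstruction is ``the technical heart of the paper,'' which contradicts your earlier claim that it is an external input.

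More substantively, the paper does not separate ``produce $\psi \in \Aut(K)$'' from ``descend to $\Galk$.'' The $\abf$-compatibility is used from the very first step: Lemma~\ref{lemma: mainproof / K0 / fix-Kfrakt0} uses it to show $\tau\Kfrak(S)=\Kfrak(S)$ for $S\subset K_0$, and this feeds directly into Lemma~\ref{lemma: mainproof / K0 / divs-local-thy}, which upgrades the local theory from \emph{quasi}-divisorial to \emph{divisorial} valuations via Proposition~\ref{proposition: localthy / pd / pd-detection}. Without the $\abf$-constraint one only knows that $\tau$ permutes quasi-divisorial valuations, and the entire machinery of \S\ref{subsection: strgen / rational-like} (rational-like collections, Proposition~\ref{proposition: mainproof / K0 / fix-K0}) collapses. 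The argument then shows that $\tau$ induces an element of $\Aut^*(\Gfrak^*(K|k)|K_0)$ --- a lattice automorphism fixing the $K_0$-sublattice --- and concludes via {\sc Evans--Hrushovski} and {\sc Gismatullin} (Corollary~\ref{corollary: lattice / galois-mod-ell / galois-lattice-iso}), never passing through an intermediate $\psi\in\Aut(K)$.
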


\subsection{The Main Result -- Connected Categories}
\label{subsection: intro / main-result-connected}

We will now introduce the precise notion of a ``$d$-connected category.''
Suppose that $\Vc$ is an essentially small subcategory of $\Var_{k_0}$ which contains a positive-dimensional object.
Let $\Ucal_1,\Ucal_2$ be two positive-dimensional birational systems.
In this context, we say that {\bf $\Ucal_1$ dominates $\Ucal_2$ in $\Vc$} provided that $\Vc$ contains $\Ucal_1$ and $\Ucal_2$, and that the following holds:
\begin{itemize}
  \item If $\dim \Ucal_2 > 1$: For all $V \in \Ucal_2$, there exists some $U \in \Ucal_1$ such that $\Vc$ contains a dominant morphism $U \rightarrow V$.
  \item If $\dim \Ucal_2 = 1$: For all $V \in \Ucal_2$, there exists some $U \in \Ucal_1$ such that $\Vc$ contains a dominant morphism $U \rightarrow V$ with geometrically integral fibers.
\end{itemize}

Next suppose that $\Ucal,\Ucal_1,\Ucal_2$ are three positive-dimensional birational systems.
In this context, we say that {\bf $\Ucal$ attaches $\Ucal_1$ to $\Ucal_2$ in $\Vc$} if the following hold:
\begin{enumerate}
  \item The category $\Vc$ contains $\Ucal_\abf$ for some finite tuple $\abf$ of elements of $k_0^\times$.
  \item The birational system $\Ucal$ dominates both $\Ucal_1$ and $\Ucal_2$ in $\Vc$.
\end{enumerate}

Now let $d \geq 1$ be given, and let $\Ucal_0$ and $\Ucal_{2r}$ be two birational systems.
We will say that $\Ucal_0$ and $\Ucal_{2r}$ are {\bf $d$-connected in $\Vc$} if there exist birational systems $\Ucal_1,\ldots,\Ucal_{2r-1}$ such that, for all $i = 0,\ldots,r-1$, the following conditions hold:
\begin{enumerate}
  \item One has $\dim \Ucal_{2i+1} \geq d$. 
  \item The birational system $\Ucal_{2i+1}$ attaches $\Ucal_{2i}$ to $\Ucal_{2i+2}$ in $\Vc$.
\end{enumerate}
Finally, we say that $\Vc$ is {\bf $d$-connected} if the following conditions hold:
\begin{enumerate}
  \item $\Vc$ is essentially small and it contains a positive-dimensional object.
  \item For every object $X$ of $\Vc$, there exists some birational system $\Ucal_X^+$ of $X$ which contains $X$ as the terminal object, such that $\Ucal_X^+$ is contained in $\Vc$.
  \item Any two birational systems $\Ucal_0,\Ucal_{2r}$ which are contained in $\Vc$ are $d$-connected in $\Vc$.
\end{enumerate}

Although the precise definition of a $d$-connected category is somewhat complicated, we note that, for example, both the full category $\Var_{k_0}$, and the full subcategory of all geometrically-integral smooth quasi-projective $k_0$-varieties, are $d$-connected for all $d \geq 1$.
Furthermore, if $d' \geq d \geq 1$, we note that $\Vc$ being $d'$-connected implies that $\Vc$ is $d$-connected.
Our next main theorem concerns the mod-$\ell$ I/OM for $5$-connected varieties.

\begin{maintheorem}
\label{maintheorem: intro / main-result / iom-main-connected}
  Let $k_0$ be an infinite perfect field of characteristic $\neq \ell$, and let $\Vc$ be a subcategory of $\Var_{k_0}$ which is $5$-connected.
  Then the canonical Galois representation 
  \[ \rho^c_{k_0,\Vc} : \Galk \rightarrow \UAutc(\pi^a|_\Vc) \]
  is an isomorphism.
\end{maintheorem}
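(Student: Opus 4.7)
The plan is to reduce Theorem~\ref{maintheorem: intro / main-result / iom-main-connected} to Theorem~\ref{maintheorem: intro / main-result / iom-main-opens} by exploiting the $5$-connectedness of $\Vc$. The notion of $d$-connectedness is tailor-made for such a bootstrap: any two birational systems contained in $\Vc$ are linked by a chain of \emph{attaches}, and each attaching birational system has dimension $\geq d$ together with an associated $\Ucal_\abf\subset\Vc$ on which Theorem~\ref{maintheorem: intro / main-result / iom-main-opens} applies directly when $d=5$.

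Injectivity is essentially immediate. Conditions~(1) and~(2) supply $\Vc$ with at least one birational system $\Ucal^+_{X_0}$, and applying condition~(3) to the pair $(\Ucal^+_{X_0},\Ucal^+_{X_0})$ with $r=1$ yields an auxiliary birational system $\Ucal_*$ of dimension $\geq 5$ with $(\Ucal_*)_\abf \subset \Vc$ for some tuple $\abf$. If $\sigma\in\Galk$ maps to the identity of $\UAutc(\pi^a|_\Vc)$, then the uniform scalar $c\in(\Z/\ell)^\times$ witnessing this simultaneously witnesses it on $(\Ucal_*)_\abf$, so $\sigma=1$ by Theorem~\ref{maintheorem: intro / main-result / iom-main-opens}.

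For surjectivity, fix $\phi\in\UAutc(\pi^a|_\Vc)$ with lift $\tilde\phi\in\Autc(\pi^a|_\Vc)$, and apply Theorem~\ref{maintheorem: intro / main-result / iom-main-opens} to $(\Ucal_*)_\abf$ to obtain $\sigma\in\Galk$; after a single global rescaling of $\tilde\phi$ by an element of $(\Z/\ell)^\times$ we may arrange $\tilde\phi_U=\rho^c_{k_0,U}(\sigma)$ for every $U\in(\Ucal_*)_\abf$. For an arbitrary $X\in\Vc$, condition~(2) gives $\Ucal^+_X\subset\Vc$, and condition~(3) provides an attach-chain $\Ucal_*=\Ucal^{(0)},\Ucal^{(1)},\ldots,\Ucal^{(2r)}=\Ucal^+_X$ in which each $\Ucal^{(2i+1)}$ has dimension $\geq 5$ with $(\Ucal^{(2i+1)})_{\abf_i}\subset\Vc$ and dominates both $\Ucal^{(2i)}$ and $\Ucal^{(2i+2)}$ in $\Vc$. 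Applying Theorem~\ref{maintheorem: intro / main-result / iom-main-opens} to each $(\Ucal^{(2i+1)})_{\abf_i}$ extracts a Galois element $\sigma_i$ matching the corresponding restriction of $\tilde\phi$ up to a scalar; the dominant morphisms in $\Vc$ connecting $\Ucal^{(2i+1)}$ to $\Ucal^{(2i)}$ and the compatibility of $\tilde\phi$ with such morphisms force $\sigma_i=\sigma_{i-1}$ with matching scalar. Iterating along the chain yields $\tilde\phi_X=\rho^c_{k_0,X}(\sigma)$, as desired.

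The crux is the propagation step: given a dominant morphism $U\to V$ in $\Vc$ and the equality $\tilde\phi_U=\rho^c_{k_0,U}(\sigma)$, one must conclude $\tilde\phi_V=\rho^c_{k_0,V}(\sigma)$. Functoriality pins down both automorphisms on the image of the induced map $\pi^a(U)\to\pi^a(V)$, so the technical question is to ensure this image is large enough to determine $\tilde\phi_V$ uniquely. For open immersions and for dominant morphisms with geometrically integral fibers, surjectivity of the induced map on $\bar\pi_1$ finishes the argument -- which explains the integrality hypothesis imposed in the one-dimensional case of the definition of dominance. In higher dimensions, where no such hypothesis is present, I expect one must exploit the basis-of-neighborhoods nature of each $\Ucal^{(2i)}$ to shrink $V$ to a smaller object on which Stein factorization (or a mod-$\ell$ Hilbert irreducibility argument) delivers the needed control on $\pi^a(V)$, and then recover the original $V$ via the open immersions inside $\Ucal^{(2i)}$. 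This is the step where the careful mod-$\ell$ analysis of Bogomolov's program referenced in the introduction will be essential.
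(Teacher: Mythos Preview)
Your overall architecture is correct and matches the paper's: injectivity via restriction to a single high-dimensional $\Ucal_\abf$ and Theorem~\ref{maintheorem: intro / main-result / iom-main-opens}, surjectivity via a chain argument propagating the Galois element along the attach-chain. You have also correctly isolated the crux as the propagation step in the case $\dim\Ucal_2>1$, where the induced map $\pi^a(U)\to\pi^a(V)$ need not be surjective.

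However, your suggested fix for this step---shrinking $V$ inside its birational system and invoking Stein factorization or a Hilbert-irreducibility argument---does not work. Stein factorization of $U\to V$ over a suitable open $V_0\subset V$ produces $U_0\to W\to V_0$ with $W\to V_0$ finite and $U_0\to W$ having geometrically connected fibers; but $W$ is a finite cover of $V_0$, not an object of $\Ucal_2$, so the image of $\pi^a(U_0)\to\pi^a(V_0)$ is still only the open subgroup $\pi^a(W)\to\pi^a(V_0)$. No amount of shrinking $V$ eliminates this finite residual cover. The paper's Lemma~\ref{lemma: finalproof / iom-main-connected / dominant-lemma} confronts exactly this: one only knows that the image of $\Gc_L^a\to\Gc_K^a$ is \emph{open}, and must conclude from compatibility on this open subgroup that $\phi_K$ is defined by $\tau$.

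The actual argument is Kummer-dual and uses the Milnor machinery developed earlier in the paper. Setting $\psi=\tau^{-1}\cdot\phi_K$, its dual $\psi^*\in\Autm(\k_1(K))$ fixes a finite-index subgroup $H\subset\k_1(K)$. For each $t\in K\smallsetminus k$ one then finds (by a pigeonhole argument on cosets of $H$ inside $\k_1(k(t))$) some $x\in k(t)\smallsetminus k$ with $\{x-c\}_K\in H$ for infinitely many $c\in k$; Corollary~\ref{corollary: milnorff / geometric / infinite-subsets} then gives $\psi^*\Kfrak(t)=\Kfrak(t)$, and Proposition~\ref{proposition: strgen / rational-like / ident-criterion} forces $\psi^*\in(\Z/\ell)^\times\cdot\one$. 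Since $\psi$ is the identity on a nontrivial subgroup, $\psi=\one$. This is the ``careful mod-$\ell$ analysis'' you gesture at, but it is a specific argument with specific ingredients, not a geometric shrinking.
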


Theorems \ref{maintheorem: intro / main-result / iom-main-opens} and \ref{maintheorem: intro / main-result / iom-main-connected} together form the mod-$\ell$ variant/strengthening (in dimension $\geq 5$) of the main results from \cite{Pop:2014}, where the absolute and pro-$\ell$ I/OM are proven.
And as mentioned above, this mod-$\ell$ context is optimal with respect to functorial pro-$\ell$ quotients of $\bar\pi_1$ which remain non-abelian.
The recent work \cite{Pop:2016} proves yet another refinement of \cite{Pop:2014} by considering \emph{coarser} categories of varieties, but still necessarily remaining in the pro-$\ell$ context.
Therefore, the present paper and \cite{Pop:2016} both refine the results of \cite{Pop:2014}, while these two refinements seem to be in orthogonal directions.

\subsection{Birational-Galois Variant}
\label{subsection: intro / galois-variant}

Let $X \in \Var_{k_0}$ be given and let $\Ucal = \Ucal_X$ be a birational system for $X$, as defined above.
Recall that elements of $\Autc(\pi^a|_\Ucal)$ consist of systems of elements $(\phi_U)_{U \in \Ucal}$, where $\phi_U \in \Autc(\pi^a(U))$ are compatible with the morphisms arising from $\Ucal$.
By taking the projective limit over $\Ucal$, one obtains an element of $\Autc((\Gal_{k(X)})^a)$.
Moreover, since $X$ is geometrically normal, it follows that the induced canonical map $\Autc(\pi^a(X)) \rightarrow \Autc((\Gal_{k(X)})^a)$ is injective.
Therefore, in order to prove Theorem \ref{maintheorem: intro / main-result / iom-main-opens}, it makes sense to first develop a \emph{birational variant} of that theorem, which deals with quotients of \emph{absolute Galois groups of function fields} as opposed to quotients of fundamental groups of varieties.
Therefore, the main focus of this paper is to develop and prove \emph{birational variants} of our main theorems, and we now introduce the appropriate notation and terminology.

Suppose that $K_0$ is a regular function field over $k_0$, and let $K = K_0 \cdot k = K_0 \otimes_{k_0} k$ denote the base-change of $K_0$ to $k$.
Recall that $\Galk$ acts on $K = K_0 \otimes_{k_0} k$ in the obvious way, and that one has a canonical isomorphism
\[ \Galk \xrightarrow{\cong} \Gal(K|K_0). \]

We denote by $\Gc_K := \Gal(K(\ell)|K)$ the maximal pro-$\ell$ Galois group of $K$.
We also consider its mod-$\ell$ abelian resp. mod-$\ell$ abelian-by-central quotients $\Gc_K^a$ resp. $\Gc_K^c$, and the associated object 
\[ (\Gc_K)^\ac = (\Gc_K^c)^\ac = (\Gc_K^a,\Rcal(\Gc_K^c)) \]
of $\AbC$, as introduced in \S\ref{subsection: intro / mod-ell-category}.
Following the notation above, we denote the automorphism group of $\Gc_K^\ac$ by $\Autc(\Gc_K^a)$, and we write
\[ \UAutc(\Gc_K^a) := \Autc(\Gc_K^a)/(\Z/\ell)^\times \]
for its quotient by the canonical action of $(\Z/\ell)^\times$.
Since the projection $\Gc_K^c \rightarrow \Gc_K^a$ is functorial in $K$, it follows that $\Galk$ acts on $\Gc_K^\ac$ as an object of $\AbC$.
In other words, we obtain canonical Galois representations
\[ \rho^c_{k_0,K_0} : \Galk \rightarrow \Autc(\Gc_K^a) \rightarrow \UAutc(\Gc_K^a). \]

Suppose now that $\abf$ is a (possibly empty) finite tuple of elements of $k_0^\times$, and recall that we write $\U_\abf := \Gbb_m \smallsetminus \abf$.
Note that every non-constant $t \in K_0^\times$ induces a dominant morphism $U \rightarrow \U_\abf$ for some $U \in \Var_{k_0}$ such that $k_0(U) = K_0$.
Thus, for every non-constant $t \in K_0$, we obtain a canonical morphism of $\ell$-elementary abelian pro-$\ell$ groups
\[ \pi_t : \Gc_K^a \twoheadrightarrow \pi^a(U) \rightarrow \pi^a(\U_\abf). \]
Let $\Hcal_t$ denote the kernel of $\pi_t : \Gc_K^a \rightarrow \pi^a(\U_\abf)$.
We will write $\Autc_\abf(\Gc_K^a)$ for the subgroup of $\Autc(\Gc_K^a)$ consisting of elements $\phi \in \Autc(\Gc_K^a)$ such that $\phi\Hcal_t = \Hcal_t$ for all non-constant $t \in K_0^\times$.
Note that the canonical action of $(\Z/\ell)^\times$ on $\Autc(\Gc_K^a)$ restricts to an action on the subgroup $\Autc_\abf(\Gc_K^a)$, and we will write 
\[ \UAutc_\abf(\Gc_K^a) := \Autc_\abf(\Gc_K^a)/(\Z/\ell^\times) \]
for the quotient of this action.

Since the Galois action is clearly compatible with the morphisms $\pi_t$ for non-constant $t \in K_0^\times$, we see that any element of $\Autc(\Gc_K^a)$ resp. $\UAutc(\Gc_K^a)$ which arises from $\Galk$ must actually be contained in $\Autc_\abf(\Gc_K^a)$ resp. $\UAutc_\abf(\Gc_K^a)$.
In other words, we obtain canonical Galois representations
\[ \rho_{k_0} : \Galk \rightarrow \Autc_\abf(\Gc_K^a) \rightarrow \UAutc_\abf(\Gc_K^a). \]
Our Birational-Galois variant of Theorem \ref{maintheorem: intro / main-result / iom-main-opens} is about this canonical morphism.

\begin{maintheorem}
\label{maintheorem: intro / galois-variant / galois-main}
  Let $k_0$ be an infinite perfect field of characteristic $\neq \ell$.
  Let $K_0$ be a regular function field over $k_0$ of transcendence degree $\geq 5$, and put $K = K_0 \cdot k$.
  Let $\abf$ be an arbitrary (possibly-empty) finite tuple of elements of $k_0^\times$.
  Then the canonical map
  \[ \rho_{k_0} : \Galk \rightarrow \UAutc_\abf(\Gc_K^a) \]
  is an isomorphism.
\end{maintheorem}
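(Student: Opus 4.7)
The plan is to dualize via Kummer theory, translating the problem into the language of mod-$\ell$ Milnor K-theory, and then to invoke the mod-$\ell$ abelian-by-central local and global theories to reconstruct first a field automorphism of $K$, and finally a Galois element of $\Galk$. Concretely, Kummer theory identifies $\Gc_K^a$ with the Pontryagin dual of $\widehat K := K^\times/(K^\times)^\ell$ (after a Tate twist), so each $\phi \in \Autc_\abf(\Gc_K^a)$ corresponds to a $\Z/\ell$-linear automorphism $\phi^*$ of $\widehat K$. By Merkurjev--Suslin/Bloch--Kato, the $\Rcal$-compatibility of $\phi$ translates into compatibility of $\phi^*$ with the mod-$\ell$ Milnor symbol $\widehat K \otimes \widehat K \to \H^2(K,\Z/\ell(2))$; equivalently, $\phi^*$ must preserve the locus of pairs $(x,y)$ satisfying $\{x,y\}=0$.

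Next I would invoke the mod-$\ell$ local theory of Pop and Topaz to extract valuation-theoretic information from this compatibility: the Milnor-symbol structure together with the $\Rcal$-compatibility pins down the divisorial (and more generally quasi-divisorial) sub-$\Z/\ell$-modules of $\widehat K$ arising from valuations of $K$, so $\phi^*$ must permute them. This in turn produces an action on the combinatorial object that gives the paper its title --- the \emph{geometric lattice} attached to $K$, which packages the valuation data together with the collection of rational one-dimensional sub-quotients of $K$.

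The technical heart of the argument is then the mod-$\ell$ global reconstruction step: starting from an automorphism of the geometric lattice having the required compatibilities, one produces a field automorphism $\Phi$ of $K$, unique modulo the canonical $(\Z/\ell)^\times$-ambiguity, whose Kummer action on $\widehat K$ coincides with $\phi^*$. This is also where the main obstacle lies, and where the passage from the pro-$\ell$ to the mod-$\ell$ context becomes genuinely delicate: in the pro-$\ell$ setting one ultimately invokes the Fundamental Theorem of Projective Geometry on a $k$-projective space sitting inside $\H^1(K,\Z_\ell(1))$, whereas $\H^1(K,\Z/\ell(1)) = \widehat K$ contains no such projective space. I would therefore replace that step by a combinatorial reconstruction internal to the geometric-lattice structure, drawing on Topaz's mod-$\ell$ global theory. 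The transcendence-degree hypothesis $\trdeg(K_0|k_0)\geq 5$ enters here, since the lattice-reconstruction arguments require sufficiently many independent divisorial valuations.

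Finally, to descend from $\Phi$ to a Galois element, I would exploit the $\abf$-compatibility: the condition $\phi\Hcal_t = \Hcal_t$ for every non-constant $t \in K_0^\times$ dually asserts that $\phi^*$ preserves the span in $\widehat K$ of the Kummer classes of $t$ and of $t-a_i$ for $i=1,\dots,r$. Ranging $t$ over a sufficiently rich family of elements of $K_0^\times$ forces $\Phi$ to preserve $K_0$ setwise; after absorbing the $(\Z/\ell)^\times$-ambiguity, one concludes that $\Phi|_{K_0}$ is the identity, hence $\Phi \in \Gal(K|K_0) = \Galk$. Injectivity of $\rho_{k_0}$ is comparatively routine: $\Galk$ acts faithfully on $k^\times/(k^\times)^\ell$, which embeds into $\widehat K$ and hence into the dual of $\Gc_K^a$, so $\Galk$ acts faithfully on $\Gc_K^a$ itself.
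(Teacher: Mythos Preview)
Your broad architecture---dualize via Kummer/Merkurjev--Suslin to the Milnor side, invoke the mod-$\ell$ local theory, pass to a geometric lattice, and reconstruct a field automorphism---matches the paper. But two concrete points are wrong or missing. First, the injectivity argument fails as written: $k = \bar k_0$ is algebraically closed, so $k^\times/(k^\times)^\ell = 0$ and $\Galk$ certainly does not act faithfully there. The paper instead proves injectivity (Lemma~\ref{lemma: strgen / galois-action / faithfulness}) by tracking the action on classes $\{t-a\}_K$ with $t \in K_0$ and $a \in k$, using that these are linearly independent in $\k_1(K)$ when $t$ is general.

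Second, and more seriously, your order of operations is inverted. You propose to build the lattice action and the field automorphism $\Phi$ first, and only afterwards use the $\abf$-compatibility to pin $\Phi$ down to $K_0$. In the paper the $\abf$-condition is used at the very outset: it is what forces $\sigma$ to fix the geometric subgroups $\Kfrak(t)$ for $t \in K_0$ (Lemma~\ref{lemma: mainproof / K0 / fix-Kfrakt0}), and \emph{that} is what allows one to distinguish divisorial from merely quasi-divisorial valuations (Proposition~\ref{proposition: localthy / pd / pd-detection}, Lemma~\ref{lemma: mainproof / K0 / divs-local-thy}). Without this, the local theory only hands you quasi-divisorial valuations, and none of the subsequent machinery (strongly-general elements, essential ramification, the inductive acceptability argument of \S\ref{subsection: mainproof / base-case}--\ref{subsection: mainproof / general-case}) gets off the ground. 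Relatedly, the replacement for the Fundamental Theorem of Projective Geometry is not an unspecified ``combinatorial reconstruction'' but the Evans--Hrushovski--Gismatullin theorem on automorphisms of the combinatorial geometry $\Gbb^*(K|k)$ (Theorem~\ref{theorem: lattice / acl / fund-thm-field-acl}); the bound $\trdeg \geq 5$ is exactly the hypothesis of that theorem, not a condition on the supply of divisorial valuations. The genuinely hard step you have glossed over is proving that $\sigma$ permutes \emph{all} one-dimensional geometric subgroups $\Kfrak(t)$, $t \in K$, not just those with $t \in K_0$; this occupies most of \S\ref{section: mainproof} and is where the new notions of strongly-general elements and essential ramification do their work.
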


\subsection{Birational-Milnor Variant}
\label{subsection: intro / milnor-variant}

It turns out that it will be more convenient to work with the \emph{Kummer Dual} of Theorem \ref{maintheorem: intro / galois-variant / galois-main}.
While the Kummer dual of $\Gc_K^a$ is $K^\times/\ell$, it will be a consequence of the Merkurjev-Suslin Theorem \cite{Merkurjev:1982} that the ``dual'' of the object $\Gc_K^\ac$ can be considered as the mod-$\ell$ Milnor K-ring of $K$, which we denote by $\k_*(K)$.
Thus, the primary focus of this paper will be to prove a \emph{Milnor variant} of Theorem \ref{maintheorem: intro / galois-variant / galois-main}, which deals with the mod-$\ell$ Milnor K-ring of the function field $K$.

We will recall the definition of $\k_*(K)$ in \S\ref{section: milnor}, but we note here that one has 
\[ \k_1(K) = K^\times/\ell \]
and that one has a canonical surjective morphism of $\Z/\ell$-algebras
\[ \operatorname{T}_*(K^\times/\ell) \twoheadrightarrow \k_*(K) \]
where $\operatorname{T}_*(K^\times/\ell)$ denotes the tensor algebra of $K^\times/\ell$ considered as a vector space over $\Z/\ell$ which is concentrated in degree $1$.
We denote by $\Autm(\k_1(K))$ the collection of automorphisms of $\k_1(K)$ which extend to an automorphism of $\k_*(K)$.
Similar to the above, we have a canonical action of $(\Z/\ell)^\times$ on $\k_1(K)$ by left multiplication, and we put
\[ \UAutm(\k_1(K)) = \Autm(\k_1(K))/(\Z/\ell)^\times. \]

Let $\abf = (a_1,\ldots,a_r)$ be a possibly empty finite tuple of elements of $k_0^\times$ as above.
For $x \in K^\times$, we write $\{x\}_K$ for the image of $x$ in $\k_1(K) = K^\times/\ell$.
We write $\Autm_\abf(\k_1(K))$ for the subgroup of all automorphisms $\phi \in \Autm(\k_1(K))$ such that for all non-constant $t \in K_0^\times$, the automorphism $\phi$ restricts to an automorphism of the subgroup
\[ \langle \{t\}_K,\{t-a_1\}_K, \ldots, \{t-a_r\}_K \rangle \]
of $\k_1(K)$.
Finally, we define $\UAutm_\abf(\k_1(K)) := \Autm_\abf(\k_1(K))/(\Z/\ell)^\times$ similar to the above.
It turns out that the group $\UAutm_\abf(\k_1(K))$ can be viewed as the ``Kummer Dual'' of the group $\UAutc_\abf(\Gc_K^a)$ considered in \S\ref{subsection: intro / galois-variant}.

As before, we have a canonical action of $\Galk$ on $\k_*(K)$.
Moreover, this action is compatible with subgroups of $\k_1(K)$ of the form 
\[ \langle \{t\}_K,\{t-a_1\}_K,\ldots,\{t-a_r\}_K \rangle \]
for all non-constant $t \in K_0^\times$, and all $a_1,\ldots,a_r \in k_0^\times$.
To summarize, for a (possibly empty) finite tuple $\abf$ of elements of $k_0^\times$, we obtain canonical Galois representations
\[ \rho^{\rm M}_{k_0,K_0} : \Galk \rightarrow \Autm_\abf(\k_1(K)) \rightarrow \UAutm_\abf(\k_1(K)) \]
which are the primary focus of the following ``Milnor-Variant'' of Theorem \ref{maintheorem: intro / galois-variant / galois-main}.

\begin{maintheorem}
\label{maintheorem: intro / milnor-variant / milnor-main}
  Let $k_0$ be an infinite perfect field of characteristic $\neq \ell$.
  Let $K_0$ be a regular function field over $k_0$ of transcendence degree $\geq 5$, and put $K = K_0 \cdot k$.
  Let $\abf$ be an arbitrary (possibly-empty) finite tuple of elements of $k_0^\times$.
  Then the canonical map
  \[ \rho^{\rm M}_{k_0,K_0} : \Galk \rightarrow \UAutm_\abf(\k_1(K)) \]
  is an isomorphism.
\end{maintheorem}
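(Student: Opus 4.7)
The plan is to prove Theorem~\ref{maintheorem: intro / milnor-variant / milnor-main} by combining the mod-$\ell$ Bogomolov-type reconstruction theory available from the cited works with the rigidification provided by the $\abf$-compatibility, which will pin down the ambiguity and force the desired descent to $\Galk$. I will treat injectivity first and then attack surjectivity in stages.

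\textbf{Injectivity.} Suppose $\sigma\in\Galk$ lies in the kernel, i.e.\ $\sigma$ acts on $\k_1(K)$ by some scalar $\lambda\in(\Z/\ell)^\times$. For any non-constant $t\in K_0^\times$ we have $\sigma(t)=t$, hence $\{t\}_K=\sigma\{t\}_K=\lambda\{t\}_K$; since $\{t\}_K\neq 0$ in $K^\times/\ell$ (as $t$ is not an $\ell$-th power) this forces $\lambda=1$. Now $\sigma$ acts trivially on $\k_1(K)$, so for any transcendental $t\in K_0$ and any $c\in k$ we obtain $(t-c)/(t-\sigma(c))\in(K^\times)^\ell$. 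Reading divisors on a smooth model, the right-hand side has a simple zero at $c$ and simple pole at $\sigma(c)$ unless $\sigma(c)=c$, which gives $\sigma=1$.

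\textbf{Surjectivity.} Fix $\phi\in\Autm_\abf(\k_1(K))$ and aim to realize it, up to $(\Z/\ell)^\times$-scalar, by an element of $\Galk$. I would proceed in three layers. First, \emph{valuation-theoretic detection}: the abelian-by-central datum $(\Gc_K)^\ac$, which by Merkurjev--Suslin is Kummer-dual to the degree-$\leq 2$ part of $\k_*(K)$, recognizes the inertia and decomposition subgroups in $\k_1(K)$ coming from quasi-divisorial valuations of $K|k$; this is the mod-$\ell$ local theory of \cite{Pop:2010a,Topaz:2013a,Topaz:2015}. Since $\phi$ extends to a degree-$2$ automorphism of $\k_*(K)$, it must permute this distinguished collection. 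Second, \emph{global reconstruction}: using the mod-$\ell$ global theory of \cite{Topaz:2014}, the preserved valuation data together with the rigid ``pencil'' subgroups $\langle\{t\}_K,\{t-a_1\}_K,\ldots,\{t-a_r\}_K\rangle$ guaranteed by the $\abf$-compatibility suffice to recover the combinatorial projective geometry of rational functions on models of $K$; the hypothesis $\trdeg(K_0|k_0)\geq 5$ is what provides enough independent pencils for this reconstruction to succeed. Third, from the preserved geometric lattice one extracts a field automorphism $\widetilde\sigma$ of $K$ inducing $\phi$ up to an overall $(\Z/\ell)^\times$-scalar; the $\abf$-compatibility, applied for each non-constant $t\in K_0^\times$, pins the restriction of $\widetilde\sigma$ to $K_0$ down to the identity (the pencil $\langle\{t\}_K,\{t-a_i\}_K\rangle$ determines $\widetilde\sigma(t)$ modulo $k_0^\times$-multiples, and varying $t$ removes the residual ambiguity), so that $\widetilde\sigma\in\Gal(K|K_0)=\Galk$.

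\textbf{The main obstacle.} The decisive step is the global reconstruction. This is precisely the point highlighted in the introduction at which the mod-$\ell$ setting departs sharply from the pro-$\ell$ setting of \cite{Pop:2014}: in the latter one reconstructs geometry by applying the Fundamental Theorem of Projective Geometry to an honest $k$-projective space inside $\H^1(K,\Z_\ell(1))$, but in the mod-$\ell$ setting $\H^1(K,\Z/\ell(1))=K^\times/\ell$ contains no such $k$-projective subspace (indeed $k^\times/\ell=0$). The reconstruction must therefore be effected combinatorially from valuation-theoretic and pencil-theoretic data, and the role of the $\abf$-rigidification and the transcendence bound $\geq 5$ is to provide exactly the raw material needed to carry out this reconstruction. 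Once this core combinatorial step is in place, the remaining descent to $\Galk$ via $\abf$-compatibility and the preliminary injectivity argument are essentially bookkeeping.
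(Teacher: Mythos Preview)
Your high-level architecture is sound and matches the paper's: injectivity via divisors, then for surjectivity use the local theory to control valuations, show the geometric lattice is preserved, and descend. But two load-bearing steps are missing real content.

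First, your ``third layer'' says that from the preserved geometric lattice one \emph{extracts a field automorphism $\widetilde\sigma$ of $K$}. You correctly diagnose that the Fundamental Theorem of Projective Geometry is unavailable here, but you do not name its replacement. The paper uses the theorems of {\sc Evans--Hrushovski} \cite{Evans:1991,Evans:1995} and {\sc Gismatullin} \cite{Gismatullin:2008} on automorphisms of the combinatorial geometry $\Gbb^*(K|k)$ of algebraically closed subextensions; these results (which rely on the group-configuration theorem) are what yield the identification $\Galk \cong \Aut^*(\Gfrak^*(K|k)|K_0)$, and they are precisely where the bound $\trdeg \geq 5$ enters. Without this input your third step has no mechanism.

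Second, and more seriously, your ``second layer'' is where essentially all of the paper's new work lives, and citing \cite{Topaz:2014} does not discharge it. One must show that an arbitrary $\phi\in\Autm_\abf(\k_1(K))$ permutes \emph{all} one-dimensional geometric subgroups $\Kfrak(t)$, not merely those with $t\in K_0$. The paper does this by an induction on the length of expressions $t=\sum a_ix_i$ with $a_i\in k$, $x_i\in K_0$; the base case requires the new notions of \emph{strongly-general} elements and the \emph{essential branch locus}, together with a Birational--Bertini theorem for strongly-general elements, to guarantee enough $\ell$-unramified prolongations of divisorial flags so that the mod-$\ell$ divisor map $\Dfrak_t\to\Dcal_t$ is bijective. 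Only after normalizing $\phi$ by a scalar so that it fixes $\{t\}_K$ for all $t\in K_0^\times$ (itself a nontrivial step using rational-like collections) can one run the induction, and the inductive step uses a delicate intersection criterion for one-dimensional geometric subgroups inside two-dimensional ones. None of this machinery is in \cite{Topaz:2014}; it is developed here in \S\S\ref{section: essram}--\ref{section: mainproof}. Your proposal treats this as bookkeeping, but it is the core of the argument.
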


We give a brief description of the proof of Theorem \ref{maintheorem: intro / milnor-variant / milnor-main}, as it pertains to the mod-$\ell$ anabelian tools mentioned above.
A much more detailed summary is given in \S\ref{section: summary}.
In the above context, let $\sigma \in \Autm_\abf(\k_1(K))$ be given.
First, we will use the mod-$\ell$ \emph{local theory} from \cite{Topaz:2015}, \cite{Topaz:2013a}, along with the compatibility of $\sigma$ with $\abf$, to show that $\sigma$ is compatible with certain special \emph{one-dimensional geometric subgroups} (see \S\ref{section: summary} for this terminology).
The majority of the work is then devoted to showing that $\sigma$ is compatible with \emph{all} such one-dimensional geometric subgroups, and these include the \emph{rational subgroups} considered in \cite{Topaz:2014}.
One then concludes, along similar lines to the mod-$\ell$ \emph{global theory} from loc. cit., that $\sigma$ arises from some automorphism of $K$, while some additional arguments show that this automorphism fixes $K_0$ pointwise.
In other words, $\sigma$ arises from an element of $\Gal_{k_0} = \Gal(K|K_0)$.

\subsection{A Guide Through the Paper}
\label{subsection: guide}

This paper contains a total of 11 sections, including \S\ref{section: intro} which is the introduction, and \S\ref{section: summary} which introduces some notation, and includes a summary of the proof of the main theorems.

Sections \ref{section: milnor}, \ref{section: cohom} and \ref{section: localthy} contain mostly generalities, appropriately translated to our context.
More specifically, in \S\ref{section: milnor}, we recall some basic facts about the Mod-$\ell$ Milnor K-ring of fields.
In \S\ref{section: cohom}, we recall the cohomological framework which allows us to translate back and forth between mod-$\ell$ abelian-by-central Galois groups and mod-$\ell$ Milnor K-rings -- this can be seen as a group-theoretical formulation of the Merkurjev-Suslin Theorem \cite{Merkurjev:1982}.
Such cohomological results have seen a recent resurgence in \cite{Chebolu:2012}, \cite{Efrat:2011}, \cite{Efrat:2015}, \cite{Topaz:2015b}, especially in connection with the Merkurjev-Suslin Theorem \cite{Merkurjev:1982} and/or the Bloch-Kato conjecture, which is now a highly-celebrated theorem due to {\sc Voevodsky-Rost} et al. \cite{Voevodsky:2011}, \cite{Rost:1998}, \cite{Weibel:2009}.
Nevertheless, the Merkurjev-Suslin Theorem is sufficient for the considerations in \S\ref{section: cohom}, as we summarize the appropriate results for our context in Theorem \ref{theorem: cohom / kummer / galois-to-milnor}.

In \S\ref{section: localthy}, we recall the required results concerning \emph{the local theory} in mod-$\ell$ abelian-by-central birational anabelian geometry.
These results have been developed incrementally over the last several years by \cite{Bogomolov:2002}, \cite{Mahe:2004}, \cite{Pop:2010}, \cite{Pop:2010a}, \cite{Efrat:2012}, \cite{Topaz:2013a}, \cite{Topaz:2015}.
We summarize the applicable results for our context in Theorem \ref{theorem: localthy / qpd / main-qpd}.

The core of the paper begins in \S6, where we discuss the mod-$\ell$ Milnor K-theory of function fields.
The ideas in this section are similar to \cite{Topaz:2014}*{\S3}, although the results themselves refine loc.~cit. somewhat.
Perhaps the most important result in \S6 is Corollary \ref{corollary: milnorff / geometric / infinite-subsets} which shows how to reconstruct a \emph{geometric subgroup} given sufficiently many of its elements.

In \S\ref{section: lattice}, we summarize (see Theorem \ref{theorem: lattice / acl / fund-thm-field-acl}) the main results from {\sc Evans-Hrushovski} \cite{Evans:1991}, \cite{Evans:1995} and {\sc Gismatullin} \cite{Gismatullin:2008}, translated appropriately to the context of the present paper.
In \S7 we also prove Corollary \ref{corollary: lattice / galois-mod-ell / galois-lattice-iso}, which shows that the absolute Galois group $\Galk$ can be canonically identified with a Galois group of certain geometric lattices associated to $K|k$ and $K_0|k_0$; this corollary will be used in a fundamental way in the proof of Theorem \ref{maintheorem: intro / milnor-variant / milnor-main}.

In \S\ref{section: essram}, we introduce the so-called \emph{essential branch locus}, and the notion of an \emph{essentially unramified point}.
We use this concept of essential ramification in several technical results in coordination with the local theory, in order to ensure that certain divisorial valuations can be ``detected'' in the mod-$\ell$ setting.

In \S\ref{section: strgen}, we recall the notion of a general element, and introduce the notion of a \emph{strongly-general} element.
In this section we also recall the so-called \emph{Birational-Bertini} theorem for general elements.
We also prove a Birational-Bertini theorem for strongly-general elements, which uses the ``yoga'' of essential ramification in a fundamental way.

In \S\ref{section: mainproof}, we give the detailed proof of Theorem \ref{maintheorem: intro / milnor-variant / milnor-main}, and note that Theorem \ref{maintheorem: intro / galois-variant / galois-main} follows from this by applying Theorem \ref{theorem: cohom / kummer / galois-to-milnor} from \S\ref{section: cohom}.
Finally, in \S\ref{section: finalproof}, we conclude the proofs of Theorems \ref{maintheorem: intro / main-result / iom-main-opens} and \ref{maintheorem: intro / main-result / iom-main-connected} by using Theorem \ref{maintheorem: intro / galois-variant / galois-main}.

To summarize, the following diagram indicates the logical relationships between Theorems \ref{maintheorem: intro / main-result / iom-main-opens}, \ref{maintheorem: intro / main-result / iom-main-connected}, \ref{maintheorem: intro / galois-variant / galois-main} and \ref{maintheorem: intro / milnor-variant / milnor-main}: 
\[ 
  \xymatrix{
    \text{Theorem \ref{maintheorem: intro / milnor-variant / milnor-main}} \ar@{<=>}[d]_{\text{Theorem \ref{theorem: cohom / kummer / galois-to-milnor}}} & & {} & & {} \\
    \text{Theorem \ref{maintheorem: intro / galois-variant / galois-main}} \ar@{=>}[rr]^{\S\ref{subsection: finalproof / ion-main-opens}} & & \text{Theorem \ref{maintheorem: intro / main-result / iom-main-opens}} \ar@{=>}[rr]^{\S\ref{subsection: finalproof / ion-main-connected}} & & \text{Theorem \ref{maintheorem: intro / main-result / iom-main-connected}.}
  }
\]
\subsection*{Acknowledgments}

First and foremost, the author would like to thank Florian Pop and Thomas Scanlon for numerous technical discussions concerning the topics in this paper.
The author also thanks all who expressed interest in this paper, and especially Y. Hoshi, E. Hrushovski, M. Kim, J. Min\'a\v{c}, A. Obus, A. Silberstein and A. Tamagawa.
Finally, the author thanks the anonymous referee, whose thoughtful comments helped improve this paper in various ways.

\section{Notation and a Summary}
\label{section: summary}


Throughout the whole paper, we will work with a fixed prime $\ell$ and a fixed base field $k_0$, such that $k_0$ is infinite, perfect, and of characteristic $\neq \ell$.
We denote by $\Galk$ the absolute Galois group of $k_0$.
We will also fix a function field $K_0$ over $k_0$ which is {\bf regular}, which means that $K_0$ has a separating transcendence base and that $k_0$ is relatively algebraically closed in $K_0$.
We denote by $k := \bar k_0$ the algebraic closure of $k_0$, and we write 
\[ K = K_0 \otimes_{k_0} k = K_0 \cdot k \]
for the base-change of $K_0$ to $k$.

When we discuss other fields which are potentially unrelated to $K_0|k_0$ and/or $K|k$ and which might have characteristic $\ell$, we will use letters such as $F,L,M$, etc.
The perfect closure of a field $F$ will be denoted by $F^i$.
If $\Char F = p > 0$, then we will write $\Frob_F$ for the usual Frobenius map on $F$, and we note that $\Frob_{F^i}$ is an automorphism of $F^i$.
In order to keep the notation consistent, if $\Char F = 0$, then $\Frob_F$ is defined to be the identity on $F$.
Finally, the absolute Galois group of a field $F$ will be denoted by $\Gal_F$, and the maximal pro-$\ell$ Galois group of $F$ will be denoted by $\Gc_F$.

For a valuation $v$ of $F$, we will use the following standard notation associated with $v$.
We denote the valuation ring of $v$ by $\Oc_v$ and the maximal ideal of $\Oc_v$ is denoted by $\mf_v$.
We will also write $\U_v := \Oc_v^\times$ for the $v$-units and $\U_v^1 := (1+\mf_v)$ for the principal $v$-units.
Finally, we write $vF$ for the value group of $v$ and $Fv$ for the residue field of $v$.
The residue map $\Oc_v \twoheadrightarrow Fv$ will usually be denoted by $t \mapsto \bar t$.
If $L$ is a subfield of $F$, we will abuse the notation and write $vL$ resp. $Lv$ for the value group resp. residue field of the restriction $v|_L$ of $v$ to $L$.

The majority of this paper deals with $\k_*(K)$, the mod-$\ell$ Milnor K-ring of $K$, the definition of which is recalled in \S\ref{section: milnor}.
We note now that for a field $F$, one has $\k_1(F) = F^\times/\ell$, and that the canonical projection $F^\times \twoheadrightarrow F^\times/\ell = \k_1(F)$ is denoted by $x \mapsto \{x\}_F$.

We now introduce some important notation which will be used consistently throughout the whole paper.
In particular, we introduce the notion of a \emph{geometric subgroup} of $\k_1(K)$, which is the primary object we study in this paper.
For a subset $S$ of $K$, we may consider $k(S)$, the subextension of $K|k$ generated by $S$, and we write:
\begin{enumerate}
  \item $\Kbb(S) := \overline{k(S)} \cap K$ for the relative algebraic-closure of $k(S)$ in $K$.
  \item $\Kfrak(S) := \{\Kbb(S)^\times\}_K$ for the image of $\Kbb(S)^\times$ in $\k_1(K)$.
\end{enumerate}
A subgroup $A$ of $\k_1(K)$ is called a {\bf geometric subgroup of $\k_1(K)$} provided that there exists some subset $S$ of $K$ such that $\Kfrak(S) = A$.
Since $\Kbb(S)$ is relatively algebraically closed in $K$, we note that the canonical map $\k_1(\Kbb(S)) \rightarrow \k_1(K)$ is injective, and its image is $\Kfrak(S)$.
In particular, the map $\k_1(\Kbb(S)) \rightarrow \k_1(K)$ induces a canonical isomorphism $\k_1(\Kbb(S)) \cong \Kfrak(S)$.

We will also frequently work with valuations of $K$ via the images of their (principal) units in $\k_1(K)$, and so we must introduce some more important notation here.
For a valuation $v$ of $K$, we will consistently write:
\begin{enumerate}
  \item $\Ufrak_v := \{\U_v\}_K$ for the image of the $v$-units in $\k_1(K)$.
  \item $\Ufrak_v^1 := \{\U_v^1\}_K$ for the image of the principal $v$-units in $\k_1(K)$.
\end{enumerate}
Note in particular that one has $\Ufrak_v^1 \subset \Ufrak_v$, and, since $vK$ is torsion-free, the quotient $\Ufrak_v/\Ufrak_v^1$ is canonically isomorphic to $\k_1(Kv)$.

We will frequently consider affine and projective spaces over $k$, which are given by a certain set of algebraically independent parameters.
More precisely, let $\tbf = (t_1,\ldots,t_r)$ be a collection of $r$ algebraically independent indeterminants over $k$.
In this case, we write 
\[ \Abb^r_{t_1,\ldots,t_r} = \Abb^r_\tbf := \Spec k[t_1,\ldots,t_r] \]
for affine $r$-space with parameters $t_1,\ldots,t_r$.
Similarly, we write 
\[ \Pbb^r_{t_1,\ldots,t_r} = \Pbb^r_\tbf := \Proj k[T_0,\ldots,T_r], \ \ T_i/T_0 = t_i \]
for the associated projective $r$-space which contains $\Abb^r_\tbf$ as a standard open subset.
In other words, one has a canonical open embedding $\Abb^r_\tbf \hookrightarrow \Pbb^r_\tbf$, and the function field of $\Abb^r_\tbf$ and/or $\Pbb^r_\tbf$ can be canonically identified with the rational function field $k(\tbf)$ generated by $\tbf$.

We will also identify the closed points of $\Abb^r_\tbf$ resp. $\Pbb^r_\tbf$ with the set $\Abb^r_\tbf(k) = k^r$ resp. $\Pbb^r_\tbf(k) = (k^{r+1} \smallsetminus \{0\})/k^\times$ of $k$-rational points.
We will use affine coordinates $(a_1,\ldots,a_r)$ to denote elements of $\Abb^r_\tbf(k) = k^r$, and we will use homogeneous coordinates $(a_0:\cdots:a_r)$ to denote elements of $\Pbb^r_\tbf(k) = (k^{r+1} \smallsetminus \{0\})/k^\times$.
In particular, we identify $\Abb^r_\tbf(k) = k^r$ with the elements of the form $(1:a_1:\cdots:a_r)$ in $\Pbb^r_\tbf(k) = (k^{r+1} \smallsetminus \{0\})/k^\times$.

Since we will consider various representations of $\Galk$, in order to simplify the notation, we will denote all such representations by $\rho_{k_0}$ if no confusion is possible.
This convention holds in particular for the representations $\rho^c_{k_0,*}$ ($* = X,\Vc,K_0$) and $\rho^{\rm M}_{k_0,K_0}$, which were defined in \S\ref{section: intro}.
We will also use this implicit terminology when describing the compatibility of certain morphisms with $\rho_{k_0}$.
To be precise, if $\bullet,\circ$ are two objects endowed with two $\Galk$-representations $\rho_{k_0} : \Galk \rightarrow \Aut(\bullet)$ resp. $\rho_{k_0} : \Galk \rightarrow \Aut(\circ)$, we say that a morphism $f : \Aut(\bullet) \rightarrow \Aut(\circ)$ of automorphism groups is {\bf compatible with $\rho_{k_0}$} provided that the following diagram commutes:
\[ 
\xymatrix{
  \Galk \ar[r]^{\rho_{k_0}} \ar[dr]_{\rho_{k_0}} & \Aut(\bullet) \ar[d]^f \\
  {} & \Aut(\circ). 
}
\]
Furthermore, we will say that an element $\sigma \in \Aut(\bullet)$ {\bf arises from $\Galk$} if $\sigma$ is in the image of $\rho_{k_0}$.
If we wish to make precise the element $\tau \in \Galk$ which maps to this $\sigma$, we will say that $\sigma$ is {\bf defined by $\tau$}.

Some of the proofs in this paper are fairly technical, although the overall idea is quite natural, and can be briefly described using the following three key steps:
\begin{enumerate}
  \item First, reduce all the main theorems to Theorem \ref{maintheorem: intro / milnor-variant / milnor-main}.
  \item Second, prove that any element of $\UAutm_\abf(\k_1(K))$ induces an automorphism of a certain lattice $\Gbb^*(K|k)$ which is of geometric origin. This step is the most difficult and takes up the majority of the paper.
  \item Finally, we use an analogue of the \emph{Fundamental Theorem of Projective Geometry} for this lattice $\Gbb^*(K|k)$, to deduce that any element of $\UAutm_\abf(\k_1(K))$ arises in a unique way from $\Galk$.
  This analogue of the fundamental theorem of projective geometry comes from the work of {\sc Evans-Hrushovski} \cite{Evans:1991}, \cite{Evans:1995} and {\sc Gismatullin} \cite{Gismatullin:2008}, and it relies on the so-called \emph{group-configuration theorem} from geometric stability theory.
\end{enumerate}
For the sake of the reader, we now provide a fairly detailed summary of the proofs of the main theorems, to act as a guide for reading the details which are found in the body of the paper.

\subsection{Reduction to Theorem \ref{maintheorem: intro / main-result / iom-main-opens}}

In the terminology introduced above, suppose that $\Ucal_X$ and $\Ucal_Y$ are birational systems for $X$ resp. $Y$.
Furthermore, suppose that $\Ucal_X$ dominates $\Ucal_Y$ in $\Vc$.
Note that any element $\phi$ of $\Autc(\pi^a|_\Vc)$ defines an element of $\phi|_{\Ucal_X} \in \Autc(\pi^a|_{\Ucal_X})$ and an element of $\phi|_{\Ucal_Y} \in \Autc(\pi^a|_{\Ucal_Y})$ by restriction.
The condition that $\Ucal_X$ dominates $\Ucal_Y$ implies the following property: If $\phi|_{\Ucal_X}$ is defined by $\tau \in \Galk$, then $\phi|_{\Ucal_Y}$ is defined by $\tau$ as well.
The ``$5$-connectedness'' assumption on $\Vc$ is then used to reduce Theorem \ref{maintheorem: intro / main-result / iom-main-connected} to Theorem \ref{maintheorem: intro / main-result / iom-main-opens}.

\subsection{Reduction to Theorem \ref{maintheorem: intro / galois-variant / galois-main}}

Let $X$ be a normal $k_0$-variety with $K = k(X)$, and let $\Ucal$ be a birational system of $X$.
By passing to the projective limit over $\Ucal$, one obtains a canonical \emph{injective} map
\[ \UAutc(\pi^a|_\Ucal) \rightarrow \UAutc(\Gc_K^a). \]
Given a finite tuple $\abf$ of elements of $k_0^\times$, this injective map above induces a map
\[ \UAutc(\pi^a|_{\Ucal_\abf}) \rightarrow \UAutc_\abf(\Gc_K^a), \]
by first restricting to $\Ucal$, then taking projective limits over $\Ucal$.
This induced map turns out to be injective as well, as long as $\dim \Ucal \geq 2$.
Since this injection is compatible with $\rho_{k_0}$, we see that Theorem \ref{maintheorem: intro / main-result / iom-main-opens} follows from Theorem \ref{maintheorem: intro / galois-variant / galois-main}.

\subsection{Reduction to Theorem \ref{maintheorem: intro / milnor-variant / milnor-main}}

Kummer theory yields a canonical perfect pairing
\[ \Gc_K^a \times \k_1(K) \rightarrow \mu_\ell. \]
Moreover, using the well-known duality between $\H^2(\Gc_K^c,\Z/\ell)$ and the relations in a minimal free presentation of $\Gc_K^c$ (in the category of pro-$\ell$ groups), along with the fact that cup-products correspond to commutators in this duality, it is then a consequence of the Merkurjev-Suslin Theorem \cite{Merkurjev:1982} that one has a canonical isomorphism
\[ \Autc(\Gc_K^a) \cong \Autm(\k_1(K)). \]
This isomorphism is obtained by dualizing an automorphism of $\Gc_K^a$, via the Kummer pairing above, to obtain an automorphism of $\k_1(K)$.
Although the isomorphism above is not compatible with $\rho_{k_0}$ exactly (since we didn't introduce the appropriate cyclotomic twist), the induced isomorphism 
\[ \UAutc(\Gc_K^a) \cong \UAutm(\k_1(K)) \] 
is actually compatible with $\rho_{k_0}$.
See Theorem \ref{theorem: cohom / kummer / galois-to-milnor}.
By Kummer theory, it follows that the isomorphism above restricts to an isomorphism
\[ \UAutc_\abf(\Gc_K^a) \cong \UAutm_\abf(\k_1(K)) \]
which is again compatible with $\rho_{k_0}$.
Thus, Theorem \ref{maintheorem: intro / galois-variant / galois-main} is equivalent to Theorem \ref{maintheorem: intro / milnor-variant / milnor-main}.

\subsection{The Mod-$\ell$ Geometric Lattice}

The primary focus of the proof of Theorem \ref{maintheorem: intro / milnor-variant / milnor-main} is to show that an element $\phi \in \UAutm_\abf(\k_1(K))$ induces an automorphism of a certain \emph{graded lattice}
\[ \Gfrak^*(K|k) := \coprod_{r \geq 0} \Gfrak^r(K|k) \]
which is contained in the lattice of subgroups of $\k_1(K)$.
The elements of $\Gfrak^*(K|k)$ are the {\bf geometric subgroups} of $\k_1(K)$, as introduced above, and the grading is induced by the so-called {\bf Milnor-dimension} of subsets of $\k_1(K)$. 
Moreover, as a consequence of the construction, it will also follow that such an induced automorphism of $\Gfrak^*(K|k)$ fixes all geometric subgroups which arise from $K_0|k_0$.
We denote the collection of all such automorphisms of $\Gfrak^*(K|k)$ by $\Aut^*(\Gfrak^*(K|k)|K_0)$.

We show in Proposition \ref{proposition: lattice / mod-ell / mod-ell-lattice} that the map $\Kbb(S) \mapsto \Kfrak(S)$ (see the notation introduced above) induces an isomorphism of graded lattices $\Gbb^*(K|k) \cong \Gfrak^*(K|k)$, where $\Gbb^*(K|k)$ is the lattice of relatively-algebraically closed subextensions of $K|k$ graded by transcendence degree.
Thus, any element of $\UAutm_\abf(\k_1(K))$ will define an automorphism of $\Gbb^*(K|k)$ which fixes subextensions arising from $K_0|k_0$.
We then use the results of {\sc Evans-Hrushovski} \cite{Evans:1991}, \cite{Evans:1995} and {\sc Gismatullin} \cite{Gismatullin:2008} to show that any such automorphism of $\Gbb^*(K|k)$ arises from some element of $\Galk$.
See Theorem \ref{theorem: lattice / acl / fund-thm-field-acl}, Proposition \ref{proposition: lattice / galois-acl / galois-acl} and Corollary \ref{corollary: lattice / galois-mod-ell / galois-lattice-iso} for more details.

\subsection{Generic Generators of $\Gfrak^*(K|k)$}

The idea of the proof is to ``produce'' elements of $\Gfrak^*(K|k)$, i.e. geometric subgroups of $\k_1(K)$, using the ``given'' data of the mod-$\ell$ Milnor K-ring $\k_*(K)$ endowed with some extra structure which is compatible with all automorphisms in $\UAutm_\abf(\k_1(K))$, and also to ensure that this process is compatible with such automorphisms.
The reconstruction process of $\Gfrak^*(K|k)$ relies on a certain ``closure operation'' called the {\bf Milnor Supremum}, which takes place entirely in the ring $\k_*(K)$, and which takes in a set of geometric subgroups as an input and returns a geometric subgroup as an output.

The fact that this closure operation produces geometric subgroups follows from some explicit vanishing and non-vanishing results in $\k_*(K)$.
The vanishing results say that $\k_*(K) = 0$ for $* > \trdeg(K|k)$, and this follows from well-known cohomological dimension calculations of $K$ and the Bloch-Kato conjecture, which is now a theorem of {\sc Voevodsky-Rost} et al. \cite{Voevodsky:2011}, \cite{Rost:1998}, \cite{Weibel:2009}.
The non-vanishing results say that there are ``many'' elements of $\k_1(K)$ which have non-trivial products.
The ``many'' above refers to the fact that these non-vanishing results all involve some open condition on some model of $K|k$ (or some subextension of $K|k$) which is usually the complement of some branch locus.

\subsection{Fixing Elements of $\Gfrak^*(K|k)$ Which Arise from $K_0$}

The fact that an automorphism $\sigma\in\UAutm_\abf(\k_1(K))$ is compatible with the \emph{tuple} $\abf$ implies that $\sigma$ fixes the elements of $\Gfrak^1(K|k)$ which come from $K_0$.
Applying the ``closure operation'' described above shows that $\sigma$ fixes all of the elements of $\Gfrak^*(K|k)$ which come from $K_0$.
However, this is still very far from what we need, because at this point there is absolutely nothing we can ``construct/produce'' which is moved around by the action of $\Galk$.

\subsection{Fixing Elements of $K_0^\times$}

A key step in the proof is to show that any element of $\UAutm_\abf(\k_1(K))$ has a representative $\sigma \in \Autm(\k_1(K))$ such that $\sigma \{x\}_K = \{x\}_K$ for all $x \in K_0^\times$.
To show this, we introduce the concept of a \emph{strongly-general element} of $K|k$, which is related to the concept of a general element from \cite{Pop:2007}, \cite{Pop:2012} but has further assumptions.
Another key input comes from the \emph{local theory} in abelian-by-central birational anabelian geometry for function fields over algebraically closed fields.
In this context, the local theory says that $\sigma$ is compatible with \emph{quasi-divisorial valuations}.
But using the previous step, one can show that $\sigma$ is actually compatible with \emph{divisorial valuations}.
The literature concerning the local theory in abelian-by-central birational anabelian geometry is quite rich, and it includes the following works among others \cite{Bogomolov:2002}, \cite{Mahe:2004}, \cite{Pop:2010}, \cite{Pop:2010a}, \cite{Topaz:2013a}, \cite{Topaz:2015}.
See the introduction of \cite{Topaz:2013a} for a detailed overview of the history of the local theory.

To show that $\sigma \in \UAutm_\abf(\k_1(K))$ fixes elements from $K_0^\times$, we first show this for elements of $K_0^\times$ which are strongly-general in $K|k$, and this uses the local theory in an essential way.
To deduce that $\sigma$ fixes all elements arising from $K_0^\times$, we prove a \emph{Birational-Bertini} type result for strongly-general elements, which shows that there are ``sufficiently many'' strongly-general elements in higher-dimensional function fields.

\subsection{The Base Case}

By using our ``closure operation'' described above, in order to show that $\sigma \in \UAutm_\abf(\k_1(K))$ induces an automorphism of the lattice $\Gfrak^*(K|k)$, it suffices to show that $\sigma$ induces a permutation of $\Gfrak^1(K|k)$, the set of \emph{1-dimensional geometric subgroups}.
Using the notation introduced above, a one-dimensional geometric subgroup is a subgroup of $\k_1(K)$ which is of the form $\Kfrak(t)$ for some $t \in K^\times \smallsetminus k^\times$.
Note that every element $t \in K = K_0 \otimes_{k_0} k$ can be written as a sum $a_0 x_0 + \cdots + a_r x_r$ for some $x_i \in K_0$ and $a_i \in k$.
The proof now proceeds by induction on the length $r$ of such an expression.
The case $r = 0$ was discussed above, and so the base case for the induction is $r = 1$.

The base case works as follows.
Using the concept of \emph{essential ramification}, we show that there are ``many'' elements of the form $x_0 + a_1 x_1$ with $x_i \in K_0$ and $a_i \in k$ which are ``acceptable'' with respect to $\sigma$.
The term ``acceptable'' means that there exists some $t \in K$ such that $\sigma$ sends the geometric subgroup associated to $x_0 + a_1 x_1$ to the geometric subgroup associated to $t$.
As before, the term ``many'' is related to a precise open condition on a certain model of a subextension of $K|k$, with the condition being related to the \emph{essential branch locus}.

Once we have ``many'' acceptable elements of the form $x_0 + a_1 x_1$, we use the ``yoga'' of the ``generic generators'' mentioned above to show that \emph{all pairs} $(t_0,t_1)$, with $t_0 \in K_0$ and $t_1 = a_0 x_0 + a_1 x_1$, $x_i \in K_0$, $a_i \in k$, are acceptable with respect to $\sigma$ (acceptability is defined similarly for pairs as it was for elements of $K$).
We then take appropriate intersections of certain \emph{two-dimensional} geometric subgroups to deduce that \emph{every element} of the form $a_0 x_0 + a_1 x_1$ is acceptable.

\subsection{Inductive Case}

To conclude the proof, one proceeds by induction on $r$ as above, with the inductive hypothesis being that every element of $K$ of the form $a_0 x_0 + \cdots + a_s x_s$ with $s < r$, $x_i \in K_0$, $a_i \in k$, is acceptable with respect to $\sigma$.
The proof is now similar in nature to the proof of the base case.
Indeed, first we show that \emph{pairs} of certain elements are acceptable, then take intersections of certain two-dimensional geometric subgroups to deduce that all elements of the form above are acceptable.

\subsection{Concluding the Proof}

The argument outlined above shows that $\sigma \in \UAutm_\abf(\k_1(K))$ induces a permutation of $\Gfrak^1(K|k)$.
Since $\sigma$ is compatible with the ``closure operation'' described above, it follows that $\sigma$ induces an automorphism of the lattice $\Gfrak^*(K|k)$.
Namely, one obtains a canonical homomorphism $\UAutm_\abf(\k_1(K)) \rightarrow \Aut^*(\Gfrak^*(K|k)|K_0) \subset \Aut^*(\Gfrak^*(K|k))$.
On the other hand, we prove that the map $\Galk \rightarrow \Aut^*(\Gfrak^*(K|k)|K_0)$ is an isomorphism by using the results of {\sc Evans-Hrushovski} \cite{Evans:1991}, \cite{Evans:1995} and {\sc Gismatullin} \cite{Gismatullin:2008}, as noted above.
Thus, to conclude the proof of Theorem \ref{maintheorem: intro / milnor-variant / milnor-main}, it remains to show that the map
\[ \UAutm_\abf(\k_1(K)) \rightarrow \Aut^*(\Gfrak^*(K|k)) \]
is \emph{injective}.
The argument here again uses the theory of \emph{strongly-general elements}.

Indeed, any element $\sigma$ in the kernel of the map above must fix \emph{all geometric subgroups} of $\k_1(K)$.
First, we show this implies that the restriction of $\sigma$ to any \emph{strongly-general} geometric subgroup looks like some element of $(\Z/\ell)^\times \cdot \one$.
Finally, one uses a \emph{Birational-Bertini} type argument again to deduce that $\sigma$ is indeed an element of $(\Z/\ell)^\times \cdot \one_{\k_1(K)}$.
This thereby proves the injectivity of the map above, hence concluding the proof of Theorem \ref{maintheorem: intro / milnor-variant / milnor-main}.

\section{Milnor K-Theory}
\label{section: milnor}


Let $F$ be a field.
We recall that the {\bf $r$-th Milnor K-group of $F$} is defined as follows:
\[ \K_r(F) = \frac{(F^\times)^{\otimes r}}{\langle x_1 \otimes \cdots \otimes x_r \ : \ \exists i < j, \ x_i + x_j = 1 \rangle}. \]
The tensor product makes $\K_*(F) := \bigoplus_{r \geq 0} \K_r(F)$ into a graded-commutative algebra over the ring $\Z = \K_0(F)$, and we call $\K_*(F)$ the {\bf Milnor K-ring of $F$}.
It is customary to denote the product of $a_1,\ldots,a_r \in \K_1(F) = F^\times$ in this ring by $\{a_1,\ldots,a_r\}$.

We will use the standard notation $\k_r(F) := \K_r(F)/\ell$ and call $\k_r(F)$ the {\bf $r$-th mod-$\ell$ Milnor K-group} of $F$.
As with $\K_*(F)$, the tensor product makes $\k_*(F) := \bigoplus_{r \geq 0} \k_r(F)$ into a graded commutative algebra over $\k_0(F) =\Z/\ell$, and we call $\k_*(F)$ the {\bf mod-$\ell$ Milnor K-ring of $F$}.
Given $r$ elements $a_1,\ldots,a_r$ of $\k_1(F) = F^\times/\ell$, we will denote their product in $\k_*(F)$ by 
\[ \{a_1,\ldots,a_r\}_F \in \k_r(F).\]
For $b_1,\ldots,b_r \in \K_1(F) = F^\times$, we will abuse the notation and write
\[ \{b_1,\ldots,b_r\}_F := \{b_1 \cdot F^{\times \ell},\ldots,b_r \cdot F^{\times\ell}\}_F. \] 
In particular, $\{\bullet\}_F$ denotes the canonical projection $F^\times \twoheadrightarrow F^\times/\ell = \k_1(F)$.

Note that $\k_*(F)$ is functorial in $F$.
The notation $\{\bullet,\ldots,\bullet\}_F$ will also be used to indicate this functoriality.
Namely, if $b_1,\ldots,b_r \in F^\times$ are given, and $F \hookrightarrow L$ is a field extension, then $\{b_1,\ldots,b_r\}_L$ denotes the image of $\{b_1,\ldots,b_r\}_F$ under the canonical map $\k_r(F) \rightarrow \k_r(L)$.

\subsection{Purely Inseparable Extensions}
\label{subsection: milnor / insep}

We will frequently reduce some arguments concerning finite field extensions to the case where the extension is separable.
This will be possible because a purely-inseparable extension of fields of characteristic $\neq \ell$ induces an isomorphism on the mod-$\ell$ Milnor K-ring which is also compatible with valuations, as the following two lemmas show.

\begin{lemma}
\label{lemma: milnor / insep / insep-iso}
  Let $L|F$ be a finite and purely inseparable extension of fields, such that $\Char F \neq \ell$.
  Then the canonical map $\k_*(F) \rightarrow \k_*(L)$ is an isomorphism for all $* \geq 0$.
\end{lemma}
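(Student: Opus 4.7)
The plan is to exploit the Frobenius isomorphism, which is available precisely because $\Char F = p > 0$ (the case $\Char F = 0$ is vacuous, since then $L|F$ purely inseparable forces $L = F$). First I would choose $n$ large enough so that $L^{p^n} \subseteq F$ and set $q = p^n$, giving a tower $F \subseteq L \subseteq F^{1/q}$ inside a fixed algebraic closure of $F$, where $F^{1/q}$ denotes the subfield consisting of $q$-th roots of elements of $F$. The goal is to prove that $\k_*(F) \to \k_*(F^{1/q})$ is an isomorphism, from which both halves of the assertion for $L$ will follow via the tower.

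The heart of the argument is the observation that the $q$-th power map $\psi : F^{1/q} \to F$, $\psi(y) = y^q$, is a \emph{field isomorphism} --- the characteristic-$p$ ``freshman's dream'' applied to $q = p^n$ shows it is additive, and its inverse is $x \mapsto x^{1/q}$. Consequently $\psi_* : \k_*(F^{1/q}) \to \k_*(F)$ is an isomorphism of graded rings. I would then examine the composition
\[
\k_*(F) \longrightarrow \k_*(F^{1/q}) \xrightarrow{\psi_*} \k_*(F),
\]
which is induced by $x \mapsto x^q$ on $F^\times$ and hence acts as multiplication by $q^r$ on $\k_r(F)$. Since $\gcd(q,\ell) = 1$, this composition is an automorphism, and together with $\psi_*$ being an isomorphism this forces $\k_*(F) \to \k_*(F^{1/q})$ to be an isomorphism as well. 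Factoring it through $\k_*(L)$ then yields injectivity of $\k_*(F) \to \k_*(L)$ immediately.

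Surjectivity is handled separately and more elementarily: since $\k_*(L)$ is generated in degree one, it suffices to show every class $\{y\}_L$ with $y \in L^\times$ lies in the image of $\k_1(F)$. But $y^q \in F^\times$ and $\{y^q\}_L = q \cdot \{y\}_L$ in $\k_1(L)$, so inverting $q$ modulo $\ell$ exhibits $\{y\}_L$ as a scalar multiple of the image of $\{y^q\}_F$. I do not expect any serious obstacle; the only point that makes genuine use of the hypothesis $\Char F \neq \ell$ is that a $p$-power becomes a unit modulo $\ell$, and this is exactly what drives both the injectivity and the surjectivity arguments.
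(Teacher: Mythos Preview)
Your proof is correct and uses essentially the same core idea as the paper: both arguments pass to the tower $F \subset L \subset F^{1/q}$ and exploit that the $q$-th power Frobenius gives an isomorphism $\k_*(F) \cong \k_*(F^{1/q})$ once $p$ is invertible modulo $\ell$. The paper simply asserts this isomorphism as known, while you spell it out via the composition $\k_*(F) \to \k_*(F^{1/q}) \xrightarrow{\psi_*} \k_*(F)$ being multiplication by $q^r$ in degree $r$; both then read off injectivity of $\k_*(F) \to \k_*(L)$ by factoring.

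The one genuine difference is in the surjectivity step. The paper argues indirectly: it shows $\k_*(L) \to \k_*(F^{1/p^n})$ is injective (by applying the already-established injectivity to the finite purely inseparable extension $F^{1/p^n}|L$), and then sandwiches $\k_*(L)$ between two copies of $\k_*(F)$. Your argument is more elementary and more direct: since $\k_*(L)$ is generated in degree one and $\{y\}_L = q^{-1}\{y^q\}_L$ with $y^q \in F^\times$, surjectivity is immediate. Your route avoids the sandwiching entirely and makes no appeal back to the injectivity half, which is a small but pleasant simplification.
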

\begin{proof}
  Put $p = \Char F$ and assume that $p > 0$.
  Since $L$ is finite and purely inseparable over $F$, one has $L \subset F^{1/p^n}$ for sufficiently large $n$.
  Since $p$ is invertible in $\Z/\ell$, the canonical map $\k_*(F) \rightarrow \k_*(F^{1/p^n})$ is an isomorphism.
  As this map factors through $\k_*(L)$, we deduce that the map $\k_*(F) \rightarrow \k_*(L)$ is injective.
  To deduce that $\k_*(F) \rightarrow \k_*(L)$ is also surjective, it suffices to prove that $\k_*(L) \rightarrow \k_*(F^{1/p^n})$ is injective.
  
  For any $\eta$ in the kernel of $\k_*(L) \rightarrow \k_*(F^{1/p^n})$, there exists some intermediate extension $M$ of $F^{1/p^n}|L$ such that $M|L$ is finite and such that $\eta$ is in the kernel of $\k_*(L) \rightarrow \k_*(M)$.
  But such a subextension $M|L$ is purely inseparable, so the argument above shows that $\eta = 0$.
  Thus $\k_*(L) \rightarrow \k_*(F^{1/p^n})$ is injective, as required.
\end{proof}

\begin{lemma}
\label{lemma: milnor / insep / insep-val}
  Let $(L,w)|(F,v)$ be a finite and purely inseparable extension of valued fields, such that $\Char F \neq \ell$.
  Then the canonical map $\k_1(F) \rightarrow \k_1(L)$ restricts to an isomorphism $\{\U_v\}_F \xrightarrow{\cong} \{\U_w\}_L$.
\end{lemma}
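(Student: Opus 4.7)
The plan is to combine Lemma \ref{lemma: milnor / insep / insep-iso}, which already gives that $\k_1(F) \to \k_1(L)$ is an isomorphism, with the explicit description of a purely inseparable extension. Write $p = \Char F$; if $p = 0$, then $L = F$ and there is nothing to prove, so assume $p > 0$ (and $p \neq \ell$). Since $L|F$ is finite and purely inseparable, we have $L \subseteq F^{1/p^n}$ for some $n \geq 0$, so $u^{p^n} \in F^\times$ for every $u \in L^\times$.

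First I would check that the map is well-defined on the subgroups, i.e. $\{\U_v\}_F$ maps into $\{\U_w\}_L$. This is immediate: since $w|_F$ is a valuation equivalent to $v$ on $F$, every $v$-unit is a $w$-unit, so the image of $\{u\}_F$ for $u \in \U_v$ lies in $\{\U_w\}_L$. Injectivity is then automatic from Lemma \ref{lemma: milnor / insep / insep-iso}, since $\k_1(F) \to \k_1(L)$ is already injective.

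The remaining point is surjectivity, which is where the hypothesis $\Char F \neq \ell$ enters. Given $u \in \U_w$, set $a := u^{p^n} \in F^\times$. Then $v(a) = w(a) = p^n \cdot w(u) = 0$, so $a \in \U_v$. In $\k_1(L)$ we compute
\[ \{a\}_L = \{u^{p^n}\}_L = p^n \cdot \{u\}_L. \]
Since $p \neq \ell$, the integer $p^n$ is invertible in $\Z/\ell$; choose $m \in \Z$ with $m p^n \equiv 1 \pmod{\ell}$. Then $\{u\}_L = m \cdot \{a\}_L = \{a^m\}_L$, and $a^m \in \U_v$ because $a \in \U_v$. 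Hence $\{u\}_L$ lies in the image of $\{\U_v\}_F$, establishing surjectivity.

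There is no substantive obstacle here; the argument is a direct verification resting on two simple structural facts: that the mod-$\ell$ Milnor K-ring is insensitive to extraction of $p$-th roots when $p \neq \ell$ (so that $\{u\}_L$ can be recovered from $\{u^{p^n}\}_L$), and that valuations extend uniquely and compatibly to purely inseparable extensions (so that $v$-units and $w$-units correspond under $p^n$-th powers).
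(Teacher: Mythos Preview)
Your argument is correct. The paper takes a slightly different, more structural route: it observes that since $L|F$ is purely inseparable, the index $[wL:vF]$ is a $p$-power, so the induced map $vF/\ell \to wL/\ell$ is an isomorphism; it then places $\{\U_v\}_F$ and $\{\U_w\}_L$ in the short exact sequences
\[
1 \to \{\U_v\}_F \to \k_1(F) \xrightarrow{v} vF/\ell \to 1, \qquad
1 \to \{\U_w\}_L \to \k_1(L) \xrightarrow{w} wL/\ell \to 1,
\]
and concludes by the five lemma, using Lemma~\ref{lemma: milnor / insep / insep-iso} for the middle map. Your approach instead works element-by-element, using the $p^n$-th power to push a $w$-unit down to a $v$-unit and then inverting $p^n$ modulo~$\ell$; this is more hands-on and avoids setting up the diagram, while the paper's version makes the role of the value-group quotient explicit and would generalize more readily to statements about higher $\k_*$ or other filtrations. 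Both rest on the same underlying fact that $p$ is a unit in $\Z/\ell$.
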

\begin{proof}
  Put $p = \Char F$ and assume that $p > 0$.
  Since $L|F$ is purely inseparable, the index $[wL:vF]$ is a power of $p$.
  Therefore, the canonical map $vF/\ell \rightarrow wL/\ell$ is an isomorphism.
  Next, note that one has a commutative diagram with exact rows
  \[
  \xymatrix{
  1 \ar[r] & \{\U_v\}_F \ar[d]\ar[r] & \k_1(F) \ar[d]^{\cong}\ar[r]^v & vF/\ell \ar[d]^{\cong}\ar[r] & 1 \\
  1 \ar[r] & \{\U_w\}_L \ar[r] & \k_1(L) \ar[r]_w & wL/\ell \ar[r] & 1.
  }
  \]
  As indicated on the diagram, the middle vertical arrow is an isomorphism by Lemma \ref{lemma: milnor / insep / insep-iso}, and the right vertical map is an isomorphism as noted above.
  It follows that the left vertical map is also an isomorphism, as required.
\end{proof}

\subsection{Tame Symbols}
\label{subsection: milnor / tame}

Suppose that $(F,v)$ is a discretely valued field of rank $1$, so that one has $vF \cong \Z$.
We recall that the ($r$-th) {\bf tame symbol} associated to $v$ is a morphism $\{\bullet\}^v_F : \k_{r+1}(F) \rightarrow \k_r(Fv)$ which is uniquely defined by the condition
\[ \{\pi,u_1,\ldots,u_r\}_F^v = \{\bar u_1,\ldots,\bar u_r\}_{Fv}, \]
where $\pi$ is any uniformizer of $v$ (i.e. $v(\pi) = 1$), the elements $u_1,\ldots,u_r \in \U_v$ are $v$-units, and $\bar u_i$ denotes the image of $u_i$ in $(Fv)^\times$.

We will primarily use tame symbols to prove that the mod-$\ell$ Milnor K-ring of a function field contains many non-trivial elements.
Most such ``non-vanishing'' results will essentially follow from the following fact concerning the field of Laurent series.

\begin{fact}
\label{fact: milnor / tame / laurent-coords}
  Let $F$ be any field, and let $t_1,\ldots,t_d$ be algebraically independent indeterminants over $F$.
  Consider the field of Laurent series $L = F((t_1,\ldots,t_d))$ over $F$.
  Let $f_0$ be a non-trivial element of $\k_r(F)$ for some $r \geq 0$, and let $f := \{f_0\}_L$ denote the image of $f_0$ in $\k_r(L)$.
  Then the product
  \[ \{t_1,\ldots,t_d,f\}_L \]
  is non-trivial in $\k_{r+d}(L)$.
\end{fact}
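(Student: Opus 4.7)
\emph{Proof strategy.} The plan is to reduce the claim to the non-triviality of $f_0 \in \k_r(F)$ by iterating the tame symbol construction. To keep the valuations one-dimensional, I would first interpret (or embed) $L = F((t_1,\ldots,t_d))$ in the iterated Laurent series field $L_d := F((t_1))((t_2))\cdots((t_d))$; the inclusion $L \hookrightarrow L_d$ is functorial for $\k_*$, so after replacing $L$ with $L_d$ it suffices to show that $\{t_1,\ldots,t_d,f\}_{L_d}$ is non-trivial. Write $L_i := F((t_1))\cdots((t_i))$ for $0 \leq i \leq d$, so that $L_0 = F$ and each $L_i$ is a complete discretely valued field with respect to the $t_i$-adic valuation $v_i$, whose uniformizer is $t_i$ and whose residue field is $L_{i-1}$.

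The key single step is the application of the tame symbol $\{-\}^{v_d} : \k_{r+d}(L_d) \to \k_{r+d-1}(L_{d-1})$. The elements $t_1,\ldots,t_{d-1}$ and the image of $f_0$ in $\k_r(L_{d-1})$ all arise via the canonical inclusion $L_{d-1} \hookrightarrow L_d$; in particular they consist of $v_d$-units whose residues are themselves. Pushing the unique non-unit factor $t_d$ into the first slot by graded commutativity of $\k_*(L_d)$ costs only a sign, and the defining property of the tame symbol then yields
\[ \{t_1,\ldots,t_d,f\}_{L_d}^{v_d} \;=\; \pm\, \{t_1,\ldots,t_{d-1},f_{d-1}\}_{L_{d-1}}, \]
where $f_{d-1}$ denotes the image of $f_0$ in $\k_r(L_{d-1})$; the identity extends $\Z/\ell$-linearly if $f$ is written as a sum of length-$r$ symbols with factors in $L_{d-1}^\times$.

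Iterating with the tame symbols for $v_{d-1}, v_{d-2}, \ldots, v_1$, each step strips off one $t_i$ and descends one level, so after $d$ steps we land at $\pm f_0 \in \k_r(F)$, which is non-zero by hypothesis. Since tame symbols are group homomorphisms, this chain of non-vanishing forces $\{t_1,\ldots,t_d,f\}_{L_d}$, and hence $\{t_1,\ldots,t_d,f\}_L$, to be non-trivial. The only bookkeeping in the argument is the tracking of signs coming from the graded commutativity of $\k_*$, but as we only care about non-vanishing this is no genuine obstacle; the substantive content is entirely the base observation that each of $t_1,\ldots,t_{d-1}$ and the lift of $f_0$ is a unit for the top valuation $v_d$.
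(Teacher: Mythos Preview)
Your proof is correct and follows essentially the same approach as the paper: embed $L$ into an iterated Laurent series field and apply tame symbols inductively to strip off one variable at a time until reaching $f_0 \in \k_r(F)$. The only cosmetic difference is that the paper orders the iterated field as $F((t_d))((t_{d-1}))\cdots((t_1))$ and peels off $t_1$ first (so no reordering sign is needed), whereas you order it as $F((t_1))\cdots((t_d))$ and peel off $t_d$ first, incurring a harmless sign from graded commutativity.
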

\begin{proof}
  Note that we have an $F$-embedding of $L$ into $L' := F((t_d))((t_{d-1}))\cdots((t_1))$, so it suffices to prove that $\{t_1,\ldots,t_d,f\}_{L'}$ is non-trivial in $\k_{r+d}(L')$.
  We proceed to prove this by induction on $d$, with the base case $d = 0$ being trivial.
  For the inductive case, let $v$ be the $t_1$-adic valuation on $L'$.
  Then the residue field of $v$ can be canonically identified with $F((t_d))\cdots((t_2)) =: M$ via the obvious map sending $t_i \in \U_v$ to $t_i \in M$ for $i \geq 2$.
  Finally, applying the tame symbol associated to $v$, we obtain
  \[ \{t_1,\ldots,t_d,f\}_{L'}^v = \{t_2,\ldots,t_d,f\}_M \neq 0 \]
  by the inductive hypothesis.
  Hence $\{t_1,\ldots,t_d,f\}_{L'} \neq 0$, as required.
\end{proof}

\section{Galois Cohomology}
\label{section: cohom}


In this section, we recall the basic framework which allows us to translate back and forth between the ``Galois'' context (i.e. mod-$\ell$ abelian-by-central Galois groups) and the ``Milnor'' context (i.e. mod-$\ell$ Milnor K-rings) by using Kummer theory.
This theory is more-or-less well known, as it follows from the fact that $\H^2$ of a pro-$\ell$ group is ``dual'' to the relations in a minimal free presentation of the group, while the cup product in $\H^*$ is ``dual'' to the commutator $[\bullet,\bullet]$ as defined in \S\ref{subsection: intro / mod-ell-category}.

The essential calculations concerning cup products and commutators were first carried out by {\sc Labute} \cite{Labute:1967} (see also \cite{Neukirch:2013}*{\S3.9}).
These calculations have seen a recent resurgence of interest in \cite{Chebolu:2012}, \cite{Efrat:2015}, \cite{Efrat:2011}, \cite{Topaz:2015b}, especially in connection with the Merkurjev-Suslin Theorem \cite{Merkurjev:1982} and the Bloch-Kato conjecture, which is now a theorem of {\sc Voevodsky-Rost} et al. \cite{Voevodsky:2011}, \cite{Rost:1998}, \cite{Weibel:2009}.
The Merkurjev-Suslin Theorem is sufficient for our considerations here.
In fact, the discussion in this section can be seen as a summary of \cite{Topaz:2014}*{\S8}, appropriately translated to the context of this paper.

First we introduce a bit of notation.
We let $\H^* = \H^*_{\rm cont}$ denote continuous-cochain group-cohomology.
For a pro-$\ell$ group $\Gc$, we write $\H^*(\Gc) := \H^*(\Gc,\Z/\ell)$, but we will specify the coefficient module if it is different from $\Z/\ell$.
We will denote the {\bf decomposable part} of $\H^*(\Gc)$ by $\H^*(\Gc)_{\rm dec}$.
We will only need to consider $\H^2(\Gc)_{\rm dec}$, which is defined as the image of the cup-product map
\[ \cup : \H^1(\Gc)^{\otimes 2} \rightarrow \H^2(\Gc). \]
Also, recall that the {\bf Bockstein morphism}, denoted $\beta : \H^1(\Gc) \rightarrow \H^2(\Gc)$, is the connecting morphism in the cohomological long exact sequence associated to the short exact sequence of coefficient modules $0 \rightarrow \Z/\ell \rightarrow \Z/\ell^2 \rightarrow \Z/\ell \rightarrow 0$.

Finally, recall that for a field $F$, we let $\Gc_F$ denote the maximal pro-$\ell$ quotient of $\Gal_F$.
Most of the general results we say in this section hold true for general fields $F$ such that $\Char F \neq \ell$ and $\mu_\ell \subset F$.
However, in order to simplify the discussion, we will restrict our attention to the function field $K|k$ which is the main focus of the paper.

\subsection{Generalities on Cohomology of Pro-$\ell$ Groups}
\label{subsection: cohom / generalities}

In this subsection we summarize the basic connection between the group-theoretical structure of a pro-$\ell$ group $\Gc$ and the mod-$\ell$ cohomology of its various quotients.
We will use the notation introduced in \S\ref{subsection: intro / mod-ell-category}.
First of all, recall that the inflation map induces canonical isomorphisms
\[ \H^1(\Gc^a) \xrightarrow{\cong} \H^1(\Gc^c) \xrightarrow{\cong} \H^1(\Gc). \]
We will tacitly identify these three cohomology groups.
Since $\H^1(\Gc^a) = \Hom(\Gc^a,\Z/\ell)$, the definition of $\Gc^{(2)}$ implies that $\Gc^a$ and $\H^1(\Gc^a)$ are in perfect $\Z/\ell$-duality.
This perfect duality thereby induces a perfect pairing
\[ \widehat\wedge^2(\Gc^a) \times \wedge^2(\H^1(\Gc^a)) \rightarrow \Z/\ell \]
in the usual way.

Next, recall that the Lyndon-Hochschild-Serre (LHS) spectral sequence
\[ {\rm E}_2^{i,j} = \H^i(\Gc^a,\H^j(\Gc^{(2)})) \Rightarrow \H^{i+j}(\Gc) \]
induces a canonical exact sequence:
\[ 0 \rightarrow \H^1(\Gc^{(2)})^\Gc \xrightarrow{d_2} \H^2(\Gc^a) \rightarrow \H^2(\Gc) \]
where $d_2$ is the differential on the ${\rm E}_2$-page of the spectral sequence and $\H^2(\Gc^a) \rightarrow \H^2(\Gc)$ is the canonical inflation map.
Moreover, since $\Gc^c = \Gc/\Gc^{(3)}$ is a central extension of $\Gc^a$ by $\Gc^{(2)}/\Gc^{(3)}$, we see that the image of the canonical \emph{injective} inflation map $\H^1(\Gc^{(2)}/\Gc^{(3)}) \rightarrow \H^1(\Gc^{(2)})$ is contained in the $\Gc$-invariants $\H^1(\Gc^{(2)})^\Gc$.
We will henceforth identify $\H^1(\Gc^{(2)}/\Gc^{(3)})$ with its isomorphic image in $\H^1(\Gc^{(2)})^\Gc$.
Using the discussion and notation introduced above, the following fact summarizes the general group-theoretical properties which we will need.

\begin{fact}
\label{fact: cohom / generalities / cup-vs-commutator}
  The following hold:
  \begin{enumerate}
    \item The preimage of $\H^2(\Gc^a)_{\rm dec}$ under the map $d_2 : \H^1(\Gc^{(2)})^\Gc \rightarrow \H^2(\Gc^a)$ is precisely $\H^1(\Gc^{(2)}/\Gc^{(3)})$.
    \item The cup product induces a canonical isomorphism $\wedge^2(\H^1(\Gc^a)) \rightarrow \H^2(\Gc^a)/\beta\H^1(\Gc^a)$.
    \item Identify $\H^2(\Gc^a)/\beta\H^1(\Gc^a)$ with $\wedge^2(\H^1(\Gc^a))$ using the isomorphism from (2) above.
    Then the map 
    \[ [\bullet,\bullet] : \widehat\wedge^2(\Gc^a) \rightarrow \Gc^{(2)}/\Gc^{(3)} \]
    is $\Z/\ell$-dual (hence Pontryagin-dual) to the map 
    \[ \H^1(\Gc^{(2)}/\Gc^{(3)}) \xrightarrow{-d_2} \H^2(\Gc^a) \twoheadrightarrow \H^2(\Gc^a)/\beta\H^1(\Gc^a) = \wedge^2(\H^1(\Gc^a)). \]
  \end{enumerate}
\end{fact}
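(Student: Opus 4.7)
The plan is to verify all three assertions by reducing to the case where $\Gc$ is a free pro-$\ell$ group $F$, where they can be checked directly on a topological basis. Choose a minimal free pro-$\ell$ presentation $1 \to R \to F \to \Gc \to 1$; minimality ensures that the induced map $F^a \to \Gc^a$ is an isomorphism, hence that the inflation $\H^*(\Gc^a) \cong \H^*(F^a)$ is an equality. The surjection $F^c \twoheadrightarrow \Gc^c$ is compatible with both the commutator maps $[\bullet,\bullet]$ and the transgressions $d_2$ coming from the LHS spectral sequences of $1 \to F^{(2)} \to F \to F^a \to 1$ and $1 \to \Gc^{(2)} \to \Gc \to \Gc^a \to 1$. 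By naturality, all three statements then transfer between $F$ and $\Gc$, so it suffices to prove them for $F$.

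For part (2), the mod-$\ell$ cohomology ring of the elementary abelian pro-$\ell$ group $F^a$ is classical: setting $V = \H^1(F^a)$, one has $\H^*(F^a,\Z/\ell) \cong \Lambda^*(V) \otimes S^*(\beta V)$ when $\ell$ is odd, and $\H^*(F^a,\Z/\ell) \cong S^*(V)$ when $\ell = 2$ (in the latter case, the Bockstein on a degree-one class coincides with its cup square). In either case, the composition $V \otimes V \xrightarrow{\cup} \H^2(F^a) \twoheadrightarrow \H^2(F^a)/\beta V$ kills the symmetric tensors, is surjective, and factors through the desired isomorphism $\wedge^2 V \cong \H^2(F^a)/\beta V$.

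For part (1), fix a topological basis $(x_i)$ of $F$. Then $F^{(2)}/F^{(3)}$ admits a topological $\Z/\ell$-basis consisting of the commutators $[x_i,x_j]$ for $i < j$ together with the $(\delta\ell)$-th powers $x_i^{\delta\ell}$. A character $\chi \in \H^1(F^{(2)})^F$ factors through $F^{(2)}/F^{(3)}$ iff it vanishes on $F^{(3)}$; by $F$-invariance it automatically vanishes on $[F,F^{(2)}]$, so the remaining condition is $\chi(x_i^{\delta\ell}) = 0$ for every $i$. Through the transgression $d_2 \colon \H^1(F^{(2)})^F \xrightarrow{\cong} \H^2(F^a)$, the dual of the $(\delta\ell)$-th-power part of the basis is precisely $\beta V$, and the dual of the commutator part is $\H^2(F^a)_{\rm dec}$. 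For $\ell = 2$ one has $\beta V \subset \H^2(F^a)_{\rm dec}$, but this causes no contradiction since any $\Z/2$-valued character on $F^{(2)}$ automatically kills fourth powers.

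For part (3), the remaining assertion is now a direct Pontryagin-duality calculation on the bases above. Taking dual characters $\chi_i \in V$, the commutator $[x_i,x_j]$ pairs to the class of $\chi_i \wedge \chi_j$ in $\wedge^2 V \cong \H^2(F^a)/\beta V$, with the sign matching Labute's classical formula (compare \cite{Neukirch:2013}*{Prop.~3.9.13}) that the transgression on commutator classes is the negative of the cup product. The main obstacle will be the careful handling of the case $\ell = 2$: there the Bockstein coincides with the cup square, forcing $\beta V \subset \H^2_{\rm dec}$, and the doubled exponent $\delta = 2$ in the definition of $F^{(3)}$ is essential in order to split $F^{(2)}/F^{(3)}$ cleanly as a sum of commutator and power contributions so that the claimed duality holds. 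Once the sign conventions and the $\ell = 2$ case are pinned down, the remainder is a repackaging of Labute's computations \cite{Labute:1967} in the language of the mod-$\ell$ abelian-by-central category.
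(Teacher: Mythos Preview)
Your overall strategy---reduce to a free pro-$\ell$ group via a minimal presentation and then invoke Labute's computation---is exactly what the paper's cited references \cite{Topaz:2014}*{Lemma 8.2, Facts 8.1, 8.3}, \cite{Neukirch:2013}*{Prop.~3.9.13}, \cite{Topaz:2015b}*{Theorem 2} do, so in that sense you are unpacking the same proof.

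There is, however, a genuine mis-statement in your part~(1). You claim that $F^{(2)}/F^{(3)}$ has topological basis $\{[x_i,x_j]\}_{i<j}$ together with $\{x_i^{\delta\ell}\}$, but by the very definition $F^{(3)} = [F,F^{(2)}]\cdot F^{\delta\ell}$ the elements $x_i^{\delta\ell}$ lie in $F^{(3)}$ and are therefore \emph{zero} in that quotient. The correct description is: for $\ell$ odd, $F^{(2)}/F^{(3)}$ has basis only the commutators $[x_i,x_j]$; for $\ell=2$, it has basis $[x_i,x_j]$ together with $x_i^2$ (not $x_i^4$). What \emph{does} carry a ``commutators plus $\ell$-th powers'' basis, for all $\ell$, is the larger quotient $F^{(2)}/\bigl([F,F^{(2)}]\cdot (F^{(2)})^\ell\bigr)$, which is the honest $\Z/\ell$-dual of $\H^1(F^{(2)})^F$; it is on \emph{this} quotient that the Labute formula matches power-duals with Bocksteins and commutator-duals with cup products under $d_2$. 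Fortunately your next sentence---that $\chi\in\H^1(F^{(2)})^F$ factors through $F^{(2)}/F^{(3)}$ iff $\chi(x_i^{\delta\ell})=0$---is correct as stated (for $\ell$ odd this is the condition $\chi(x_i^\ell)=0$; for $\ell=2$ it is vacuous since $x_i^4=(x_i^2)^2$ is automatically killed), so once you separate the two quotients the rest of the argument goes through unchanged.
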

\begin{proof}
  See \cite{Topaz:2014}*{Lemma 8.2} for assertion (1).
  Assertion (2) follows from the K\"unneth formula along with the fact that $\Gc^a$ is isomorphic to a direct power of $\Z/\ell$; see \cite{Topaz:2014}*{Fact 8.1} and the surrounding discussion for more details.

  Assertion (3) is the standard ``duality'' between the commutator and the cup-product.
  This ``duality'' has been well-known for some time (see e.g. \cite{Neukirch:2013}*{Proposition 3.9.13}), but \cite{Topaz:2015b}*{Theorem 2} can also be used as a reference.
  See \cite{Topaz:2014}*{Fact 8.3} for more details.
\end{proof}

\subsection{Kummer Theory}
\label{subsection: cohom / kummer}

Recall that Kummer theory yields a canonical perfect pairing 
\[ \Gc_K^a \times \k_1(K) \rightarrow \mu_\ell \]
which is defined by $(\sigma,\{x\}_K) \mapsto \sigma(\sqrt[\ell]{x})/\sqrt[\ell]{x}$.
For an automorphism $\phi$ of $\k_1(K)$, we denote by $\phi^*$ the automorphism of $\Gc_K^a$ which is dual to $\phi$ via the pairing above.
This perfect duality yields a canonical isomorphism of automorphism groups
\[\Kummer : \Aut(\Gc_K^a) \xrightarrow{\cong} \Aut(\k_1(K)), \]
which is given by mapping $\phi \in \Aut(\Gc_K^a)$ to $(\phi^{-1})^*$.
This isomorphism further induces a canonical isomorphism on $(\Z/\ell)^\times$-classes of automorphism groups
\[ \Kummer : \UAut(\Gc_K^a) \xrightarrow{\cong} \UAut(\k_1(K)). \]

These isomorphisms are actually compatible with the Galois action, once we introduce the appropriate twist.
Hence the last isomorphism is \emph{exactly} compatible with $\rho_{k_0}$.
To make things precise, let $\chi_\ell : \Galk \rightarrow \Z_\ell^\times$ denote the $\ell$-adic cyclotomic character of the base field $k_0$.
We can then define the {\bf $i$-th cyclotomic twist} $\rho_{k_0}(i)$ of the representation 
\[ \rho_{k_0} : \Galk \rightarrow \Aut(\k_1(K)) \]
in the obvious way, as $\rho_{k_0}(i)(\tau) = \chi_\ell(\tau)^i \cdot \rho_{k_0}(\tau)$.
Next, recall that Kummer theory produces a canonical isomorphism 
\[ \k_1(K) = \H^1(\Gc_K^a,\mu_\ell) = \H^1(\Gc_K^a)\otimes \mu_\ell \]
which is compatible with the action of $\Galk$.
It therefore follows that the following diagram commutes
\[\xymatrix{
  \Galk \ar[r]^{\rho_{k_0}} \ar[dr]_{\rho_{k_0}(-1)} & \Aut(\Gc_K^a) \ar[d]^{\Kummer} \\
  {} & \Aut(\k_1(K))
}\]
because we introduced the appropriate cyclotomic twist.
But the definition of $\rho_{k_0}(*)$ ensures that the induced isomorphism 
\[ \Kummer : \UAut(\Gc_K^a) \xrightarrow{\cong} \UAut(\k_1(K)) \]
is compatible with $\rho_{k_0}$ since the cyclotomic twist becomes completely irrelevant after modding out by $(\Z/\ell)^\times$.

\subsection{Galois vs. Milnor}
\label{subsection: cohom / galois-to-milnor}
With this discussion, we may now present the main theorem of this section which allows us to pass back and forth between the Galois-setting and the Milnor-setting.
The following theorem is essentially a group-theoretical interpretation of the Merkurjev-Suslin Theorem \cite{Merkurjev:1982}.

\begin{theorem}
\label{theorem: cohom / kummer / galois-to-milnor}
  The following hold:
  \begin{enumerate}
    \item Let $\phi \in \Aut(\k_1(K))$ be given and consider $\phi^* \in \Aut(\Gc_K^a)$ the Kummer-dual of $\phi$.
    Then one has $\phi \in \Autm(\k_1(K))$ if and only if $\phi^* \in \Autc(\Gc_K^a)$.
    \item The canonical isomorphism $\Kummer : \Aut(\Gc_K^a) \xrightarrow{\cong} \Aut(\k_1(K))$ restricts to an isomorphism $\Kummer : \Autc(\Gc_K^a) \xrightarrow{\cong} \Autm(\k_1(K))$.
    \item The canonical isomorphism $\Kummer : \UAut(\Gc_K^a) \xrightarrow{\cong} \UAut(\k_1(K))$ restricts to an isomorphism $\Kummer : \UAutc(\Gc_K^a) \xrightarrow{\cong} \UAutm(\k_1(K))$ which is compatible with $\rho_{k_0}$.
  \end{enumerate}
\end{theorem}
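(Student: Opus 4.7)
The plan is to deduce the theorem from the Merkurjev--Suslin theorem \cite{Merkurjev:1982} together with the commutator-vs-cup-product duality of Fact \ref{fact: cohom / generalities / cup-vs-commutator}. I would first observe that parts (2) and (3) reduce formally to (1): the map $\Kummer$ sends $\phi \in \Aut(\Gc_K^a)$ to $(\phi^{-1})^* \in \Aut(\k_1(K))$, so once (1) is known, and noting that both $\Autc$ and $\Autm$ are closed under inversion, we obtain the restricted isomorphism in (2). Passing to the $(\Z/\ell)^\times$-quotient yields (3); the compatibility with $\rho_{k_0}$ was already explained in \S\ref{subsection: cohom / kummer}, using that the cyclotomic twist $\rho_{k_0}(-1)$ agrees with $\rho_{k_0}$ up to a scalar in $(\Z/\ell)^\times$.

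For part (1), I would recast the condition $\phi^* \in \Autc(\Gc_K^a)$ cohomologically. By Fact \ref{fact: cohom / generalities / cup-vs-commutator}(3), $\Rcal(\Gc_K) \subseteq \widehat\wedge^2(\Gc_K^a)$ is the Pontryagin annihilator of the subgroup $R \subseteq \wedge^2 \H^1(\Gc_K^a)$, where $R$ is the image of
\[
\H^1(\Gc_K^{(2)}/\Gc_K^{(3)}) \xrightarrow{-d_2} \H^2(\Gc_K^a) \twoheadrightarrow \H^2(\Gc_K^a)/\beta \H^1(\Gc_K^a) = \wedge^2 \H^1(\Gc_K^a).
\]
Hence $\phi^* \in \Autc(\Gc_K^a)$ if and only if the dual automorphism $\phi$ of $\H^1(\Gc_K^a)$ preserves $R$.

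Next, I would identify $R$, after cyclotomic twist, with the kernel of the multiplication in Milnor K-theory. Fact \ref{fact: cohom / generalities / cup-vs-commutator}(1) combined with the Lyndon--Hochschild--Serre sequence identifies the image of $d_2$ restricted to $\H^1(\Gc_K^{(2)}/\Gc_K^{(3)})$ with $\ker(\mathrm{inf}) \cap \H^2(\Gc_K^a)_{\rm dec}$, where $\mathrm{inf}:\H^2(\Gc_K^a) \to \H^2(\Gc_K)$ is inflation. Passing to $\H^2(\Gc_K^a)/\beta\H^1(\Gc_K^a)$ and using Fact \ref{fact: cohom / generalities / cup-vs-commutator}(2), the resulting subgroup $R$ corresponds to the kernel of the induced map $\wedge^2 \H^1(\Gc_K^a) \to \H^2(\Gc_K)/\beta\H^1(\Gc_K)$ arising from cup product and inflation. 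Now the Kummer identification $\H^1(\Gc_K^a) \otimes \mu_\ell = \k_1(K)$ together with the Merkurjev--Suslin theorem $\H^2(\Gc_K) \otimes \mu_\ell^{\otimes 2} = \k_2(K)$ match this cup product with multiplication in $\k_*(K)$, yielding $R \otimes \mu_\ell^{\otimes 2} = \ker(\wedge^2 \k_1(K) \to \k_2(K))$.

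Finally, I would characterize $\Autm(\k_1(K))$: since $\k_*(K)$ is generated as a $\Z/\ell$-algebra by $\k_1(K)$ with a two-sided ideal of relations concentrated in degree two (the Steinberg relations), an automorphism $\phi$ of $\k_1(K)$ lies in $\Autm(\k_1(K))$ if and only if $\phi$ preserves $\ker(\wedge^2 \k_1(K) \to \k_2(K))$. Combining with the previous paragraph, $\phi \in \Autm$ iff $\phi^* \in \Autc$, proving (1). The hard part will be the careful bookkeeping of the cyclotomic twists and of the Bockstein contributions in the identification of $R$ with $\ker(\wedge^2 \k_1(K) \to \k_2(K))$; this is essentially carried out in \cite{Topaz:2014}*{\S8} in a closely related setting.
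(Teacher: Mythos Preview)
Your proposal is correct and follows essentially the same route as the paper: reduce (2) and (3) to (1), then use Fact~\ref{fact: cohom / generalities / cup-vs-commutator} and Merkurjev--Suslin to identify $\Rcal(\Gc_K)^\perp \subset \wedge^2\k_1(K)$ with the kernel of the multiplication map $\wedge^2\k_1(K) \to \k_2(K)$, whence $\phi \in \Autm$ iff $\phi^* \in \Autc$. The only substantive point you leave implicit is Fact~\ref{fact: cohom / kummer / generalities-kummer}: you need $\beta\H^1(\Gc_K)=0$ (so that $\beta\H^1(\Gc_K^a) \subset \ker(\mathrm{inf})$, making your identification of $R$ with the kernel of $\wedge^2\H^1 \to \H^2(\Gc_K)$ go through) and $\H^2(\Gc_K) \cong \H^2(\Gal_K)$ (so that Merkurjev--Suslin, which is a statement about $\Gal_K$, applies to $\Gc_K$). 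The paper invokes these explicitly; you flag the Bockstein bookkeeping as ``the hard part'' but should cite Fact~\ref{fact: cohom / kummer / generalities-kummer} rather than leave it buried.
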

Concerning the proof of Theorem \ref{theorem: cohom / kummer / galois-to-milnor}, the implication $(1) \Rightarrow (2)$ is clear, while the implication $(2) \Rightarrow (3)$ follows from the observations about cyclotomic twists made above.
It therefore remains to prove assertion (1), which was essentially already proven in \cite{Topaz:2014}*{Theorem 8.6}; alternatively, one can deduce (1) from the main results of \cite{Topaz:2015b}.
We give a summary of the proof of Theorem \ref{theorem: cohom / kummer / galois-to-milnor} below.
First, we need to recall some calculations in Galois cohomology.

\begin{fact}
\label{fact: cohom / kummer / generalities-kummer}
  The following hold:
  \begin{enumerate}
    \item The Bockstein morphism $\beta : \H^1(\Gal_K) \rightarrow \H^2(\Gal_K)$ is trivial.
    \item The inflation map $\H^*(\Gc_K) \rightarrow \H^*(\Gal_K)$ is an isomorphism for $* = 0,1,2$.
  \end{enumerate}
\end{fact}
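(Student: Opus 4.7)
The plan for (1) is to exhibit an explicit lift of every mod-$\ell$ character. Since $k = \bar{k}_0$ is algebraically closed of characteristic $\neq \ell$, we have $\mu_{\ell^2} \subset k \subset K$. After fixing a compatible system $\zeta_\ell = \zeta_{\ell^2}^\ell$ of primitive $\ell$-power roots of unity, we obtain $\Gal_K$-equivariant identifications $\mu_{\ell^n} \cong \Z/\ell^n$ of trivial Galois modules. By Kummer theory, every class in $\H^1(\Gal_K, \Z/\ell) = K^\times/\ell$ is of the form $\chi_a : \sigma \mapsto \sigma(\sqrt[\ell]{a})/\sqrt[\ell]{a}$ for some $a \in K^\times$. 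The candidate lift to $\H^1(\Gal_K, \Z/\ell^2)$ is $\tilde\chi_a : \sigma \mapsto \sigma(\sqrt[\ell^2]{a})/\sqrt[\ell^2]{a}$, viewed as a homomorphism with values in $\mu_{\ell^2} \cong \Z/\ell^2$; a short check using the compatibility $\zeta_{\ell^2}^\ell = \zeta_\ell$ shows that $\tilde\chi_a$ reduces to $\chi_a$ modulo $\ell$. Hence the map $\H^1(\Gal_K, \Z/\ell^2) \to \H^1(\Gal_K, \Z/\ell)$ is surjective, which by the long exact sequence attached to $0 \to \Z/\ell \to \Z/\ell^2 \to \Z/\ell \to 0$ is equivalent to the vanishing of $\beta$.

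For assertion (2), the cases $* = 0, 1$ are immediate: both sides equal $\Z/\ell$ in degree $0$, and any continuous homomorphism $\Gal_K \to \Z/\ell$ factors through the maximal pro-$\ell$ quotient $\Gc_K$. The case $* = 2$ requires two separate arguments. For injectivity, I would invoke the five-term exact sequence of the Hochschild-Serre spectral sequence associated to the defining extension $1 \to N \to \Gal_K \to \Gc_K \to 1$, where $N := \Gal(K^{\rm sep}|K(\ell))$. By construction of $K(\ell)$ as the maximal pro-$\ell$ extension of $K$, the group $N$ admits no continuous $\Z/\ell$-valued character, so $\H^1(N, \Z/\ell) = 0$, and the five-term sequence then forces the inflation $\H^2(\Gc_K) \hookrightarrow \H^2(\Gal_K)$ to be injective.

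The surjectivity in degree $2$ is the heart of the matter and relies crucially on the Merkurjev-Suslin theorem, which identifies $\k_2(K) \cong \H^2(\Gal_K, \mu_\ell^{\otimes 2})$ via the norm residue symbol. Combined with the $\Gal_K$-equivariant isomorphism $\mu_\ell \cong \Z/\ell$ induced by our choice of $\zeta_\ell$, this shows that $\H^2(\Gal_K, \Z/\ell)$ is spanned by cup products of pairs of elements of $\H^1(\Gal_K, \Z/\ell)$. By the $* = 1$ case each such factor lifts uniquely to $\H^1(\Gc_K, \Z/\ell)$, and the functoriality of the cup product then provides a lift of the given class to $\H^2(\Gc_K, \Z/\ell)$. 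The only non-trivial input is thus the Merkurjev-Suslin theorem, which allows us to reduce $\H^2(\Gal_K, \Z/\ell)$ to its decomposable part; everything else is formal.
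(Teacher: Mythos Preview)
Your argument is correct and matches the paper's approach closely: for (1) both use $\mu_{\ell^2}\subset K$ to lift Kummer characters, and for (2) both invoke the Hochschild--Serre sequence for $1\to\Gal_{K(\ell)}\to\Gal_K\to\Gc_K\to 1$ together with $\H^1(\Gal_{K(\ell)},\Z/\ell)=0$. The one place where you are more explicit than the paper is surjectivity in degree~$2$: the paper merely writes that it ``follows by considering the LHS spectral sequence'' and defers to \cite{Topaz:2014}*{Fact 8.4}, whereas you spell out that Merkurjev--Suslin forces $\H^2(\Gal_K,\Z/\ell)$ to be decomposable and then lift each cup-product factor through the degree-$1$ isomorphism. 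This is not an extra hypothesis but precisely what is hidden behind the citation, since the vanishing $\H^1(\Gal_{K(\ell)},\Z/\ell)=0$ by itself only yields injectivity of inflation in degree~$2$; some form of Merkurjev's theorem is needed either to show $\H^2(\Gal_{K(\ell)},\Z/\ell)=0$ (controlling the $E_2^{0,2}$-term) or, as you do, to produce explicit preimages.
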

\begin{proof}
  Assertion (2) is clear for $* = 0,1$, and for $* = 2$ it follows by considering the LHS spectral sequence associated to the extension
  \[ 1 \rightarrow \Gal_{K(\ell)} \rightarrow \Gal_K \rightarrow \Gc_K \rightarrow 1. \]
  See \cite{Topaz:2014}*{Fact 8.4} for the details.
  Assertion (1) follows from the fact that $\mu_{\ell^2} \subset K$; see \cite{Topaz:2014}*{Lemma 8.5} for the details.
\end{proof}

\begin{proof}[Proof of Theorem \ref{theorem: cohom / kummer / galois-to-milnor}]
  As noted above, it suffices to prove assertion (1) as assertions (2) and (3) follow from this.
  First, note that the Kummer pairing canonically induces a perfect pairing
  \[ \widehat\wedge^2(\Gc_K^a) \times \wedge^2 (\k_1(K)) \rightarrow \mu_\ell^{\otimes 2}. \]
  Let $\Rcal(\Gc_K)$ denote the kernel of the map
  \[ [\bullet,\bullet] : \widehat\wedge^2(\Gc_K^a) \rightarrow \Gc_K^{(2)}/\Gc_K^{(3)}. \]
  Since $\k_*(K)$ is a quadratic $\Z/\ell$-algebra, in order to prove assertion (1), it suffices to prove that the dual of the inclusion $\Rcal(\Gc_K) \hookrightarrow \widehat\wedge^2(\Gc_K^a)$ with respect to the pairing above is the (surjective) multiplication map 
  \[ \wedge^2 (\k_1(K)) \twoheadrightarrow \k_2(K) \]
  in the mod-$\ell$ Milnor K-ring.
  This follows easily by combining Fact \ref{fact: cohom / generalities / cup-vs-commutator}(3) with the Merkurjev-Suslin Theorem \cite{Merkurjev:1982} and Fact \ref{fact: cohom / kummer / generalities-kummer}, as follows.
  
  First, since $\mu_\ell \subset K$, we may simplify the notation and choose a fixed isomorphism $\mu_\ell \cong \Z/\ell$ of $\Gal_K$-modules.
  In particular, we may identify $\k_1(K)$ with $\H^1(\Gc_K^a) = \H^1(\Gc_K)$ via the Kummer pairing, and $\k_2(K)$ with $\H^2(\Gc_K)$ via the Merkurjev-Suslin Theorem \cite{Merkurjev:1982} and Fact \ref{fact: cohom / kummer / generalities-kummer}(2).

  Next, recall from Fact \ref{fact: cohom / kummer / generalities-kummer} that the Bockstein morphism $\beta : \H^1(\Gc_K) \rightarrow \H^2(\Gc_K)$ is trivial.
  Hence, the inflation map $\H^2(\Gc_K^a) \rightarrow \H^2(\Gc_K)$ factors through $\H^2(\Gc_K^a)/\beta\H^1(\Gc_K^a)$, which is isomorphic to $\wedge^2(\k_1(K))$ by Fact \ref{fact: cohom / generalities / cup-vs-commutator} and our identification $\H^1(\Gc_K^a) = \k_1(K)$ above.

  Finally, using the LHS spectral sequence and the fact that $\H^2(\Gc_K)_{\rm dec} = \H^2(\Gc_K)$ by the Merkurjev-Suslin Theorem and Fact \ref{fact: cohom / kummer / generalities-kummer}(2), we have a canonical short exact sequence
  \[ 0 \rightarrow \H^1(\Gc_K^{(2)})^{\Gc_K} \xrightarrow{d_2} \H^2(\Gc_K^a) \rightarrow \H^2(\Gc_K) \rightarrow 0, \]
  which induces an exact sequence
  \[ \H^1(\Gc_K^{(2)}/\Gc_K^{(3)}) \xrightarrow{-d_2} \wedge^2(\H^1(\Gc_K^a)) \rightarrow \H^2(\Gc_K) \rightarrow 0 \]
  by Fact \ref{fact: cohom / generalities / cup-vs-commutator}.
  Once we identify $\H^1(\Gc_K^a)$ with $\k_1(K)$ and $\H^2(\Gc_K)$ with $\k_2(K)$, the right-hand map of this sequence is precisely the multiplication map.
  Finally, the dual of this sequence is precisely 
  \[ 0 \rightarrow \Rcal(\Gc_K) \rightarrow \widehat\wedge^2(\Gc_K^a) \xrightarrow{[\bullet,\bullet]} \Gc_K^{(2)}/\Gc_K^{(3)} \]
  by Fact \ref{fact: cohom / generalities / cup-vs-commutator}(3).
\end{proof}

\section{The Local Theory}
\label{section: localthy}


In this section, we recall the required results from the \emph{Local Theory} in ``almost-abelian'' anabelian geometry for function fields over algebraically closed fields.
All such results have been generally stated for abelian-by-central Galois groups in the literature.
But since we work primarily with the ``Kummer dual'' of this context, i.e. with the mod-$\ell$ Milnor K-ring, we will need to translate these results to the context of Milnor K-theory via Theorem \ref{theorem: cohom / kummer / galois-to-milnor}.

\subsection{Minimized Galois Theory and C-Pairs}
\label{subsection: localthy / c-pairs}

For a field $F$, we define the {\bf $\ell$-minimized Galois group} of $F$ as
\[ \gfrak(F) := \Hom(F^\times,\Z/\ell), \]
endowed with its canonical structure of an abelian pro-$\ell$ group arising from the point-wise convergence topology.
If $\Char F \neq \ell$ and $\mu_\ell \subset F$, then Kummer theory together with a choice of isomorphism of $\Gal_F$-modules $\mu_\ell \cong \Z/\ell$ induces canonically an isomorphism of pro-$\ell$ groups
\[ \gfrak(F) \cong \Gc_F^a. \]
However, the main benefit of working with the minimized context is that it applies also to fields of characteristic $\ell$.
Since we will need to consider valuations of $K$ which \emph{a priori} have residue characteristic $\ell$, we \emph{must} work in the minimized context.

A pair of elements $\sigma,\tau \in \gfrak(F)$ is called a {\bf C-pair} provided that for all $x \in F \smallsetminus \{0,1\}$, one has
\[ \sigma(x)\cdot\tau(1-x) = \sigma(1-x)\cdot\tau(x). \]
If $F_1,F_2$ are two fields and $\phi : \gfrak(F_1) \rightarrow \gfrak(F_2)$ is an isomorphism of pro-$\ell$ groups, we say that $\phi$ is {\bf compatible with C-pairs} if, for all $\sigma,\tau \in \gfrak(F_1)$, the following are equivalent
\begin{enumerate}
  \item $\sigma,\tau$ forms a C-pair in $\gfrak(F_1)$.
  \item $\phi\sigma,\phi\tau$ forms a C-pair in $\gfrak(F_2)$.
\end{enumerate}

In the case where $\Char F \neq \ell$ and $\mu_{2\ell} \subset F$, it turns out that C-pairs can be determined in a Galois-theoretical manner, as the following fact describes.
For a detailed proof of this fact, see \cite{Topaz:2013a}*{Theorem 12} and \cite{Topaz:2015b}*{Theorem 4}.

\begin{fact}
\label{fact: localthy / c-pairs / c-vs-cl}
  Let $F$ be a field such that $\Char F \neq \ell$ and $\mu_{2\ell} \subset F$, and identify $\gfrak(F)$ with $\Gc_F^a$ via some isomorphism $\mu_\ell \cong \Z/\ell$, as above.
  Let $\sigma,\tau \in \gfrak(F) = \Gc_F^a$ be given.
  Then the following are equivalent:
  \begin{enumerate}
    \item $\sigma,\tau$ form a C-pair.
    \item One has $[\sigma,\tau] = 0$.
  \end{enumerate}
  In particular, if $\phi$ is an element of $\Autc(\Gc_F^a)$, then $\phi$ is compatible with C-pairs.
\end{fact}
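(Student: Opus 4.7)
The plan is to deduce this directly from the Galois--Milnor duality established in Theorem~\ref{theorem: cohom / kummer / galois-to-milnor} of the previous section. The strategy is to show that both conditions---the C-pair condition and the commutator vanishing---have a uniform interpretation as the vanishing of $\sigma\wedge\tau$ on a specific subspace of $\wedge^2(\k_1(F))$ under the pairing induced by Kummer duality, and then to observe that these two subspaces are literally the same.

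First I would make the Kummer identifications explicit. Fixing an isomorphism $\mu_\ell\cong\Z/\ell$ (which makes sense since $\mu_\ell\subset F$), Kummer theory identifies $\gfrak(F)=\Hom(F^\times,\Z/\ell)$ with $\Gc_F^a$, and yields the perfect pairing $\Gc_F^a\times\k_1(F)\to\Z/\ell$ with $(\sigma,\{x\}_F)\mapsto\sigma(x)$. This extends functorially to a perfect pairing $\widehat\wedge^2(\Gc_F^a)\times\wedge^2(\k_1(F))\to\Z/\ell$, under which the value of $\sigma\wedge\tau$ on $\{x\}_F\wedge\{1-x\}_F$ is precisely $\sigma(x)\tau(1-x)-\sigma(1-x)\tau(x)$. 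Hence the C-pair condition is exactly the statement that $\sigma\wedge\tau$ annihilates every Steinberg element $\{x\}_F\wedge\{1-x\}_F$ for $x\in F\smallsetminus\{0,1\}$.

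Next I would apply the duality at the heart of the proof of Theorem~\ref{theorem: cohom / kummer / galois-to-milnor}: the subgroup $\Rcal(\Gc_F)\subset\widehat\wedge^2(\Gc_F^a)$, defined as the kernel of the commutator map $[\bullet,\bullet]:\widehat\wedge^2(\Gc_F^a)\to\Gc_F^{(2)}/\Gc_F^{(3)}$, is the annihilator (under the pairing above) of the kernel of the multiplication map $\wedge^2(\k_1(F))\twoheadrightarrow\k_2(F)$. This is precisely where the hypotheses $\Char F\neq\ell$ and $\mu_{2\ell}\subset F$ are used: the first ensures that the cohomological machinery (Bloch--Kato/Merkurjev--Suslin) applies, and the second ensures the vanishing of the Bockstein so that the relevant inflation map factors through $\wedge^2(\H^1(\Gc_F^a))$, as in Fact~\ref{fact: cohom / kummer / generalities-kummer}. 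By the Merkurjev--Suslin theorem \cite{Merkurjev:1982}, the kernel of $\wedge^2(\k_1(F))\to\k_2(F)$ is generated by the Steinberg symbols $\{x\}_F\wedge\{1-x\}_F$. Combining, $[\sigma,\tau]=0$ is equivalent to $\sigma\wedge\tau\in\Rcal(\Gc_F)$, which in turn is equivalent to $\sigma\wedge\tau$ annihilating every $\{x\}_F\wedge\{1-x\}_F$. This is the same as the C-pair condition, so (1)~$\Leftrightarrow$~(2).

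For the ``in particular'' claim, the equivalence just established says that being a C-pair is equivalent to $\sigma\wedge\tau\in\Rcal(\Gc_F)$. An automorphism $\phi\in\Autc(\Gc_F^a)$ is by definition an element of $\Aut(\Gc_F^a)$ such that $\widehat\wedge^2(\phi)$ preserves $\Rcal(\Gc_F)$ (with $\phi^{-1}$ doing the same). Therefore $\sigma,\tau$ form a C-pair iff $\sigma\wedge\tau\in\Rcal(\Gc_F)$ iff $\widehat\wedge^2(\phi)(\sigma\wedge\tau)=\phi\sigma\wedge\phi\tau\in\Rcal(\Gc_F)$ iff $\phi\sigma,\phi\tau$ form a C-pair. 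The only potential obstacle is bookkeeping the various identifications (twists by $\mu_\ell\cong\Z/\ell$ and distinguishing $\Gc_F^a$ from $\gfrak(F)$), but these are harmless once fixed, since the C-pair condition and the commutator both live in their respective categories independently of the chosen identification.
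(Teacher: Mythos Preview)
The paper does not prove this Fact; it simply refers to \cite{Topaz:2013a}*{Theorem 12} and \cite{Topaz:2015b}*{Theorem 4}. Your approach --- deducing the equivalence from the Kummer duality of \S\ref{section: cohom} --- is a genuine self-contained alternative, and the overall strategy is sound: both conditions amount to $\sigma\wedge\tau$ annihilating the kernel of $\wedge^2(\k_1(F))\to\k_2(F)$, once one knows that $\Rcal(\Gc_F)$ is the orthogonal of that kernel under the induced pairing.

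There is, however, a concrete error in your justification. Your claim that ``$\mu_{2\ell}\subset F$ ensures the vanishing of the Bockstein'' is false for odd $\ell$: in that case $\mu_{2\ell}\subset F$ is equivalent to $\mu_\ell\subset F$ (as $\mu_2\subset F$ always), which does not imply $\mu_{\ell^2}\subset F$, and it is the latter that forces the Bockstein on $\H^2(\Gc_F)$ to vanish (cf.\ the proof of Fact~\ref{fact: cohom / kummer / generalities-kummer}(1)). Relatedly, Theorem~\ref{theorem: cohom / kummer / galois-to-milnor} is stated only for the function field $K$, where $\mu_{\ell^2}\subset K$ holds automatically; you are invoking it for an arbitrary $F$. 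The fix is easy but must be made explicit: for $\ell$ odd one has a direct-sum splitting $\H^2(\Gc_F^a)=\wedge^2(\H^1)\oplus\beta\H^1$ with $\H^2(\Gc_F^a)_{\rm dec}=\wedge^2(\H^1)$, so combining Fact~\ref{fact: cohom / generalities / cup-vs-commutator}(1) with the five-term exact sequence and Merkurjev--Suslin yields the required exact sequence $\H^1(\Gc_F^{(2)}/\Gc_F^{(3)})\to\wedge^2(\H^1)\to\H^2(\Gc_F)\to 0$ without any Bockstein hypothesis; for $\ell=2$ one has $\mu_{2\ell}=\mu_4=\mu_{\ell^2}$ and your argument applies verbatim. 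As a minor aside, the fact that the kernel of $\wedge^2(\k_1(F))\to\k_2(F)$ is generated by Steinberg elements is the \emph{definition} of Milnor K-theory, not Merkurjev--Suslin; the latter is needed only to identify $\k_2(F)$ with $\H^2(\Gc_F)$.
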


\subsection{Minimized Decomposition Theory}
\label{subsection: localthy / mindec}

For a valuation $v$ of $F$, we consider the {\bf minimized inertia and decomposition} groups of $v$, which are defined as follows
\[ \I_v = \Hom(F^\times/\U_v,\Z/\ell), \ \D_v = \Hom(F^\times/\U_v^1,\Z/\ell). \]
Note that one has $\I_v \subset \D_v \subset \gfrak(F)$, and both $\I_v$ and $\D_v$ are closed subgroups of $\gfrak(F)$.

For $\sigma \in \D_v$, considered as a homomorphism $\sigma : F^\times/\U_v^1 \rightarrow \Z/\ell$, we write $\sigma_v$ for the restriction of $\sigma$ to $\U_v/\U_v^1 = Fv^\times$.
In particular, the map $\sigma \mapsto \sigma_v$ yields a canonical morphism $\D_v \rightarrow \gfrak(Fv)$.

\begin{fact}
\label{fact: localthy / mindec / mindec}
  Let $(F,v)$ be a valued field, and let $w$ be a valuation of $Fv$.
  Then the following hold:
  \begin{enumerate}
    \item The canonical map $\sigma \mapsto \sigma_v : \D_v \rightarrow \gfrak(Fv)$ induces an isomorphism $\D_v/\I_v \cong \gfrak(Fv)$.
    \item One has $\I_v \subset \I_{w \circ v} \subset \D_{w \circ v} \subset \D_v$.
    \item Identifying $\D_v/\I_v$ with $\gfrak(Fv)$ as in (1), one has $\D_{w \circ v}/\I_v = \D_w$ and $\I_{w \circ v}/\I_v = \I_w$.
  \end{enumerate}
\end{fact}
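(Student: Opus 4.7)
The three assertions should follow from unwinding the definitions of $\I_v, \D_v$ as $\Hom$-groups into $\Z/\ell$, together with some basic homological algebra. I will treat them in order.

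For (1), the key short exact sequence is
\[ 1 \to \U_v/\U_v^1 \to F^\times/\U_v^1 \to F^\times/\U_v \to 1, \]
whose terms are canonically identified with $(Fv)^\times$, a quotient of $F^\times$, and $vF$ respectively. Applying $\Hom(-,\Z/\ell)$ to this sequence, the restriction map $\D_v \to \gfrak(Fv)$ appears as the middle-to-left map, and its kernel is precisely the image of $\Hom(vF,\Z/\ell) = \I_v$. So the content is surjectivity, i.e.\ the vanishing of $\Ext^1(vF,\Z/\ell)$. For this I will use that $vF$ is torsion-free, which makes multiplication by $\ell$ on $vF$ injective; applying $\Hom(-,\Z/\ell)$ to $0 \to vF \xrightarrow{\ell} vF \to vF/\ell \to 0$ and using $\Ext^2 = 0$ for abelian groups yields that $\ell$ acts surjectively on $\Ext^1(vF,\Z/\ell)$, which is already annihilated by $\ell$, hence zero. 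This is the main technical point, but it is a standard and quick argument.

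For (2), I will simply compare unit groups using the description $\Oc_{w\circ v} = \pi_v^{-1}(\Oc_w)$ and $\mf_{w\circ v} = \pi_v^{-1}(\mf_w)$, where $\pi_v : \Oc_v \to Fv$ is the residue map. From this one reads off immediately that $\U_{w\circ v} \subset \U_v$ and that $\mf_v \subset \mf_{w\circ v}$, so $\U_v^1 \subset \U_{w\circ v}^1$. The first inclusion, together with the defining $\Hom$-description, gives $\I_v \subset \I_{w\circ v}$, while the second gives $\D_{w\circ v} \subset \D_v$. The middle inclusion $\I_{w\circ v} \subset \D_{w\circ v}$ is inherent in the definitions.

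For (3), I will compute the images of $\I_{w\circ v}$ and $\D_{w\circ v}$ under the surjection $\D_v \twoheadrightarrow \gfrak(Fv)$ from (1). The point is that the residue map $\U_v \twoheadrightarrow (Fv)^\times$ pulls $\U_w$ back to $\U_{w\circ v}$ and pulls $\U_w^1$ back to $\U_{w\circ v}^1$, both being immediate from the descriptions of $\Oc_{w\circ v}$ and $\mf_{w\circ v}$ above. Hence for $\sigma \in \D_v$ with image $\sigma_v \in \gfrak(Fv)$, one has $\sigma_v \in \D_w$ iff $\sigma$ kills $\U_{w\circ v}^1$ iff $\sigma \in \D_{w\circ v}$, and similarly $\sigma_v \in \I_w$ iff $\sigma \in \I_{w\circ v}$. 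Combining this with the surjectivity established in (1) to lift elements of $\D_w$ and $\I_w$, and noting from (2) that $\I_v$ lies inside both $\I_{w\circ v}$ and $\D_{w\circ v}$, we obtain the claimed identifications $\D_{w\circ v}/\I_v = \D_w$ and $\I_{w\circ v}/\I_v = \I_w$ under the isomorphism of (1). No single step here is particularly hard; the only non-formal input is the $\Ext^1$-vanishing in (1), and everything else is a matter of carefully tracking subgroups through the residue map.
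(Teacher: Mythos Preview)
Your proof is correct. The paper itself does not give an argument: it says assertion (2) is clear and refers assertions (1) and (3) to \cite{Topaz:2015}*{Lemma 2.1(1)(2)}. What you have written is presumably close to what appears in that reference, since the statement really is just a matter of unwinding the $\Hom(-,\Z/\ell)$ definitions along the standard short exact sequences of unit groups, with the only substantive input being the vanishing of $\Ext^1(vF,\Z/\ell)$ for torsion-free $vF$. Your handling of that point and of the pullback identities $\U_{w\circ v} = \pi_v^{-1}(\U_w)$, $\U_{w\circ v}^1 = \pi_v^{-1}(\U_w^1)$ is clean and complete.
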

\begin{proof}
  Assertion (2) is clear, while assertions (1) and (3) are \cite{Topaz:2015}*{Lemma 2.1(1)(2)}.
\end{proof}

It turns out that the C-pair condition in the minimized decomposition group of a valuation can actually be completely determined from the residue field.
We summarize this property in the following fact.

\begin{fact}
\label{fact: localthy / mindec / c-pair-mindec}
  Let $(F,v)$ be a valued field and let $\sigma,\tau \in \D_v$ be given such that $\sigma(-1)=\tau(-1) = 0$.
  Then the following are equivalent
  \begin{enumerate}
    \item $\sigma,\tau$ form a C-pair in $\gfrak(F)$.
    \item $\sigma_v,\tau_v$ form a C-pair in $\gfrak(Fv)$.
  \end{enumerate}
\end{fact}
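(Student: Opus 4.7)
The plan is to prove the two implications separately; the direction $(1) \Rightarrow (2)$ is nearly tautological, while $(2) \Rightarrow (1)$ requires a case analysis on the valuations $v(x)$ and $v(1-x)$, and this is where the hypothesis $\sigma(-1)=\tau(-1)=0$ will be used.

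For $(1) \Rightarrow (2)$, I would take any $\bar y \in Fv \smallsetminus \{0,1\}$ and choose a lift $y \in \U_v$ with $\bar y = y \bmod \mf_v$; then $1 - y \in \U_v$ as well, because $v(1-y) = v(1-\bar y) = 0$. By the very definition of $\sigma_v$ as the restriction of $\sigma$ to $\U_v / \U_v^1 = Fv^\times$, we have $\sigma_v(\bar y) = \sigma(y)$, $\sigma_v(1-\bar y) = \sigma(1-y)$, and similarly for $\tau$. The C-pair relation for $\sigma,\tau$ applied to $x = y$ then transports directly to the required C-pair relation for $\sigma_v, \tau_v$ applied to $\bar y$.

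For $(2) \Rightarrow (1)$, fix $x \in F \smallsetminus \{0,1\}$. Because of the ultrametric triangle inequality applied to $1 = x + (1-x)$, the pair $(v(x), v(1-x))$ can only take one of four forms, and I would treat each one separately. If $v(x) = v(1-x) = 0$, set $\bar x := x \bmod \mf_v \in Fv \smallsetminus \{0,1\}$, and the desired equality follows from applying $(2)$ to $\bar x$, exactly as in the reverse direction. If $v(x) > 0$, then $1-x \in 1 + \mf_v = \U_v^1$, so both $\sigma(1-x)$ and $\tau(1-x)$ vanish since $\sigma, \tau \in \D_v = \Hom(F^\times/\U_v^1, \Z/\ell)$; the C-pair identity then reduces to $0 = 0$. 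The case $v(1-x) > 0$, $v(x) = 0$ is symmetric: now $x \in \U_v^1$ so $\sigma(x) = \tau(x) = 0$.

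The case $v(x) < 0$ is the only one where the hypothesis $\sigma(-1) = \tau(-1) = 0$ is genuinely used. Here one also has $v(1-x) = v(x)$, and the factorization $1-x = (-x)(1 - 1/x)$ combined with $1 - 1/x \in \U_v^1$ yields $\sigma(1-x) = \sigma(-1) + \sigma(x) + \sigma(1 - 1/x) = \sigma(x)$, and similarly $\tau(1-x) = \tau(x)$; the C-pair identity then becomes the trivial equality $\sigma(x)\tau(x) = \sigma(x)\tau(x)$. The main (and only slightly delicate) obstacle is keeping straight which terms vanish in each case and, in the negative-valuation case, correctly invoking $\sigma(-1) = \tau(-1) = 0$; once the case-analysis bookkeeping is done, the proof is a direct verification.
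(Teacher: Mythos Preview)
Your proof is correct. The paper does not actually prove this fact; it simply cites \cite{Topaz:2015}*{Lemma 2.1(3)}. Your direct case analysis on the signs of $v(x)$ and $v(1-x)$ is the standard argument and is exactly what one would expect to find behind that citation, so in substance there is no divergence. The only thing to add is that your four cases are exhaustive because the ultrametric inequality forbids $v(x)>0$ and $v(1-x)>0$ simultaneously (else $v(1)>0$), and forbids exactly one of them being negative; you implicitly use this but it is worth stating.
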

\begin{proof}
  See \cite{Topaz:2015}*{Lemma 2.1(3)}.
\end{proof}

\subsection{Quasi-Divisorial Valuations}
\label{subsection: localthy / qpd}

A valuation $v$ of $K$ is called a {\bf quasi-divisorial valuation of $K|k$} provided that $v$ satisfies the following properties:
\begin{enumerate}
  \item $vK$ contains no non-trivial $\ell$-divisible convex subgroups.
  \item $vK/vk$ is isomorphic to $\Z$ as an abstract group.
  \item $\trdeg(K|k)-1 = \trdeg(Kv|kv)$.
\end{enumerate}
For a quasi-divisorial valuation $v$ of $K|k$, it turns out that the residue field $Kv$ is a function field of transcendence degree $\trdeg(K|k)-1$ over $kv$ (cf. \cite{Pop:2006}*{Fact 5.5}).
In particular we can consider quasi-divisorial valuations $w$ of $Kv|kv$ as well, as long as $\trdeg(Kv|kv) = \trdeg(K|k)-1 \geq 1$.
For $r \leq \trdeg(K|k)$, we say that a valuation $v$ is an {\bf $r$-quasi-divisorial valuation of $K|k$} if $v$ is the valuation-theoretic composition of $r$ quasi-divisorial valuations as described above.

We will sometimes want to keep track of each term in the composition defining an $r$-quasi-divisorial valuation.
In such cases, we will generally consider the {\bf flag of quasi-divisorial valuations}:
\[ \vbf = (v_1,\ldots,v_r) \]
associated to an $r$-quasi-divisorial valuation $v_r$ of $K|k$.
Namely, $v_i/v_{i-1}$ is a quasi-divisorial valuation on $Kv_{i-1}|kv_{i-1}$ for all $i = 1,\ldots,r$; here and throughout, we write $v_0$ for the trivial valuation so that $Kv_0 = K$.
In particular, for all $i =1,\ldots,r$, the valuation $v_i$ is an $i$-quasi-divisorial valuation of $K|k$ which refines $v_{i-1}$.

For any valuation $v$ of $K$, recall that we write
\[ \Ufrak_v = \{\U_v\}_K, \ \ \Ufrak_v^1 = \{\U_v^1\}_K. \]
This notation is compatible with the notation/terminology from \S\ref{subsection: localthy / mindec}, as follows.
Under the Kummer pairing $\Gc_K^a \times \k_1(K) \rightarrow \mu_\ell$, the subgroup $\Ufrak_v$ resp. $\Ufrak_v^1$ of $\k_1(K)$ is precisely the orthogonal to the minimized inertia group $\I_v$ resp. minimized decomposition group $\D_v$ of $v$.

The main results concerning the \emph{local theory} (in the abelian-by-central setting) state that the minimized inertia and decomposition groups of $r$-quasi-divisorial valuations are preserved under the action of elements of $\Autc(\Gc_K^a)$, and that the partially ordered structure of the associated valuations is preserved as well.
We present the following theorem merely as a translation of these results which replaces $\I_v$ resp. $\D_v$ with $\Ufrak_v$ resp. $\Ufrak_v^1$, and $\Autc(\Gc_K^a)$ with $\Autm(\k_1(K))$.
The fact that elements of $\Autc(\Gc_K^a)$ preserve minimized decomposition/inertia groups of quasi-divisorial valuations was first proven by {\sc Pop} in \cite{Pop:2010a}, which uses the Galois context and the usual inertia/decomposition groups.
Nevertheless, the key arguments from loc.~cit. can be made to work in the minimized context as well.
In any case, in order to keep things precise, we will instead use the reference \cite{Topaz:2015} which uses the minimized context exclusively.

\begin{theorem}
\label{theorem: localthy / qpd / main-qpd}
  Let $\sigma \in \Autm(\k_1(K))$ be given.
  Then the following hold:
  \begin{enumerate}
    \item Let $v$ be an $r$-quasi-divisorial valuation of $K|k$, with $r < \trdeg(K|k)$.
    Then there exists a unique $r$-quasi-divisorial valuation $v^\sigma$ of $K|k$ such that
    \[ \sigma\Ufrak_v = \Ufrak_{v^\sigma}, \ \ \sigma\Ufrak_v^1 = \Ufrak_{v^\sigma}^1. \]
    \item Let $v$ be an $r$-quasi-divisorial valuation of $K|k$ and $w$ an $s$-quasi-divisorial valuation of $K|k$, with $r,s < \trdeg(K|k)$, and let $v^\sigma,w^\sigma$ be as in (1) above.
    Then $v$ is a coarsening of $w$ if and only if $v^\sigma$ is a coarsening of $w^\sigma$.
  \end{enumerate}
\end{theorem}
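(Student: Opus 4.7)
The plan is to translate the statement from the Milnor K-theoretic setting into the minimized Galois-theoretic setting via Kummer duality, and then invoke the local theory established in \cite{Pop:2010a} and its minimized refinement in \cite{Topaz:2015}.

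First I would apply Theorem \ref{theorem: cohom / kummer / galois-to-milnor}(2) to the given $\sigma \in \Autm(\k_1(K))$ to obtain a Kummer-dual $\sigma^* \in \Autc(\Gc_K^a)$. Since $\mu_{2\ell} \subset k \subset K$, one can fix an identification $\mu_\ell \cong \Z/\ell$ of $\Gal_K$-modules and view $\sigma^*$ as an automorphism of $\gfrak(K) \cong \Gc_K^a$. By Fact \ref{fact: localthy / c-pairs / c-vs-cl}, this $\sigma^*$ is then compatible with C-pairs, which is the key hypothesis one needs in order to apply the local theory.

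Next I would translate the data $(\Ufrak_v,\Ufrak_v^1)$ via Kummer duality. As noted in \S\ref{subsection: localthy / qpd}, the subgroups $\Ufrak_v$ and $\Ufrak_v^1$ of $\k_1(K)$ are precisely the orthogonal complements of $\I_v$ and $\D_v$ respectively under the Kummer pairing $\Gc_K^a \times \k_1(K) \rightarrow \mu_\ell$. Consequently, the existence of an $r$-quasi-divisorial valuation $v^\sigma$ satisfying $\sigma\Ufrak_v = \Ufrak_{v^\sigma}$ and $\sigma\Ufrak_v^1 = \Ufrak_{v^\sigma}^1$ is equivalent to the assertion that $\sigma^*$ maps $\I_v$ and $\D_v$ onto the minimized inertia and decomposition subgroups of some $r$-quasi-divisorial valuation of $K|k$. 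This is precisely the main content of the local theory for C-pair compatible automorphisms, as developed in \cite{Pop:2010a} (in the abelian-by-central Galois context) and reformulated in the minimized setting in \cite{Topaz:2015}. The uniqueness of $v^\sigma$ then follows from the fact that an $r$-quasi-divisorial valuation is determined by its minimized inertia subgroup inside $\gfrak(K)$.

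For assertion (2), the coarsening relation among quasi-divisorial valuations corresponds via Fact \ref{fact: localthy / mindec / mindec}(2) to an inclusion of minimized inertia subgroups. Since $\sigma^*$ is a bijection of $\gfrak(K)$ sending $\I_v$ to $\I_{v^\sigma}$ and $\I_w$ to $\I_{w^\sigma}$, it preserves such containments, yielding the required compatibility. The main obstacle here is not the translation itself, which is mechanical once the Kummer and C-pair dictionaries are in place, but rather verifying that the hypotheses of the local theory results in \cite{Pop:2010a} and \cite{Topaz:2015} are met in our situation — in particular the transcendence-degree bound $r < \trdeg(K|k)$ guaranteeing that the successive residue fields remain function fields of positive transcendence degree, and the use of the minimized formulation to accommodate valuations with residue characteristic possibly equal to $\ell$. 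Since $k$ is algebraically closed with $\Char k \neq \ell$ and we work throughout in the minimized setting, all these hypotheses are satisfied.
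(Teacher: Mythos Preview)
Your translation strategy via Kummer duality and C-pairs is exactly what the paper does, but there is a genuine gap in how you invoke the local theory. The results you cite --- \cite{Pop:2010a}*{Theorem 1} and \cite{Topaz:2015}*{Theorem D} --- only establish the case $r=1$: they show that a C-pair compatible automorphism of $\gfrak(K)$ permutes the minimized inertia/decomposition pairs of \emph{quasi-divisorial} valuations, not of $r$-quasi-divisorial valuations for $r>1$. The paper closes this gap by induction on $r$: writing $v = w \circ w_0$ with $w_0$ an $(r-1)$-quasi-divisorial valuation and $w$ quasi-divisorial on $Kw_0|kw_0$, one uses the inductive hypothesis to obtain $w_1 := w_0^\sigma$, then uses Fact~\ref{fact: localthy / mindec / mindec}(1) to identify $\D_{w_0}/\I_{w_0} \cong \gfrak(Kw_0)$ and Fact~\ref{fact: localthy / mindec / c-pair-mindec} to verify that the induced isomorphism $\gfrak(Kw_0) \to \gfrak(Kw_1)$ is again C-pair compatible (here the hypothesis $r < \trdeg(K|k)$ ensures $\trdeg(Kw_0|kw_0) \geq 2$). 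One can then reapply the base case at the residue level to find $w'$, and set $v^\sigma = w' \circ w_1$ via Fact~\ref{fact: localthy / mindec / mindec}(2)(3). This inductive descent through residue fields is the missing ingredient in your proposal.

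For assertion (2), your appeal to Fact~\ref{fact: localthy / mindec / mindec}(2) only gives one implication: if $v$ coarsens $w$ then $\I_v \subset \I_w$. The converse is not formal; it uses the hypothesis that the value groups of quasi-divisorial valuations contain no non-trivial $\ell$-divisible convex subgroups, and the paper cites \cite{Topaz:2015}*{Lemma 3.1} (or \cite{Topaz:2013a}*{Lemma 3.4}) for this. The same lemma is what justifies your uniqueness claim that an $r$-quasi-divisorial valuation is determined by its minimized inertia group.
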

\begin{proof}
  Before we begin the proof, we choose an isomorphism $\mu_\ell \cong \Z/\ell$ which will be fixed throughout, and which allows us to identify $\Gc_K^a$ with $\gfrak(K)$ by Kummer theory.

  \vskip 5pt
  \emph{Proof of (1):}
  Let $\sigma^*$ denote the element of $\Autc(\Gc_K^a)$ which is the dual of $\sigma$ under the Kummer pairing (see Theorem \ref{theorem: cohom / kummer / galois-to-milnor}(1)).
  Recall that $\Ufrak_v$ resp. $\Ufrak_v^1$ is the orthogonal of $\I_v$ resp. $\D_v$ under this Kummer pairing
  \[ \Gc_K^a \times \k_1(K) \rightarrow \mu_\ell \cong \Z/\ell. \]
  Therefore, it suffices to show that there exists an $r$-quasi-divisorial valuation $v^\sigma$ such that
  \[ \sigma^*\I_v = \I_{v^\sigma}, \ \ \sigma^*\D_v = \D_{v^\sigma}, \]
  as the uniqueness of this $v^\sigma$ will be discussed at the end of the proof.

  Recall from Theorem \ref{theorem: cohom / kummer / galois-to-milnor}(1) that $\sigma^*$ is compatible with $\Rcal$, hence $\sigma^*$ is compatible with C-pairs by Fact \ref{fact: localthy / c-pairs / c-vs-cl}.
  The proof now proceeds by induction on $r$.
  The base case $r = 1$ follows immediately from \cite{Topaz:2015}*{Theorem D} and/or \cite{Pop:2010a}*{Theorem 1}.

  The inductive case proceeds as follows.
  First, write an $r$-quasi-divisorial valuation $v$ as $v = w \circ w_0$ with $w_0$ an $(r-1)$-quasi-divisorial valuation and $w$ a quasi-divisorial valuation of $Kw_0|kw_0$.
  By induction, there exists an $(r-1)$-quasi-divisorial valuation $w_1 = w_0^\sigma$ such that
  \[ \sigma^* \D_{w_0} = \D_{w_1},\ \ \sigma^*\I_{w_0} = \I_{w_1}. \]
  Thus, $\sigma^*$ induces an isomorphism
  \[ \sigma^* : \gfrak(Kw_0) = \D_{w_0}/\I_{w_0} \xrightarrow{\cong} \D_{w_1}/\I_{w_1} = \gfrak(Kw_1) \]
  by Fact \ref{fact: localthy / mindec / mindec}.
  Also, note that $\trdeg(Kw_0|kw_0) \geq 2$ by assumption.
  By Fact \ref{fact: localthy / mindec / c-pair-mindec}, this isomorphism $\sigma^* : \gfrak(Kw_0) \rightarrow \gfrak(Kw_1)$ is compatible with C-pairs.
  Using \cite{Topaz:2015}*{Theorem D} again, it follows that there exists a quasi-divisorial valuation $w'$ of $Kw_1|kw_1$ such that $\sigma^* \I_w = \I_{w'}$ and $\sigma^* \D_w = \D_{w'}$.
  Letting $v^\sigma := w' \circ w_1$, it follows from Fact \ref{fact: localthy / mindec / mindec} that $\sigma^* \I_v = \I_{v^\sigma}$ and $\sigma^*\D_w = \D_{v^\sigma}$, as required.

  \vskip 5pt
  \emph{Proof of (2):}
  Since $vK$ and $wK$ contain no non-trivial $\ell$-divisible convex subgroups, it follows from \cite{Topaz:2015}*{Lemma 3.1} or \cite{Topaz:2013a}*{Lemma 3.4} that $v$ is a coarsening of $w$ if and only if $\I_v \subset \I_w$.
  By assertion (1), this is true if and only if $\I_{v^\sigma} \subset \I_{w^\sigma}$, which is similarly equivalent to $v^\sigma$ being a coarsening of $w^\sigma$.
  This observation also implies that $v^\sigma$ is uniquely determined by $v$ and by $\sigma$.
\end{proof}

We will continue to use the notation $v^\sigma$ which was implicitly introduced in Theorem \ref{theorem: localthy / qpd / main-qpd}.
In other words, for an $r$-quasi-divisorial valuation $v$ of $K|k$ with $r < \trdeg(K|k)$, and $\sigma \in \Autm(\k_1(K))$, we denote by $v^\sigma$ the unique $r$-quasi-divisorial valuation of $K|k$ such that
\[ \sigma \Ufrak_v  = \Ufrak_{v^\sigma}, \ \ \sigma\Ufrak_v^1 = \Ufrak_{v^\sigma}^1.\]
Since $v^\sigma$ is compatible with coarsening/refinement of $v$, we may also use this notation for flags of quasi-divisorial valuations.
Namely, if $(v_1,\ldots,v_r)$ is a flag of quasi-divisorial valuations of $K|k$, then $(v_1^\sigma,\ldots,v_r^\sigma)$ is again a flag of quasi-divisorial valuations of $K|k$.

\subsection{Divisorial Valuations}
\label{subsection: localthy / pd}

An $r$-quasi-divisorial valuation is called {\bf $r$-divisorial} provided that the restriction of $v$ to $k$ is trivial.
As in the quasi-divisorial setting, we may wish to consider a \emph{flag} of divisorial valuations
\[ \vbf = (v_1,\ldots,v_r) \]
where $v_i/v_{i-1}$ is a divisorial valuation of $Kv_{i-1}|kv_{i-1}$ for all $i \geq 1$; as before, here $v_0$ denotes the trivial valuation of $K$.

It turns out (using the valuative criterion for properness) that any divisorial valuation indeed arises from some Weil-prime-divisor on some normal model of $K|k$.
More generally, any $r$-divisorial valuation arises from a \emph{flag} of Weil-prime-divisors on some normal model.
See \cite{Pop:2006}*{\S5} for more details on this.

In general, it is still a major open question to determine which $r$-quasi-divisorial valuations are actually $r$-divisorial, using the group-theoretical structure of $\Gc_K^c$ resp. the ring structure of $\k_*(K)$.
Nevertheless, in our context, we can use \emph{geometric subgroups} (as defined in \S\ref{section: summary}) which arise from $K_0$ to distinguish the $r$-divisorial valuations among the $r$-quasi-divisorial ones.
See also \cite{Pop:2010a}*{Theorem 19} for a related result which uses a similar argument.

\begin{proposition}
\label{proposition: localthy / pd / pd-detection}
  Let $v$ be an $r$-quasi-divisorial valuation of $K|k$, with $r < \trdeg(K|k)$.
  Then the following are equivalent:
  \begin{enumerate}
    \item The valuation $v$ is an $r$-divisorial valuation of $K|k$.
    \item There exists some $t \in K_0 \smallsetminus k_0$ such that the intersection $\Kfrak(t) \cap \Ufrak_v^1$ is finite.
  \end{enumerate}
\end{proposition}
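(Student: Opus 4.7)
The plan is to study the intersection via the restriction $w := v|_L$, where $L := \Kbb(t)$, exploiting the injection $\k_1(L) \hookrightarrow \k_1(K)$ (which holds since $L$ is relatively algebraically closed in $K$) to identify $\Kfrak(t)$ with $\k_1(L)$. The key dichotomy I will use is whether $w$ is trivial on $L$: when $w$ is non-trivial, the principal $w$-units $\U_w^1 \subset \U_v^1 \cap L^\times$ will force $\Kfrak(t) \cap \Ufrak_v^1$ to contain an infinite image; when $w$ is trivial, $L$ embeds into $Kv$ via the residue map and the intersection will reduce to a finite Kummer-theoretic quotient. Accordingly, $(1) \Rightarrow (2)$ amounts to producing $t$ for which $w$ is trivial, while the contrapositive of $(2) \Rightarrow (1)$ shows $w$ is forced to be non-trivial for every $t \in K_0 \setminus k_0$.

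For $(1) \Rightarrow (2)$, assume $v$ is $r$-divisorial, so $v|_k = 0$. The hypothesis $r < \trdeg(K|k)$ makes $Kv$ of positive transcendence degree over $k = kv$, and combining this with $K = K_0 \cdot k$ and the regularity of $K_0 | k_0$, I would show that $v_0 := v|_{K_0}$ is non-trivial and that the residue field $K_0 v_0$ has positive transcendence degree over $k_0$. Choosing $t \in K_0 \cap \Oc_{v_0}$ with $\bar t$ transcendental over $k_0$ produces $t \in K_0 \setminus k_0$ with $v(t) = 0$ and $\bar t$ transcendental over $k$ in $Kv$. Since value groups are torsion-free, the triviality of $v$ on $k(t)$ extends to $L = \Kbb(t)$, i.e., $w = 0$, so the residue map embeds $L$ onto a subfield $\hat L \subset Kv$. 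A direct computation then gives
\[
\Kfrak(t) \cap \Ufrak_v^1 \;\cong\; (\hat L \cap M^\ell) / \hat L^\ell,
\]
where $M := \overline{\hat L} \cap Kv$ is the relative algebraic closure of $\hat L$ in $Kv$. Since $Kv$ is finitely generated over $k$, the degree $[M : \hat L]$ is finite, and a Kummer-theoretic classification of the exponent-$\ell$ abelian subextensions of $M | \hat L$ shows this quotient is finite.

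For $(2) \Rightarrow (1)$, I argue the contrapositive: if $v$ is $r$-quasi-divisorial but not $r$-divisorial, i.e., $v|_k \neq 0$, then for every $t \in K_0 \setminus k_0$ the restriction $w = v|_L$ extends $v|_k$ and is thus non-trivial. In particular $\U_w^1 \subset \U_v^1 \cap L^\times$, and its image in $\k_1(K)$ lies in $\Kfrak(t) \cap \Ufrak_v^1$. Because $L | k$ is a function field of dimension $1$ over the infinite algebraically closed field $k$, the quotient $L^\times / L^{\times \ell}$ is an infinite-dimensional $\Z/\ell$-vector space, and an analysis via the divisor map on the smooth projective model of $L$ shows that $\{\U_w^1\}_L$ differs from $L^\times / L^{\times \ell}$ by at most the one-dimensional summand at the place of $w$, hence remains infinite. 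These independent classes persist in $\k_1(K)$ via the injection $\k_1(L) \hookrightarrow \k_1(K)$, so $\Kfrak(t) \cap \Ufrak_v^1$ is infinite. The main obstacle will be the Kummer-theoretic identification of the intersection in $(1) \Rightarrow (2)$, along with securing a suitable $t \in K_0 \setminus k_0$; the latter crucially uses the regularity of $K_0 | k_0$ and the hypothesis $r < \trdeg(K|k)$ to ensure $K_0 v_0 | k_0$ has positive transcendence degree.
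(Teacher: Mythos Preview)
Your argument for $(1)\Rightarrow(2)$ is essentially the paper's: choose $t\in K_0$ whose residue is transcendental over $k$, so $L=\Kbb(t)$ embeds in $Kv$, and identify $\Kfrak(t)\cap\Ufrak_v^1$ with the finite kernel of $\k_1(\hat L)\to\k_1(M)$ for $M$ the relative algebraic closure of $\hat L$ in $Kv$. This is fine.

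The gap is in your contrapositive for $(2)\Rightarrow(1)$. You correctly observe that $w=v|_L$ is non-trivial (since $w|_k=v|_k\neq 0$) and that $\{\U_w^1\}_L$ injects into $\Kfrak(t)\cap\Ufrak_v^1$. But your justification that $\{\U_w^1\}_L$ is infinite is wrong: you claim it ``differs from $L^\times/L^{\times\ell}$ by at most the one-dimensional summand at the place of $w$,'' treating $w$ as a closed point on the smooth projective model of $L|k$. It is not. Precisely because $w|_k\neq 0$, the valuation $w$ is \emph{not} a geometric place of $L|k$, so the divisor map on the model says nothing about $\U_w^1$. In fact the codimension of $\{\U_w^1\}_L$ in $\k_1(L)$ can be infinite: if $w$ is a Gauss-type extension of $v|_k$ with $wL=wk$ and $\trdeg(Lw|kw)=1$, then $\{\U_w\}_L/\{\U_w^1\}_L\cong\k_1(Lw)$ is infinite-dimensional.

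The paper avoids this entirely by exhibiting explicit elements: for $a\in k^\times$ with $v(a)<v(t)$ one has $\{a-t\}_K=\{1-a^{-1}t\}_K\in\Ufrak_v^1\cap\Kfrak(t)$ (using $\{a\}_K=0$ since $k$ is algebraically closed), and the $\{t-a\}_{k(t)}$ are linearly independent in $\k_1(k(t))$ via the divisor map on $\Pbb^1_t$. There are infinitely many such $a$ because $vk$ is a non-trivial divisible group. This argument uses the projective model of $k(t)|k$ only to verify independence of the explicit elements, not to analyze $w$; that is the distinction you need.
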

\begin{proof}
  First of all, note that $K|K_0$ and $k|k_0$ are both algebraic extensions.
  Thus, the associated residue extensions, $Kv|K_0v$ and $kv|k_0v$, are also algebraic.
  Moreover, since $k$ is algebraically closed, the residue field $kv$ is also algebraically closed.
  In particular, if $\trdeg(Kv|kv) \geq 1$, then $K_0v$ cannot be contained in $kv$, and therefore there exist (many) elements $t$ in $\U_v \cap K_0^\times$ whose image $\bar t$ in $Kv$ is transcendental over $kv$.

  Now assume that $v$ is $r$-divisorial, hence $v$ is trivial on $k$.
  Since $r < \trdeg(K|k)$, it follows that $Kv$ is a function field of transcendence degree $\geq 1$ over $k$.
  Arguing as above, we see that there exists some element
  \[ t \in (\U_v \cap K_0^\times) \smallsetminus k_0 \]
  such that the image $\bar t$ of $t$ in $Kv$ is transcendental over $k$.
  This implies that $\Kbb(t)^\times \subset \U_v$ and that the image of $\Kbb(t)$ in $Kv$ is a function field of transcendence degree $1$ over $k$.
  Let $L'$ denote the image of $\Kbb(t)$ in $Kv$ and let $L$ denote the relative algebraic closure of $L'$ in $Kv$.
  Note that $L$ is a finite extension of $L'$, and that $\Kbb(t) \rightarrow L'$ is an isomorphism.

  If we consider the canonical map
  \[ \Kfrak(t) \hookrightarrow \Ufrak_v \twoheadrightarrow \Ufrak_v / \Ufrak_v^1 = \k_1(Kv) \]
  then its kernel is the same as the kernel of $\Kfrak(t) \cong \k_1(L') \rightarrow \k_1(L)$, which is finite by Kummer theory.
  On the other hand, this kernel is precisely $\Ufrak_v^1 \cap \Kfrak(t)$.

  Conversely, assume that $v$ is not $r$-divisorial, hence $v$ is non-trivial on $k$, and let $t \in K_0 \smallsetminus k_0$ be given.
  We must show that $\Kfrak(t) \cap \Ufrak_v^1$ is infinite.
  By replacing $t$ with $t^{-1}$ if needed, we may assume, without loss of generality, that $v(t) \geq 0$.

  Now let $a \in k^\times$ be any element such that $v(a) < 0$.
  For all such $a$, one has $v(a^{-1} t) > 0$, and therefore
  \[ \{a-t\}_K = \{1-a^{-1}t\}_K \in \Ufrak_v^1 \cap \Kfrak(t).\]
  On the other hand, there are infinitely many such $a \in k^\times$, hence the set
  \[ \{\{a-t\}_{k(t)} \ : \ a \in k^\times, \ v(a) < 0 \} \]
  is an infinite linearly-independent subset of $\k_1(k(t))$.
  Since the map given by the composition $\k_1(k(t)) \rightarrow \k_1(\Kbb(t)) \cong \Kfrak(t)$ has a finite kernel by Kummer theory, it follows that $\Ufrak_v^1 \cap \Kfrak(t)$ is infinite, as required.
\end{proof}

\section{Milnor K-Theory of Function Fields}
\label{section: milnorff}


In this section, we will prove some important vanishing and non-vanishing results for the mod-$\ell$ Milnor K-ring of a function field.
In particular, we will show that a significant portion of the ``algebraic-independence'' structure of a function field $K$ over an algebraically closed field $k$ is encoded in the mod-$\ell$ Milnor K-ring of $K$.

\subsection{Vanishing}
\label{subsection: milnorff / vanishing}

We begin by recalling the following fact which follows from some well-known cohomological dimension bounds, combined with the highly celebrated Voevodsky-Rost Theorem \cite{Voevodsky:2011} \cite{Rost:1998} \cite{Weibel:2009}.

\begin{fact}
\label{fact: milnorff / vanishing / vanishing-B-K}
  One has $\k_s(K) = 0$ for all $s > \trdeg(K|k)$.
\end{fact}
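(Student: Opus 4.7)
The plan is to combine the Bloch-Kato conjecture (Voevodsky-Rost theorem) with a standard upper bound on the cohomological $\ell$-dimension of function fields over algebraically closed fields.

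First I would invoke the Voevodsky-Rost theorem \cite{Voevodsky:2011}, \cite{Rost:1998}, \cite{Weibel:2009}, which asserts that the norm-residue map
\[ \k_s(K) \longrightarrow \H^s(\Gal_K, \mu_\ell^{\otimes s}) \]
is an isomorphism for every $s \geq 0$ (using that $\Char K = \Char k \neq \ell$). This reduces the desired vanishing to showing that $\H^s(\Gal_K, \mu_\ell^{\otimes s}) = 0$ for $s > \trdeg(K|k)$.

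Next I would invoke the standard bound on cohomological $\ell$-dimension, due to Serre-Tate. Since $k$ is algebraically closed of characteristic $\neq \ell$, one has $\mathrm{cd}_\ell(k) = 0$. The general inequality $\mathrm{cd}_\ell(L) \leq \mathrm{cd}_\ell(F) + \trdeg(L|F)$ for a finitely generated field extension $L|F$ with $\ell \neq \Char F$ (see e.g.\ \cite{Serre:2002}*{II.\S4.2}) then yields
\[ \mathrm{cd}_\ell(K) \leq \trdeg(K|k). \]
In particular, $\H^s(\Gal_K, M) = 0$ for every $\ell$-primary torsion $\Gal_K$-module $M$ whenever $s > \trdeg(K|k)$; applying this with $M = \mu_\ell^{\otimes s}$ and combining with the Voevodsky-Rost identification above yields the claim.

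There is no real obstacle here; both inputs are standard and directly applicable once one checks the characteristic hypothesis. The only point deserving a line of care is that the cohomological dimension bound applies uniformly to all twists $\mu_\ell^{\otimes s}$, which is automatic since $\mu_\ell^{\otimes s}$ is $\ell$-torsion.
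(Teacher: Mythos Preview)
Your proof is correct and follows essentially the same approach as the paper: both combine the Voevodsky--Rost norm residue isomorphism with the standard bound $\mathrm{cd}_\ell(K) \leq \trdeg(K|k)$ from \cite{Serre:2002}. The paper's version is just a more compressed statement of exactly what you wrote.
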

\begin{proof}
  It is well-known that the field $K$ has $\ell$-cohomological-dimension $\leq \trdeg(K|k)$ (cf. \cite{Serre:2002}*{II.\S4 Proposition 11}).
  Therefore, it follows from the Voevodsky-Rost Theorem \cite{Voevodsky:2011} \cite{Rost:1998} \cite{Weibel:2009} that
  \[ \k_s(K) \cong \H^s(K,\mu_\ell^{\otimes s}) = 0\]
  for all $s > \trdeg(K|k)$.
\end{proof}

\subsection{Non-Vanishing}
\label{subsection: milnorff / nonvanishing}

The non-vanishing results in mod-$\ell$ Milnor K-theory of function fields follow the ``yoga'' that algebraically independent elements should have non-trivial products in Milnor K-theory.
This turns out to be true, but with various exceptions which arise from modding out by $\ell$-th powers.
Nevertheless, it turns out that these exceptions can be avoided because they are related to some ramification phenomena which are concentrated in codimension one.

\begin{lemma}
\label{lemma: milnorff / nonvanishing / alg-indep-coords}
  Let $t_1,\ldots,t_r \in K$ be algebraically independent over $k$.
  Then there exists a non-empty open subset $U$ of $\Abb^r_k$ such that, for all $(a_1,\ldots,a_r) \in U(k)$, one has 
  \[ \{t_1-a_1,\ldots,t_r-a_r\}_K \neq 0. \]
\end{lemma}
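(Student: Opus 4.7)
The plan is to embed $K$ into an iterated Laurent series field over $k$ in which $t_1-a_1,\ldots,t_r-a_r$ appear as the first $r$ Laurent coordinates (up to units), and then to invoke Fact~\ref{fact: milnor / tame / laurent-coords}.

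First I would take a normal projective model $X$ of $K|k$ and let $X_0 \subseteq X$ be the open subscheme on which $t_1,\ldots,t_r$ are all regular, so that they define a dominant morphism $\varphi \colon X_0 \to \Abb^r_k$. Since $k$ is perfect, the smooth locus of $X/k$ is open and dense; I would replace $X_0$ by its intersection with this smooth locus. Assuming for the moment that $K/k(t_1,\ldots,t_r)$ is separable (automatic when $\Char k = 0$), generic smoothness gives a non-empty open $X_1 \subseteq X_0$ on which $\varphi$ is smooth, and $U := \varphi(X_1)$ is then a non-empty open subset of $\Abb^r_k$. For each $a \in U(k)$ I would pick a closed point $x \in X_1$ with $\varphi(x) = a$; such $x$ is automatically $k$-rational, and smoothness of $\varphi$ at $x$ forces the classes of $t_1-a_1,\ldots,t_r-a_r$ to be $k$-linearly independent in $\mf_x/\mf_x^2$, so they extend to a regular system of parameters $(\pi_1,\ldots,\pi_d)$ of $\Oc_{X,x}$ with $\pi_i = t_i-a_i$ for $i \leq r$, where $d = \trdeg(K|k)$. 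Cohen's structure theorem then gives $\widehat{\Oc_{X,x}} \cong k[[\pi_1,\ldots,\pi_d]]$, and composing the canonical injections $K \hookrightarrow \Frac(\widehat{\Oc_{X,x}}) \hookrightarrow L := k((\pi_1,\ldots,\pi_d))$ embeds $K$ into the iterated Laurent series field $L$. Fact~\ref{fact: milnor / tame / laurent-coords}, applied with base field $k$, the $d$ variables $\pi_1,\ldots,\pi_d$, and $f_0 = 1 \in \k_0(k)$, then yields $\{\pi_1,\ldots,\pi_d\}_L \neq 0$; in particular $\{\pi_1,\ldots,\pi_r\}_L = \{t_1-a_1,\ldots,t_r-a_r\}_L \neq 0$, whence by functoriality $\{t_1-a_1,\ldots,t_r-a_r\}_K \neq 0$.

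The main obstacle is the separability hypothesis on $K/k(t_1,\ldots,t_r)$, which can fail in residue characteristic $p \neq \ell$. To handle this I would combine Lemma~\ref{lemma: milnor / insep / insep-iso} with the coprimality of $p$ with $\ell$: for $n$ large enough, the finite purely inseparable extension $K' := K(t_1^{1/p^n},\ldots,t_r^{1/p^n})$ of $K$ has the property that $K'/k(\xi_1,\ldots,\xi_r)$ is separable, where $\xi_i := t_i^{1/p^n}$. Since $k$ is perfect, the identity $(\xi_i - a_i^{1/p^n})^{p^n} = t_i - a_i$ gives $\{t_i-a_i\}_{K'} = p^n\{\xi_i - a_i^{1/p^n}\}_{K'}$, a nonzero scalar multiple in $\k_1(K')$ since $p^n$ is a unit modulo $\ell$. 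Applying the separable argument to the $\xi_i$ in $K'$ yields a non-empty open $V \subseteq \Abb^r_k$ (in the coordinates $b_i := a_i^{1/p^n}$) over which $\{\xi_1-b_1,\ldots,\xi_r-b_r\}_{K'} \neq 0$; the desired $U$ is then the image of $V$ under the $n$-th iterate of the Frobenius morphism $\Abb^r_k \to \Abb^r_k$, which is an open subset of $\Abb^r_k$ since Frobenius is finite flat over the perfect base $k$. Lemma~\ref{lemma: milnor / insep / insep-iso} finally transfers the non-vanishing from $K'$ back to $K$.
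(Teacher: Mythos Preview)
Your argument is correct and follows the same overall strategy as the paper --- complete at a regular closed point so that the $t_i-a_i$ become part of a system of parameters, embed $K$ into a Laurent series field, and invoke Fact~\ref{fact: milnor / tame / laurent-coords} --- but the two proofs organise the details differently, especially the inseparability issue.

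The paper never works on a model of $K$ itself. Instead it extends $t_1,\ldots,t_r$ to a full transcendence base $\tbf=(t_1,\ldots,t_d)$, lets $K'$ be the maximal separable subextension of $K|k(\tbf)$, and takes $Y$ to be the normalisation of $\Abb^d_\tbf$ in $K'$. The open set $U$ is then simply the unramified locus of the finite separable cover $Y\to\Abb^d_\tbf$; over any closed point of $U$ one lifts the regular parameters $(t_1-a_1,\ldots,t_d-a_d)$ to a point of $Y$, completes, and applies Fact~\ref{fact: milnor / tame / laurent-coords} in $K'$. The passage from $K'$ back up to $K$ is a single invocation of Lemma~\ref{lemma: milnor / insep / insep-iso}. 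In other words, the paper handles inseparability by going \emph{down} to the separable subextension of $k(\tbf)$, whereas you go \emph{up} to $K(t_1^{1/p^n},\ldots,t_r^{1/p^n})$ and transport the open set through Frobenius. Both are valid, but the paper's version avoids the case split, the Frobenius bookkeeping, and the (true but not entirely trivial) assertion that $K'/k(\xi_1,\ldots,\xi_r)$ becomes separable for $n$ large: that claim is most cleanly seen by extending to a full transcendence base $t_1,\ldots,t_d$, noting that $K\subset K_s^{1/p^N}$ for the separable closure $K_s$ of $k(t_1,\ldots,t_d)$ in $K$ and some $N$, and then observing that for $n\ge N$ one has $K\cdot k(\xi_1,\ldots,\xi_r)\subset K_s^{1/p^n}$, which is separable over $k(t_1^{1/p^n},\ldots,t_d^{1/p^n})$ and hence over $k(\xi_1,\ldots,\xi_r)$.
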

\begin{proof}
  By extending $t_1,\ldots,t_r$ to a transcendence base $\tbf = (t_1,\ldots,t_d)$ for $K|k$, we may assume without loss of generality that $r = d = \trdeg(K|k)$.
  Let $X = \Abb^d_\tbf$ denote affine $d$-space over $k$ with parameters $\tbf$, so that $k(\tbf)$ is canonically identified with the function field of $X$.
  Let $K'$ be the maximal separable subextension of $K|k(\tbf)$, and let $Y$ denote the normalization of $X$ in $K'$.

  Then $Y \rightarrow X$ is a finite separable (possibly branched) cover of $k$-varieties.
  Since the branch locus of $Y \rightarrow X$ has codimension one, there exists a non-empty open subset $U$ of $X$ such that $Y$ is unramified over $U$.

  Let $x \in U$ be a given closed point.
  Then $x$ corresponds to a $k$-rational point
  \[ (a_1,\ldots,a_d) \in U(k) \subset X(k) = k^d, \]
  and in this case $(t_1-a_1,\ldots,t_d-a_d)$ is a system of regular parameters at $x$.

  If $y$ is any closed point of $Y$ lying above $x$, then $(t_1-a_1,\ldots,t_d-a_d)$ is also a system of regular parameters at $y$, since $y$ is unramified over $x$.
  By taking the $\mf_y$-adic completion at $y$, we obtain a $k$-embedding of $K'$ into the field of Laurent series $k((T_1,\ldots,T_d)) =: L$ which sends $t_i-a_i$ to $T_i$.
  But $\{T_1,\ldots,T_d\}_L \neq 0$ by Fact \ref{fact: milnor / tame / laurent-coords}, so it follows that
  \[ \{t_1-a_1,\ldots,t_d-a_d\}_{K'} \neq 0. \]
  Finally, one has $\{t_1-a_1,\ldots,t_d-a_d\}_K \neq 0$ as well by Lemma \ref{lemma: milnor / insep / insep-iso}.
\end{proof}

The following proposition is our second main ``non-vanishing'' result, and it will be crucial in describing geometric subgroups of $\k_1(K)$.
This will play a primary role in the reconstruction of the ``geometric lattice'' in the proof of the main theorems.

\begin{proposition}
\label{proposition: milnorff / geometric / geometric-maximal}
  Let $t_1,\ldots,t_r \in K$ be algebraically independent over $k$.
  Let $z \in K^\times$ be such that $\{z\}_K \notin \Kfrak(t_1,\ldots,t_r)$.
  Then there exists a non-empty open subset $U$ of $\Abb^r_k$ such that for all $(a_1,\ldots,a_r) \in U(k)$, one has
  \[ \{t_1-a_1,\ldots,t_r-a_r,z\}_K \neq 0. \]
\end{proposition}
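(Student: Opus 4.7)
My plan is to find a discrete rank-one valuation $v$ on $K$ satisfying $v|_F = 0$ (where $F := \Kbb(t_1,\ldots,t_r)$) and $\ell \nmid v(z)$, and then to apply the tame symbol at $v$ in combination with Lemma \ref{lemma: milnorff / nonvanishing / alg-indep-coords} applied to the residue field. First observe that the hypothesis $\{z\}_K \notin \Kfrak(t_1,\ldots,t_r)$ immediately forces $z$ to be transcendental over $F$: otherwise $z$ would lie in the relatively algebraically closed subfield $F$ and $\{z\}_K \in \{F^\times\}_K = \Kfrak(t_1,\ldots,t_r)$, a contradiction.

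Given such a valuation $v$, the conclusion follows quickly. Writing $z = \pi_v^{v(z)} u$ for a uniformizer $\pi_v$ and a unit $u \in \U_v$, and observing that each $t_i - a_i \in F \subset \U_v$, the tame symbol at $v$ computes to
\[ \{t_1-a_1,\ldots,t_r-a_r,z\}_K^v = \pm v(z) \cdot \{t_1-a_1,\ldots,t_r-a_r\}_{Kv}. \]
Because $Kv$ contains $F$ via the residue embedding, the elements $t_1,\ldots,t_r$ remain algebraically independent over $k$ in $Kv$. Lemma \ref{lemma: milnorff / nonvanishing / alg-indep-coords} applied to $Kv$ then produces a non-empty open $U \subset \Abb^r_k$ on which $\{t_1-a_1,\ldots,t_r-a_r\}_{Kv} \neq 0$, and the coprimality $\ell \nmid v(z)$ forces the original product to be non-zero for $(a_1,\ldots,a_r) \in U(k)$.

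To construct $v$, I would begin with the $z$-adic valuation $w$ on the subfield $F(z) \subset K$ (trivial on $F$, with $w(z) = 1$) and seek an extension $v$ of $w$ to $K$ with $\ell \nmid e(v|w)$. Geometrically: on a normal proper model $W_\eta$ of $K \mid F$ --- obtained, say, as the generic fiber of a morphism $\pi : W \to V$ resolving $F \hookrightarrow K$ between normal proper models, where $W_\eta$ is geometrically integral since $F$ is relatively algebraically closed in $K$ --- such $v$ correspond to horizontal prime divisors $E$ with $\ell \nmid v_E(z)$, and their existence is controlled by the exact sequence
\[ 0 \to F^\times \to K^\times \to \operatorname{Div}(W_\eta) \to \operatorname{Cl}(W_\eta) \to 0 \]
reduced mod $\ell$, with obstruction lying in $\operatorname{Cl}(W_\eta)[\ell]$.

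The main obstacle is thus to show that the hypothesis $\{z\}_K \notin \Kfrak(t_1,\ldots,t_r)$ does force $(z)_{W_\eta} \notin \ell \operatorname{Div}(W_\eta)$, i.e., to control or bypass the $\ell$-torsion in the class group of a suitable model. A robust way to circumvent this issue is a Henselization argument: were every extension of $w$ to $K$ to have $\ell \mid e(v|w)$, then tameness of the ramification (residue characteristic $\neq \ell$) together with Galois descent (using $\mu_\ell \subset F$ and $F$ relatively algebraically closed in $K$) would produce an $\ell$-th root of $z$ up to a factor in $F^\times$, forcing $z \in F^\times \cdot K^{\times\ell}$ and contradicting the hypothesis.
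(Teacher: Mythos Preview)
Your strategy hinges on producing a discrete valuation $v$ on $K$ trivial on $F = \Kbb(t_1,\ldots,t_r)$ with $\ell \nmid v(z)$, but no such valuation need exist. Take $\ell$ odd, $r=1$, $F = k(t_1)$, and $K = k(t_1,z,y)$ with $y^\ell = z(z-1)$. Then $K = F \otimes_k k(C)$ for the smooth projective curve $C$ of genus $(\ell-1)/2 \geq 1$, so $F$ is relatively algebraically closed in $K$. On $C$ one has $\divv(z) = \ell P_0 - \ell P_\infty$, and $P_0 - P_\infty$ is a non-principal (in fact non-trivial $\ell$-torsion) class in $\operatorname{Pic}^0(C)$, so $z \notin k(C)^{\times\ell}$; specializing $t_1$ to a constant then shows $\{z\}_K \notin \Kfrak(t_1)$. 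Yet every discrete valuation on $K$ trivial on $F$ is a closed point of $C_F$, where $\divv(z) = \ell(P_0)_F - \ell(P_\infty)_F$, so $\ell \mid v(z)$ for every such $v$. In particular your Henselization/descent claim is false as stated: the $z$-adic place of $F(z)$ has a unique extension to $K$ with $e=\ell$, so $z$ is locally an $\ell$-th power there, but the global obstruction in $\operatorname{Pic}(C_F)[\ell]$ is non-zero and no factorization $z \in F^\times \cdot K^{\times\ell}$ exists. Splitting of a Kummer extension at one place (or even all places) does not force it to be trivial over a function field of positive genus; the phrase ``Galois descent'' is doing no actual work here.

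The paper proceeds in the opposite direction. Rather than a valuation trivial on $F$ for which $z$ is essentially a uniformizer, it constructs, for generic $(a_1,\ldots,a_r)$, an embedding $K' \hookrightarrow k(\eta)((T_1,\ldots,T_r))$ sending $t_i - a_i \mapsto T_i$, arranged so that the image $\bar z$ of $z$ in the residue field $k(\eta)$ still has $\{\bar z\}_{k(\eta)} \neq 0$; iterated tame symbols and Fact~\ref{fact: milnor / tame / laurent-coords} then finish. The substantive content is exactly the construction of such an embedding with residual non-vanishing of $\{\bar z\}$, and this requires geometric input: spreading out over a normal base, controlling the fibers of the resulting family, and a Bertini--Noether argument to ensure $[k((X_2)_x):k((X_1)_x)] = \ell$ generically on the base. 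In the counterexample above the paper's method works immediately: the $(t_1-a)$-adic place has residue field $k(C)$ with $\{\bar z\}_{k(C)} \neq 0$, whence $\{t_1-a,z\}_K \neq 0$ for every $a \in k$.
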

\begin{proof}
  Put $d = \trdeg(K|k)$ and $s := d-r$.
  Let $\tbf = (t_1,\ldots,t_r,y_1,\ldots,y_s)$ be a transcendence base of $K|k$ extending $t_1,\ldots,t_r$.
  Let $K'$ denote the maximal separable subextension of $K|k(\tbf)$.
  By Lemma \ref{lemma: milnor / insep / insep-iso}, there exists some $z' \in K'$ such that $\{z'\}_K = \{z\}_K$.
  By replacing $z$ with such a $z'$, we may assume without loss of generality that $z \in K'$.

  Let $F'$ denote the relative algebraic closure of $k(t_1,\ldots,t_r)$ in $K'$.
  In particular, note that $F'$ is separable over $k(t_1,\ldots,t_r)$ and that $F := \Kbb(t_1,\ldots,t_r)$ is purely inseparable over $F'$.
  Therefore, one has $\{z\}_{K'} \notin \{(F')^\times\}_{K'}$ by Lemma \ref{lemma: milnor / insep / insep-iso}.
  This implies that the field $F'$ is relatively algebraically closed also in $K'(\sqrt[\ell]{z}) =: L$.
  And since $L$ is separable over $k(\tbf)$, it follows that $L$ is regular over $F'$.

  Let $B$ denote the normalization of $\Abb^r_{t_1,\ldots,t_r}$ in $F'$.
  Moreover, we let $X_0$ denote $\Abb^s_{B;y_1,\ldots,y_s}$, an affine $s$-space over $B$ with coordinates $(y_1,\ldots,y_s)$.
  Finally, let $X_1$ denote the normalization of $X_0$ in $K'$, and let $X_2$ denote the normalization of $X_0$ in $L$.

  We will now pass to a sufficiently small non-empty open subset $V$ of $B$ which has the six properties listed below.
  This is possible since each one of these properties holds on a dense open subset of $B$, as described in each item below.
  \begin{enumerate}[leftmargin=*]
    \item The fibers of $X_i \rightarrow B$, $i = 0,1,2$, over points of $V$ are all geometrically integral.
    This is an open condition on $B$ since $k(X_i)$ is regular over $F' = k(B)$, hence $X_i \rightarrow B$ has generically geometrically integral fibers.
    \item Any point of $V$ is unramified over $\Abb^r_{t_1,\ldots,t_r}$.
    This is an open condition on $B$ because the extension $F'|k(t_1,\ldots,t_r)$ is finite and separable, hence the ramification locus of $B \rightarrow \Abb^r_{t_1,\ldots,t_r}$ has codimension one.
    \item For all $x \in V$, the function $z$ is regular and non-zero on the generic point of $(X_1)_x$, the fiber of $X_1 \rightarrow B$ over $x$.
    This is clearly an open condition on $B$, since the support of $z$ has codimension one in $X_1$, and $X_0$ is flat over $B$ of relative dimension $s$.
    \item For all $x \in V$, letting $\bar z$ denote the image of $z$ in $k((X_1)_x)$, one has
    \[ k((X_2)_x) = k((X_1)_x)[\sqrt[\ell]{\bar z}]. \]
    This is an open condition on $B$ because $L = K'[\sqrt[\ell]{z}]$, hence for a sufficiently small non-empty affine open subset of $X_1$, say $\Spec A$, one has $z \in A$ and the normalization of $A$ in $L$ is precisely $A[\sqrt[\ell]{z}]$.
    \item For all $x \in V$, one has $[k(X_2):k(X_0)] = [k((X_2)_x):k((X_0)_x)]$, and therefore one also has $\ell = [k((X_2)_x):k((X_1)_x)]$ and $[k(X_1):k(X_0)] = [k((X_1)_x):k((X_0)_x)]$.
    This is an open condition on $B$ by a standard application of the Bertini-Noether theorem (cf. \cite{Fried:2006}*{Proposition 9.4.3}).
    \item For all $x \in V$, letting $\eta^i_x$ denote the generic point of the fiber $(X_i)_x$ over $x$, the point $\eta^1_x$ is unramified over $\eta^0_x$.
    This is clearly an open condition on $B$, since the branch locus of $X_1 \rightarrow X_0$ is a proper closed subset of $X_0$, and $X_0$ is flat over $B$.
  \end{enumerate}

  Let $U$ denote the image of $V$ in $\Abb^r_{t_1,\ldots,t_r}$, and note that $U$ is a non-empty (hence dense) open subset of $\Abb^r_{t_1,\ldots,t_r}$.
  Suppose that $x \in U$ is a closed point with associated rational point $(a_1,\ldots,a_r) \in U(k) \subset k^r$, and let $y \in V$ be a closed point lying above $x$.
  Then $(t_1-a_1,\ldots,t_r-a_r)$ is a system of regular parameters at $x$.
  Since $y$ is unramified over $x$ by condition (2) above, the tuple $(t_1-a_1,\ldots,t_r-a_r)$ is also a system of regular parameters at $y$.
  Since $X_0 = \Abb^s_B$, we see that $(t_1-a_1,\ldots,t_r-a_r)$ is a system of regular parameters for the generic point of the fiber $(X_0)_y$.
  By condition (6), $(t_1-a_1,\ldots,t_r-a_r)$ is also a system of regular parameters for the generic point $\eta_y$ of the fiber $(X_1)_y$, which is integral by condition (1).
  Finally, by (4) and (5), we know that $\{\bar z\}_{k(\eta_y)} \neq 0$ as an element of $\k_1(k(\eta_y))$.
  Taking the completion with respect to the maximal ideal of $\Oc_{X_1,\eta_y}$, we obtain an embedding of $K'$ into the field of Laurent series
  \[ M := k(\eta_y)((T_1,\ldots,T_r)) \]
  which sends $t_i-a_i$ to $T_i$.
  Fact \ref{fact: milnor / tame / laurent-coords} implies that $\{T_1,\ldots,T_r,z\}_M \neq 0$, hence
  \[ \{t_1-a_1,\ldots,t_r-a_r,z\}_{K'} \neq 0.\]
  Finally, one has $\{t_1-a_1,\ldots,t_r-a_r,z\}_K \neq 0$ by Lemma \ref{lemma: milnor / insep / insep-iso}, and this concludes the proof of the proposition.
\end{proof}

As a corollary, we obtain our primary criterion for constructing geometric subgroups of $\k_1(K)$.
Since it appears in the statement of the corollary, we briefly note that, given $r$ elements $f_1,\ldots,f_r$ of $\k_1(K)$, one obtains a canonical morphism
\[ \{f_1,\ldots,f_r,\bullet\}_K : \k_1(K) \rightarrow \k_{r+1}(K). \]
The description of geometric subgroups of $\k_1(K)$ will use the \emph{kernels} of such morphisms.

\begin{corollary}
\label{corollary: milnorff / geometric / infinite-subsets}
  Let $t_1,\ldots,t_r \in K$ be algebraically independent over $k$.
  Let $S_1,\ldots,S_r$ be arbitrary infinite subsets of $k$.
  Then one has
  \[ \Kfrak(t_1,\ldots,t_r) = \bigcap_{(a_1,\ldots,a_r)} \ker\{t_1-a_1,\ldots,t_r-a_r,\bullet\}_K \]
  where $(a_1,\ldots,a_r)$ varies over the elements of $S_1 \times \cdots \times S_r$.
\end{corollary}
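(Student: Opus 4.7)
The plan is to establish the two inclusions separately. For the inclusion $\subseteq$, suppose $\{z\}_K \in \Kfrak(t_1,\ldots,t_r)$, so $\{z\}_K$ is in the image of $\k_1(\Kbb(t_1,\ldots,t_r)) \hookrightarrow \k_1(K)$. For any tuple $(a_1,\ldots,a_r) \in k^r$, each $t_i - a_i$ also lies in $\Kbb(t_1,\ldots,t_r)^\times$ (it is nonzero since $t_i \notin k$), so by functoriality the product $\{t_1-a_1,\ldots,t_r-a_r,z\}_K$ is the image of an element of $\k_{r+1}(\Kbb(t_1,\ldots,t_r))$. But $\trdeg(\Kbb(t_1,\ldots,t_r)|k) = r$, so by Fact \ref{fact: milnorff / vanishing / vanishing-B-K} one has $\k_{r+1}(\Kbb(t_1,\ldots,t_r)) = 0$, and the product vanishes.

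For the reverse inclusion $\supseteq$, I will prove the contrapositive: given $\{z\}_K \notin \Kfrak(t_1,\ldots,t_r)$, I want to find some $(a_1,\ldots,a_r) \in S_1 \times \cdots \times S_r$ witnessing $\{t_1-a_1,\ldots,t_r-a_r,z\}_K \neq 0$. By Proposition \ref{proposition: milnorff / geometric / geometric-maximal}, there is a non-empty open subset $U \subset \Abb^r_k$ such that the product is non-zero for every $(a_1,\ldots,a_r) \in U(k)$. It therefore suffices to show that $U(k) \cap (S_1 \times \cdots \times S_r)$ is non-empty.

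The remaining step is elementary: the complement $\Abb^r_k \smallsetminus U$ is contained in the vanishing locus of some non-zero polynomial $f \in k[x_1,\ldots,x_r]$, so it is enough to show that such an $f$ cannot vanish identically on $S_1 \times \cdots \times S_r$ when each $S_i$ is infinite. This is a standard induction on $r$: writing $f = \sum_i g_i(x_1,\ldots,x_{r-1}) \cdot x_r^i$, if $f$ vanishes on the whole product then for every $(a_1,\ldots,a_{r-1}) \in S_1 \times \cdots \times S_{r-1}$ the single-variable polynomial $f(a_1,\ldots,a_{r-1},x_r)$ has infinitely many roots in $S_r$, hence is zero, forcing each $g_i(a_1,\ldots,a_{r-1}) = 0$; the inductive hypothesis then forces $g_i \equiv 0$ and hence $f \equiv 0$, a contradiction. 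The main substantive content of the corollary has already been packaged into Proposition \ref{proposition: milnorff / geometric / geometric-maximal} and Fact \ref{fact: milnorff / vanishing / vanishing-B-K}, so no step here is a genuine obstacle; the one tiny subtlety worth noting is the use of infinitude (rather than Zariski-density) of each $S_i$ to guarantee intersection with the open set $U$.
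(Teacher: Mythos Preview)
Your proof is correct and follows essentially the same approach as the paper: the inclusion $\subseteq$ via Fact \ref{fact: milnorff / vanishing / vanishing-B-K} applied to $\Kbb(t_1,\ldots,t_r)$, and the reverse inclusion via Proposition \ref{proposition: milnorff / geometric / geometric-maximal} together with the observation that $S_1\times\cdots\times S_r$ meets every non-empty open subset of $\Abb^r_k$. The paper simply asserts this last fact without argument, whereas you supply the standard induction on $r$; otherwise the proofs are identical.
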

\begin{proof}
  It is clear that
  \[ \Kfrak(t_1,\ldots,t_r) \subset \bigcap_{(a_1,\ldots,a_r)} \ker\{t_1-a_1,\ldots,t_r-a_r,\bullet\}_K, \]
  where $(a_1,\ldots,a_r)$ varies over $S_1 \times \cdots \times S_r$, by Fact \ref{fact: milnorff / vanishing / vanishing-B-K}.
  The converse follows immediately from Proposition \ref{proposition: milnorff / geometric / geometric-maximal}, since the fact that $S_i$ are all infinite, implies that the set
  \[ (S_1 \times \cdots \times S_r) \cap U(k) \]
  is non-empty for any non-empty open subset $U$ of $\Abb^r_k$.
\end{proof}

\subsection{Milnor Dimension}
\label{subsection: milnorff / milnordim}

For a subset $S$ of $\k_1(K)$, we define the {\bf Milnor dimension} of $S$, denoted $\dimm(S)$, as the minimal element of $\Z_{\geq 0}$ which satisfies
\[ \dimm(S) \geq r \Longleftrightarrow \exists s_1,\ldots,s_r \in S, \ \{s_1,\ldots,s_r\}_K \neq 0. \]
Since $\k_r(K) = 0$ for all $r > \trdeg(K|k)$ by Fact \ref{fact: milnorff / vanishing / vanishing-B-K}, it follows that $\dimm(S)$ is a well-defined element of $\Z_{\geq 0}$ and that $\dimm(S) \leq \trdeg(K|k)$, for any subset $S$ of $\k_1(K)$.
The following fact follows immediately from Fact \ref{fact: milnorff / vanishing / vanishing-B-K} and Lemma \ref{lemma: milnorff / nonvanishing / alg-indep-coords}.

\begin{fact}
\label{fact: milnorff / milnordim / geometric-dim}
  Let $S$ be a subset of $K$.
  Then one has $\dimm(\Kfrak(S)) = \trdeg(\Kbb(S)|k)$.
\end{fact}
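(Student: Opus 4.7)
The plan is to prove the two inequalities $\dimm(\Kfrak(S)) \leq \trdeg(\Kbb(S)|k)$ and $\dimm(\Kfrak(S)) \geq \trdeg(\Kbb(S)|k)$ separately, and both will follow quickly from results already established in the excerpt, so the proof should be short and essentially formal.

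For the upper bound, I would use the fact mentioned in \S\ref{section: summary} that the functorial map $\k_1(\Kbb(S)) \to \k_1(K)$ is injective with image $\Kfrak(S)$ (since $\Kbb(S)$ is relatively algebraically closed in $K$). Given any elements $s_1,\ldots,s_r \in \Kfrak(S)$ witnessing $\dimm(\Kfrak(S)) \geq r$, lift them to elements $\tilde{s}_1,\ldots,\tilde{s}_r \in \k_1(\Kbb(S))$. Since the ring homomorphism $\k_*(\Kbb(S)) \to \k_*(K)$ sends $\{\tilde{s}_1,\ldots,\tilde{s}_r\}_{\Kbb(S)}$ to $\{s_1,\ldots,s_r\}_K$, which is nonzero by hypothesis, we obtain a nonzero element of $\k_r(\Kbb(S))$. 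Applying Fact \ref{fact: milnorff / vanishing / vanishing-B-K} to the function field $\Kbb(S)|k$ (which is regular over $k$ because $k$ is algebraically closed and $\Kbb(S) \subset K$), we conclude $r \leq \trdeg(\Kbb(S)|k)$.

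For the lower bound, set $r = \trdeg(\Kbb(S)|k)$ and pick a transcendence base $t_1,\ldots,t_r$ of $\Kbb(S)|k$. Viewing these as algebraically independent elements of $K$ over $k$, Lemma \ref{lemma: milnorff / nonvanishing / alg-indep-coords} produces a non-empty open $U \subset \Abb^r_k$ such that for any $(a_1,\ldots,a_r) \in U(k)$ the product $\{t_1-a_1,\ldots,t_r-a_r\}_K$ is non-trivial in $\k_r(K)$. Since $k \subset \Kbb(S)$, each $t_i - a_i$ lies in $\Kbb(S)^\times$, so $\{t_i-a_i\}_K \in \Kfrak(S)$, and this exhibits a non-trivial $r$-fold product of elements of $\Kfrak(S)$, yielding $\dimm(\Kfrak(S)) \geq r$.

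There is no real obstacle here: both directions reduce immediately to already-proven facts, with the only minor subtlety being the need to invoke injectivity of $\k_1(\Kbb(S)) \hookrightarrow \k_1(K)$ and the functoriality of the product in Milnor K-theory in order to transfer the nonvanishing back to $\k_r(\Kbb(S))$ for the upper bound. This sort of argument is exactly the reason that geometric subgroups of $\k_1(K)$ behave well with respect to the Milnor-dimension grading, and it is precisely what makes the graded lattice $\Gfrak^*(K|k)$ (graded by Milnor dimension) match up with $\Gbb^*(K|k)$ (graded by transcendence degree) in the sequel.
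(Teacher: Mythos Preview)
Your proof is correct and follows exactly the route the paper indicates: the paper simply states that the fact ``follows immediately from Fact~\ref{fact: milnorff / vanishing / vanishing-B-K} and Lemma~\ref{lemma: milnorff / nonvanishing / alg-indep-coords},'' and your two inequalities are precisely the natural unpacking of that sentence. The only point worth remarking is that for the upper bound you are implicitly using that $\Kbb(S)$ is itself a function field over $k$ (i.e.\ finitely generated), which holds because any subextension of the finitely generated extension $K|k$ is finitely generated; you noted regularity but might also flag finite generation so that Fact~\ref{fact: milnorff / vanishing / vanishing-B-K} applies verbatim.
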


\subsection{Milnor-Supremum}
\label{subsection: milnorff / milnorsup}

We can now use the vanishing/non-vanishing results above and the notion of Milnor-dimension to show how to reconstruct large geometric subgroups from smaller ones.
Let $S$ be a subset of $\k_1(K)$ with $d = \dimm(S)$.
We define the {\bf Milnor-supremum} of $S$, denoted $\msup(S)$, as
\[ \msup(S) = \bigcap_{s_1,\ldots,s_d \in S} \ker\{s_1,\ldots,s_d,\bullet\}_K.\]
The following lemma shows how the Milnor-supremum can be used to construct ``large'' geometric subgroups from smaller geometric subgroups.

\begin{lemma}
\label{lemma: milnorff / milnordim / milnorsup}
  Let $(S_i)_i$ be a collection of subsets of $K$, and put $\Kfrak_i = \Kfrak(S_i)$.
  Moreover, put
  \[ S := \bigcup_i S_i, \ \ \Kfrak := \bigcup_i \Kfrak_i. \]
  Then one has $\msup(\Kfrak) = \Kfrak(S)$.
\end{lemma}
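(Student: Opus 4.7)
The plan is first to pin down the Milnor dimension $d := \dimm(\Kfrak)$, show it equals $\trdeg(\Kbb(S)|k)$, and then establish the two inclusions $\Kfrak(S) \subseteq \msup(\Kfrak)$ and $\msup(\Kfrak) \subseteq \Kfrak(S)$ separately. The key input for the first inclusion is the vanishing Fact \ref{fact: milnorff / vanishing / vanishing-B-K} applied inside $\Kbb(S)$, and the key input for the second inclusion is the description of geometric subgroups as intersections of kernels given by Corollary \ref{corollary: milnorff / geometric / infinite-subsets}.

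To compute the Milnor dimension, I would first use that $k(S)$ is generated over $k$ by $\bigcup_i S_i$ to extract a transcendence base $t_1,\ldots,t_d$ of $k(S)|k$ with each $t_j$ lying in some $S_{i_j}$; note that $\Kbb(S)|k(t_1,\ldots,t_d)$ is algebraic, so $\Kbb(S) = \Kbb(t_1,\ldots,t_d)$ (both are relatively algebraically closed in $K$ and have the same transcendence degree $d$), and in particular $\Kfrak(S) = \Kfrak(t_1,\ldots,t_d)$. Since $\{t_j - a_j\}_K \in \Kfrak_{i_j} \subseteq \Kfrak$ for every $a_j \in k$, Lemma \ref{lemma: milnorff / nonvanishing / alg-indep-coords} produces a tuple $(a_1,\ldots,a_d) \in k^d$ realizing a non-vanishing $d$-fold product from $\Kfrak$, so $\dimm(\Kfrak) \geq d$; the reverse $\dimm(\Kfrak) \leq \dimm(\Kfrak(S)) = d$ follows from $\Kfrak \subseteq \Kfrak(S)$ together with Fact \ref{fact: milnorff / milnordim / geometric-dim}.

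For $\Kfrak(S) \subseteq \msup(\Kfrak)$, take any $f \in \Kfrak(S)$ and any $d$-tuple $s_1,\ldots,s_d \in \Kfrak$. All of $s_1,\ldots,s_d,f$ are images of elements of $\Kbb(S)^\times$ under the canonical injection $\k_1(\Kbb(S)) \hookrightarrow \k_1(K)$, so $\{s_1,\ldots,s_d,f\}_K$ is the image under $\k_{d+1}(\Kbb(S)) \to \k_{d+1}(K)$ of a product which must vanish by Fact \ref{fact: milnorff / vanishing / vanishing-B-K} since $\trdeg(\Kbb(S)|k) = d$. Hence $f \in \msup(\Kfrak)$.

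For the reverse inclusion $\msup(\Kfrak) \subseteq \Kfrak(S)$, fix the transcendence base $t_1,\ldots,t_d$ from above and observe that $\{t_j - a_j\}_K$ lies in $\Kfrak$ for every choice of $a_j \in k$, so any $f \in \msup(\Kfrak)$ satisfies $\{t_1 - a_1,\ldots,t_d - a_d,f\}_K = 0$ for all $(a_1,\ldots,a_d) \in k^d$. Applying Corollary \ref{corollary: milnorff / geometric / infinite-subsets} with $S_1 = \cdots = S_d = k$ (which is infinite since $k = \bar k_0$), this forces $f \in \Kfrak(t_1,\ldots,t_d) = \Kfrak(S)$, completing the argument. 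No single step is a real obstacle: the hard content has already been packaged into Lemma \ref{lemma: milnorff / nonvanishing / alg-indep-coords}, Corollary \ref{corollary: milnorff / geometric / infinite-subsets}, and Fact \ref{fact: milnorff / vanishing / vanishing-B-K}, and the only subtle bookkeeping is the identification $\Kbb(S) = \Kbb(t_1,\ldots,t_d)$ that lets one pass between geometric subgroups generated by $S$ and by a transcendence base drawn from $S$.
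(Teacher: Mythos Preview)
Your argument is correct and follows essentially the same route as the paper's proof: establish $\dimm(\Kfrak) = \trdeg(\Kbb(S)|k)$ via Lemma~\ref{lemma: milnorff / nonvanishing / alg-indep-coords} and Fact~\ref{fact: milnorff / vanishing / vanishing-B-K}, then handle the two inclusions using vanishing and Corollary~\ref{corollary: milnorff / geometric / infinite-subsets} respectively. The only expository wrinkle is that in your second paragraph you already write the transcendence base as $t_1,\ldots,t_d$ with $d = \dimm(\Kfrak)$ before the equality $\dimm(\Kfrak) = \trdeg(\Kbb(S)|k)$ has been established; introducing a separate symbol for the transcendence degree until that equality is proved would remove the apparent circularity.
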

\begin{proof}
  Put $d' := \trdeg(\Kbb(S)|k)$ and $d := \dimm(\Kfrak)$.
  First we show that $d = d'$.
  The inequality $d \leq d'$ follows from Fact \ref{fact: milnorff / vanishing / vanishing-B-K} since $\Kfrak \subset \Kfrak(S)$.
  Conversely, we note that there exist $s_1,\ldots,s_{d'} \in S$ which are algebraically independent over $k$.
  Thus, $d \geq d'$ by Lemma \ref{lemma: milnorff / nonvanishing / alg-indep-coords}.

  Since $d = \trdeg(\Kbb(S)|k) = \dimm(\Kfrak(S))$ and $\Kfrak \subset \Kfrak(S)$, it follows again from Fact \ref{fact: milnorff / vanishing / vanishing-B-K} that
  \[ \Kfrak(S) \subset \bigcap_{x_1,\ldots,x_d \in \Kfrak} \ker\{x_1,\ldots,x_d,\bullet\}_K.\]
  Conversely, we again use the fact that there exist $t_1,\ldots,t_d \in S$ which are algebraically independent over $k$.
  Then the inclusion
  \[ \Kfrak(S) \supset \bigcap_{x_1,\ldots,x_d \in \Kfrak} \ker\{x_1,\ldots,x_d,\bullet\}_K\]
  follows from Corollary \ref{corollary: milnorff / geometric / infinite-subsets}.
\end{proof}

\section{Geometric Lattices}
\label{section: lattice}


A {\bf graded lattice} is a $\Z_{\geq 0}$-graded set $\Lcal^* = \coprod_{i \geq 0} \Lcal^i$ endowed with a partial ordering $\leq$, such that the following two conditions hold:
\begin{enumerate}
  \item Every subset $S$ of $\Lcal^*$ has a \emph{greatest lower bound} $\wedge S$ and a \emph{least upper bound} $\vee S$ in $\Lcal^*$. 
  Namely, the partially ordered set $(\Lcal^*,\leq)$ is a \emph{complete lattice}.
  \item If $A \in \Lcal^s$ and $B \in \Lcal^t$ are such that $A < B$, then one has $s < t$.
  Namely, the partial ordering $\leq$ is \emph{strictly compatible} with the grading of $\Lcal^*$.
\end{enumerate}

An isomorphism between two graded lattices $\Lcal_1^*$ and $\Lcal_2^*$ is a graded bijection
\[ f^* = \coprod_{i \geq 0} f^i : \Lcal_1^* \rightarrow \Lcal_2^*\] 
which is an isomorphism on the level of partially ordered sets. 
We will denote the automorphism group of a graded lattice $\Lcal^*$ by $\Aut^*(\Lcal^*)$.

\subsection{Relative Algebraic Closure}
\label{subsection: lattice / acl}

Assume now that $F$ is a perfect field and that $L$ is an extension of finite transcendence degree over $F$, such that $F$ is relatively algebraically closed in $L$.
We denote by $\G^*(L|F)$ the graded lattice of relatively algebraically closed subextensions of $L|F$, graded by transcendence degree.

More precisely, we let $\G^s(L|F)$ denote the collection of relatively algebraically closed subextensions of $L|F$ of transcendence degree $s$ over $F$.
Then one has $\G^*(L|F) = \coprod_{s \geq 0} \G^s(L|F)$, and the ordering on $\G^*(L|F)$ is the partial ordering given by inclusion of subextensions of $L|F$.

Note that one has a canonical isomorphism of graded lattices
\[ \G^*(L|F) \xrightarrow{\cong} \G^*(L^i|F) \]
defined by $M \mapsto M^i$, with inverse given by $M^i \mapsto M^i \cap L$.
In particular, this canonical isomorphism induces an isomorphism of automorphism groups:
\[ \Aut^*(\G(L|F)) \xrightarrow{\cong} \Aut^*(\G(L^i|F)). \]

We will denote by $\Aut_{\{F\}}(L^i)$ the set of automorphisms $\phi$ of $L^i$ which satisfy $\phi F = F$.
Note that any element of $\Aut_{\{F\}}(L^i)$ induces an automorphism of $\G^*(L^i|F)$ in the obvious way.
Thus we obtain a canonical homomorphism
\[ \Aut_{\{F\}}(L^i) \rightarrow \Aut^*(\G(L^i|F)) \cong \Aut^*(\G^*(L|F)). \]
The work of {\sc Evans-Hrushovski} \cite{Evans:1991} \cite{Evans:1995} and {\sc Gismatullin} \cite{Gismatullin:2008} considers the surjectivity/injectivity of this map.
We summarize their main results in the following theorem.

\begin{theorem}[see \cite{Gismatullin:2008}*{Theorem 4.2}]
\label{theorem: lattice / acl / fund-thm-field-acl}
  Let $F$ be a perfect field and let $L$ be an extension of finite transcendence degree over $F$, such that $F$ is relatively algebraically closed in $L$.
  Assume furthermore that $\trdeg(L|F) \geq 5$.
  Then the canonical map 
  \[ \Aut_{\{F\}}(L^i) \rightarrow \Aut^*(\G^*(L|F)) \]
  is surjective, and its kernel is the subgroup generated by $\Frob_{L^i}$.
\end{theorem}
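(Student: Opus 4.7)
The plan is to deduce this theorem from the work of Evans--Hrushovski \cite{Evans:1991}, \cite{Evans:1995} and its extension to positive characteristic by Gismatullin \cite{Gismatullin:2008}, after a short reduction to the perfect setting. The key conceptual point is that $\G^*(L|F)$ encodes the pregeometry (combinatorial geometry) on $L^i$ determined by the algebraic closure operator relative to $F$, and the main input to be invoked is that every pregeometry automorphism of such a field pregeometry is induced by a field automorphism, unique up to powers of Frobenius.

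First, I would reduce to the case where $L$ is perfect using the canonical isomorphism $\G^*(L|F) \cong \G^*(L^i|F)$, $M \mapsto M^i$, recorded in the discussion preceding the statement. This makes the two target automorphism groups canonically isomorphic, and the canonical map factors through $\Aut_{\{F\}}(L^i) \rightarrow \Aut^*(\G^*(L^i|F))$; hence we may assume $L=L^i$ is perfect throughout. Next, I would verify that $\Frob_L$ lies in the kernel: for any $M \in \G^*(L|F)$, the field $M$ is relatively algebraically closed in the perfect field $L$, so every element of $M$ has its unique $p$-th root in $L$ actually algebraic over $M$, hence already in $M$; thus $M$ is itself perfect, $\Frob_L(M) = M$, and $\Frob_L$ acts as the identity on $\G^*(L|F)$.

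The remaining content -- surjectivity together with the sharper kernel computation $\ker = \langle \Frob_L \rangle$ -- is the deep part, and would be identified directly with \cite{Gismatullin:2008}*{Theorem 2.4}. The main obstacle is the underlying model-theoretic machinery, in particular Hrushovski's group-configuration theorem from geometric stability theory: starting from a non-trivial automorphism of the pregeometry, one builds a \emph{group configuration} inside $L$, which -- in the pregeometry of an algebraically closed field of transcendence degree at least $5$ over a perfect base -- gives rise to a one-dimensional connected algebraic group over $F$ (necessarily $\Gbb_m$ or $\Gbb_a$). Recovering the field structure from this algebraic group then allows one to lift the pregeometry automorphism to an element of $\Aut_{\{F\}}(L)$, unique modulo $\langle \Frob_L \rangle$. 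The hypothesis $\trdeg(L|F) \geq 5$ enters precisely at this step to ensure that sufficiently generic group configurations can be realized inside the pregeometry; lower bounds on the transcendence degree below this threshold are known to fail for analogous reconstruction results, which explains why one cannot expect to significantly weaken this hypothesis purely from the lattice data.
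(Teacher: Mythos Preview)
Your proposal is correct and follows essentially the same route as the paper: both arguments reduce the theorem to \cite{Gismatullin:2008}. The paper's proof, however, spends its effort on a slightly different point than you do. You emphasize the reduction to the perfect case (which the paper has already recorded before the theorem statement), the easy verification that $\Frob_{L^i}$ lies in the kernel, and a sketch of the group-configuration machinery underlying Gismatullin's result. The paper instead takes all of that for granted and focuses solely on the one bookkeeping step needed to invoke the cited reference: Gismatullin works with the \emph{combinatorial geometry} $(\G^1(L|F),\cl)$, whereas the present paper works with the full graded lattice $\G^*(L|F)$, so one must check that these two structures have canonically isomorphic automorphism groups. The paper does this by observing that the closure operator is recoverable from the lattice as $\cl(S)=\{A\in\G^1(L|F):A\le\vee S\}$, and conversely that $\G^*(L|F)$ is the lattice of flats of the pregeometry. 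You assert this equivalence in your first paragraph (``$\G^*(L|F)$ encodes the pregeometry'') but do not spell it out; since it is the only content of the paper's proof, you may wish to add a sentence making the identification $\Aut^*(\G^*(L|F))\cong\Aut^{\cl}(\G^1(L|F))$ explicit.
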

\begin{proof}
  The only part of this theorem which doesn't follow directly from \cite{Gismatullin:2008}*{Theorem 4.2} is that loc.~cit. uses the \emph{combinatorial geometry} associated to $\Gbb^*(L|F)$ as its basic structure, whereas we use the whole lattice $\Gbb^*(L|F)$.
  These two formulations are easily seen to be equivalent, as follows.

  First, note that the combinatorial geometry $\Gbb(L|F)$ associated to $L|F$ is defined to be the set $\Gbb^1(L|F)$ with a \emph{closure operation} $\cl$ on subsets $S \subset \Gbb^1(L|F)$.
  This closure operation is easily interpretable in the lattice $\Gbb^*(L|F)$ as 
  \[ \cl(S) = \{A \in \Gbb^1(L|F) \ : \ A \leq \vee S\}. \]

  Conversely, given the set $\Gbb^1(L|F)$ endowed with the closure operation $\cl$ defined above, it is easy to see that $\Gbb^*(L|F)$ is canonically isomorphic to the \emph{lattice of flats} associated to the combinatorial geometry $(\Gbb^1(L|F),\cl)$.
  In other words, the lattice $\Gbb^*(L|F)$ can be identified with the lattice of \emph{closed subsets} of $\Gbb^1(L|F)$ with respect to the closure operation $\cl$ described above, and the grading of this lattice is given by the \emph{combinatorial dimension} of closed subsets.

  With these observations, it is easy to see that restricting $\phi \in \Aut^*(\Gbb^*(L|F))$ to $\Gbb^1(L|F)$ induces an isomorphism of automorphism groups
  \[ \phi \mapsto \phi|_{\G^1(L|F)} : \Aut^*(\Gbb^*(L|F)) \cong \Aut^{\cl}(\Gbb^1(L|F)) \]
  where $\Aut^{\cl}(\Gbb^1(L|F))$ denotes the set of permutations of $\Gbb^1(L|F)$ which are compatible with the closure operation $\cl$.
  This isomorphism of automorphism groups is clearly compatible with the canonical morphisms originating from $\Aut_{\{F\}}(L^i)$.
  Hence, the assertion of the theorem follows immediately from \cite{Gismatullin:2008}*{Theorem 4.2}.
\end{proof}

\subsection{The Galois Action on the Geometric Lattice}
\label{subsection: lattice / galois-acl}

We now consider the canonical Galois action of $\Galk$ on $\G^*(K|k)$.
First, recall that $\Galk$ can be canonically identified with the Galois group $\Gal(K|K_0)$, since $K_0|k_0$ is a regular function field.
In particular, $\Galk$ acts on the (relatively algebraically closed) subextensions of $K|k$ in the obvious way, and this action preserves transcendence degree.
In other words, $\Galk$ acts on the graded lattice $\Gbb^*(K|k)$.

Using the notation introduced above, we have
\[ \G^*(K|k) = \{\Kbb(S) \ : \ S \subset K \} \]
and $\G^r(K|k)$ consists of all those $\Kbb(S)$ such that $\trdeg(\Kbb(S)|k) = r$.
We will consider the subgroup $\Aut^*(\G^*(K|k)|K_0)$ of $\Aut^*(\G^*(K|k))$ consisting of all automorphisms $\Phi \in \Aut^*(\G^*(K|k))$ such that $\Phi \Kbb(S) = \Kbb(S)$ for all subsets $S \subset K_0$.

We can also describe $\Aut^*(\Gbb^*(K|k)|K_0)$ as a ``Galois Group,'' as follows.
First, observe that one has a canonical \emph{injective} map 
\[ \G^*(K_0|k_0) \rightarrow \G^*(K|k) \]
which sends $F_0 \in \G^*(K_0|k_0)$ to $F := \Kbb(F_0)$.
Therefore, we see that $\Aut^*(\G^*(K|k)|K_0)$ is the Galois group of $\G^*(K|k)$ over $\G^*(K_0|k_0)$, i.e. the set of automorphisms of $\G^*(K|k)$ which fix $\G^*(K_0|k_0)$ pointwise.

Note that $\Galk$ fixes $\Kbb(S)$ \emph{setwise} (as a subset of $K$) for any subset $S \subset K_0$.
Therefore, the action of $\Galk$ on $\Gbb^*(K|k)$ induces a canonical Galois representation 
\[ \rho_{k_0} : \Galk \rightarrow \Aut^*(\G^*(K|k)|K_0). \]
We now use Theorem \ref{theorem: lattice / acl / fund-thm-field-acl} to prove that this map is an isomorphism.

\begin{proposition}
\label{proposition: lattice / galois-acl / galois-acl}
  Assume that $\trdeg(K|k) \geq 5$.
  Then the canonical map 
  \[ \rho_{k_0} : \Galk \rightarrow \Aut^*(\G^*(K|k)|K_0) \]
  is an isomorphism.
\end{proposition}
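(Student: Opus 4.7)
The plan is to deduce the proposition from Theorem~\ref{theorem: lattice / acl / fund-thm-field-acl} applied to both $K|k$ and $K_0|k_0$. This is set up by the observation that $\trdeg(K_0|k_0) = \trdeg(K|k) \geq 5$, that $k$ and $k_0$ are both perfect, and that $k_0$ is relatively algebraically closed in $K_0$ by regularity of $K_0|k_0$, so both hypotheses of Theorem~\ref{theorem: lattice / acl / fund-thm-field-acl} are met. The two applications are glued by a restriction map $\Aut_{\{k\}}(K^i) \to \Aut_{\{k_0\}}(K_0^i)$, which becomes available once one checks that any lift of $\Phi \in \Aut^*(\G^*(K|k)|K_0)$ preserves $K_0^i$, and hence $k \cap K_0 = k_0$, setwise.

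For surjectivity, I start with $\Phi \in \Aut^*(\G^*(K|k)|K_0)$ and use Theorem~\ref{theorem: lattice / acl / fund-thm-field-acl} for $K|k$ to lift it to some $\phi \in \Aut_{\{k\}}(K^i)$. Since $K_0$ is an element of $\G^*(K|k)$ fixed by $\Phi$, the automorphism $\phi$ preserves $K_0^i$ setwise; combined with $\phi k = k$, this yields $\phi k_0 = k_0$. Next I verify that $\phi|_{K_0^i}$ acts trivially on $\G^*(K_0^i|k_0) \cong \G^*(K_0|k_0)$, the key identity being $\Kbb(F_0)^i \cap K_0^i = F_0^i$ for $F_0 \in \G^*(K_0|k_0)$, which follows routinely from the definitions of relative algebraic closure and perfect closure together with the fact that $K_0$ is relatively algebraically closed in $K$. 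Applying Theorem~\ref{theorem: lattice / acl / fund-thm-field-acl} to $K_0|k_0$ then gives $\phi|_{K_0^i} = \Frob_{K_0^i}^m$ for some $m \in \Z$. Replacing $\phi$ by $\phi' := \phi \cdot \Frob_{K^i}^{-m}$ does not change the induced automorphism of $\G^*(K|k)$, since $\Frob_{K^i}$ lies in the kernel of Theorem~\ref{theorem: lattice / acl / fund-thm-field-acl}; but now $\phi'$ fixes $K_0$ pointwise and preserves $k$, so its restriction to $K = K_0 \cdot k$ defines an element of $\Gal(K|K_0) = \Galk$ which maps to $\Phi$ under $\rho_{k_0}$.

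For injectivity, any $\sigma \in \Galk$ in the kernel of $\rho_{k_0}$ extends to $\Aut_{\{k\}}(K^i)$ and lies in $\langle \Frob_{K^i}\rangle$ by Theorem~\ref{theorem: lattice / acl / fund-thm-field-acl}, so $\sigma$ acts on $K^i$ as $\Frob_{K^i}^n$ for some $n$. But $\sigma$ fixes $K_0$ pointwise, which in characteristic $p > 0$ forces $x^{p^n} = x$ for every $x \in K_0$; since $K_0$ contains elements transcendental over the prime field, this forces $n = 0$, while in characteristic zero the Frobenius is trivial to begin with. Either way $\sigma = 1$. The only even mildly delicate point in the entire argument is the verification that the lift $\phi$ descends to the identity on $\G^*(K_0^i|k_0)$ — once the intersection identity $\Kbb(F_0)^i \cap K_0^i = F_0^i$ is established, the remainder is bookkeeping between the two applications of Theorem~\ref{theorem: lattice / acl / fund-thm-field-acl}.
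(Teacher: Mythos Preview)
Your overall strategy is the same as the paper's --- lift $\Phi$ to $\phi \in \Aut_{\{k\}}(K^i)$ via Theorem~\ref{theorem: lattice / acl / fund-thm-field-acl}, show that $\phi|_{K_0^i}$ is a Frobenius power, and adjust --- and your injectivity argument is fine. But there is a real error in one step of the surjectivity argument: you assert ``$K_0$ is an element of $\G^*(K|k)$ fixed by $\Phi$'' in order to conclude that $\phi$ preserves $K_0^i$. This premise is false. The lattice $\G^*(K|k)$ consists of relatively algebraically closed subextensions of $K|k$, all of which contain $k$; since $K_0 \cap k = k_0 \subsetneq k$, the field $K_0$ is not such a subextension. (The image of $K_0$ under the embedding $\G^*(K_0|k_0) \hookrightarrow \G^*(K|k)$ is $\Kbb(K_0) = K$ itself, which is vacuously fixed and carries no information.) So you cannot read off $\phi(K_0^i) = K_0^i$ from the lattice in this direct way, and your subsequent intersection identity $\Kbb(F_0)^i \cap K_0^i = F_0^i$ cannot be applied until this is established.

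The conclusion $\phi(K_0^i) = K_0^i$ is nevertheless correct, and the paper obtains it by invoking the \emph{functoriality} of the Evans--Hrushovski/Gismatullin reconstruction underlying Theorem~\ref{theorem: lattice / acl / fund-thm-field-acl}: the embedding of combinatorial geometries $\G^*(K_0|k_0) \hookrightarrow \G^*(K|k)$ corresponds, under that reconstruction, to the field embedding $K_0^i \hookrightarrow K^i$ (up to Frobenius), so an automorphism of $\G^*(K|k)$ that restricts to the identity on the image of $\G^*(K_0|k_0)$ lifts to an automorphism of $K^i$ whose restriction to $K_0^i$ is a power of Frobenius --- in particular, it preserves $K_0^i$. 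Once you replace your incorrect justification with this functoriality argument, the remainder of your proof goes through and coincides with the paper's.
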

\begin{proof}
  We first use Theorem \ref{theorem: lattice / acl / fund-thm-field-acl} to make the following observation:
  Any automorphism $\Phi$ of $\G^*(K|k)$ arises from an automorphism $\tilde\Phi$ of $K^i$ which is unique up-to composition with some power of $\Frob_{K^i}$.
  Assume moreover that $\Phi$ restricts to an automorphism $\Phi_0$ of $\G^*(K_0|k_0)$.
  Then Theorem \ref{theorem: lattice / acl / fund-thm-field-acl} implies that $\Phi_0$ arises from an automorphism $\tilde\Phi_0$  of $K_0^i$ which is unique up-to composition with some power of $\Frob_{K_0^i}$.
  It therefore follows from the functoriality of the situation that this automorphism $\tilde\Phi$ restricts to an automorphism of $K_0^i$, which is precisely $\tilde\Phi_0$, up-to composition with a power of $\Frob_{K_0^i}$.

  Consider the following subgroup of $\Aut_{\{K_0^i\}}(K^i)$:
  \[ G:= \{\sigma \in \Aut_{\{K_0^i\}}(K^i) \ : \ \sigma|_{K_0^i} \in \langle \Frob_{K_0^i} \rangle \}.\]
  By Theorem \ref{theorem: lattice / acl / fund-thm-field-acl} and the observations made above, we see that the canonical map
  \[ G \rightarrow \Aut^*(\G^*(K|k)|K_0) \]
  is \emph{surjective}, and its kernel is the subgroup generated by $\Frob_{K^i}$.

  On the other hand, it is easy to see that one has a canonical isomorphism
  \[ \Gal(K|K_0) = \Gal(K^i|K_0^i) \xrightarrow{\cong} G/\langle \Frob_{K^i} \rangle. \]
  Finally, we conclude the proof by recalling that the canonical map $\rho_{k_0} : \Galk \rightarrow \Gal(K|K_0)$ is an isomorphism, since $K_0|k_0$ is a regular function field.
  As the isomorphisms 
  \[ \Gal(K|K_0) \cong G/\langle \Frob_{K^i} \rangle \cong \Aut^*(\G^*(K|k)|K_0) \]
  are all compatible with $\rho_{k_0}$, the assertion follows.
\end{proof}

\subsection{The Mod-$\ell$ Geometric Lattice}
\label{subsection: lattice / mod-ell}

As a first step towards the mod-$\ell$ context, we will prove that there is an isomorphic copy of $\Gbb^*(K|k)$ inside of the \emph{lattice of subgroups} of $\k_1(K)$.
We write $\Gfrak^*(K|k)$ for the collection of \emph{geometric subgroups} of $\k_1(K)$, ordered by inclusion in $\k_1(K)$, and graded by $\dimm$.
Namely, as a partially-ordered set, one has
\[ \Gfrak^*(K|k) = \{\Kfrak(S) \ : \ S \subset K \}\]
where the partial-ordering is given by inclusion of subgroups of $\k_1(K)$.
The grading of 
\[ \Gfrak^*(K|k) = \coprod_{s \geq 0} \Gfrak^s(K|k) \]
is defined by the condition that $\Gfrak^s(K|K)$ consists of all geometric subgroups $A$ of $\k_1(K)$ such that $\dimm(A) = s$.
The next proposition shows that we can identify $\Gfrak^*(K|k)$ with $\G^*(K|k)$, and that the \emph{Milnor-supremum} defined in \S\ref{subsection: milnorff / milnorsup} corresponds to the least-upper-bound in $\G^*(K|k)$, via this identification.

\begin{proposition}
\label{proposition: lattice / mod-ell / mod-ell-lattice}
  The following hold:
  \begin{enumerate}
    \item The set $\Gfrak^*(K|k)$, endowed with the ordering by inclusion in $\k_1(K)$ and the grading by $\dimm$, is a graded lattice.
    \item The map $\Kbb(S) \mapsto \Kfrak(S)$ induces a canonical isomorphism of graded lattices $\G^*(K|k) \rightarrow \Gfrak^*(K|k)$.
    \item For a collection $(\Kfrak_i)_i$ of elements of $\Gfrak^*(K|k)$, the least-upper-bound of $(\Kfrak_i)_i$ in the lattice $\Gfrak^*(K|k)$ is precisely the Milnor-supremum $\msup(\bigcup_i \Kfrak_i)$.
  \end{enumerate}
\end{proposition}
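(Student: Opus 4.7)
The plan is to prove (2) first; assertions (1) and (3) will then follow essentially formally. Define the map $\Phi\colon\Gbb^*(K|k)\to\Gfrak^*(K|k)$ by $\Phi(\Kbb(S))=\Kfrak(S)$. This is well-defined because $\Kfrak(S)=\{\Kbb(S)^\times\}_K$ depends only on the subfield $\Kbb(S)$, and it is surjective by the very definition of $\Gfrak^*(K|k)$. Grading-preservation is exactly Fact \ref{fact: milnorff / milnordim / geometric-dim}, while order-preservation follows from the obvious implication $\Kbb(S_1)\subseteq\Kbb(S_2)\Rightarrow\Kbb(S_1)^\times\subseteq\Kbb(S_2)^\times$. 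The heart of the proof is showing that $\Phi$ reflects the order, which will also yield injectivity.

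For the order-reflection step, suppose $\Kfrak(F_1)\subseteq\Kfrak(F_2)$ with $F_i:=\Kbb(S_i)$. First, apply Lemma \ref{lemma: milnorff / milnordim / milnorsup} to the singleton collection $\{F_2\}$ to get $\msup(\Kfrak(F_2))=\Kfrak(F_2)$. Next, apply Lemma \ref{lemma: milnorff / milnordim / milnorsup} to the pair $\{F_1,F_2\}$: since $\Kfrak(F_1)\cup\Kfrak(F_2)=\Kfrak(F_2)$ by hypothesis, this yields
\[
\Kfrak(F_1\cup F_2)=\msup\bigl(\Kfrak(F_1)\cup\Kfrak(F_2)\bigr)=\msup(\Kfrak(F_2))=\Kfrak(F_2).
\]
Fact \ref{fact: milnorff / milnordim / geometric-dim} then gives $\trdeg(\Kbb(F_1\cup F_2)|k)=\trdeg(F_2|k)$, and since $\Kbb(F_1\cup F_2)\supseteq F_2$, the containment is algebraic. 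Relative algebraic closedness of $F_2$ in $K$ forces $\Kbb(F_1\cup F_2)=F_2$, so $F_1\subseteq F_2$ as desired. This completes the proof of (2).

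Having established the graded-lattice isomorphism $\Phi$ in (2), assertion (1) follows: $\Gbb^*(K|k)$ is a complete lattice with meets given by intersection and joins by the relative algebraic closure of compositums, and the bijection $\Phi$ transports this complete lattice structure to $\Gfrak^*(K|k)$ compatibly with the grading. For assertion (3), the join of $(\Kbb(S_i))_i$ in $\Gbb^*(K|k)$ is $\Kbb\bigl(\bigcup_i S_i\bigr)$, and under $\Phi$ combined with Lemma \ref{lemma: milnorff / milnordim / milnorsup} this becomes $\Kfrak\bigl(\bigcup_i S_i\bigr)=\msup\bigl(\bigcup_i\Kfrak(S_i)\bigr)$, which is exactly the claim of (3).

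The main obstacle is the order-reflection step. A naive element-by-element approach — writing a given $x\in F_1$ as $x=yz^\ell$ with $y\in F_2$ and $z\in K$ — fails because one has no control over whether $z$ lies in $F_2$, even though $F_2$ is relatively algebraically closed. The key insight that makes the argument work is to avoid such pointwise manipulations entirely and instead work with the full subgroup $\Kfrak(F_1\cup F_2)$, using the Milnor-supremum identity of Lemma \ref{lemma: milnorff / milnordim / milnorsup} to reduce the question to a transcendence-degree count that the rigidity of relative algebraic closure settles immediately.
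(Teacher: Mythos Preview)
Your proof is correct. The main difference from the paper lies in how you establish the key order-reflection/injectivity step. The paper argues injectivity directly by contradiction: assuming $\Kfrak(S)=\Kfrak(T)$ but $\Kbb(S)\neq\Kbb(T)$, one has $\trdeg(\Kbb(S\cup T)|k)>d:=\dimm(\Kfrak(S))$, so one can pick $d+1$ algebraically independent elements $t_0,\dots,t_d\in S\cup T$; Lemma~\ref{lemma: milnorff / nonvanishing / alg-indep-coords} then produces constants $a_i$ with $\{t_0-a_0,\dots,t_d-a_d\}_K\neq 0$, contradicting $\dimm(\Kfrak(S))=d$ since each $\{t_i-a_i\}_K$ lies in $\Kfrak(S)=\Kfrak(T)$. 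Your route through Lemma~\ref{lemma: milnorff / milnordim / milnorsup} is more structural: it packages the non-vanishing results into the Milnor-supremum identity and reduces the question to a transcendence-degree comparison. Your version has the advantage of proving full order-reflection rather than merely injectivity, which is what assertion~(2) actually requires; the paper only states injectivity explicitly, though its argument adapts to order-reflection with no real change. Conversely, the paper's approach is more elementary in that it invokes only Lemma~\ref{lemma: milnorff / nonvanishing / alg-indep-coords}, whereas your use of Lemma~\ref{lemma: milnorff / milnordim / milnorsup} implicitly pulls in the finer Proposition~\ref{proposition: milnorff / geometric / geometric-maximal} via Corollary~\ref{corollary: milnorff / geometric / infinite-subsets}.
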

\begin{proof}
  Note that the map $\Kbb(S) \mapsto \Kfrak(S)$ yields a canonical surjective map $\Gbb^*(K|k) \rightarrow \Gfrak^*(K|k)$.
  By Fact \ref{fact: milnorff / milnordim / geometric-dim}, this surjective map restricts to a surjective map 
  \[ \G^s(K|k) \rightarrow \Gfrak^s(K|k) \]
  on the graded pieces, and it is clear from the definition that this map respects the partial ordering.
  Since $\G^*(K|k)$ is a graded lattice, in order to prove (1) and (2), it suffices to prove that the map above is an isomorphism on the level of partially ordered sets. 
  With this in mind, suppose that $S_1,S_2$ are two subsets of $K$, and assume that $\Kfrak(S_1) \subset \Kfrak(S_2)$.
  We must show that one has $\Kbb(S_1) \subset \Kbb(S_2)$.

  Assume the contrary, and, for $i = 1,2$, put $d_i := \trdeg(\Kbb(S_i)|k)$.
  Recall that, by Fact \ref{fact: milnorff / milnordim / geometric-dim}, one has $d_i = \dimm(\Kfrak(S_i))$.
  As $\Kbb(S_i)$ is relatively algebraically closed in $K$, it follows that
  \[ \trdeg(\Kbb(S_1 \cup S_2)|k) > \max(d_1,d_2) =: d. \]
  Hence, there exist $t_0,\ldots,t_{d} \in S_1 \cup S_2$ which are algebraically independent over $k$.
  By Lemma \ref{lemma: milnorff / nonvanishing / alg-indep-coords}, there exist $a_0,\ldots,a_d \in k$ such that 
  \begin{align}
  \label{align: mod-ell-lattice / non-vanishing}
    \{t_0-a_0,\ldots,t_d-a_d\}_K \neq 0.
  \end{align}
  However, since $\Kfrak(S_1) \subset \Kfrak(S_2) =: \Kfrak$, we have $\{t_i-a_i\}_K \in \Kfrak$ for all $i = 0,\ldots,d$.
  But then (\ref{align: mod-ell-lattice / non-vanishing}) provides a contradiction to the fact that $\dimm(\Kfrak) = d_2 \leq d$.
  
  This proves, in particular, that the map $\Gbb^*(K|k) \rightarrow \Gfrak^*(K|k)$ is an isomorphism of partially ordered sets, while the compatibility with the grading was already noted above. 
  Assertion (3) follows immediately from assertion (2) and Lemma \ref{lemma: milnorff / milnordim / milnorsup}.
\end{proof}

\subsection{Galois Action on the Mod-$\ell$ Lattice}
\label{subsection: lattice / galois-mod-ell}

By Proposition \ref{proposition: lattice / mod-ell / mod-ell-lattice}(1)(2), the partially ordered set $\Gfrak^*(K|k)$ is a graded lattice which is canonically isomorphic to $\Gbb^*(K|k)$.
Moreover, the definition of this isomorphism immediately shows that it is compatible with the action of $\Galk$.
In any case, since $\Gfrak^*(K|k)$ is a graded lattice, we can define $\Aut^*(\Gfrak^*(K|k)|K_0)$ similarly to the way we defined $\Aut^*(\G^*(K|k)|K_0)$.
Namely, $\Aut^*(\Gfrak^*(K|k)|K_0)$ consists of all automorphisms $\Phi$ of $\Gfrak^*(K|k)$ (as a graded-lattice) such that $\Phi\Kfrak(S) = \Kfrak(S)$ for all $S \subset K_0$.
As before, one has a canonical Galois representation 
\[ \rho_{k_0} : \Galk \rightarrow \Aut^*(\Gfrak^*(K|k)|K_0). \]

On the other hand, Proposition \ref{proposition: lattice / mod-ell / mod-ell-lattice}(2) immediately implies that we have a canonical isomorphism of automorphism groups:
\[ \Aut^*(\Gbb^*(K|k)|K_0) \xrightarrow{\cong} \Aut^*(\Gfrak^*(K|k)|K_0) \]
and it is easy to see that this isomorphism is compatible with $\rho_{k_0}$.
Thus, by combining Propositions \ref{proposition: lattice / galois-acl / galois-acl} and \ref{proposition: lattice / mod-ell / mod-ell-lattice}, we immediately deduce the following corollary.

\begin{corollary}
\label{corollary: lattice / galois-mod-ell / galois-lattice-iso}
  Assume that $\trdeg(K|k) \geq 5$.
  Then the canonical map
  \[ \rho_{k_0} : \Galk \rightarrow \Aut^*(\Gfrak^*(K|k)|K_0) \]
  is an isomorphism.
\end{corollary}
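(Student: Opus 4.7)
The plan is to deduce this corollary directly from the two results established just before it, namely Proposition \ref{proposition: lattice / galois-acl / galois-acl} and Proposition \ref{proposition: lattice / mod-ell / mod-ell-lattice}. The strategy is to transport the isomorphism statement across the canonical identification $\Gbb^*(K|k) \cong \Gfrak^*(K|k)$, checking along the way that all the relevant structures (the $K_0$-fixing condition and the Galois action) are preserved.

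First I would invoke Proposition \ref{proposition: lattice / mod-ell / mod-ell-lattice}(2), which provides a canonical isomorphism of graded lattices $\Phi: \Gbb^*(K|k) \xrightarrow{\cong} \Gfrak^*(K|k)$ defined by $\Kbb(S) \mapsto \Kfrak(S)$. Any lattice isomorphism induces an isomorphism of automorphism groups $\Aut^*(\Gbb^*(K|k)) \cong \Aut^*(\Gfrak^*(K|k))$ via conjugation by $\Phi$. The key observation is that this isomorphism restricts to an isomorphism of the subgroups cut out by the $K_0$-fixing condition: if $F_0 \subset K_0$ is a subset, then $\Phi(\Kbb(F_0)) = \Kfrak(F_0)$ by definition of $\Phi$, so an automorphism of $\Gbb^*(K|k)$ fixes every $\Kbb(F_0)$ if and only if the conjugate automorphism of $\Gfrak^*(K|k)$ fixes every $\Kfrak(F_0)$. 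This yields the desired canonical isomorphism
\[ \Aut^*(\Gbb^*(K|k)|K_0) \xrightarrow{\cong} \Aut^*(\Gfrak^*(K|k)|K_0). \]

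Next I would verify that this induced isomorphism of automorphism groups is compatible with $\rho_{k_0}$. This is essentially automatic from functoriality: the Galois action of $\Galk$ on $\k_1(K)$ is induced by its action on $K^\times$, and for any $\tau \in \Galk$ and any subset $S \subset K$, one has $\tau(\Kbb(S)) = \Kbb(\tau S)$ and $\tau(\Kfrak(S)) = \Kfrak(\tau S)$, so $\Phi$ intertwines the two Galois actions. Hence the two canonical Galois representations
\[ \rho_{k_0}: \Galk \rightarrow \Aut^*(\Gbb^*(K|k)|K_0), \qquad \rho_{k_0}: \Galk \rightarrow \Aut^*(\Gfrak^*(K|k)|K_0) \]
fit into a commutative triangle with the isomorphism of automorphism groups displayed above.

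Finally, by Proposition \ref{proposition: lattice / galois-acl / galois-acl}, the hypothesis $\trdeg(K|k) \geq 5$ ensures that the first of these Galois representations is an isomorphism, so the second one must be an isomorphism as well. Since none of the three steps involve any serious difficulty beyond unpacking definitions, there is no substantive obstacle; all the real work has been carried out in the references to \cite{Evans:1991}, \cite{Evans:1995}, \cite{Gismatullin:2008} that feed into Theorem \ref{theorem: lattice / acl / fund-thm-field-acl} and in the non-vanishing results of \S\ref{section: milnorff} that underlie Proposition \ref{proposition: lattice / mod-ell / mod-ell-lattice}.
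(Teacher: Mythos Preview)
Your proposal is correct and follows exactly the same approach as the paper, which simply states that the corollary follows immediately by combining Propositions \ref{proposition: lattice / galois-acl / galois-acl} and \ref{proposition: lattice / mod-ell / mod-ell-lattice}. You have merely made explicit the routine verifications (that the lattice isomorphism $\Kbb(S)\mapsto\Kfrak(S)$ respects the $K_0$-fixing condition and intertwines the Galois actions) that the paper leaves implicit.
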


\section{Essentially Unramified Points and Fibers}
\label{section: essram}


A key difficulty which arises by working in the mod-$\ell$ context is that the presence of ramification can make certain valuations ``invisible.''
More precisely, suppose that $t$ is a non-constant element of $K$.
While every divisorial valuation of $\Kbb(t)|k$ is the restriction of some divisorial valuation of $K|k$, there may be some divisorial valuations $v$ of $\Kbb(t)|k$ such that $\{\U_v\}_K$ is not of the form $\Ufrak_w \cap \Kfrak(t)$ for any divisorial valuation $w$ of $K|k$.
Dualizing using the Kummer pairing, there may be some minimized inertia subgroups of $\Gc_{\Kbb(t)}^a$ arising from divisorial valuations of $\Kbb(t)|k$ which are not the image of any minimized inertia subgroup of $\Gc_K^a$ that arise from a divisorial valuation of $K|k$.
This ``difficulty'' is clearly intimately tied to ramification (specifically, ramification indices which are divisible by $\ell$).
In this section, we introduce some general conditions which will suffice to prevent such problems.

\subsection{The Flag Associated to Regular Parameters}
\label{subsection: essram / regular-params-flags}

We will use a basic construction in algebraic geometry which associates an $r$-divisorial valuation and/or a flag of divisorial valuations of length $r$ to a system of regular parameters at a point of codimension $r$ in a regular $k$-variety.
More precisely, let $X$ be a regular $k$-variety, and let $x$ be a (possibly non-closed) regular point of $X$.
Let $\Oc_{X,x}$ be the regular local ring at $x$.
For a subset $S \subset \Oc_{X,x}$, we use the usual notation
\[ V(S) = \Spec \Oc_{X,x}/(S) \]
to denote the closed subscheme of $\Spec \Oc_{X,x}$ associated to the ideal $(S)$ of $\Oc_{X,x}$.

Let $(f_1,\ldots,f_r)$ be a system of regular parameters for the (maximal ideal of the) regular local ring $\Oc_{X,x}$, and put $L := k(X)$.
We will abuse the terminology and also say that $(f_1,\ldots,f_r)$ is a {\bf system of regular parameters at $x$ in $X$.}

In any case, to the system $(f_1,\ldots,f_r)$ of regular parameters of $\Oc_{X,x}$, we associate a flag $(v_1,\ldots,v_r)$ of divisorial valuations of $L|k$, by letting $v_i/v_{i-1}$ be the divisorial valuation of
\[ Lv_{i-1} = k(V(f_1,\ldots,f_{i-1})) \]
associated to the prime Weil-divisor $V(f_1,\ldots,f_i)$ on $V(f_1,\ldots,f_{i-1})$.
As before, $v_0$ stands for the trivial valuation of $L$ by convention.
The following is a summary of the basic facts concerning the relationship between $(f_1,\ldots,f_r)$ and $(v_1,\ldots,v_r)$.

\begin{fact}
\label{fact: essram / regular-params-flags / regular-params}
  In the context above, the following hold:
  \begin{enumerate}
    \item For all $i = 1,\ldots,r$, one has $\U_{v_{i-1}}/\U_{v_{i}} \cong \Z$.
    Moreover, one has $f_i \in \U_{v_{i-1}}$ and its image in $\U_{v_{i-1}}/\U_{v_i}$ is a generator.
    \item For all $i = 1,\ldots,r$, one has $\{\U_{v_{i-1}}\}_{L}/\{\U_{v_{i}}\}_{L} \cong \Z/\ell$.
    Moreover, one has $\{f_i\}_{L} \in \{\U_{v_{i-1}}\}_{L}$ and its image in $\{\U_{v_{i-1}}\}_{L}/\{\U_{v_{i}}\}_{L}$ is a generator.
  \end{enumerate}
\end{fact}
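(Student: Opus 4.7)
The plan is to unwind the construction in terms of the chain of primes coming from the regular system of parameters, and to reduce both assertions to elementary facts about composed valuations. Since $(f_1,\ldots,f_r)$ is a regular system of parameters for the regular local ring $\Oc_{X,x}$, each quotient $\Oc_{X,x}/(f_1,\ldots,f_{i-1})$ is again a regular local ring of dimension $r-i+1$, so $(f_1,\ldots,f_{i-1})\subsetneq (f_1,\ldots,f_i)$ is a chain of primes of heights $i-1$ and $i$. In particular, the image $\bar f_i$ of $f_i$ in $\Oc_{X,x}/(f_1,\ldots,f_{i-1})$ generates the height-one prime corresponding to the Weil divisor $V(f_1,\ldots,f_i)\subset V(f_1,\ldots,f_{i-1})$, so it is a uniformizer of the DVR $\Oc_{v_i/v_{i-1}}$. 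Since $f_i\notin(f_1,\ldots,f_{i-1})$ in $\Oc_{X,x}$, an induction through the composition defining $v_{i-1}$ shows $v_{i-1}(f_i)=0$, i.e.\ $f_i\in\U_{v_{i-1}}$.

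For (1), I would use the standard description of composed valuations: the residue map $\Oc_{v_{i-1}}\twoheadrightarrow Lv_{i-1}$ restricts to a surjection $\U_{v_{i-1}}\twoheadrightarrow (Lv_{i-1})^\times$ with kernel $\U_{v_{i-1}}^1$, under which $\U_{v_i}$ is precisely the preimage of $\U_{v_i/v_{i-1}}$. Composing with $v_i/v_{i-1}$, which is divisorial (hence rank-one discrete with value group $\Z$), then yields an isomorphism
\[ \U_{v_{i-1}}/\U_{v_i}\;\xrightarrow{\;\cong\;}\;(Lv_{i-1})^\times/\U_{v_i/v_{i-1}}\;\xrightarrow{\;v_i/v_{i-1}\;}\;\Z. \]
The preparatory paragraph shows that $f_i\in\U_{v_{i-1}}$ and that its image in $\Z$ under this composite is $(v_i/v_{i-1})(\bar f_i)=1$, a generator. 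This proves (1).

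For (2), I would pass to the mod-$\ell$ quotient. The value group $v_{i-1}L$ is a subgroup of $\Z^{i-1}$, hence torsion-free, from which one obtains $L^{\times\ell}\cap\U_{v_{i-1}}=\U_{v_{i-1}}^\ell$, and the same with $v_i$ in place of $v_{i-1}$. A short diagram chase then identifies
\[ \{\U_{v_{i-1}}\}_L/\{\U_{v_i}\}_L\;\cong\;\U_{v_{i-1}}/(\U_{v_i}\cdot\U_{v_{i-1}}^\ell)\;\cong\;(\U_{v_{i-1}}/\U_{v_i})\otimes_\Z\Z/\ell, \]
so (2) follows at once from (1), with $\{f_i\}_L$ mapping to a generator of $\Z/\ell$. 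None of the steps is genuinely hard; the only mildly delicate point is the verification $L^{\times\ell}\cap\U_{v_{i-1}}=\U_{v_{i-1}}^\ell$, which is really just the $\ell$-torsion-freeness of the value group.
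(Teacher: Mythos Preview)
Your argument is correct. The paper itself states this result as a ``Fact'' and gives no proof, treating it as a basic consequence of the definitions; your write-up supplies exactly the standard verification one would expect. The reduction of (1) to the isomorphism $\U_{v_{i-1}}/\U_{v_i}\cong (Lv_{i-1})^\times/\U_{v_i/v_{i-1}}$ via the residue map is the right move, and your inductive check that $f_i\in\U_{v_{i-1}}$ is fine since $(f_1,\ldots,f_{i-1})$ is prime in $\Oc_{X,x}$ and $f_i$ lies outside it. For (2), the identification $\{\U_{v_{i-1}}\}_L/\{\U_{v_i}\}_L\cong(\U_{v_{i-1}}/\U_{v_i})\otimes\Z/\ell$ is correct, and your diagnosis that the only nontrivial input is $L^{\times\ell}\cap\U_{v_{i-1}}=\U_{v_{i-1}}^\ell$ (equivalently, torsion-freeness of $v_{i-1}L$) is exactly right.
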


\subsection{$\ell$-Unramified Prolongations}
\label{subsection: essram / ell-unram}
Suppose now that $k(X) = L$ is a subextension of $K|k$, and $(v_1,\ldots,v_r)$ is a flag of divisorial valuations of $L|k$.
Let $(w_1,\ldots,w_r)$ be a flag of divisorial valuations of $K|k$ which \emph{prolongs} $(v_1,\ldots,v_r)$; i.e. $w_i$ is a prolongation of $v_i$ for $i = 1,\ldots,r$.
Note that this condition ensures that the canonical map $\k_1(L) \rightarrow \k_1(K)$ restricts to a morphism $\{\U_{v_i}\}_L \rightarrow \Ufrak_{w_i}$ for all $i = 1,\ldots,r$.
We will say that $(w_1,\ldots,w_r)$ is an {\bf $\ell$-unramified prolongation} of $(v_1,\ldots,v_r)$, provided that the canonical map $\k_1(L) \rightarrow \k_1(K)$ induces isomorphisms
\[ \frac{\{\U_{v_{i-1}}\}_L}{\{\U_{v_i}\}_L} \xrightarrow{\cong} \frac{\Ufrak_{w_{i-1}}}{\Ufrak_{w_i}} \]
for all $i = 1,\ldots,r$ (recall that $v_0$ resp. $w_0$ denote the trivial valuations).

The notion of an $\ell$-unramified prolongation and Fact \ref{fact: essram / regular-params-flags / regular-params} lead to the following particularly useful observation.
If the flag $(v_1,\ldots,v_r)$ arises from a system of regular parameters $(f_1,\ldots,f_r)$ at a regular point $x \in X$ as described in \S\ref{subsection: essram / regular-params-flags}, and $(w_1,\ldots,w_r)$ is an $\ell$-unramified prolongation of $(v_1,\ldots,v_r)$ to $K$, then $\{f_i\}_K$ is a generator of $\Ufrak_{w_{i-1}}/\Ufrak_{w_i} \cong \Z/\ell$ for all $i = 1,\ldots,r$ by Fact \ref{fact: essram / regular-params-flags / regular-params}(2).

\subsection{The Essential Branch Locus}
\label{subsection: essram / essential-branch-locus}

Let $X$ be a regular $k$-variety, and suppose that $K$ is a \emph{finite extension} of $k(X)$.
Let $K'$ denote the maximal separable subextension of $K|k(X)$, and let $Y \rightarrow X$ denote the normalization of $X$ in $K'$.
Thus, $Y \rightarrow X$ is a finite separable (possibly branched) cover of $k$-varieties.
The branch locus of this finite separable cover $Y \rightarrow X$ will be called the {\bf essential branch locus of $X$ in $K$}.
Recall that the essential branch locus of $X$ in $K$ is actually a divisor of $X$.
However, we will only be interested in the \emph{support} of this divisor.
Namely, we will consider the essential branch locus of $X$ in $K$ only as a closed subvariety of $X$ which has codimension one.

Along the same lines, if $x$ is a (scheme-theoretic) point of $X$, we say that $x$ is {\bf essentially unramified} in $K$ if $x$ is not contained in the essential branch locus of $X$ in $K$.
Similarly, if $Z$ is an integral closed subvariety of $X$, we say that $Z$ is {\bf essentially unramified} in $K$ if the generic point of $Z$ is essentially unramified in $K$.
Since the essential branch locus of $X$ in $K$ is \emph{closed} in $X$, we note that $Z$ is essentially unramified in $K$ if and only if $Z$ is not contained in the essential branch locus of $X$ in $K$.

The concept of essentially unramified points and the essential branch locus allows us to produce ``many'' $\ell$-unramified prolongations of flags of divisorial valuations, as follows.
In the context above, suppose that $x$ is an essentially unramified regular point of $X$, and let $y \in Y$ be a point in the fiber of $x$.
Moreover, let $(f_1,\ldots,f_r)$ be a system of regular parameters at $x$, and let $\vbf := (v_1,\ldots,v_r)$ be the flag of divisorial valuations of $k(X)|k$ associated to $(f_1,\ldots,f_r)$.
Since $y$ is unramified over $x$, it follows that $(f_1,\ldots,f_r)$ is a system of regular parameters at $y$ as well, and we let $\wbf' := (w_1',\ldots,w_r')$ be the flag of divisorial valuations of $k(Y) = K'$ associated to $(f_1,\ldots,f_r)$.
Note that the canonical map $\k_1(k(X)) \rightarrow \k_1(k(Y))$ induces isomorphisms
\[ \frac{\{\U_{v_{i-1}}\}_{k(X)}}{\{\U_{v_i}\}_{k(X)}} \xrightarrow{\cong} \frac{\{\U_{w'_{i-1}}\}_{k(Y)}}{\{\U_{w_i'}\}_{k(Y)}} \]
for all $i = 1,\ldots,r$ (recall that $v_0$ resp. $w'_0$ denote the trivial valuations).

Finally, let $\wbf = (w_1,\ldots,w_r)$ be \emph{any} prolongation of $\wbf'$ to $K$.
Then $\wbf$ is a flag of divisorial valuations, and it follows from Lemma \ref{lemma: milnor / insep / insep-val} that $\wbf$ is an \emph{$\ell$-unramified prolongation} of $\vbf$ as defined in \S\ref{subsection: essram / ell-unram}.
We summarize this discussion for later use in the following fact.

\begin{fact}
\label{fact: essram / essential-branch-locus / essential-branch-fact}
  Suppose that $X$ is a regular $k$-variety and that $K$ is a finite extension of $k(X)$.
  Let $K'$ denote the maximal separable subextension of $K|k(X)$, and let $Y$ denote the normalization of $X$ in $K'$.
  Let $x$ be a regular point of $X$ which is essentially unramified in $K$.
  Let $(f_1,\ldots,f_r)$ be a system of regular parameters at $x$ with associated flag $\vbf$ of divisorial valuations, and let $y$ be any point of $Y$ in the fiber above $x$.
  Then the following hold:
  \begin{enumerate}
    \item The system $(f_1,\ldots,f_r)$ is a system of regular parameters at $y$.
    \item Any prolongation $\wbf$ to $K$ of the flag of divisorial valuations of $k(Y) = K'$ associated to $(f_1,\ldots,f_r)$, considered as a system of regular parameters at $y$, is an $\ell$-unramified prolongation of $\vbf$.
  \end{enumerate}
\end{fact}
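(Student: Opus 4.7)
The plan is to deduce (1) from the standard fact that \'etale morphisms preserve regular parameters, and then deduce (2) by factoring the map on $\k_1$ through the intermediate field $K'$, using Lemma~\ref{lemma: milnor / insep / insep-val} on the purely inseparable piece $K|K'$ and Fact~\ref{fact: essram / regular-params-flags / regular-params} on the separable piece.

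For (1): by the definition of the essential branch locus of $X$ in $K$ as the branch locus of the finite separable cover $Y \to X$, the hypothesis that $x$ is essentially unramified in $K$ means that $Y \to X$ is \'etale on an open neighborhood of $x$, and hence \'etale at any $y$ in the fibre above $x$. The standard fact that \'etale morphisms of noetherian local rings preserve regular parameters then yields $\mf_x \cdot \Oc_{Y,y} = \mf_y$ and $\dim \Oc_{Y,y} = \dim \Oc_{X,x} = r$, so that $(f_1,\ldots,f_r)$ generates $\mf_y$ and is a system of regular parameters for $\Oc_{Y,y}$.

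For (2), let $\wbf' = (w_1',\ldots,w_r')$ denote the flag of divisorial valuations of $K' = k(Y)$ associated to $(f_1,\ldots,f_r)$ considered as a system of regular parameters at $y$. I will factor the map in question through $\k_1(K')$ as
\[
\frac{\{\U_{v_{i-1}}\}_{k(X)}}{\{\U_{v_i}\}_{k(X)}}
\longrightarrow \frac{\{\U_{w_{i-1}'}\}_{K'}}{\{\U_{w_i'}\}_{K'}}
\longrightarrow \frac{\Ufrak_{w_{i-1}}}{\Ufrak_{w_i}},
\]
and argue that each arrow is an isomorphism. For the right arrow: since $K|K'$ is finite and purely inseparable of characteristic $\neq \ell$, Lemma~\ref{lemma: milnor / insep / insep-val} applied to each prolongation $w_i|w_i'$ yields compatible isomorphisms $\{\U_{w_i'}\}_{K'} \cong \Ufrak_{w_i}$, which pass to isomorphisms on the successive quotients. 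For the left arrow: by Fact~\ref{fact: essram / regular-params-flags / regular-params}(2) applied at $x \in X$ and at $y \in Y$ (using (1)), both source and target are cyclic of order $\ell$, generated respectively by $\{f_i\}_{k(X)}$ and $\{f_i\}_{K'}$, and the canonical map sends the first generator to the second.

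The main technical ingredient is thus the preservation of regular parameters under \'etale morphisms; the remainder amounts to bookkeeping with the purely inseparable reduction of Lemma~\ref{lemma: milnor / insep / insep-val} and the explicit generator description of Fact~\ref{fact: essram / regular-params-flags / regular-params}. I expect no real obstacle beyond being careful about the fact that \emph{any} prolongation of $\wbf'$ works in (2), which is automatic because $K|K'$ is purely inseparable and hence the prolongation is unique.
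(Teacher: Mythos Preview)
Your proposal is correct and follows essentially the same approach as the paper: the paper's justification for this Fact is precisely the discussion in the paragraphs immediately preceding its statement, which argues (1) via unramifiedness preserving regular parameters and (2) by factoring through $K'$, invoking Fact~\ref{fact: essram / regular-params-flags / regular-params}(2) on the separable piece and Lemma~\ref{lemma: milnor / insep / insep-val} on the purely inseparable piece. One minor remark: strictly speaking the hypothesis gives that $Y\to X$ is \emph{unramified} near $x$; your claim of \'etaleness is nevertheless correct a posteriori, since once you have shown $\Oc_{Y,y}$ is regular, miracle flatness (finite between regular local rings) yields flatness.
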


\subsection{Essentially Unramified Fibers}
\label{subsection: essram / essential-unram}

We will primarily use Fact \ref{fact: essram / essential-branch-locus / essential-branch-fact} in the case where $x \in X$ is the generic point of a fiber of some smooth morphism.
More precisely, suppose that $X \rightarrow S$ is a dominant smooth morphism of regular $k$-varieties with geometrically integral fibers, and let $s \in S$ be a closed point in the image of $X \rightarrow S$.
Let $\eta_s \in X$ be the generic point of the fiber of $X \rightarrow S$ over $s$.

With the setup above, if $(f_1,\ldots,f_r)$ is a system of regular parameters at $s \in S$, then $(f_1,\ldots,f_r)$ is also a system of regular parameters at $\eta_s \in X$.
Thus, if $K$ is a finite extension of $k(X)$ and $\eta_s$ is essentially unramified in $K$, then we may apply Fact \ref{fact: essram / essential-branch-locus / essential-branch-fact} to $\eta_s \in X$ endowed with a system of regular parameters $(f_1,\ldots,f_r)$, which arises from $s \in S$.

\subsection{Mod-$\ell$ Divisors}
\label{subsection: essram / mod-ell-divs}

In this subsection, we will use the concept of \emph{essentially unramified fibers}, as discussed in \S\ref{subsection: essram / essential-unram}, to compare the divisorial valuations on a field of the form $\Kbb(t)$ with the divisorial valuations which can be detected in the mod-$\ell$ setting from $K$.

For $t \in K \smallsetminus k$, we denote by $\Dcal_t$ the set of divisorial valuations of $\Kbb(t)|k$.
Since $\Kbb(t)$ is a one-dimensional function field over $k$, the elements of $\Dcal_t$ are in canonical bijection with the closed points of the unique complete normal model of $\Kbb(t)|k$.
As for the mod-$\ell$ analogue of $\Dcal_t$, we define $\Dfrak_t$ to be the set
\[ \Dfrak_t := \{ \Kfrak(t) \cap \Ufrak_v \ : \ \Kfrak(t) \not\subset \Ufrak_v \} \]
where $v$ varies over the divisorial valuations of $K|k$.

Note that one has $\Kbb(t)/\U_v \cong \Z$ for all $v \in \Dcal_t$, and that $\Kfrak(t)/\Vfrak \cong \Z/\ell$ for all $\Vfrak \in \Dfrak_t$.
Our primary goal in this section is to compare $\Dcal_t$ and $\Dfrak_t$.
First, we show that $\Dfrak_t$ can be embedded canonically in $\Dcal_t$.

\begin{lemma}
\label{lemma: essram / mod-ell-divs / mod-ell-div-injection}
  Let $t \in K \smallsetminus k$ be given.
  For every $\Vfrak \in \Dfrak_t$, there exists a unique $v \in \Dcal_t$ such that $\Vfrak = \{\U_v\}_K$.
  In particular, one has a canonical injective map $\Dfrak_t \hookrightarrow \Dcal_t$ which satisfies $\Vfrak \mapsto v$ if and only if $\Vfrak = \{\U_v\}_K$.
\end{lemma}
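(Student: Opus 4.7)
The plan is to produce the desired $v$ as the restriction to $\Kbb(t)$ of a divisorial valuation $w$ of $K|k$ realizing $\Vfrak = \Kfrak(t) \cap \Ufrak_w$, and then to verify both the defining identity and uniqueness. First, since $w$ is trivial on $k$ and $\Kbb(t)|k$ is a one-dimensional function field, the restriction $v := w|_{\Kbb(t)}$ is either trivial or a divisorial valuation of $\Kbb(t)|k$; but the hypothesis $\Kfrak(t) \not\subset \Ufrak_w$ excludes triviality, since in that case $\U_w$ would contain $\Kbb(t)^\times$ and so $\Kfrak(t) \subset \Ufrak_w$. Hence $v \in \Dcal_t$ is the natural candidate.

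The heart of the argument is to show $\Vfrak = \Kfrak(t) \cap \Ufrak_w = \{\U_v\}_K$. The inclusion $\{\U_v\}_K \subset \Kfrak(t) \cap \Ufrak_w$ is immediate from $\U_v = \U_w \cap \Kbb(t)^\times$. For the reverse inclusion, the key claim is that the ramification index $e := [wK : v\Kbb(t)]$ is coprime to $\ell$. Indeed, if $\ell \mid e$, then writing $wK \cong \Z$ with $v\Kbb(t) = e\Z$ inside $wK$ gives $v\Kbb(t) \subset \ell\, wK$; then $w(a) = v(a) \in \ell\, wK$ for every $a \in \Kbb(t)^\times$, which forces $\Kfrak(t) \subset \Ufrak_w$, contradicting the hypothesis. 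Granted $\gcd(\ell,e)=1$, the mod-$\ell$ map $v\Kbb(t)/\ell \to wK/\ell$ is injective. Then for $x = \{a\}_K \in \Kfrak(t) \cap \Ufrak_w$ with $a \in \Kbb(t)^\times$, the condition $x \in \Ufrak_w$ reads $w(a) \in \ell\, wK$; combined with $w(a) = v(a) \in v\Kbb(t)$ and the above injectivity, we obtain $v(a) \in \ell\, v\Kbb(t)$. Picking a uniformizer $\pi \in \Kbb(t)$ for $v$, we may write $a = u \pi^{\ell n}$ with $u \in \U_v$, whence $x = \{u\}_K \in \{\U_v\}_K$.

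Uniqueness reduces to the observation that the assignment $v \mapsto \{\U_v\}_{\Kbb(t)}$ is injective on $\Dcal_t$. Since $\Kbb(t)$ is relatively algebraically closed in $K$, the canonical map $\k_1(\Kbb(t)) \hookrightarrow \k_1(K)$ is injective and identifies $\k_1(\Kbb(t))$ with $\Kfrak(t)$; under this identification $\{\U_v\}_K$ corresponds to $\{\U_v\}_{\Kbb(t)}$, so the equality $\{\U_{v_1}\}_K = \{\U_{v_2}\}_K$ descends to $\{\U_{v_1}\}_{\Kbb(t)} = \{\U_{v_2}\}_{\Kbb(t)}$. If $v_1 \neq v_2$ in $\Dcal_t$, a standard weak-approximation argument yields $\pi \in \Kbb(t)^\times$ with $v_1(\pi) = 1$ and $v_2(\pi) = 0$, so $\{\pi\}_{\Kbb(t)}$ lies in $\{\U_{v_2}\}_{\Kbb(t)}$ but not in $\{\U_{v_1}\}_{\Kbb(t)}$ (since $1 \notin \ell\Z$), a contradiction.

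The main obstacle is the ramification step above: one must leverage the hypothesis $\Kfrak(t) \not\subset \Ufrak_w$ to force tameness $\ell \nmid e$, which is precisely the phenomenon that makes the mod-$\ell$ picture strictly coarser than the integral one. Once tameness is in hand, descent of the mod-$\ell$ unit structure from $K$ to $\Kbb(t)$, as well as the uniqueness step, are routine consequences of standard valuation theory.
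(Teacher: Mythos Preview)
Your proof is correct and follows essentially the same approach as the paper: take $v = w|_{\Kbb(t)}$, show it is nontrivial, and verify $\Vfrak = \{\U_v\}_K$ plus uniqueness via approximation. The only difference is that where you explicitly analyze the ramification index $e$ to show $\ell \nmid e$ and deduce the reverse inclusion, the paper instead observes directly that both $\Kfrak(t)/\Vfrak$ and $\Kfrak(t)/\{\U_v\}_K$ are isomorphic to $\Z/\ell$, so the inclusion $\{\U_v\}_K \subset \Vfrak$ forces equality; this is the same content packaged as an index argument.
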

\begin{proof}
  Let $w$ be a divisorial valuation of $K|k$ such that $\Vfrak = \Ufrak_w \cap \Kfrak(t)$.
  Since $\Kfrak(t) \not\subset \Ufrak_w$, we deduce that $w$ is non-trivial on $\Kbb(t)$, hence $w|_{\Kbb(t)}$ is a divisorial valuation on $\Kbb(t)$.
  Denote this divisorial valuation of $\Kbb(t)$ by $v$.
  Then clearly one has $\{\U_v\}_K \subset \Ufrak_w \cap \Kfrak(t) = \Vfrak$.
  On the other hand, both $\Kfrak(t)/\Vfrak$ and $\Kfrak(t)/\{\U_v\}_K$ are isomorphic to $\Z/\ell$, hence $\Vfrak = \{\U_v\}_K$.

  Concerning the uniqueness of $v$, suppose that $v'$ is another divisorial valuation of $\Kbb(t)$ such that $\Vfrak = \{\U_{v'}\}_K$.
  Then one has $\{\U_v\}_K = \{\U_{v'}\}_K$, and since $\k_1(\Kbb(t)) \rightarrow \k_1(K)$ is injective, we deduce that $\{\U_v\}_{\Kbb(t)} = \{\U_{v'}\}_{\Kbb(t)}$.
  In particular, $v$ and $v'$ must be dependent as valuations of $\Kbb(t)$, for otherwise $\U_v \cdot \U_{v'} = \Kbb(t)^\times$ by the approximation theorem for independent valuations.
  Since both $v$ and $v'$ have value groups isomorphic to $\Z$, it follows that $v = v'$.
  This proves the uniqueness of $v$, as required.
\end{proof}

A primary difficuly which arises in the mod-$\ell$ case is that the canonical map $\Dfrak_t \hookrightarrow \Dcal_t$ described in Lemma \ref{lemma: essram / mod-ell-divs / mod-ell-div-injection} need not be \emph{surjective} in general.
Nevertheless, we can use the notion of \emph{essentially-unramified fibers} to give a sufficient condition for an element of $\Dcal_t$ to arise from $\Dfrak_t$.
Although we will not need it later, we note that an argument similar to the proof of Lemma \ref{lemma: essram / mod-ell-divs / fibers} below shows that all but finitely many of the elements of $\Dcal_t$ arise from some element of $\Dfrak_t$ via this injection.

\begin{lemma}
\label{lemma: essram / mod-ell-divs / fibers}
  Let $t \in K \smallsetminus k$ be given and let $S$ be the unique proper normal model of $\Kbb(t)|k$.
  Let $v \in \Dcal_t$ be given, and let $s \in S$ be the (unique) center of $v$.
  Assume that there exists a regular $k$-variety $X$ and a smooth dominant morphism $X \rightarrow S$ with geometrically integral fibers, such that the following hold:
  \begin{enumerate}
    \item $K$ is a finite extension of $k(X)$.
    \item $s$ is in the image of $X \rightarrow S$.
    \item The fiber $X_s$ of $X \rightarrow S$ over $s$ is essentially unramified in $K$.
  \end{enumerate}
  Then there exists some $\Vfrak \in \Dfrak_t$ such that $\Vfrak = \{\U_v\}_K$, i.e. $v$ is in the image of the canonical injective map $\Dfrak_t \hookrightarrow \Dcal_t$ from Lemma \ref{lemma: essram / mod-ell-divs / mod-ell-div-injection}.
\end{lemma}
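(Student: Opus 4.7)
The plan is to construct a divisorial valuation $w$ of $K|k$ prolonging $v$ for which $\Kfrak(t) \not\subset \Ufrak_w$; then, setting $\Vfrak := \Ufrak_w \cap \Kfrak(t)$, this $\Vfrak$ lies in $\Dfrak_t$ by definition, and Lemma~\ref{lemma: essram / mod-ell-divs / mod-ell-div-injection} identifies it with $\{\U_{w|_{\Kbb(t)}}\}_K = \{\U_v\}_K$, which is the desired element of $\Dfrak_t$.

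First I would produce an intermediate divisorial valuation $v_1$ on $k(X)|k$. Since $S$ is one-dimensional and normal, $s$ is a regular closed point, so I may choose a uniformizer $f \in \Oc_{S,s} \subset \Kbb(t)$ of $v$. Smoothness of $X \to S$ with geometrically integral fibers forces the generic point $\eta_s$ of $X_s$ to be a regular codimension-one point of $X$ whose maximal ideal is generated by the pullback of $f$. Hence $(f)$ is a length-one system of regular parameters at $\eta_s$ in the sense of \S\ref{subsection: essram / regular-params-flags}; let $v_1$ be the associated divisorial valuation of $k(X)|k$, namely the order of vanishing along the prime divisor $X_s$. A brief check---using that $v$-units of $\Oc_{S,s}$ pull back to units of $\Oc_{X,\eta_s}$, since they restrict to nonzero constants on the geometrically integral fiber $X_s$---yields $v_1|_{\Kbb(t)} = v$.

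Next I would apply Fact~\ref{fact: essram / essential-branch-locus / essential-branch-fact} at the regular point $\eta_s \in X$ equipped with the system of regular parameters $(f)$. Hypothesis (3) says that $\eta_s$ is essentially unramified in $K$, so for any point $y$ of the normalization of $X$ in the maximal separable subextension of $K|k(X)$ lying above $\eta_s$, and any prolongation $w$ to $K$ of the divisorial valuation of this normalization defined by $(f)$ at $y$, the valuation $w$ is divisorial on $K|k$ and is an $\ell$-unramified prolongation of the flag $(v_1)$. Unfolding the $\ell$-unramified condition, the induced map $\k_1(k(X))/\{\U_{v_1}\}_{k(X)} \to \k_1(K)/\Ufrak_w$ is an isomorphism of groups of order $\ell$; since $\{f\}_{k(X)}$ is a generator of the left-hand side, $\{f\}_K$ is a generator of the right-hand side, and in particular $\{f\}_K \in \Kfrak(t) \smallsetminus \Ufrak_w$. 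Combined with $w|_{\Kbb(t)} = v_1|_{\Kbb(t)} = v$, this supplies all the ingredients needed to conclude via the first paragraph.

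The only substantive step is the application of Fact~\ref{fact: essram / essential-branch-locus / essential-branch-fact}: the essentially-unramified-fiber hypothesis is precisely what prevents $\ell$ from dividing the ramification index in any prolongation of $v_1$ to $K$, thereby ensuring that $v$ remains visible in the mod-$\ell$ dual setting. Once this is in place, everything else reduces to bookkeeping about the geometry of $\Oc_{X,\eta_s}$ over $\Oc_{S,s}$ and the $\Z/\ell$-index count already used in Lemma~\ref{lemma: essram / mod-ell-divs / mod-ell-div-injection}.
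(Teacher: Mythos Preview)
Your proof is correct and follows essentially the same route as the paper's: choose a uniformizer $\pi$ (your $f$) for $v$, observe it is a local parameter at the generic point $\eta_s$ of the fiber $X_s$, let $v_1$ (the paper's $w_0$) be the associated divisorial valuation of $k(X)$, apply Fact~\ref{fact: essram / essential-branch-locus / essential-branch-fact} to obtain an $\ell$-unramified prolongation $w$ to $K$, and conclude via $\Vfrak = \Ufrak_w \cap \Kfrak(t)$ using that $\{\pi\}_K$ generates $\k_1(K)/\Ufrak_w$. Your write-up is in fact slightly more explicit than the paper's in verifying $v_1|_{\Kbb(t)} = v$ and hence $\Vfrak = \{\U_v\}_K$.
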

\begin{proof}
  Let $\pi \in \Kbb(t)$ be a uniformizer for $v$, and let $s \in S$ be the (unique) center of $v$ on $S$.
  Furthermore, let $\eta \in X$ be the generic point of the fiber of $X \rightarrow S$ over $s$.
  Following the discussion of \S\ref{subsection: essram / essential-unram} with the morphism $X \rightarrow S$, it follows that $\eta$ is a regular point of codimension one, and that $\pi$ is a local parameter at $\eta$.
  Let $w_0$ be the divisorial valuation of $k(X)$ associated to $\pi$ at $\eta$.
  By Fact \ref{fact: essram / essential-branch-locus / essential-branch-fact}, there exists an $\ell$-unramified prolongation $w$ of $w_0$ to $K|k$.
  The lemma follows by taking $\Vfrak = \Ufrak_w \cap \Kfrak(t)$ and noting that the image of $\{\pi\}_K$ is a generator of $\k_1(K)/\Ufrak_w \cong \Z/\ell$.
\end{proof}

\section{Strongly General Elements and Birational Bertini}
\label{section: strgen}


In this section, we recall the notion of a \emph{general element} of a regular function field.
We also introduce the notion of a \emph{strongly-general element} which is related to the notion of a general element, but has further assumptions which are motivated by the discussion of \S\ref{section: essram}.
The primary goal of this section is to prove so-called \emph{Birational Bertini} results for both general and strongly-general elements, which will show that there are ``many'' such elements in higher-dimensional function fields.

\subsection{General Elements}
\label{subsection: strgen / general}

Let $L|F$ be a regular field extension and let $x \in L \smallsetminus F$ be given.
We say that $x$ is {\bf separable} in $L|F$ if $x \notin F \cdot L^p$ where $p = \Char F$.
If $\Char F = 0$, then every element of $L$ is separable in $L|F$ by convention.
We say that $x$ is {\bf general} in $L|F$ provided that $L$ is a regular extension of $F(x)$.
In particular, if $x$ is general in $L|F$ then $F(x)$ is relatively algebraically closed in $L$.

The following lemma is our so-called \emph{Birational Bertini} result for general elements.
The first assertion of this lemma can be found in \cite{Lang:1958}*{Ch. VIII, pg. 213}.
The second assertion of this lemma has essentially the same proof as in loc.~cit., but since it hasn't explicitly appeared in the literature, we provide a detailed proof below.
\begin{lemma}
\label{lemma: strgen / general / gen-birat-bertini}
  Let $F$ be an infinite field, and let $L$ be a regular function field over $F$.
  Let $x,y \in L$ be algebraically independent over $F$ with $y$ separable in $L|F$.
  Then the following hold:
  \begin{enumerate}
    \item For all but finitely many $a \in F$, the element $x+ay$ is general in $L|F$.
    \item There exists a non-empty open subset $U$ of $\Abb^2_F$ such that for all $(a,b) \in U(F)$, the element $(x-a)/(y-b)$ is general in $L|F$.
  \end{enumerate}
\end{lemma}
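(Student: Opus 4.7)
Part (1) is classical (due to Lang, as cited). For part (2), my plan is to establish the result via a Bertini-type argument: I would show that there exists a non-empty Zariski-open $U \subset \Abb^2_F$ such that $z := (x-a)/(y-b)$ is general in $L|F$ for every $(a,b) \in U(F)$. Since $F$ is infinite, such $U$ automatically has $F$-rational points.

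I would first reduce to the case $[L:K] < \infty$, where $K := F(x,y)$, by replacing $L$ with its relative algebraic closure $L_1$ over $K$. Since $L_1$ is rel. alg. closed in $L$ by construction and $L|F$ is regular, the intermediate extension $L|L_1$ is itself regular; hence if $z$ is general in $L_1|F$ (i.e., $L_1|F(z)$ is regular), then $L|F(z)$ is regular by transitivity. I would then reformulate the claim geometrically: fix a projective geometrically integral model $V$ of $L_1|F$. The pair $(x,y)$ defines a dominant, generically finite rational map $\pi \colon V \dashrightarrow \Abb^2_F$, and $z$ corresponds to composing $\pi$ with the projection $\Abb^2_F \dashrightarrow \Pbb^1_F$ from $(a,b)$. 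Thus, $z$ is general in $L_1|F$ if and only if the generic fiber of the composite rational map $V \dashrightarrow \Pbb^1_F$, i.e., the preimage under $\pi$ of the generic line in the pencil of lines through $(a,b) \in \Abb^2_F$, is geometrically integral and reduced.

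A classical Bertini-type theorem then applies to the linear system on $V$ pulled back from the pencil of lines through $(a,b)$ on $\Abb^2_F$: the base locus of this pulled-back pencil on $V$ is the $\pi$-preimage of the single point $(a,b)$, which is zero-dimensional and hence of codimension $\dim V \geq 2$. Therefore, for $(a,b)$ in a non-empty Zariski-open $U \subset \Abb^2_F$, the generic member of the pulled-back pencil on $V$ is geometrically integral, which is exactly the statement that $z = (x-a)/(y-b)$ is general in $L_1|F$, and hence in $L|F$. Separability throughout is inherited from the separability hypothesis on $y$ in $L|F$, which ensures that the relevant linear systems on $V$ are separable. The main obstacle I anticipate is the correct invocation of Bertini's theorem in positive characteristic, where one must distinguish geometric integrality from mere connectedness of the generic fiber of the pulled-back pencil; this is ultimately handled via the separability of the pencil, which follows from the hypothesis that $y$ is separable in $L|F$, together with the openness of ``geometric integrality of the generic fiber'' as a condition on the base point $(a,b) \in \Abb^2_F$.
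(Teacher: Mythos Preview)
Your geometric approach via Bertini is quite different from the paper's, which is entirely field-theoretic: a derivation computation shows $t_P := (x-a)/(y-b)$ is separable in $L|F$ for all but one $P = (a,b)$, and then, writing $E'_P$ for the relative algebraic closure of $F(t_P)$ in $L$ and $M$ for that of $F(x,y)$ in $L$, the paper notes that each $E'_P \cdot F(x,y)$ is intermediate to the finite extension $M|F(x,y)$, of which there are only finitely many, and uses linear disjointness of $E'_P, E'_Q$ over $F$ (for $P \neq Q$) to force $E'_Q = F(t_Q)$ for all but finitely many $Q$.

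Your sketch has a genuine gap, though not the one you anticipate. The problem is not characteristic $p$ but the \emph{dimension} of the linear system: Bertini's irreducibility theorem (e.g.\ Jouanolou, \emph{Th\'eor\`emes de Bertini}, Th.~6.3) requires the image of the associated map to projective space to have dimension $\geq 2$, and simply fails for pencils---the generic member of a pencil on $V$ need not be irreducible regardless of the codimension of the base locus. So ``base locus of codimension $\geq 2$'' does not yield what you claim. The fix is to apply Bertini to the full $2$-dimensional system of lines in $\Pbb^2$: since $\dim \pi(V) = 2$, the locus $W \subset (\Pbb^2)^\vee$ of lines $\ell$ with $\pi^{-1}(\ell)$ geometrically irreducible is dense open; the lines through $(a,b)$ form a line $\ell^\vee_{(a,b)}$ in $(\Pbb^2)^\vee$, and $\ell^\vee_{(a,b)} \not\subset (\Pbb^2)^\vee \smallsetminus W$ holds for all but finitely many $(a,b)$ since the complement, being of dimension $\leq 1$, contains only finitely many lines. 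Separability of $L|F(z)$ is a separate matter and is not ``inherited'' from anything: you need the explicit computation that $D(t_{a,b}) \neq 0$ for all but one $(a,b)$, where $D$ is an $F$-derivation of $L$ with $D(y) \neq 0$.
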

\begin{proof}
  As mentioned above, we only need to prove assertion (2) since assertion (1) can be found in \cite{Lang:1958}*{Ch. VIII, pg. 213}.
  To simplify the notation, for $P = (a,b) \in F^2$, we write
  \[ t_{a,b} = t_P = \frac{x-a}{y-b}. \]
  Since $y$ is separable in $L|F$, there exists an $F$-derivation $D$ of $L$ such that $D(y) \neq 0$.
  Now we may calculate:
  \[ D(t_{a,b}) = \frac{D(x) \cdot (y-b) - D(y) \cdot (x-a)}{(y-b)^2}. \]
  Hence $D(t_{a,b}) = 0$ if and only if $t_{a,b} = D(x)/D(y)$, so this can only happen for at most one pair $P_0 \in F^2$.
  Thus, for any pair $(a,b)$ different from this (possible) exceptional one $P_0$, the element $t_{a,b}$ is separable in $L|F$, which implies that $L|F(t_{a,b})$ has a separating transcendence base.

  For each $P \in F^2$, write $E_P := F(t_P)$ and $E'_P$ for the relative algebraic closure of $E_P$ in $L$.
  If $P,Q$ are two different points of $F^2$, note that one has $E_P \cdot E_Q = F(x,y)$.
  Note that $E_P'$ and $E_Q'$ are regular of transcendence degree $1$ over $F$, since they are contained in $L$ which is regular over $F$.
  This implies that $E_P'$ and $E_Q'$ are linearly disjoint over $F$, since $L|F$ is regular.

  Let $M$ denote the relative algebraic closure of $F(x,y)$ in $L$, and note that there are only finitely many intermediate subextensions of $M|F(x,y)$.
  In particular, there are finitely many subextensions of $M|F(x,y)$ of the form $E'_P(x,y)$.
  Let $P_1,\ldots,P_n$ be finitely many points in $F^2$ such that $E'_{P_i}(x,y)$ exhaust all such subextensions.
  Now suppose that $Q$ is any point of $F^2$ which is different from $P_0,\ldots,P_n$.
  Since $Q \neq P_0$, we see that $E'_Q$ is the separable algebraic closure of $E_Q$ in $M$.
  Moreover, by the discussion above, $E'_Q(x,y)$ must be linearly disjoint from $E'_{P_i}(x,y)$ over $F(x,y)$, for all $i = 1,\ldots,n$.
  This forces $E'_Q(x,y)$ to be precisely $F(x,y)$, and therefore $E'_Q = E_Q$.
  In other words, $E_Q = F(t_Q)$ is algebraically closed in $L$, and since $Q \neq P_0$, we see that $L$ is regular over $E_Q = F(t_Q)$, as required.
\end{proof}

\subsection{Strongly-General Elements}
\label{subsection: strgen / strgen}

Let $t \in K$ be a non-constant element.
We say that $t$ is a {\bf strongly-general element} in $K|k$ if the following two conditions hold:
\begin{enumerate}
  \item The element $t$ is general in $K|k$.
  \item The canonical injective map $\Dfrak_t \rightarrow \Dcal_t$ from Lemma \ref{lemma: essram / mod-ell-divs / mod-ell-div-injection} is surjective (hence bijective).
\end{enumerate}
In the spirit of Lemma \ref{lemma: strgen / general / gen-birat-bertini}, we now prove the following \emph{Birational Bertini} result for strongly-general elements of $K|k$.

\begin{proposition}
\label{proposition: strgen / strgen / strgen-birat-bertini}
  Let $x,y \in K$ be algebraically independent over $k$ such that $y$ is separable in $K|k$.
  Then there exists a non-empty open subset $U$ of $\Abb^2_k$ such that, for all $(a,b) \in U(k)$, the element $(x-a)/(y-b)$ is strongly-general in $K|k$.
\end{proposition}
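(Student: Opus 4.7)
The plan is to strengthen Lemma~\ref{lemma: strgen / general / gen-birat-bertini}(2) using the essential-ramification machinery of \S\ref{section: essram}. That lemma already produces a non-empty open $U_1 \subset \Abb^2_k$ on which $t_{a,b} := (x-a)/(y-b)$ is general in $K|k$; it remains to shrink $U_1$ so that the canonical injection $\Dfrak_{t_{a,b}} \hookrightarrow \Dcal_{t_{a,b}}$ of Lemma~\ref{lemma: essram / mod-ell-divs / mod-ell-div-injection} is also surjective. Equivalently, every divisorial valuation of $\Kbb(t_{a,b}) = k(t_{a,b})$ must be realized as the restriction of a suitable divisorial valuation of $K|k$, and for this I will apply Lemma~\ref{lemma: essram / mod-ell-divs / fibers} uniformly in $s \in \Pbb^1_k$ to a single geometric model.

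Extend $(x,y)$ to a transcendence basis $(x,y,z_3,\ldots,z_d)$ of $K|k$ and set $L_0 := k(x,y,z_3,\ldots,z_d)$, so $K|L_0$ is a finite extension. Let $P := \Pbb^d_k$ with homogeneous coordinates $(T_0:T_1:\cdots:T_d)$ and $x = T_1/T_0$, $y = T_2/T_0$, $z_i = T_i/T_0$, and let $D_1,\ldots,D_n$ be the irreducible components of the essential branch locus of $P$ in $K$; these form a fixed divisor in $P$ independent of $(a,b)$. For each $(a,b) \in \Abb^2_k$, blow up the smooth codimension-two linear subvariety
\[ Z_{a,b} := \{T_1 - aT_0 = 0,\ T_2 - bT_0 = 0\} \subset P \]
to obtain a regular projective $k$-variety $X_{a,b} \to P$ with exceptional divisor $E_{a,b}$. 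The rational function $t_{a,b}$ resolves to a morphism $\pi_{a,b} : X_{a,b} \to \Pbb^1_k$ whose closed fibers are the strict transforms of the hyperplanes $H_{a,b,\lambda} := \{T_1 - aT_0 = \lambda(T_2 - bT_0)\}$ for $\lambda \in k$ together with $H_{a,b,\infty} := \{T_2 - bT_0 = 0\}$. Since $Z_{a,b}$ is a Cartier divisor inside each such hyperplane, each strict transform is isomorphic to $\Pbb^{d-1}_k$, so $\pi_{a,b}$ is smooth projective of relative dimension $d-1$ with geometrically integral fibers.

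Hypotheses (1) and (2) of Lemma~\ref{lemma: essram / mod-ell-divs / fibers} are automatic for $X = X_{a,b}$, $S = \Pbb^1_k$, and any closed $s \in \Pbb^1_k$. Since $X_{a,b} \to P$ is birational, the irreducible components of the essential branch locus of $X_{a,b}$ in $K$ are the strict transforms $\tilde{D}_1,\ldots,\tilde{D}_n$ together with possibly $E_{a,b}$; but $E_{a,b}$ surjects onto $\Pbb^1_k$ and hence is not a fiber of $\pi_{a,b}$. So hypothesis (3) of Lemma~\ref{lemma: essram / mod-ell-divs / fibers} holds for \emph{every} closed $s \in \Pbb^1_k$ if and only if no $D_i$ in $P$ coincides with some $H_{a,b,\lambda}$ for $\lambda \in k \cup \{\infty\}$. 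A $D_i$ which is not a hyperplane, or is a hyperplane involving some $T_j$ with $j \geq 3$, never coincides with any $H_{a,b,\lambda}$; each of the remaining finitely many $D_i$, necessarily of the form $\{\alpha_i T_0 + \beta_i T_1 + \gamma_i T_2 = 0\}$, determines $\lambda$ uniquely and imposes a single linear equation on $(a,b)$, hence excludes one line in $\Abb^2_k$. Let $U_2 \subset \Abb^2_k$ be the complement of these finitely many lines and set $U := U_1 \cap U_2$; this is a non-empty open subset of $\Abb^2_k$ because $k$ is infinite. For $(a,b) \in U(k)$, Lemma~\ref{lemma: essram / mod-ell-divs / fibers} applied at each closed $s \in \Pbb^1_k$ gives surjectivity of $\Dfrak_{t_{a,b}} \hookrightarrow \Dcal_{t_{a,b}}$, while $(a,b) \in U_1$ supplies generality; hence $t_{a,b}$ is strongly-general in $K|k$.

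The main obstacle is arranging that \emph{every} closed fiber of $\pi_{a,b}$ — not just the generic one — is essentially unramified in $K$, in contrast to the usual generic-smoothness statements. The key simplification is that the blow-up of the smooth codimension-two linear locus $Z_{a,b}$ in $P$ turns $t_{a,b}$ into a smooth morphism whose closed fibers are precisely the strict transforms of the hyperplanes through $Z_{a,b}$; the condition that no essential branch component equals such a hyperplane then reduces to finitely many explicit linear exclusions on $(a,b)$.
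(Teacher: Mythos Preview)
Your argument is correct and reaches the same conclusion, but it takes a different geometric route than the paper's proof. The paper works entirely inside affine space: after extending $(x,y)$ to a transcendence base $\tbf=(t_1,\ldots,t_d)$ of $K|k$, it considers the coordinate projection $\Abb^d_\tbf \rightarrow \Abb^2_{t_1,t_2}$ and shrinks $U_1$ to the open set $U$ of points $(a,b)$ whose fiber in $\Abb^d_\tbf$ is essentially unramified in $K$. The key observation is an incidence trick: the fibers of the composed rational map $\Abb^d_\tbf \dashrightarrow \Abb^2_{t_1,t_2} \dashrightarrow \Pbb^1_{t_{a,b}}$ are the preimages of the lines in $\Abb^2$ through $(a,b)$, and each such hyperplane \emph{contains} the fixed codimension-$2$ fiber $Z$ over $(a,b)$. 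Since $Z$ is essentially unramified, no hyperplane in the pencil can lie in the branch locus. Removing $Z$ makes the map to $\Pbb^1$ regular, smooth, surjective with geometrically integral fibers, and Lemma~\ref{lemma: essram / mod-ell-divs / fibers} applies at every closed point of $\Pbb^1$.

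By contrast, you compactify to $\Pbb^d$, blow up the linear $\Pbb^{d-2}$ through which the pencil passes, and argue directly that no component of the essential branch locus of the blowup (strict transform or exceptional) can coincide with a fiber of the resolved pencil, except for $(a,b)$ lying on finitely many explicit lines in $\Abb^2_k$. This yields a slightly more explicit description of the excluded locus, at the cost of the compactification, the blowup, and the verification that the branch locus of the blowup is contained in the strict transforms together with the exceptional divisor. The paper's incidence trick---all fibers share a common subvariety whose essential unramifiedness is a single open condition on $(a,b)$---is more economical and sidesteps both constructions.
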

\begin{proof}
  By Lemma \ref{lemma: strgen / general / gen-birat-bertini}(2), there exists a non-empty open subset $U_1$ of $\Abb^2_k$ such that for all $(a,b) \in U_1(k)$, the element $t_{a,b} = (x-a)/(y-b)$ is general in $K|k$.
  We must therefore prove that the second condition for a strongly-general element is also an open condition on $(a,b) \in \Abb^2_k(k)$ as above.

  Put $x = t_1$, $y = t_2$ and extend $t_1,t_2$ to a transcendence base $\tbf = (t_1,\ldots,t_d)$ for $K|k$.
  Furthermore, consider the canonical coordinate projection $\Abb^d_\tbf \rightarrow \Abb^2_{t_1,t_2}$.
  Also, identify $\Abb^2_k$ as above with $\Abb^2_{t_1,t_2}$, so that we may consider $U_1$ as an open subset of $\Abb^2_{t_1,t_2}$.
  Finally, note that $K$ is a finite extension of $k(\tbf) = k(\Abb^d_\tbf)$.
  Thus, there exists a non-empty open subset $U$ of $U_1$ such that for all $(a,b) \in U(k)$, the fiber of $\Abb^d_\tbf \rightarrow \Abb^2_{t_1,t_2}$ over the point $(a,b)$ is essentially unramified in $K$.
  We will show that this open set $U$ satisfies the required assertion.

  Let $(a,b) \in U(k)$ be given, and consider the (rational) projection $\Abb^2_{t_1,t_2} \rightarrow \Pbb^1_t$ induced by sending $t$ to $(x-a)/(y-b)$.
  The fibers of this morphism are all lines in $\Abb^2_{t_1,t_2}$ given by equations of the form
  \[ c \cdot (x-a) = d \cdot (y-b) \]
  for $(c:d) \in \Pbb^1(k)$ (written in homogeneous coordinates).
  All such lines pass through the point $(a,b)$ in $\Abb^2_{t_1,t_2}$ hence the fibers of the composition
  \[ \Abb^d_\tbf \rightarrow \Abb^2_{t_1,t_2} \rightarrow \Pbb^1_t \]
  are all non-empty, geometrically integral, and essentially unramified in $K$.
  Let $Z$ be the preimage of $(a,b)$ under the coordinate projection $\Abb^d_\tbf \twoheadrightarrow \Abb^2_{t_1,t_2}$, and put $X:= \Abb^d_\tbf \smallsetminus Z$.
  Then the surjective rational morphism $\Abb^d_\tbf \twoheadrightarrow \Pbb^1_t$ defined above is regular on $X$, and the induced (regular) map $X \rightarrow \Pbb^1_t$ is a smooth surjective morphism with geometrically integral fibers.
  
  With this set-up, the fact that $\Dfrak_t \rightarrow \Dcal_t$ is surjective follows from Lemma \ref{lemma: essram / mod-ell-divs / fibers}, by taking $X$ as above, $S = \Pbb^1_t$, and $X \rightarrow S$ the map defined above.
\end{proof}

The main benefit of Proposition \ref{proposition: strgen / strgen / strgen-birat-bertini} is that it can be used to show that strongly-general elements multiplicatively generate higher-dimensional function fields.
The following lemma is a precise formulation of this fact which we will use later.

\begin{lemma}
\label{lemma: strgen / strgen / strgen-generate}
  Let $L$ be a relatively-algebraically closed subfield of $K$, such that $k_0 \subset L$ and $\trdeg(L|k_0) \geq 2$.
  Then the multiplicative group $L^\times$ is generated by elements $t \in L^\times$ which are strongly-general in $K|k$.
\end{lemma}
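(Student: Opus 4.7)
The plan is to reduce quickly to the transcendental case and then use two applications of the Birational Bertini Proposition~\ref{proposition: strgen / strgen / strgen-birat-bertini} combined with a ``quotient trick.''

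First, two preliminary observations are needed. Since $K_0|k_0$ is regular, the relative algebraic closure of $k_0$ in $K = K_0 \otimes_{k_0} k$ is exactly $k$; as $L \supseteq k_0$ is relatively algebraically closed in $K$, we obtain $k \subseteq L$, and hence $\trdeg(L|k) \geq 2$. Next, $K_0|k_0$ admits a separating transcendence base, so $K|k$ is separable and thus so is the subextension $L|k$. Using relative algebraic closedness, any $w \in K$ with $w^p \in L$ (where $p = \Char k$, if positive) is purely inseparable over $L$, hence in $L$; this gives $L \cap K^p = L^p$, so that an element of $L$ is separable in $L|k$ iff it is separable in $K|k$. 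Consequently, $L$ contains many elements which are both transcendental and separable in $K|k$, and Proposition~\ref{proposition: strgen / strgen / strgen-birat-bertini} applies to suitable pairs within $L$.

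Let $H \subseteq L^\times$ denote the subgroup generated by elements of $L$ strongly-general in $K|k$. By the remark above, $H$ is non-trivial; fix one such $s$. The constant case $z \in k^\times$ is now easy: since $\Kbb(cs) = \Kbb(s)$ for $c \in k^\times$, the element $cs$ is also strongly-general, and so $c = (cs) \cdot s^{-1} \in H$. Thus $k^\times \subseteq H$.

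For the transcendental case, let $z \in L^\times$ be transcendental over $k$ and pick $y \in L$ algebraically independent from $z$ over $k$ with $y$ separable in $K|k$. Apply Proposition~\ref{proposition: strgen / strgen / strgen-birat-bertini} to both pairs $(z,y)$ and $(zy,y)$: these are valid pairs, since $zy$ is transcendental over $k(y)$ and $y$ is separable. This yields non-empty open sets $U_{z,y}, U_{zy,y} \subseteq \Abb^2_k$ such that $(z-a)/(y-b)$ and $(zy-a)/(y-b)$ are strongly-general for $(a,b)$ in the respective set; their intersection is a non-empty open subset of $\Abb^2_k$. Choosing $y$ so that this intersection meets the line $\{a=0\}$, pick $(0,b)$ in both. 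Then $zy/(y-b)$ and $z/(y-b)$ are strongly-general elements of $L$, so both lie in $H$, and dividing gives $y \in H$. Repeating the same quotient trick with $y$ replaced by $y-b$, i.e.\ applying Proposition~\ref{proposition: strgen / strgen / strgen-birat-bertini} to $(z, y-b)$ and $(z(y-b), y-b)$, shows $(y-b) \in H$. Finally, $z = (z/(y-b))\cdot(y-b)$ exhibits $z$ as a product of two elements of $H$, so $z \in H$.

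The key technical obstacle is the genericity condition ensuring that $U_{z,y}\cap U_{zy,y}$ (and later $U_{z,y-b}\cap U_{z(y-b),y-b}$) meets the coordinate line $\{a=0\}$; without this, the quotient trick cannot produce $y$ (resp.\ $y-b$) as a bona fide element of $H$. This is overcome using the freedom afforded by $\trdeg(L|k_0) \geq 2$ and the abundance of separable choices for $y$ in $L$: the bad loci inside $\Abb^2_k$ are proper closed subsets whose component structure does not, for generic $y$, contain the fixed line $\{a=0\}$. If necessary, one may also replace $y$ by a different generic separable element of $L$ algebraically independent from $z$, which modifies $U_{z,y}$ and $U_{zy,y}$ while preserving all other hypotheses.
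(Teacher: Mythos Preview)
Your quotient trick needs the open sets from Proposition~\ref{proposition: strgen / strgen / strgen-birat-bertini} to meet the line $\{a=0\}$, and the resolution you offer --- vary $y$ so that ``for generic $y$'' the bad locus avoids this line --- is not justified and can fail. Tracing the proof of Proposition~\ref{proposition: strgen / strgen / strgen-birat-bertini}, the bad locus in $\Abb^2$ is (up to finitely many points) the set of $(a,b)$ whose fiber in $\Abb^d_\tbf$ lies in the essential branch locus of $K$; for the pair $(z,y)$ this bad set contains $\{a=0\}$ precisely when the hyperplane $\{z=0\}\subset\Abb^d$ lies in that branch locus. But this can happen independently of the choice of $y,t_3,\ldots,t_d$: if for instance $\sqrt[\ell]{z}\in K'$, the $z$-adic divisor ramifies no matter which transcendence base extending $z$ you pick. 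So changing $y$ need not help, and the same objection recurs in step~4.

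The paper avoids the issue with a one-line translation: since $\Kbb\bigl((t-ab)/(u-b)\bigr)=\Kbb\bigl((t-au)/(u-b)\bigr)$, the latter is strongly-general whenever the former is. After first reducing to $x$ separable (every nonconstant element of $L$ is a power of a separable one, as $L$ is relatively algebraically closed in $K$), one applies Proposition~\ref{proposition: strgen / strgen / strgen-birat-bertini} to $(x,y)$ and to $(xy,x)$, picks any $(a,b)\in k_0^2$ satisfying the resulting open condition --- possible since $k_0$ is infinite, with no constraint forcing $a=0$ --- and writes
\[ x=\frac{x-a}{y-b}\cdot\frac{x(y-b)}{x-a}. \]
The second factor is strongly-general by the translation trick applied to $(xy-ab)/(x-a)$. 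This exhibits $x$ directly as a product of two strongly-general elements of $L$, without any detour through $y$ or $y-b$.
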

\begin{proof}
  It suffices to prove that every \emph{transcendental} element $x$ of $L$ is a product of finitely many elements of $L$ which are strongly-general in $K|k$.
  Since $L$ is relatively algebraically closed in $K$, it follows that every non-constant element of $L$ is a power of some element of $L$ which is separable in $K|k$.
  Thus, we may assume without loss of generality that $x$ is separable.
  Moreover, since $\trdeg(L|k \cap L) = \trdeg(L|k_0) \geq 2$, there exists another element $y \in L$ which is algebraically independent from $x$ over $k$, and such that $y$ is separable in $K|k$.

  Note that if $(t-ab)/(u-b)$ is strongly-general in $K|k$ with $a,b \in k$, then so is
  \[ \frac{t-ab}{u-b} - a = \frac{t-au}{u-b}. \]
  Thus, using Proposition \ref{proposition: strgen / strgen / strgen-birat-bertini}, the fact that $k_0$ is infinite, and the assumption that $k_0 \subset L$, we can find $a,b \in k_0$ such that both
  \[ \frac{x-a}{y-b}, \ \frac{xy-bx}{x-a} \in L^\times \]
  are strongly-general in $K|k$.
  But then we see that
  \[ x = \frac{x-a}{y-b} \cdot \frac{xy-bx}{x-a} \]
  is a product of two elements of $L$ which are strongly-general in $K|k$.
  This concludes the proof of the lemma.
\end{proof}

\subsection{Rational-Like Collections}
\label{subsection: strgen / rational-like}

Let $t \in K \smallsetminus k$ be given.
A particularly useful consequence of Lemma \ref{lemma: essram / mod-ell-divs / mod-ell-div-injection} is that every element $x \in \Kfrak(t)$ is contained in all but finitely many of the $\Vfrak \in \Dfrak_t$.
In particular, the projection maps $\Kfrak(t) \twoheadrightarrow \Kfrak(t)/\Vfrak$ for $\Vfrak \in \Dfrak_t$ together induce a canonical morphism
\[ \Kfrak(t) \rightarrow \bigoplus_{\Vfrak \in \Dfrak_t} \Kfrak(t)/\Vfrak. \]
Moreover, recall that $\Kfrak(t)/\Vfrak \cong \Z/\ell$ for all $\Vfrak \in \Dfrak_t$.
By choosing a collection $\Phi = (\Phi_\Vfrak)_{\Vfrak \in \Dfrak_t}$ of isomorphisms
\[ \Phi_\Vfrak : \Kfrak(t)/\Vfrak \xrightarrow{\cong} \Z/\ell \]
we thereby obtain a map
\[ \divv_\Phi : \Kfrak(t) \rightarrow \bigoplus_{\Vfrak \in \Dfrak_t} \Z/\ell \cdot [\Vfrak] \]
defined by $\divv_\Phi(x) = \sum_{\Vfrak \in \Dfrak_t} \Phi_\Vfrak(x \cdot \Vfrak) \cdot [\Vfrak]$.
As the notation $\divv_\Phi$ suggests, this morphism should be considered as a ``mod-$\ell$'' analogue of the usual divisor map
\[ \divv : \Kbb(t) \rightarrow \bigoplus_{v \in \Dcal_t} \Z \cdot [v] \]
albeit with respect to the collection of isomorphisms $\Phi$.

Suppose now that $t$ is \emph{strongly general} in $K|k$, so that the canonical injective map $\Dfrak_t \rightarrow \Dcal_t$ given by Lemma \ref{lemma: essram / mod-ell-divs / mod-ell-div-injection} is actually a \emph{bijection}.
In this context, we say that a collection of isomorphisms $\Phi = (\Phi_\Vfrak : \Kfrak(t)/\Vfrak \xrightarrow{\cong} \Z/\ell)_{\Vfrak \in \Dfrak_t}$ as above is a {\bf rational-like collection} provided that the induced map
\[ \divv_\Phi : \Kfrak(t) \rightarrow \bigoplus_{\Vfrak \in \Dfrak_t} \Kfrak(t)/\Vfrak \xrightarrow{(\Phi_\Vfrak)_{\Vfrak}} \bigoplus_{\Vfrak \in \Dfrak_t} \Z/\ell \cdot [\Vfrak] \]
fits into a short exact sequence
\[ 0 \rightarrow \Kfrak(t) \xrightarrow{\divv_\Phi} \bigoplus_{\Vfrak \in \Dfrak_t} \Z/\ell \cdot [\Vfrak] \xrightarrow{\text{sum}} \Z/\ell \rightarrow 0. \]

For every strongly-general element $t \in K \smallsetminus k$, there is a {\bf canonical rational-like collection} $\Psi = (\Psi_\Vfrak)_{\Vfrak \in \Dfrak_t}$ associated to the field $\Kbb(t) = k(t)$, which is defined as follows.
For each $\Vfrak \in \Dfrak_t$ with associated element $v \in \Dcal_t$ (i.e. $\Vfrak = \{\U_v\}_K$), the isomorphism $\Psi_\Vfrak$ is the unique one making the following diagram commute:
\[
\xymatrix{
  \Kbb(t)^\times \ar[r]^{v}\ar[d]_{\{\bullet\}_K} & \Z \ar@{->>}[r] & \Z/\ell \ar@{=}[d] \\
  \Kfrak(t) \ar@{->>}[r] & \Kfrak(t)/\Vfrak \ar[r]_{\Psi_\Vfrak} & \Z/\ell.
}
\]
The following Lemma shows that any rational collection is obtained from the canonical one by multiplying by some element of $(\Z/\ell)^\times$.

\begin{lemma}
\label{lemma: strgen / rational-like / rational-like}
  Let $t \in K \smallsetminus k$ be strongly general in $K|k$, and let $\Psi = (\Psi_\Vfrak)_{\Vfrak \in \Dfrak_t}$ be the canonical rational-like collection associated to $\Kbb(t) = k(t)$.
  Also, let $(\Phi_\Vfrak)_{\Vfrak \in \Dfrak_t}$ be another rational-like collection.
  Then there exists a unique $\epsilon \in (\Z/\ell)^\times$, such that $\Phi_\Vfrak = \epsilon \cdot \Psi_\Vfrak$ for all $\Vfrak \in \Dfrak_t$.
\end{lemma}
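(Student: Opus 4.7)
The plan is to show that the function $\Vfrak \mapsto c_\Vfrak$, defined by writing $\Phi_\Vfrak = c_\Vfrak \cdot \Psi_\Vfrak$, is constant on $\Dfrak_t$. The existence of such a $c_\Vfrak \in (\Z/\ell)^\times$ for each $\Vfrak$ is immediate, since $\Kfrak(t)/\Vfrak$ is a one-dimensional $\Z/\ell$-vector space and both $\Phi_\Vfrak$ and $\Psi_\Vfrak$ are isomorphisms onto $\Z/\ell$. The uniqueness clause of the lemma is also immediate once the constancy is established, as $\Psi_\Vfrak \neq 0$ for any fixed $\Vfrak$.

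To prove constancy, I would exploit the short exact sequence which expresses that $\Psi$ is rational-like. Fix any two distinct divisors $\Vfrak_1, \Vfrak_2 \in \Dfrak_t$. The element $[\Vfrak_1] - [\Vfrak_2]$ lies in the kernel of the sum map $\bigoplus_{\Vfrak \in \Dfrak_t} \Z/\ell \cdot [\Vfrak] \twoheadrightarrow \Z/\ell$, so by exactness of the sequence for $\Psi$ there exists $x \in \Kfrak(t)$ with $\divv_\Psi(x) = [\Vfrak_1] - [\Vfrak_2]$. Unravelling the definition, this says that $\Psi_{\Vfrak_1}(x \cdot \Vfrak_1) = 1$, $\Psi_{\Vfrak_2}(x \cdot \Vfrak_2) = -1$, and $\Psi_\Vfrak(x \cdot \Vfrak) = 0$ for all remaining $\Vfrak \in \Dfrak_t$. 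Scaling by the ratios $c_\Vfrak$ then gives $\divv_\Phi(x) = c_{\Vfrak_1} \cdot [\Vfrak_1] - c_{\Vfrak_2} \cdot [\Vfrak_2]$.

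Applying the rational-likeness of $\Phi$ to this element $x$, we conclude that the sum of the components of $\divv_\Phi(x)$ must vanish, which forces $c_{\Vfrak_1} = c_{\Vfrak_2}$. Since $\Vfrak_1$ and $\Vfrak_2$ were arbitrary, the function $\Vfrak \mapsto c_\Vfrak$ is constant, and denoting its common value by $\epsilon$ completes the proof. There is no real obstacle here; the argument is elementary duality between the two short exact sequences, and the only mildly delicate point is noticing that $[\Vfrak_1] - [\Vfrak_2]$ actually lies in the image of $\divv_\Psi$, which is precisely what the exactness of the sequence for $\Psi$ supplies.
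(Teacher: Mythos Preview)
Your proof is correct and essentially identical to the paper's: both define the scalars $c_\Vfrak$ (the paper calls them $\epsilon_\Vfrak$), use the exactness of $\divv_\Psi$ to find an element supported on $\{\Vfrak_1,\Vfrak_2\}$, and then read off $c_{\Vfrak_1}=c_{\Vfrak_2}$ from the fact that the image of $\divv_\Phi$ lies in the kernel of the sum map.
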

\begin{proof}
  For each $\Vfrak \in \Dfrak_t$, there exists some $\epsilon_\Vfrak \in (\Z/\ell)^\times$ such that
  \[ \Phi_\Vfrak = \epsilon_\Vfrak \cdot \Psi_\Vfrak. \]
  We must show that $\epsilon_\Vfrak$ is independent of $\Vfrak$.
  So let $\Vfrak_i$, $i = 1,2$ be two elements of $\Dfrak_t$, and put $\epsilon_i := \epsilon_{\Vfrak_i}$.

  Since $\Psi$ is a rational-like collection, there exists some $a \in \Kfrak(t)$ such that
  \[ \divv_\Psi(a) = [\Vfrak_1]-[\Vfrak_2]. \]
  This implies that
  \[ \divv_\Phi(a) = \epsilon_1 \cdot [\Vfrak_1] - \epsilon_2 \cdot [\Vfrak_2]. \]
  But the fact that $\Phi$ is a rational-like collection implies that $\epsilon_1 = \epsilon_2$, as required.
\end{proof}

We now use the notions of strongly-general elements and rational-like collections to prove a proposition which will be useful in several steps of the proof of Theorem \ref{maintheorem: intro / milnor-variant / milnor-main}.

\begin{proposition}
\label{proposition: strgen / rational-like / ident-criterion}
  Assume that $\trdeg(K|k) \geq 2$, and let $\sigma$ be an automorphism of $\k_1(K)$.
  Then the following are equivalent:
  \begin{enumerate}
    \item There exists an $\epsilon \in (\Z/\ell)^\times$ such that $\sigma = \epsilon \cdot \one_{\k_1(K)}$.
    \item For all $\Kfrak \in \Gfrak^1(K|k)$, one has $\sigma \Kfrak = \Kfrak$.
  \end{enumerate}
\end{proposition}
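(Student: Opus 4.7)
The implication $(1) \Rightarrow (2)$ is immediate, since scalar multiplication preserves every subgroup. For the converse, the plan is to show that $\sigma|_{\Kfrak(t)}$ is multiplication by some $\epsilon_t \in (\Z/\ell)^\times$ for each strongly-general $t \in K$, and then that this scalar is independent of $t$. Since $\trdeg(K|k_0) = \trdeg(K|k) \geq 2$, Lemma~\ref{lemma: strgen / strgen / strgen-generate} applied with $L = K$ implies that $K^\times$ is multiplicatively generated by strongly-general elements, and hence $\k_1(K)$ is generated by the classes $\{t\}_K$ for strongly-general $t$; combined with the two facts above, this forces $\sigma = \epsilon \cdot \one_{\k_1(K)}$.

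Fix a strongly-general $t$. Since $\sigma\Kfrak(t) = \Kfrak(t)$, $\sigma$ restricts to an automorphism $\sigma_t$ of $\Kfrak(t)$. The crucial intermediate claim is that $\sigma_t(\Vfrak) = \Vfrak$ for each $\Vfrak \in \Dfrak_t$. Granting this, each induced map $\bar\sigma_\Vfrak : \Kfrak(t)/\Vfrak \to \Kfrak(t)/\Vfrak$ is multiplication by some $\epsilon_\Vfrak \in (\Z/\ell)^\times$, and the collection $\Phi_\Vfrak := \Psi_\Vfrak \circ \bar\sigma_\Vfrak = \epsilon_\Vfrak \cdot \Psi_\Vfrak$ satisfies $\divv_\Phi = \divv_\Psi \circ \sigma_t$. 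Since $\sigma_t$ is an automorphism of $\Kfrak(t)$, this composition is injective with image $\ker(\text{sum})$, so $\Phi$ is rational-like. Lemma~\ref{lemma: strgen / rational-like / rational-like} then forces $\epsilon_\Vfrak$ to be a single constant $\epsilon_t$, and the injectivity of $\divv_\Psi$ yields $\sigma_t = \epsilon_t \cdot \one$ on $\Kfrak(t)$.

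The main obstacle is the intermediate claim $\sigma_t(\Vfrak) = \Vfrak$, which requires an intrinsic characterization of the subgroups in $\Dfrak_t$ using only 1-dimensional geometric subgroups of $\k_1(K)$. My strategy is to exploit the full hypothesis that $\sigma$ preserves \emph{every} $\Kfrak(s)$, not merely $\Kfrak(t)$: choosing auxiliary elements $y \in K$ algebraically independent from $t$ such that $(t-a)/(y-b)$ is strongly-general for generic $a,b$ (Proposition~\ref{proposition: strgen / strgen / strgen-birat-bertini}), and invoking the theory of essentially unramified fibers from Section~\ref{section: essram} to identify the elements of $\Dfrak_{(t-a)/(y-b)}$, one obtains enough compatibility constraints on $\sigma\{t-a\}_K$ to force $\sigma_t$ to respect the divisor-theoretic structure on $\Kfrak(t)$.

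Finally, for the independence of $\epsilon_t$: given algebraically independent strongly-general $t_1, t_2$, the element $s = (t_1-a)/(t_2-b)$ is strongly-general for generic $a,b$ by Proposition~\ref{proposition: strgen / strgen / strgen-birat-bertini}, and applying step one to $s, t_1, t_2$ together with the additivity of $\sigma$ gives $(\epsilon_s - \epsilon_{t_1})\{t_1-a\}_K = (\epsilon_s - \epsilon_{t_2})\{t_2-b\}_K$ in $\k_1(K)$. The elements $\{t_1-a\}_K$ and $\{t_2-b\}_K$ are linearly independent in $\k_1(K)$: constructing an $\ell$-unramified prolongation to $K$ (via Fact~\ref{fact: essram / essential-branch-locus / essential-branch-fact}) of the divisorial valuation of $k(t_1,t_2)$ centered at $\{t_1 = a\}$ — which exists for generic $a,b$ since the essential branch locus is a proper closed subset — yields a divisorial valuation of $K$ distinguishing them. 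This forces $\epsilon_s = \epsilon_{t_1} = \epsilon_{t_2}$; when $\Kbb(t_1) = \Kbb(t_2)$ the equality is automatic, and otherwise one passes to an algebraically independent pair, completing the proof.
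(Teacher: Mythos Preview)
Your overall architecture matches the paper's: reduce to strongly-general $t$, show $\sigma|_{\Kfrak(t)}=\epsilon_t\cdot\one$, prove $\epsilon_t$ is independent of $t$, and conclude via Lemma~\ref{lemma: strgen / strgen / strgen-generate}. Step~(2) and Step~(3) of your argument are essentially correct and coincide with the paper's (your verification of linear independence of $\{t_1-a\}_K,\{t_2-b\}_K$ via an $\ell$-unramified prolongation is more laborious than necessary, but not wrong).

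The gap is exactly where you flag it: the claim $\sigma(\Vfrak)=\Vfrak$ for $\Vfrak\in\Dfrak_t$. Your proposed strategy---pass to auxiliary strongly-general elements $(t-a)/(y-b)$ and extract constraints via essential ramification---is only a sketch, and it is not clear how to make it go through without already knowing something close to the conclusion. The paper's argument here is both different and substantially simpler, and you are missing it: for any divisorial valuation $v$ of $K|k$, the subgroup $\Ufrak_v\subset\k_1(K)$ is \emph{generated} by the one-dimensional geometric subgroups it contains. Indeed, $\U_v$ is multiplicatively generated by elements $x$ whose residue $\bar x\in Kv$ is transcendental over $k$ (since $\trdeg(Kv|k)\geq 1$), and for such $x$ one has $\Kbb(x)^\times\subset\U_v$, hence $\Kfrak(x)\subset\Ufrak_v$. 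Since $\sigma$ fixes every $\Kfrak(x)$ by hypothesis, $\sigma\Ufrak_v=\Ufrak_v$; then $\sigma\Vfrak=\sigma(\Ufrak_v\cap\Kfrak(t))=\Ufrak_v\cap\Kfrak(t)=\Vfrak$ directly from the definition of $\Dfrak_t$. This bypasses all the machinery you were reaching for.
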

\begin{proof}
  The implication $(1) \Rightarrow (2)$ is trivial.
  The proof of the non-trivial direction $(2) \Rightarrow (1)$ has three main steps:
  \begin{enumerate}
    \item First, we show that for all $t \in K \smallsetminus k$ and for all $\Vfrak \in \Dfrak_t$, one has $\sigma \Vfrak = \Vfrak$.
    \item Second, we show that for all $t$ which is strongly-general in $K|k$, the restriction of $\sigma$ to $\Kfrak := \Kfrak(t)$ is of the form $\epsilon_\Kfrak \cdot \one_\Kfrak$, for some $\epsilon_\Kfrak \in (\Z/\ell)^\times$, which \emph{a priori} might depend on $\Kfrak$.
    \item Finally, we show that $\epsilon_\Kfrak$ from step (2) doesn't actually depend on $\Kfrak$, and then conclude the proof of the proposition by using Lemma \ref{lemma: strgen / strgen / strgen-generate}.
  \end{enumerate}

  \vskip 5pt
  \noindent\emph{Step (1):} Let $v$ be a divisorial valuation of $K|k$.
  Since $Kv$ is a function field of transcendence degree $\geq 1$ over $k$, it is easy to see that $\U_v$ is multiplicatively generated by elements $x \in \U_v$ whose image in $Kv$ is transcendental over $k$.
  For all such $x \in \U_v$, one has $\Kbb(x)^\times \subset \U_v$.
  Thus, $\Ufrak_v$ is generated by subgroups of the form $\Kfrak(x)$ such that $\Kfrak(x) \subset \Ufrak_v$.
  Since $\sigma \Kfrak(x) = \Kfrak(x)$ for all $x \in K \smallsetminus k$, it follows that $\sigma \Ufrak_v = \Ufrak_v$.
  For any $t \in K \smallsetminus k$, it follows that $\sigma \Vfrak = \Vfrak$ for all $\Vfrak \in \Dfrak_t$ by the definition of the elements of $\Dfrak_t$.

  \vskip 5pt
  \noindent\emph{Step (2):}
  Let $t$ be strongly-general in $K|k$ and put $\Kfrak = \Kfrak(t)$.
  Consider the canonical rational-like collection $\Psi = (\Psi_\Vfrak)_{\Vfrak \in \Dfrak_t}$ associated to $\Kbb(t) = k(t)$.
  Since $\sigma \Vfrak = \Vfrak$ for all $\Vfrak \in \Dfrak_t$, we obtain an induced rational-like collection $\Phi = (\Phi_\Vfrak)_{\Vfrak \in \Dfrak_t}$, where $\Phi_\Vfrak := \Psi_\Vfrak \circ \sigma$.
  By Lemma \ref{lemma: strgen / rational-like / rational-like}, there exists some $\epsilon_\Kfrak \in (\Z/\ell)^\times$ depending only on $\Kfrak$ and $\sigma$, such that $\Phi_\Vfrak = \epsilon_\Kfrak \cdot \Psi_\Vfrak$ for all $\Vfrak \in \Dfrak_t$.
  In other words, for all $x \in \Kfrak$, one has
  \[ \divv_\Psi(\sigma x) = \divv_\Phi(x) = \epsilon_\Kfrak \cdot \divv_\Psi(x) = \divv_\Psi(\epsilon_\Kfrak \cdot x). \]
  The injectivity of $\divv_\Psi$ in the definition of $\Psi$ being a rational-like collection implies that $\sigma|_\Kfrak = \epsilon_\Kfrak \cdot \one_\Kfrak$.

  \vskip 5pt
  \noindent\emph{Step (3):}
  Let $t_1,t_2$ be strongly-general in $K|k$, and put $\Kfrak_i := \Kfrak(t_i)$ and $\epsilon_i := \epsilon_{\Kfrak_i}$ for $i = 1,2$.
  If $t_1,t_2$ are algebraically dependent over $k$, then $\Kfrak(t_1) = \Kfrak(t_2)$, so that $\epsilon_1 = \epsilon_2$.

  Assume, on the other hand, that $t_1,t_2$ are algebraically independent over $k$.
  Since $t_i$ is general in $K|k$, we see that $\{\{t_i-a\}_K \ : \ a \in k\}$ is a linearly-independent subset of $\k_1(K)$.
  Thus, by Proposition \ref{proposition: strgen / strgen / strgen-birat-bertini}, we may choose $a,b \in k$ such that $\{t_1-a\}_K$ and $\{t_2-b\}_K$ are $\Z/\ell$-independent in $\k_1(K)$ and such that $t_0 := (t_1-a)/(t_2-b)$ is strongly-general in $K|k$.
  Put $\Kfrak_0 = \Kfrak(t_0)$ and $\epsilon_0 = \epsilon_{\Kfrak_0}$.

  We can now calculate:
  \[ \epsilon_0 \cdot (\{t_1-a\}_K - \{t_2-b\}_K) = \sigma \left\{\frac{t_1-a}{t_2-b}\right\}_K = \epsilon_1 \cdot \{t_1-a\}_K - \epsilon_2 \cdot \{t_2-b\}_K.\]
  Since $\{t_1-a\}_K$ and $\{t_2-b\}_K$ are linearly independent in $\k_1(K)$, it follows that $\epsilon_1 = \epsilon_2 = \epsilon_0$.
  This proves that $\epsilon_\Kfrak$ doesn't depend on $\Kfrak = \Kfrak(t)$ for $t$ strongly-general in $K|k$.

  Letting $\epsilon = \epsilon_\Kfrak$ for some (hence any) $\Kfrak = \Kfrak(t)$ with $t$ strongly-general, we deduce that $\sigma|_\Kfrak = \epsilon \cdot \one|_\Kfrak$ for all $\Kfrak = \Kfrak(t)$ with $t$ strongly-general.
  By Lemma \ref{lemma: strgen / strgen / strgen-generate}, we see that $\k_1(K)$ is generated by its subgroups of the form $\Kfrak(t)$ for $t$ strongly-general in $K|k$. 
  Hence $\sigma = \epsilon \cdot \one_{\k_1(K)}$, as required.
\end{proof}

\subsection{Faithfulness of the Galois Action}
\label{subsection: strgen / galois-action}

We conclude this section by proving that the Galois action of $\Galk$ on the mod-$\ell$ Milnor K-ring of $K$ is faithful.
Although there are many ways to prove this fact, we can use geometric subgroups and the Birational-Bertini results to prove this for function fields of dimension $\geq 2$.
The result also holds for function fields of dimension $1$, but a different argument is needed in that case.

\begin{lemma}
\label{lemma: strgen / galois-action / faithfulness}
  Suppose that $\trdeg(K|k) \geq 1$.
  Then the canonical Galois representation
  \[ \rho_{k_0} : \Galk \rightarrow \UAutm(\k_1(K)) \]
  is injective.
  In particular, the Galois representation $\rho_{k_0} : \Galk \rightarrow \Autm(\k_1(K))$ is injective as well.
\end{lemma}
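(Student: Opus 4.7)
The plan is to prove the injectivity of $\rho_{k_0}: \Galk \to \UAutm(\k_1(K))$ directly, which formally implies the injectivity into $\Autm(\k_1(K))$ since the kernel of the map into $\Autm$ is contained in that of the map into $\UAutm$. Equivalently, I will show that if $\tau \in \Galk$ satisfies $\rho_{k_0}(\tau) = \epsilon \cdot \one_{\k_1(K)}$ for some $\epsilon \in (\Z/\ell)^\times$, then $\tau = 1$. The approach exploits the fact that $\Galk = \Gal(K|K_0)$ acts on $K = K_0 \otimes_{k_0} k$ via its action on the second tensor factor, and uses divisorial valuations of $K|k$ (of the sort constructed in \S\ref{section: essram}) to detect non-trivial classes in $\k_1(K)$. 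Concretely, I fix a transcendental element $t \in K_0$ (which exists since $\trdeg(K_0|k_0) \geq 1$), a smooth geometrically integral $k_0$-model $X_0$ of $K_0$ together with a dominant $k_0$-morphism $X_0 \to \Abb^1_{k_0}$ induced by $t$, and pass to the base change $X = X_0 \times_{k_0} k$, so that $k(X) = K$ and $X \to \Abb^1_k$ is smooth with geometrically integral fibers over some dense open $U \subset \Abb^1_k$ defined over $k_0$.

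In the first step I show $\epsilon = 1$. For any $c \in U(k_0)$ (non-empty since $k_0$ is infinite and $U$ is defined over $k_0$), the generic point of the fiber of $X \to \Abb^1_k$ above $c$ yields a divisorial valuation $w$ of $K|k$ with $w(t-c) = 1$, and therefore $\{t-c\}_K \neq 0$ in $\k_1(K)$. Since $t - c \in K_0$, one has $\tau(t-c) = t-c$, hence $\{t-c\}_K = \tau \cdot \{t-c\}_K = \epsilon \cdot \{t-c\}_K$, which forces $\epsilon = 1$.

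In the second step I show $\tau = 1$ under the hypothesis that $\tau$ acts trivially on $\k_1(K)$. Suppose for contradiction $\tau \neq 1$, so $k^\tau$ is a proper subfield of $k$ containing the infinite field $k_0$; since $k$ is a nontrivial extension of the infinite field $k^\tau$, the set $k \setminus k^\tau$ is infinite. I then select $a \in U(k) \cap (k \setminus k^\tau)$, which is possible because $U(k)$ is cofinite in $k$ and $k \setminus k^\tau$ is infinite. The generic point of the fiber of $X \to \Abb^1_k$ above this $a$ provides a divisorial valuation $w$ of $K|k$ with $w(t-a) = 1$ and $w(t-\tau(a)) = 0$, the latter because the restriction of $t - \tau(a)$ to the fiber is the non-zero constant $a - \tau(a) \in k^\times$. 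Consequently $w\bigl((t-\tau(a))/(t-a)\bigr) = -1 \not\equiv 0 \pmod{\ell}$, so $\{t - \tau(a)\}_K \neq \{t-a\}_K$ in $\k_1(K)$. But by the definition of the Galois action, $\tau \cdot \{t-a\}_K = \{t - \tau(a)\}_K$, which must equal $\{t-a\}_K$ by the triviality assumption, a contradiction. The main technical point, though not a serious obstacle, is the routine geometric construction of the valuation $w$ above; no deep birational input is needed beyond the generic smoothness of $X \to \Abb^1_k$, and the whole argument is uniform in $\trdeg(K|k) \geq 1$.
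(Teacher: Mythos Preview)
Your argument is correct. Two small points of care: (i) you should choose $t$ separable in $K_0|k_0$ (e.g.\ as part of a separating transcendence basis) so that $X_0 \to \Abb^1_{k_0}$ is generically smooth in positive characteristic; (ii) the phrase ``geometrically integral fibers'' is stronger than you need and may fail when $\trdeg(K|k)=1$ (it amounts to $t$ being general in $K_0|k_0$, which need not occur), but your valuation argument only requires picking the generic point of \emph{some} irreducible component of the smooth fiber $X_a$, where $t-a$ is still a uniformizer. With these adjustments the proof goes through uniformly in $\trdeg(K|k)\ge 1$.

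By contrast, the paper argues in three separate cases. For $K=k(t)$ it uses directly that $\{\{t-a\}_{k(t)}:a\in k\}$ is linearly independent in $\k_1(k(t))$; for $\trdeg(K|k)\ge 2$ it invokes the Birational--Bertini Lemma~\ref{lemma: strgen / general / gen-birat-bertini} to produce $t\in K_0$ general in $K|k$, so that $\k_1(k(t))\hookrightarrow\k_1(K)$ and one reduces to the rational case; only for $\trdeg(K|k)=1$ does it run a valuation argument like yours. Your route is more uniform and avoids the appeal to Birational--Bertini, at the cost of setting up a geometric model. Note also that your preliminary Step~1 (forcing $\epsilon=1$ via a $k_0$-rational fiber) is pleasant but not strictly necessary: since $\Ufrak_w$ is a subgroup, $\{t-a\}_K\notin\Ufrak_w$ already implies $\epsilon\cdot\{t-a\}_K\notin\Ufrak_w$, so one can conclude $\tau a=a$ directly from the hypothesis $\tau\in\ker(\Galk\to\UAutm(\k_1(K)))$, as the paper does in its one-dimensional case.
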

\begin{proof}
  First if $K = k(t)$ with $t \in K_0$, then the claim is clear, simply because the set 
  \[\{\{t-a\}_{k(t)} \ : \ a \in k\}\]
  is linearly independent in $\k_1(k(t))$, and since the action of $\tau \in \Galk$ on $\{t-a\}_{k(t)}$ satisfies $\tau \{t-a\}_{k(t)} = \{t-\tau a\}_{k(t)}$.

  Next, if $K$ has transcendence degree $\geq 2$, we note that the action of $\tau \in \Galk$ on $\k_1(K)$ restricts to an automorphism on any geometric subgroup of the form $\Kfrak(S)$ for $S \subset K_0$.
  By Lemma \ref{lemma: strgen / general / gen-birat-bertini}, there exists some $t \in K_0$ which is general in $K|k$.
  In this case, the map $\k_1(k(t)) \rightarrow \k_1(K)$ is injective with image $\Kfrak(t)$.
  This injection is compatible with the action of $\Galk$, so the assertion follows from the argument above.

  Finally, suppose that $\trdeg(K|k) = 1$ and suppose that $\tau$ is in the kernel of the canonical map 
  \[ \rho_{k_0} : \Galk \rightarrow \UAutm(\k_1(K)). \]
  Note that in order to show $\tau = \one$, it suffices to prove that $\tau a = a$ for all but finitely many $a \in k$.

  We now choose some $t \in K_0$ such that $K$ is finite and separable over $k(t)$.
  Let $C$ denote the complete normal model of $K|k$, and consider the finite (possibly branched) separable cover $C \rightarrow \Pbb^1_t$ induced by the inclusion $k(t) \hookrightarrow K$.

  Now let $a \in k$ be given such that the point $t = a$ in $\Pbb^1_t$ is unramified in the cover $C \rightarrow \Pbb^1_t$, and let $v$ be a divisorial valuation of $K|k$ whose center on $C$ lies above this point.
  By our assumption on $\tau$, it follows that 
  \[ \tau\{t-a\}_K = \{t-\tau a\}_K \in (\Z/\ell)^\times \cdot \{t-a\}_K. \]
  However, note that $\{t-a\}_K \notin \Ufrak_v$ while $\tau \Ufrak_v = \Ufrak_v$.
  In particular, we see that $\{t-\tau a\}_K \notin \Ufrak_v$, which directly implies that $\tau a = a$.
  Since this condition holds for all but finitely many $a \in k$, we deduce that $\tau = \one$, as required.

  Since the morphism $\rho_{k_0} : \Galk \rightarrow \UAutm(\k_1(K))$ factors through $\Autm(\k_1(K))$, we deduce that the morphism $\rho_{k_0} : \Galk \rightarrow \Autm(\k_1(K))$ is injective as well.
\end{proof}

\section{The Main Proof}
\label{section: mainproof}


We now turn to the proof of Theorems \ref{maintheorem: intro / galois-variant / galois-main} and \ref{maintheorem: intro / milnor-variant / milnor-main}, which is the main focus of this paper.
The primary focus will be on Theorem \ref{maintheorem: intro / milnor-variant / milnor-main}, since we have been primarily working with mod-$\ell$ Milnor K-theory, while Theorem \ref{maintheorem: intro / galois-variant / galois-main} will follow by applying Theorem \ref{theorem: cohom / kummer / galois-to-milnor}.

Using the notation from Theorem \ref{maintheorem: intro / milnor-variant / milnor-main}, recall that $\abf = (a_1,\ldots,a_r)$ is an arbitrary (possibly empty) finite tuple of elements of $k_0^\times$.
We start off the proof by working with a fixed element $\tau \in \Autm_\abf(\k_1(K))$, although we will eventually replace $\tau$ by another element $\sigma \in \Autm_\abf(\k_1(K))$ of the form $\epsilon \cdot \tau$ for some $\epsilon \in (\Z/\ell)^\times$.
In particular, $\sigma$ and $\tau$ represent the same element of $\UAutm_\abf(\k_1(K))$, but this $\sigma$ will have some further special properties which we will need.
In any case, if $A$ is any subgroup of $\k_1(K)$ and $\tau,\sigma$ are as above, then one has $\sigma A = \tau A$.
Since the primary goal of the proof is to show that $\tau$ induces an automorphism of the lattice of geometric subgroups of $\k_1(K)$, this observation shows that it doesn't actually matter if we replace $\tau$ with $\sigma = \epsilon \cdot \tau$.
We now fix this initial element $\tau \in \Autm_\abf(\k_1(K))$.

Recall that the definition of $\Autm_\abf(\k_1(K))$ says that $\tau$ is an automorphism of $\k_1(K)$ which satisfies the following two properties:
\begin{enumerate}
  \item $\tau$ extends to an automorphism of $\k_*(K)$.
  \item For all $t \in K_0 \smallsetminus k_0$, $\tau$ restricts to an automorphism of the subgroup
  \[ \langle \{t\}_K,\{t-a_1\}_K,\ldots,\{t-a_r\}_K \rangle. \]
\end{enumerate}
The following fact summarizes the formulation of these two conditions which will form the starting point of our proof.
\begin{fact}
\label{fact: mainproof / autm-condition}
  In the context above, the following hold:
  \begin{enumerate}
    \item Let $x_1,\ldots,x_r \in \k_1(K)$ be given.
    Then one has $\{x_1,\ldots,x_r\}_K = 0$ if and only if one has $\{\tau x_1,\ldots,\tau x_r\}_K = 0$.
    \item For all $t \in K_0 \smallsetminus k_0$, one has $\tau \{t\}_K \in \langle \{t\}_K,\{t-a_1\}_K,\ldots,\{t-a_r\}_K \rangle$.
  \end{enumerate}
\end{fact}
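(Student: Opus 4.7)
The plan is to unwind the two defining conditions of $\Autm_\abf(\k_1(K))$; the Fact is really just a convenient repackaging of the definition, and I do not expect any substantive obstacle in either part.

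For (1), I would invoke the defining property of $\Autm(\k_1(K))$: since $\tau$ lies in $\Autm_\abf(\k_1(K)) \subset \Autm(\k_1(K))$, there exists an automorphism $\tilde\tau$ of the graded $\Z/\ell$-algebra $\k_*(K)$ whose restriction to the degree-$1$ piece $\k_1(K)$ agrees with $\tau$. Because $\k_*(K)$ is generated as a $\Z/\ell$-algebra in degree $1$, multiplicativity of $\tilde\tau$ forces
\[
\tilde\tau\{x_1,\ldots,x_r\}_K \; = \; \tilde\tau\{x_1\}_K \cdots \tilde\tau\{x_r\}_K \; = \; \{\tau x_1,\ldots,\tau x_r\}_K
\]
for all $x_1,\ldots,x_r \in \k_1(K)$. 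Since $\tilde\tau$ is an automorphism of $\k_*(K)$, it restricts to a bijection on $\k_r(K)$, hence sends $0$ to $0$ and nonzero elements to nonzero elements. This immediately yields the equivalence claimed in (1).

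For (2), I would simply apply the additional defining condition of $\Autm_\abf(\k_1(K))$ directly. Any element $t \in K_0\smallsetminus k_0$ is a non-constant element of $K_0^\times$, so by the very definition of $\Autm_\abf(\k_1(K))$, the automorphism $\tau$ restricts to an automorphism of the subgroup
\[
H_t := \langle \{t\}_K,\, \{t-a_1\}_K,\, \ldots,\, \{t-a_r\}_K \rangle \subset \k_1(K).
\]
Since $\{t\}_K \in H_t$, its image $\tau\{t\}_K$ also belongs to $H_t$, which is exactly the assertion of (2).

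The role of this Fact in the sequel is to free all later arguments from needing to reference either the ambient ring $\k_*(K)$ or the precise compatibility of $\tau$ with the tuple $\abf$: property (1) will be used as a purely combinatorial vanishing criterion for products in $\k_*(K)$, and property (2) will be the exact hook that allows $\tau$ to be shown to fix the $1$-dimensional geometric subgroups arising from $K_0$ once the closure operation $\msup$ from \S\ref{section: milnorff} is brought into play.
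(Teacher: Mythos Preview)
Your proposal is correct and matches the paper's own treatment: the paper does not supply a proof at all, stating only that the Fact ``summarizes the formulation of these two conditions'' defining $\Autm_\abf(\k_1(K))$, and your argument is precisely the natural unpacking of those two conditions. Your closing remark about the role of the Fact in the sequel is also accurate.
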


\subsection{Acceptable Subsets}
\label{subsection: mainproof / acceptable}

As discussed above, the primary goal of the proof is to show that the action of $\tau$ on the subgroups of $\k_1(K)$ induces an automorphism of the lattice $\Gfrak^*(K|k)$ of geometric subgroups.
With this in mind, we say that a subset $S \subset K$ is {\bf $\tau$-acceptable} if there exists a subset $T$ of $K$ such that
\[ \tau\Kfrak(S) = \Kfrak(T). \]
Thus, the primary goal of the proof is to show that \emph{every} subset of $K$ is $\tau$-acceptable, and we will then conclude the proof by applying Corollary \ref{corollary: lattice / galois-mod-ell / galois-lattice-iso}.
The following fact follows immediately from Lemma \ref{lemma: milnorff / milnordim / milnorsup} and Fact \ref{fact: mainproof / autm-condition}(1), and it essentially reduces our primary goal to showing that every \emph{element} of $K$ is acceptable.

\begin{fact}
\label{fact: mainproof / acceptable / acceptable}
  Suppose that $(S_i)_i$ is a collection of $\tau$-acceptable subsets of $K$, and for each $i$, let $T_i$ be a subset of $K$ such that
  \[ \tau \Kfrak(S_i) = \Kfrak(T_i). \]
  Put $S = \bigcup_i S_i$ and $T = \bigcup_i T_i$.
  Then one has $\tau\Kfrak(S) = \Kfrak(T)$.
  In particular, $S$ is $\tau$-acceptable.
\end{fact}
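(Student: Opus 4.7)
The plan is to combine the two cited results directly. By Lemma \ref{lemma: milnorff / milnordim / milnorsup} applied to the family $(S_i)_i$ we have
\[ \Kfrak(S) \;=\; \msup\Bigl(\bigcup_i \Kfrak(S_i)\Bigr), \]
and likewise $\Kfrak(T) = \msup(\bigcup_i \Kfrak(T_i))$. Since $\tau\Kfrak(S_i) = \Kfrak(T_i)$ for each $i$, we have $\tau\bigl(\bigcup_i\Kfrak(S_i)\bigr) = \bigcup_i\Kfrak(T_i)$. Therefore, the whole content of the statement reduces to showing that $\tau$ commutes with the operation $\msup$ when applied to subsets of $\k_1(K)$, i.e.\ $\tau\msup(A) = \msup(\tau A)$ for any subset $A$ of $\k_1(K)$.

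To verify this commutation, first observe that Fact \ref{fact: mainproof / autm-condition}(1) shows that for any $s_1,\ldots,s_r \in \k_1(K)$ one has $\{s_1,\ldots,s_r\}_K = 0$ if and only if $\{\tau s_1,\ldots,\tau s_r\}_K = 0$. In particular, applying this to subsets of $\k_1(K)$, one gets $\dimm(A) = \dimm(\tau A)$, so both Milnor suprema are taken with respect to the same integer $d$. Moreover, for every tuple $s_1,\ldots,s_d \in A$ and every $x \in \k_1(K)$, the equivalence $\{s_1,\ldots,s_d,x\}_K = 0 \iff \{\tau s_1,\ldots,\tau s_d,\tau x\}_K = 0$ gives
\[ \tau\bigl(\ker\{s_1,\ldots,s_d,\bullet\}_K\bigr) \;=\; \ker\{\tau s_1,\ldots,\tau s_d,\bullet\}_K. \]

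Taking the intersection over all such tuples and using that $\tau$ is bijective (so commutes with intersections), we get
\[ \tau\msup(A) \;=\; \bigcap_{s_1,\ldots,s_d \in A}\ker\{\tau s_1,\ldots,\tau s_d,\bullet\}_K \;=\; \bigcap_{t_1,\ldots,t_d \in \tau A}\ker\{t_1,\ldots,t_d,\bullet\}_K \;=\; \msup(\tau A), \]
which is precisely the desired commutation. Applying this to $A = \bigcup_i \Kfrak(S_i)$ yields $\tau\Kfrak(S) = \Kfrak(T)$, hence $S$ is $\tau$-acceptable. There is no genuine obstacle here: the only subtle point is the observation that $\tau$ preserves Milnor dimension, which is where Fact \ref{fact: mainproof / autm-condition}(1) enters, and the rest is formal.
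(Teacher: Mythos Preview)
Your proof is correct and is precisely the argument the paper has in mind: the paper states that the fact ``follows immediately from Lemma \ref{lemma: milnorff / milnordim / milnorsup} and Fact \ref{fact: mainproof / autm-condition}(1)'' without spelling out the details, and you have unpacked exactly those two ingredients in the intended way---using the lemma to express $\Kfrak(S)$ and $\Kfrak(T)$ as Milnor suprema, and using the fact to show that $\tau$ preserves Milnor dimension and commutes with $\msup$.
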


\subsection{Fixing $K_0$}
\label{susbection: mainproof / K0}

We begin by showing that every subset of $K_0$ is $\tau$-acceptable.
In fact, since our goal will be to apply Corollary \ref{corollary: lattice / galois-mod-ell / galois-lattice-iso}, we must show the stronger property that $\tau \Kfrak(S) = \Kfrak(S)$ for all subsets $S \subset K_0$.
This assertion is the starting point of the proof and is accomplished in the following lemma.

\begin{lemma}
\label{lemma: mainproof / K0 / fix-Kfrakt0}
  Let $S \subset K_0$ be a subset.
  Then one has $\tau\Kfrak(S) = \Kfrak(S)$.
  In particular, every subset of $K_0$ is $\tau$-acceptable.
\end{lemma}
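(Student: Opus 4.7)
The plan is to reduce the lemma to the single-element case $\tau\Kfrak(t) = \Kfrak(t)$ for each $t \in K_0$, and then to exploit the infinite family of translates $\{t-a\}_K$, $a \in k_0$, together with the vanishing of $\k_2$ on one-dimensional function fields. For the reduction, observe that by Lemma \ref{lemma: milnorff / milnordim / milnorsup} one has $\Kfrak(S) = \msup\bigl(\bigcup_{t \in S}\Kfrak(t)\bigr)$. Since $\tau$ extends to a ring automorphism of $\k_*(K)$, it preserves Milnor dimension and kernels of Milnor-product maps, hence commutes with the Milnor-supremum operation; so it suffices to show $\tau\Kfrak(t) = \Kfrak(t)$ for every $t \in S$. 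For $t \in k_0$ this is trivial, since $\Kbb(t) = k$ and $k^\times$ is $\ell$-divisible (so $\Kfrak(t) = 0$); only the case $t \in K_0 \setminus k_0$ requires work.

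Fix such a $t$. Since $K_0 | k_0$ is regular, $t$ is transcendental over $k$. Applying Corollary \ref{corollary: milnorff / geometric / infinite-subsets} with $r = 1$ and $S_1 = k_0$ (infinite by hypothesis on $k_0$) gives the intrinsic description
\[ \Kfrak(t) = \bigcap_{a \in k_0} \ker\{t-a,\, \bullet\}_K. \]
On the other hand, for every $a \in k_0$ the element $t - a$ again lies in $K_0 \setminus k_0$, so condition (2) of Fact \ref{fact: mainproof / autm-condition} applied to both $\tau$ and $\tau^{-1}$ yields
\[ \tau^{\pm 1}\{t-a\}_K \;\in\; \langle \{t-a\}_K,\, \{t-a-a_1\}_K,\, \ldots,\, \{t-a-a_r\}_K\rangle \;\subset\; \Kfrak(t), \]
since each $t - a - a_i$ lies in $\Kbb(t)$.

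The key computation is then as follows. Given $x \in \Kfrak(t)$ and $a \in k_0$, the fact that $\tau$ is a ring automorphism of $\k_*(K)$ gives
\[ \{t-a,\, \tau x\}_K \;=\; \tau\bigl(\{\tau^{-1}\{t-a\}_K,\, x\}_K\bigr). \]
Both arguments of the inner symbol lie in $\Kfrak(t)$, so the inner symbol is the image in $\k_2(K)$ of an element of $\k_2(\Kbb(t))$; but $\k_2(\Kbb(t)) = 0$ by Fact \ref{fact: milnorff / vanishing / vanishing-B-K}, since $\trdeg(\Kbb(t) | k) = 1$. Hence $\{t-a, \tau x\}_K = 0$ for every $a \in k_0$, and the intrinsic description above forces $\tau x \in \Kfrak(t)$. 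Running the same argument with $\tau^{-1}$ in place of $\tau$ gives the reverse inclusion, which completes the proof.

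The main conceptual obstacle is that condition (2) of Fact \ref{fact: mainproof / autm-condition} only directly controls the image of the single generator $\{t\}_K$, whereas $\Kfrak(t) = \{\Kbb(t)^\times\}_K$ is in general much larger. The way around this is to apply that condition simultaneously to the infinite family of translates $t - a$, $a \in k_0$ (this is where the infiniteness of $k_0$ is essential), producing enough ``test elements'' to reconstruct $\Kfrak(t)$ as an intersection of kernels of symbol maps, and then to pair this with the one-dimensional vanishing $\k_2(\Kbb(t)) = 0$.
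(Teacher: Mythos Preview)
Your proof is correct and follows essentially the same approach as the paper's: both reduce to the single-element case via the Milnor-supremum (the paper packages this as Fact~\ref{fact: mainproof / acceptable / acceptable}), then use Corollary~\ref{corollary: milnorff / geometric / infinite-subsets} with the infinite index set $k_0$ to write $\Kfrak(t)$ as an intersection of kernels, invoke Fact~\ref{fact: mainproof / autm-condition}(2) on each translate $t-a$ to force $\tau^{\pm1}\{t-a\}_K\in\Kfrak(t)$, and conclude via the vanishing $\k_2(\Kbb(t))=0$. The only cosmetic difference is that you establish $\tau\Kfrak(t)\subset\Kfrak(t)$ first by rewriting $\{t-a,\tau x\}_K=\tau\{\tau^{-1}\{t-a\}_K,x\}_K$, whereas the paper establishes $\Kfrak(t)\subset\tau\Kfrak(t)$ first by applying $\tau$ to the kernel description; these are the same argument viewed from opposite sides.
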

\begin{proof}
  By Fact \ref{fact: mainproof / acceptable / acceptable}, it suffices to prove that $\tau\Kfrak(t) = \Kfrak(t)$ for all $t \in K_0 \smallsetminus k_0$.
  By Corollary \ref{corollary: milnorff / geometric / infinite-subsets}, one has
  \[ \Kfrak(t) = \bigcap_{c \in k_0} \ker\{t-c,\bullet\}_K. \]
  By Fact \ref{fact: mainproof / autm-condition}(1), we see that
  \[ \tau\Kfrak(t) = \bigcap_{c \in k_0} \ker\{\tau\{t-c\}_K,\bullet\}_K. \]

  On the other hand, Fact \ref{fact: mainproof / autm-condition}(2) implies that $\tau\{t-c\}_K \in \Kfrak(t)$ for all $c \in k_0$.
  Moreover, for all $x \in \Kfrak(t)$, one has $\Kfrak(t) \subset \ker\{x,\bullet\}_K$ by Fact \ref{fact: milnorff / vanishing / vanishing-B-K}.
  In particular, we deduce that $\Kfrak(t) \subset \tau \Kfrak(t)$.
  Repeating this argument with $\tau^{-1} \in \Autm_\abf(\k_1(K))$ in place of $\tau$, we deduce that $\Kfrak(t) = \tau\Kfrak(t)$, as required.
\end{proof}

Although Lemma \ref{lemma: mainproof / K0 / fix-Kfrakt0} will be used in the final steps of the proof, we will need a \emph{stronger} variant of this result.
Namely, we will need to prove that there exists a $\sigma$ of the form $\sigma = \epsilon \cdot \tau$ for some $\epsilon \in (\Z/\ell)^\times$ such that $\sigma \{t\}_K = \{t\}_K$ for all $t \in K_0^\times$.
This stronger variant appears in Proposition \ref{proposition: mainproof / K0 / fix-K0} below, and proving this proposition is the main goal of this subsection.
Naturally, this property can be seen as a crude approximation to our goal of proving that $\tau$ arises from some element of $\Galk$.

Let $(v_1,\ldots,v_r)$ be a flag of divisorial valuations of $K|k$ of length $r < \trdeg(K|k)$.
Since $\tau$ is an element of $\Autm(\k_1(K))$, we recall from \S\ref{subsection: localthy / qpd} (specifically Theorem \ref{theorem: localthy / qpd / main-qpd}) that there exists a unique flag $(v_1^\tau,\ldots,v_r^\tau)$ of \emph{quasi-divisorial valuations} of $K|k$ which satisfies
\[ \tau \Ufrak_{v_i} = \Ufrak_{v_i^\tau}, \ \ \tau \Ufrak_{v_i}^1 = \Ufrak_{v_i^\tau}^1 \]
for all $i = 1,\ldots,r$.
We now use Lemma \ref{lemma: mainproof / K0 / fix-Kfrakt0} to prove that this induced flag actually consists of \emph{divisorial valuations}.

\begin{lemma}
\label{lemma: mainproof / K0 / divs-local-thy}
  Let $(v_1,\ldots,v_r)$ be a flag of divisorial valuations of $K|k$, with $r < \trdeg(K|k)$.
  Then $(v_1^\tau,\ldots,v_r^\tau)$ is a flag of divisorial valuations of $K|k$.
\end{lemma}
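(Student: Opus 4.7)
The plan is to combine the \emph{detection criterion} for divisoriality provided by Proposition \ref{proposition: localthy / pd / pd-detection} with the fact, already established in Lemma \ref{lemma: mainproof / K0 / fix-Kfrakt0}, that $\tau$ preserves every one-dimensional geometric subgroup arising from $K_0$. By Theorem \ref{theorem: localthy / qpd / main-qpd}, we already know that $(v_1^\tau,\ldots,v_r^\tau)$ is a flag of $i$-quasi-divisorial valuations of $K|k$ (with $v_i^\tau$ being $i$-quasi-divisorial and refining $v_{i-1}^\tau$), so the only remaining content is to upgrade ``quasi-divisorial'' to ``divisorial'' at each stage.

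First, I would fix an index $i$ with $1 \leq i \leq r$, and apply Proposition \ref{proposition: localthy / pd / pd-detection} to the $i$-divisorial valuation $v_i$: this produces some $t \in K_0 \smallsetminus k_0$ such that the intersection $\Kfrak(t) \cap \Ufrak_{v_i}^1$ is finite. The next step is to transport this finiteness through $\tau$. By Lemma \ref{lemma: mainproof / K0 / fix-Kfrakt0}, we have $\tau \Kfrak(t) = \Kfrak(t)$, and by the definition of $v_i^\tau$ (coming from Theorem \ref{theorem: localthy / qpd / main-qpd}), we have $\tau \Ufrak_{v_i}^1 = \Ufrak_{v_i^\tau}^1$. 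Since $\tau$ is a bijection,
\[
\Kfrak(t) \cap \Ufrak_{v_i^\tau}^1 \;=\; \tau\bigl(\Kfrak(t) \cap \Ufrak_{v_i}^1\bigr),
\]
so this intersection is also finite. Applying Proposition \ref{proposition: localthy / pd / pd-detection} in the reverse direction to the $i$-quasi-divisorial valuation $v_i^\tau$ (with the very same $t \in K_0 \smallsetminus k_0$) then shows that $v_i^\tau$ is actually $i$-divisorial.

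Finally, I would observe that once each $v_i^\tau$ is individually divisorial, the flag structure is automatic: since $v_i^\tau$ refines $v_{i-1}^\tau$ as valuations of $K$, the quotient $v_i^\tau/v_{i-1}^\tau$ is a valuation of $K v_{i-1}^\tau$, and the triviality of $v_i^\tau$ on $k$ forces the triviality of $v_{i-1}^\tau$ on $k$ as well as the triviality of $v_i^\tau/v_{i-1}^\tau$ on $k v_{i-1}^\tau$, which (together with the quasi-divisoriality already known from Theorem \ref{theorem: localthy / qpd / main-qpd}) makes each successive quotient a divisorial valuation in the sense required. There is no serious obstacle here: the proof is essentially a three-line consequence of Proposition \ref{proposition: localthy / pd / pd-detection} and Lemma \ref{lemma: mainproof / K0 / fix-Kfrakt0}, and the only small point requiring care is checking the ``flag'' condition rather than merely divisoriality of each individual $v_i^\tau$, which follows from the preservation of coarsening/refinement in Theorem \ref{theorem: localthy / qpd / main-qpd}(2).
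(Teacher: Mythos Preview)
Your proof is correct and follows essentially the same approach as the paper: apply Proposition~\ref{proposition: localthy / pd / pd-detection} to each $v_i$ to obtain $t \in K_0 \smallsetminus k_0$ with $\Kfrak(t) \cap \Ufrak_{v_i}^1$ finite, then use $\tau\Kfrak(t) = \Kfrak(t)$ (Lemma~\ref{lemma: mainproof / K0 / fix-Kfrakt0}) and $\tau\Ufrak_{v_i}^1 = \Ufrak_{v_i^\tau}^1$ to transport this finiteness and conclude via Proposition~\ref{proposition: localthy / pd / pd-detection} again. Your final paragraph on the flag condition is a harmless elaboration; the paper leaves this implicit since Theorem~\ref{theorem: localthy / qpd / main-qpd}(2) already guarantees the refinement structure is preserved.
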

\begin{proof}
  By Theorem \ref{theorem: localthy / qpd / main-qpd}, the valuation $v_i^\tau$ is the $i$-quasi-divisorial valuation of $K|k$ which is uniquely determined by the fact that
  \[ \tau\Ufrak_{v_i} = \Ufrak_{v_i^\tau}, \ \ \tau\Ufrak_{v_i}^1 = \Ufrak_{v_i^\tau}^1. \]
  On the other hand, by Proposition \ref{proposition: localthy / pd / pd-detection}, there exists some $t \in K_0 \smallsetminus k_0$ such that $\Kfrak(t) \cap \Ufrak_{v_i}^1$ is finite, since $v_i$ is $i$-divisorial.
  But then by Lemma \ref{lemma: mainproof / K0 / fix-Kfrakt0}, we see that
  \[ \tau(\Kfrak(t) \cap \Ufrak_{v_i}^1) = (\tau\Kfrak(t)) \cap (\tau \Ufrak_{v_i}^1) = \Kfrak(t) \cap \Ufrak_{v_i^\tau}^1 \]
  is finite as well.
  Hence $v_i^\tau$ is an $i$-divisorial valuation by Proposition \ref{proposition: localthy / pd / pd-detection}.
\end{proof}

To simplify the exposition for the rest of the proof, we will introduce some notation to label the elements of $\Dcal_t$ and $\Dfrak_t$ for a strongly-general element $t$.
If we fix an element $t \in K \smallsetminus k$ which is strongly-general in $K|k$, then the canonical map
\[ \Dfrak_t \rightarrow \Dcal_t \]
defined in Lemma \ref{lemma: essram / mod-ell-divs / mod-ell-div-injection} is a \emph{bijection}.
Since $\Kbb(t) = k(t)$ (as $t$ is, in particular, general), the elements of $\Dcal_t$ are in bijection with the closed points of $\Pbb^1_t$.
Thus $\Dfrak_t$ is also parametrized by the closed points of $\Pbb^1_t$, via the bijection $\Dfrak_t \rightarrow \Dcal_t$.
By fixing the parameter $t$, we can label the closed points of $\Pbb^1_t$ as $k \cup \{\infty\}$ in the usual way.
Namely, for $c \in k$, the corresponding point of $\Pbb^1_t$ is the closed point given by the equation $t-c = 0$, and the point associated to $\infty$ is the closed point given by $1/t = 0$.
We will denote the element of $\Dfrak_t$ associated to $c \in k \cup \{\infty\}$ by $\Vfrak[t;c]$, and we will denote the element of $\Dcal_t$ associated to $c$ by $v[t;c]$.
Using this notation, we recall from Lemma \ref{lemma: essram / mod-ell-divs / mod-ell-div-injection} that for all $c \in k \cup \{\infty\}$, one has
\[ \Vfrak[t;c] = \{\U_{v[t;c]}\}_K \]
as subgroups of $\k_1(K)$, and that one has $\Kfrak(t)/\Vfrak[t;c] \cong \Z/\ell$ for all $c \in k \cup \{\infty\}$.

A change of the parameter $t$ by some fractional-linear transformation yields a corresponding change in the associated element of $k \cup \{\infty\} = \Pbb^1(k)$.
More precisely, if we let ${\rm GL}_2(k)$ act on $\Pbb^1(k)$ and on the generators of $k(t)|k$ by fractional-linear transformations, as usual, and if $M \in {\rm GL}_2(k)$ is given, then for all $c \in \Pbb^1(k)$, one has
\[ \Vfrak[M \cdot t;c] = \Vfrak[t;M^{-1} \cdot c] \]
as subgroups of $\k_1(K)$.

For an element $x$ of $\Kfrak(t)$, we define the {\bf support of $x$ in $\Dfrak_t$} as
\[ \Supp_{\Dfrak_t}(x) = \{\Vfrak \in \Dfrak_t \ : \ x \notin \Vfrak \}.\]
This is completely analogous to the usual notion of the support of a function $f \in k(t)^\times$ in $\Pbb^1(k) = \Dcal_t$.
In particular, using the notation introduced above, note that one has
\[ \Supp_{\Dfrak_t}(\{t-c\}_K) = \{\Vfrak[t;c],\Vfrak[t;\infty]\}\]
for all $c \in k$.

The following lemma shows that $\tau$ fixes subgroups of the form $\Vfrak[t;c]$ which arise from $K_0$, i.e. such that $t \in K_0 \smallsetminus k_0$ and $c \in \Pbb^1(k_0)$.
This is a key step towards proving Proposition \ref{proposition: mainproof / K0 / fix-K0} below.
The proof of this lemma essentially follows by considering the $\Dfrak_t$-supports of various elements associated to $t$ and using Fact \ref{fact: mainproof / autm-condition}(2).

\begin{lemma}
\label{lemma: mainproof / K0 / fix-K0-strgen-divs}
  Let $t \in K_0 \smallsetminus k_0$ be strongly general in $K|k$, and let $c \in \Pbb^1(k_0)$ be given.
  Then one has $\tau\Vfrak[t;c] = \Vfrak[t;c]$.
\end{lemma}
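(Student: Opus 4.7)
The plan is to show that $\tau$ induces a permutation $\pi\colon \Pbb^1(k) \to \Pbb^1(k)$ via $\tau\Vfrak[t;c] = \Vfrak[t;\pi(c)]$, and then to pin down $\pi$ on $\Pbb^1(k_0)$ using the $\Autm_\abf$-condition. To produce $\pi$, write each $\Vfrak[t;c] = \Kfrak(t) \cap \Ufrak_w$ for a divisorial valuation $w$ of $K|k$: Lemma \ref{lemma: mainproof / K0 / divs-local-thy} gives $\tau\Ufrak_w = \Ufrak_{w^\tau}$ with $w^\tau$ again divisorial, while Lemma \ref{lemma: mainproof / K0 / fix-Kfrakt0} gives $\tau\Kfrak(t) = \Kfrak(t)$. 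The strict non-inclusion $\Kfrak(t) \not\subset \Ufrak_w$ is preserved under $\tau$, so $\tau\Vfrak[t;c]$ again lies in $\Dfrak_t$, yielding the permutation $\pi$.

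Next I would extract the key support constraint. A routine check shows $\Supp_{\Dfrak_t}(\{t-c\}_K) = \{\Vfrak[t;c], \Vfrak[t;\infty]\}$ for each $c \in k$ ($t-c$ is a uniformizer at $v[t;c]$, a unit at all other finite $v[t;c']$, and has a pole at $v[t;\infty]$), hence
\[ \Supp_{\Dfrak_t}(\tau\{t-c\}_K) = \{\Vfrak[t;\pi(c)], \Vfrak[t;\pi(\infty)]\}. \]
Applying Fact \ref{fact: mainproof / autm-condition}(2) to the non-constant element $s = t-b \in K_0^\times$ expresses $\tau\{t-b\}_K$ as a $\Z/\ell$-linear combination of $\{t-b\}_K$ and $\{t-b-a_1\}_K, \ldots, \{t-b-a_r\}_K$, whose $\Dfrak_t$-support is contained in $\{\Vfrak[t;b], \Vfrak[t;b+a_1], \ldots, \Vfrak[t;b+a_r], \Vfrak[t;\infty]\}$. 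Combining these yields
\[ \{\pi(b), \pi(\infty)\} \subseteq \{b, b+a_1, \ldots, b+a_r, \infty\} \quad \text{for every } b \in k_0. \]

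A pigeonhole argument then forces $\pi(\infty) = \infty$: if $\pi(\infty) = c_0 \in k$, then $c_0 \in \{b, b+a_1, \ldots, b+a_r\}$ for every $b \in k_0$, i.e., $b \in \{c_0, c_0-a_1, \ldots, c_0-a_r\}$, contradicting $|k_0| = \infty$. Finally, to obtain $\pi(c) = c$ for each $c \in k_0$, I would repeat the entire argument with $t$ replaced by $s := 1/(t-c) \in K_0 \smallsetminus k_0$. Since $k(s) = k(t)$ and strong-generality depends only on this subfield, $s$ is strongly-general in $K|k$, and the analogous conclusion $\pi_s(\infty) = \infty$ translates, via the identity $\Vfrak[s;\infty] = \Vfrak[t;c]$, to $\pi(c) = c$. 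The main subtlety to watch is the bookkeeping in this parameter change, ensuring that strong-generality and the $\Autm_\abf$-condition both transfer cleanly to $s$.
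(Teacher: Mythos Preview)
Your proof is correct and follows essentially the same approach as the paper: both arguments reduce to showing $\tau\Vfrak[t;\infty]=\Vfrak[t;\infty]$ after a parameter change, by combining the support constraint from Fact~\ref{fact: mainproof / autm-condition}(2) with reparametrizations of $t$ by elements of $k_0$ and the infinitude of $k_0$. The only organizational difference is that the paper first uses the rescalings $t\mapsto t/c$ (for $c\in k_0^\times$) to trap $\tau\Vfrak[t;0]$ in $\{\Vfrak[t;0],\Vfrak[t;\infty]\}$ and then translates, whereas you go straight to translations $t\mapsto t-b$ and a pigeonhole on $\pi(\infty)$; your route is arguably a bit more direct, and the ``subtlety'' you flag (that strong-generality and the $\Autm_\abf$-condition transfer to $s=1/(t-c)$) is exactly what the paper invokes when it says ``by replacing $t$ with another element of $K_0$ which generates $\Kbb(t)=k(t)$ over $k$.''
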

\begin{proof}
  By replacing $t$ with another element of $K_0$ which generates $\Kbb(t) = k(t)$ over $k$, it suffices to prove that $\tau\Vfrak[t;\infty] = \Vfrak[t;\infty]$.
   By Fact \ref{fact: mainproof / autm-condition}(2), we know that
   \[ \tau\{t\}_K \in \langle\{t\}_K,\{t-a_1\}_K,\ldots,\{t-a_r\}_K \rangle. \]

   By combining Lemmas \ref{lemma: mainproof / K0 / fix-Kfrakt0} and \ref{lemma: mainproof / K0 / divs-local-thy} with the definition of $\Dfrak_t$, it follows that for all $\Vfrak \in \Dfrak_t$, one has $\tau \Vfrak \in \Dfrak_t$ as well.
   In other words, $\Vfrak \mapsto \tau\Vfrak$ can be considered as a permutation of $\Dfrak_t$.

   Note that the $\Dfrak_t$-support of any element of
   \[ \langle\{t\}_K,\{t-a_1\}_K,\ldots,\{t-a_r\}_K \rangle\]
   is contained in the set
   \[ \{\Vfrak[t;0],\Vfrak[t;a_1],\ldots,\Vfrak[t;a_r],\Vfrak[t;\infty]\}. \]
   And since the $\Dfrak_t$-support of $\{t\}_K$ is $\{\Vfrak[t;0],\Vfrak[t;\infty]\}$, it follows that one has
   \[ \tau\Vfrak[t;0] \in \{\Vfrak[t;0],\Vfrak[t;a_1],\ldots,\Vfrak[t;a_r],\Vfrak[t;\infty]\}.\]

   For $c \in k_0^\times$ and $a \in k$, one has $\Vfrak[t/c;a] = \Vfrak[t;ca]$; in particular $\Vfrak[t;0] = \Vfrak[t/c;0]$ for all $c \in k_0^\times$.
   By varying $c \in k_0^\times$ and using the fact that $k_0$ is infinite, we deduce that
   \[ \tau\Vfrak[t;0] \in \bigcap_{c \in k_0^\times}  \{\Vfrak[t;0],\Vfrak[t;ca_1],\ldots,\Vfrak[t;ca_r],\Vfrak[t;\infty]\} = \{\Vfrak[t;0],\Vfrak[t;\infty]\}. \]
   By replacing $t$ with $t^{-1}$, it follows similarly that
   \[ \tau\Vfrak[t;\infty] \in \{\Vfrak[t;0],\Vfrak[t;\infty]\}\]
   as well.
   Hence $\tau$ restricts to a permutation of $\{\Vfrak[t;0],\Vfrak[t;\infty]\}$.

   Finally, for $c \in k_0$, one has $\Vfrak[t-c;0] = \Vfrak[t;c]$.
   Repeating the argument above with $t-c$ for $c \in k_0$, we deduce that
   \[ \tau\Vfrak[t;c] \in \{\Vfrak[t;c],\Vfrak[t;\infty]\}\]
   for all $c \in k_0$.
   But since $\Vfrak[t;c] \neq \Vfrak[t;0]$ for $c \in k_0^\times$ (since $t$ is strongly-general in $K|k$), it follows that $\tau\Vfrak[t;\infty] = \Vfrak[t;\infty]$, as required.
\end{proof}

As mentioned above, the following proposition is the primary goal of this subsection, and it can be seen as the first major step towards the proof of Theorem \ref{maintheorem: intro / milnor-variant / milnor-main}.

\begin{proposition}
\label{proposition: mainproof / K0 / fix-K0}
  Assume that $\trdeg(K|k) \geq 2$.
  Then there exists a unique $\epsilon \in (\Z/\ell)^\times$ such that the following hold:
  \begin{enumerate}
    \item For all $t \in K_0^\times$, one has $\tau\{t\}_K = \epsilon \cdot \{t\}_K$.
    \item For all $t \in K_0$ which is strongly general in $K|k$ and for all $b \in k$, there exists a unique $c \in k$ such that $\tau\{t-b\}_K = \epsilon \cdot \{t-c\}_K$.
  \end{enumerate}
\end{proposition}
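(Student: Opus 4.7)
The plan is to extract a single scalar $\epsilon$ out of $\tau$ by first restricting to $\Kfrak(t)$ for strongly general $t \in K_0$, then showing the extracted scalar is independent of $t$, and finally transferring to all of $K_0^\times$ via multiplicative generation. So first I would fix a strongly general $t \in K_0$ in $K|k$. By Lemma \ref{lemma: mainproof / K0 / fix-Kfrakt0}, $\tau$ restricts to an automorphism of $\Kfrak(t) \cong \k_1(k(t))$; by Lemmas \ref{lemma: mainproof / K0 / divs-local-thy} and \ref{lemma: essram / mod-ell-divs / mod-ell-div-injection}, $\tau$ permutes $\Dfrak_t$; and by Lemma \ref{lemma: mainproof / K0 / fix-K0-strgen-divs} this permutation fixes every $\Vfrak[t;c]$ with $c \in \Pbb^1(k_0)$. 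Since $\Kfrak(t)$ has $\Z/\ell$-basis $\{\{t-a\}_K : a \in k\}$, with each basis element having $\Dfrak_t$-support $\{\Vfrak[t;a],\Vfrak[t;\infty]\}$, for each $a \in k_0$ there is a unique scalar $m_a \in (\Z/\ell)^\times$ with $\tau\{t-a\}_K = m_a\{t-a\}_K$.

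Next I would show $m_a$ does not depend on $a \in k_0$. The key trick is that $\{(t-a)/(t-b)\}_K = \{t-a\}_K - \{t-b\}_K$ has $\Dfrak_t$-support $\{\Vfrak[t;a],\Vfrak[t;b]\}$, with zero $\Vfrak[t;\infty]$-coordinate; this must remain true under $\tau$, and evaluating the $\Vfrak[t;\infty]$-coordinate of $m_a\{t-a\}_K - m_b\{t-b\}_K$ forces $m_a \equiv m_b \pmod\ell$ (using $v[t;\infty](t-c) = -1$ for every $c \in k$). Denote the common value by $\epsilon_t$. To show $\epsilon_t$ is independent of $t$, for algebraically independent strongly general $t_1,t_2 \in K_0$, Proposition \ref{proposition: strgen / strgen / strgen-birat-bertini} together with the Zariski density of $k_0^2$ in $\Abb^2_k$ (valid since $k_0$ is infinite) furnishes $a,b \in k_0$ such that $t_3 := (t_1-a)/(t_2-b) \in K_0$ is strongly general. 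Expanding $\tau\{t_3\}_K$ two ways and invoking $\Z/\ell$-linear independence of $\{t_1-a\}_K, \{t_2-b\}_K$ in $\k_1(K)$ (which follows from $\Kfrak(t_1) \cap \Kfrak(t_2) = \Kfrak(k) = 0$ by Proposition \ref{proposition: lattice / mod-ell / mod-ell-lattice}) gives $\epsilon_{t_1} = \epsilon_{t_2} = \epsilon_{t_3}$. The algebraically dependent case reduces to the independent one via a third strongly general intermediary in $K_0$ algebraically independent from both, which exists because $\trdeg(K_0|k_0) = \trdeg(K|k) \geq 2$. Setting $\epsilon$ to be this common value, Lemma \ref{lemma: strgen / strgen / strgen-generate} applied to $L = K_0$ yields assertion (1); uniqueness of $\epsilon$ is immediate since there exist $x \in K_0^\times$ with $\{x\}_K \neq 0$.

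For assertion (2), given strongly general $t \in K_0$ and $b \in k$, the $\Dfrak_t$-support of $\{t-b\}_K$ is $\{\Vfrak[t;b],\Vfrak[t;\infty]\}$, so $\tau\{t-b\}_K$ is supported on $\{\tau\Vfrak[t;b],\Vfrak[t;\infty]\}$; writing $\tau\Vfrak[t;b] = \Vfrak[t;c]$ with $c \in k$ (distinct from $\infty$ since $\Vfrak[t;\infty]$ is fixed and $\Vfrak[t;b] \neq \Vfrak[t;\infty]$), the basis argument again gives $\tau\{t-b\}_K = m\{t-c\}_K$ for some $m \in (\Z/\ell)^\times$. To match $m$ with $\epsilon$ I would apply the cancellation trick to $\{(t-b)/t\}_K$, whose $\Dfrak_t$-support $\{\Vfrak[t;b],\Vfrak[t;0]\}$ avoids $\Vfrak[t;\infty]$; this must persist under $\tau$, and forcing the $\Vfrak[t;\infty]$-coordinate of $m\{t-c\}_K - \epsilon\{t\}_K$ to vanish yields $m = \epsilon$. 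Uniqueness of $c$ follows from the $\Z/\ell$-linear independence of $\{\{t-a\}_K : a \in k\}$ in $\Kfrak(t)$.

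The main technical obstacle lies in the independence step for $m_a$: one has to convert the essentially combinatorial fact that $\tau$ fixes $\Vfrak[t;\infty]$ and each $\Vfrak[t;a]$ with $a \in k_0$ into a precise scalar identity, and this requires carefully exploiting the uniform valuation $v[t;\infty](t-c) = -1$ to force the cancellation $m_a = m_b$. Once this cancellation is in place the remaining arguments are essentially organizational, resting on the Birational Bertini result for strongly general elements and on the lattice-intersection identity $\Kfrak(t_1) \cap \Kfrak(t_2) = \Kfrak(k) = 0$.
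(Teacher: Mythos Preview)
Your overall strategy matches the paper's: extract $\epsilon_t$ for each strongly general $t\in K_0$ via the action on $\Dfrak_t$, show independence from $t$ by linking through $(t_1-a)/(t_2-b)$, then globalize via Lemma~\ref{lemma: strgen / strgen / strgen-generate}. Your replacement of the rational-like--collection formalism (Lemma~\ref{lemma: strgen / rational-like / rational-like}) by direct support computations in the basis $\{\{t-a\}_K:a\in k\}$ is a legitimate and somewhat more hands-on variant of the same idea; the paper packages the ``$\Vfrak[t;\infty]$-coordinate forces $m_a=m_b$'' step into the single statement that $\Psi^\tau$ is again rational-like.

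There is, however, a genuine error in the independence step. You assert that $\Kfrak(t_1)\cap\Kfrak(t_2)=0$ for algebraically independent $t_1,t_2$, citing Proposition~\ref{proposition: lattice / mod-ell / mod-ell-lattice}. That proposition only says that geometric subgroups form a lattice isomorphic to $\Gbb^*(K|k)$; the greatest lower bound in that lattice is the largest \emph{geometric} subgroup contained in both, not the subgroup-theoretic intersection inside $\k_1(K)$. In fact the claim is false: take $K_0=k_0(t_1,s)$ and set $t_2:=s^\ell/t_1\in K_0$. Then $K=k(t_1,s)$ is purely transcendental over each $k(t_i)$, so both $t_i$ are strongly general with $\Kbb(t_i)=k(t_i)$, they are algebraically independent, yet $\{t_1\}_K+\{t_2\}_K=\{s^\ell\}_K=0$, so $0\neq\{t_1\}_K\in\Kfrak(t_1)\cap\Kfrak(t_2)$. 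In particular $\{t_1-0\}_K$ and $\{t_2-0\}_K$ are dependent. The repair is easy and is what the paper does: you only need independence for \emph{some} $(a,b)\in U(k_0)$, not all. Since $\{\{t_1-a\}_K:a\in k_0\}$ is $\Z/\ell$-independent (as $t_1$ is general), for each fixed $b$ only finitely many $a$ give dependence, so a suitable $(a,b)\in U(k_0)$ exists; alternatively, Lemma~\ref{lemma: milnorff / nonvanishing / alg-indep-coords} gives a nonempty open set of $(a,b)$ with $\{t_1-a,t_2-b\}_K\neq 0$, which already forces independence.
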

\begin{proof}
  First, let us assume that $t \in K_0\smallsetminus k_0$ is strongly-general in $K|k$.
  Recall that the following hold:
  \begin{enumerate}
    \item First, $\tau \Kfrak(t) = \Kfrak(t)$ by Lemma \ref{lemma: mainproof / K0 / fix-Kfrakt0}.
    \item Second, one has $\tau \Vfrak \in \Dfrak_t$ for all $\Vfrak \in \Dfrak_t$ by Lemma \ref{lemma: mainproof / K0 / divs-local-thy} and the definition of $\Dfrak_t$.
    \item Third, for all $c \in k_0 \cup \{\infty\} = \Pbb^1(k_0)$, one has $\tau\Vfrak[t;c] = \Vfrak[t;c]$ by Lemma \ref{lemma: mainproof / K0 / fix-K0-strgen-divs}.
  \end{enumerate}
  In particular, $\tau$ induces isomorphisms $\tau : \Kfrak(t)/\Vfrak \xrightarrow{\cong} \Kfrak(t)/\tau\Vfrak$ for every $\Vfrak \in \Dfrak_t$.

  Let $\Psi = (\Psi_\Vfrak)_{\Vfrak \in \Dfrak_t}$ be the canonical rational-like collection associated to $\Kbb(t)|k$, as defined in \S\ref{subsection: strgen / rational-like}.
  Consider the induced rational-like collection $\Psi^\tau = (\Psi^\tau_{\Vfrak})_{\Vfrak \in \Dfrak_t}$, where the isomorphism $\Psi^\tau_{\tau\Vfrak} : \Kfrak(t)/\tau\Vfrak \xrightarrow{\cong}\Z/\ell$ indexed by $\tau\Vfrak$ is the unique one making the following diagram commute:
  \[
  \xymatrix{
    \Kfrak(t) \ar[d]_\tau \ar@{->>}[r] & \Kfrak(t)/\Vfrak \ar[d]_\tau \ar[r]^-{\Psi_\Vfrak} & \Z/\ell \ar@{=}[d] \\
    \Kfrak(t) \ar@{->>}[r] & \Kfrak(t)/\tau\Vfrak \ar[r]_-{\Psi^\tau_{\tau\Vfrak}} & \Z/\ell.
  }
  \]
  By Lemma \ref{lemma: strgen / rational-like / rational-like}, there exists an $\epsilon_t \in (\Z/\ell)^\times$ such that $\epsilon_t \cdot \Psi_{\tau\Vfrak} = \Psi^\tau_{\tau\Vfrak}$ for all $\Vfrak \in \Dfrak_t$.
  Since it will be used later on, recall that this $\epsilon_t$ only depends on $\Kfrak(t)$ and $\tau$.
  
  \vskip 5pt
  \noindent\emph{Proof of (1):} 
  Now let $c \in k_0$ be given.
  Then one has $\divv_\Psi(\{t-c\}_K) = [\Vfrak[t;c]] - [\Vfrak[t;\infty]]$ since $\Psi$ is the canonical rational-like collection associated to $\Kbb(t)$.
  
  By Lemma \ref{lemma: mainproof / K0 / fix-K0-strgen-divs}, one has $\tau\Vfrak[t;c] = \Vfrak[t;c]$ and $\tau\Vfrak[t;\infty] = \Vfrak[t;\infty]$.
  And since $\Psi^\tau_{\tau\Vfrak} = \epsilon_t \cdot \Psi_{\tau\Vfrak}$ for all $\Vfrak \in \Dfrak_t$, we deduce that
  \[ \divv_\Psi(\tau\{t-c\}_K) = \divv_{\Psi^\tau}(\{t-c\}_K) = \epsilon_t \cdot [\Vfrak[t;c]] - \epsilon_t \cdot [\Vfrak[t;\infty]] = \divv_\Psi(\epsilon_t \cdot \{t-c\}_K). \]
  The injectivity of $\divv_\Psi$ implies that $\tau\{t-c\}_K = \epsilon_t \cdot \{t-c\}_K$.

  In order to conclude the proof of assertion (1), we will apply Lemma \ref{lemma: strgen / strgen / strgen-generate} to the subfield $K_0$ of $K$.
  In light of this, it suffices to show that the $\epsilon_t$ which appears in the argument above is independent of $\Kfrak(t)$ for $t \in K_0$ which is strongly-general in $K|k$.

  With this in mind, suppose that $u,t \in K_0$ are both strongly-general in $K|k$.
  Let $\epsilon_t$ resp. $\epsilon_u$ be as above, and assume for a contradiction that $\epsilon_t \neq \epsilon_u$.
  Since the $\epsilon_t$ resp. $\epsilon_u$ depends only on $\Kfrak(t)$ resp. $\Kfrak(u)$ and $\tau$, this implies that $\Kfrak(t) \neq \Kfrak(u)$, hence $\Kbb(t) \neq \Kbb(u)$ by Proposition \ref{proposition: lattice / mod-ell / mod-ell-lattice}.
  In particular, $u,t$ are algebraically independent over $k$.

  By Proposition \ref{proposition: strgen / strgen / strgen-birat-bertini}, there exists a non-empty open subset $U$ of $\Abb^2_k$ such that $(a,b) \in U(k)$ implies that $(t-a)/(u-b)$ is strongly-general in $K|k$.
  Moreover, since $t,u$ are general in $K|k$, it follows that the sets $\{\{t-a\}_K\}_{a \in k_0}$, $\{\{u-a\}_K\}_{a \in k_0}$ are $\Z/\ell$-independent in $\k_1(K)$.
  Since $k_0$ is infinite, there exists some $(a,b) \in U(k_0)$ such that $\{t-a\}_K$ and $\{u-b\}_K$ are $\Z/\ell$-independent in $\k_1(K)$.

  Put $x = (t-a)/(u-b)$, and recall that $x \in K_0$ and that $x$ is strongly-general in $K|k$.
  Let $\epsilon_x$ be again as above.
  Now we calculate, similarly to the proof of Proposition \ref{proposition: strgen / rational-like / ident-criterion}:
  \[\epsilon_x \cdot (\{t-a\}_K - \{u-b\}_K) = \tau\left\{\frac{t-a}{u-b}\right\}_K = \epsilon_t \cdot \{t-a\}_K-\epsilon_u \cdot \{u-b\}_K.\]
  Since $\{t-a\}_K$ and $\{u-b\}_K$ are linearly-independent in $\k_1(K)$, it follows that $\epsilon_t = \epsilon_x = \epsilon_u$.
  This concludes the proof of (1), and we let $\epsilon$ be the unique element of $(\Z/\ell)^\times$ such that $\tau\{t\}_K = \epsilon \cdot \{t\}_K$ for all $t \in K_0$.
  I.e. $\epsilon = \epsilon_t$ for some/all $t \in K_0$ which is strongly-general in $K|k$.

  \vskip 5pt
  \noindent\emph{Proof of (2):} Let $b \in k$ be given, assume that $t \in K_0$ is strongly-general in $K|k$, and let $\epsilon$ be as in the proof of (1) above.
  By Lemma \ref{lemma: mainproof / K0 / fix-K0-strgen-divs}, we see that $\tau\Vfrak[t;\infty] = \Vfrak[t;\infty]$.
  Since $\Vfrak[t;b] \neq \Vfrak[t;\infty]$, it follows that there exists a unique $c \in k$ such that $\tau\Vfrak[t;b] = \Vfrak[t;c]$.
  Arguing similarly to the above, we have
  \begin{align*}
    \divv_{\Psi}(\tau\{t-b\}_K) = \divv_{\Psi^\tau}(\{t-b\}_K) &= \epsilon \cdot [\tau\Vfrak[t;b]] - \epsilon \cdot [\tau\Vfrak[t;\infty]] \\
                  &= \epsilon \cdot [\Vfrak[t;c]] - \epsilon \cdot [\Vfrak[t;\infty]] \\
                  &= \divv_\Psi(\epsilon \cdot \{t-c\}_K).
  \end{align*}
  The injectivity of $\divv_\Psi$ implies that $\tau\{t-b\}_K = \epsilon \cdot \{t-c\}_K$, as required.
  Finally, the uniqueness of $c$ follows from the fact that $c \mapsto \Vfrak[t;c] : \Pbb^1(k) \rightarrow \Dfrak_t$ is a bijection, and that $c$ is uniquely determined by the condition that $\tau\Vfrak[t;b] = \Vfrak[t;c]$.
\end{proof}

At this point, we will fix $\sigma := \epsilon^{-1} \cdot \tau$ with $\epsilon \in (\Z/\ell)^\times$ as in Proposition \ref{proposition: mainproof / K0 / fix-K0}.
In particular, $\sigma$ has the following crucial properties:
\begin{enumerate}
  \item $\sigma$ and $\tau$ represent the same element of $\UAutm_\abf(\k_1(K))$.
  In particular $\sigma A = \tau A$ for all subgroups $A$ of $\k_1(K)$.
  Thus, one has $\sigma\Kfrak(S) = \Kfrak(S)$ for all $S \subset K_0$ by Lemma \ref{lemma: mainproof / K0 / fix-Kfrakt0}.
  \item One has $\sigma\{t\}_K = \{t\}_K$ for all $t \in K_0^\times$ by Proposition \ref{proposition: mainproof / K0 / fix-K0}(1).
  \item If $t \in K_0$ is strongly-general in $K|k$, and $b \in k$ is arbitrary, then there exists a unique element $c \in k$ such that $\sigma\{t-b\}_K = \{t-c\}_K$, by Proposition \ref{proposition: mainproof / K0 / fix-K0}(2).
\end{enumerate}

At this stage in the proof, the main missing step is that we have no apparent control over the $c$ which appears in point (3) above.
Namely, this $c$ might \emph{a priori} vary as we change the element $t$.
The next proposition gives a crucial sufficient condition for the independence of this $c$ from $t$.

\begin{proposition}
\label{proposition: mainproof / K0 / independence-proposition}
  Let $x,y \in K_0$ be two elements which are algebraically independent over $k$.
  Let $e,f \in k_0$ be such that $t_e := (x-e)/y$ and $t_f := (x-f)/y$ are strongly-general in $K|k$, and let $b \in k$ be given.
  Assume that there exists a regular $k$-variety $X$ and a dominant smooth morphism $X \rightarrow \Pbb^2_{x,y}$ with geometrically integral fibers, such that the following hold:
  \begin{enumerate}
    \item The closed point $(0:b:1) \in \Pbb^2_{x,y}$ is in the image of $X \rightarrow \Pbb^2_{x,y}$.
    \item $K$ is a finite extension of $k(X)$.
    \item The fiber of $X \rightarrow \Pbb^2_{x,y}$ over $(0:b:1)$ is essentially unramified in $K$.
  \end{enumerate}
  Then there exists a unique $c \in k$ such that $\sigma\{t_e-b\}_K = \{t_e-c\}_K$ and $\sigma\{t_f-b\}_K = \{t_f-c\}_K$.
\end{proposition}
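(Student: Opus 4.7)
The plan is to construct a specific divisorial valuation of $K|k$ which simultaneously detects the divisorial structure at both ``$t_e = b$'' and ``$t_f = b$,'' then apply $\sigma$ and use the constraints coming from $\sigma$-invariance of $\{K_0^\times\}_K$ to force the center of the transformed valuation to remain at infinity, yielding $c_e = c_f$. WLOG $e \neq f$ and $b \neq 0$ (the cases $e = f$ or $b = 0$ are trivial from Proposition \ref{proposition: mainproof / K0 / fix-K0}(1) and the definitions). I work in the local coordinates $(u,s) = (1/y,\, x-by)$ on the blow-up $\tilde\Pbb^2 \to \Pbb^2$ at $P = (0:b:1)$; then the exceptional divisor valuation $\mu$ of $k(x,y)$ has $\mu(u) = 1$, residue field $k(s)$, and the key computation
\[ t_e - b = (s-e)u, \qquad t_f - b = (s-f)u \]
shows that $\mu|_{k(t_e)} = v[t_e;b]$ and $\mu|_{k(t_f)} = v[t_f;b]$.

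First I will form the base change $\tilde X := X \times_{\Pbb^2} \tilde\Pbb^2$, which is regular and smooth over $\tilde\Pbb^2$ with geometrically integral fibers; hypothesis (3) then implies that the generic point of the preimage of the exceptional divisor in $\tilde X$ is essentially unramified in $K$, so Fact \ref{fact: essram / essential-branch-locus / essential-branch-fact} produces an $\ell$-unramified prolongation $W$ of $\mu$ to $K$. By Lemma \ref{lemma: essram / mod-ell-divs / mod-ell-div-injection} this yields $\Ufrak_W \cap \Kfrak(t_e) = \Vfrak[t_e;b]$ and $\Ufrak_W \cap \Kfrak(t_f) = \Vfrak[t_f;b]$. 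Applying $\sigma$ and combining Theorem \ref{theorem: localthy / qpd / main-qpd}, Lemma \ref{lemma: mainproof / K0 / divs-local-thy}, and Lemma \ref{lemma: mainproof / K0 / fix-Kfrakt0} then produces a divisorial valuation $W^\sigma$ of $K|k$ with $\Ufrak_{W^\sigma} \cap \Kfrak(t_e) = \Vfrak[t_e;c_e]$ and $\Ufrak_{W^\sigma} \cap \Kfrak(t_f) = \Vfrak[t_f;c_f]$. A standard transcendence-degree computation (using the fundamental inequality for valuations) ensures $\nu := W^\sigma|_{k(x,y)}$ is itself a divisorial valuation of $k(x,y)|k$, and the conditions $\nu(t_e-c_e)>0$, $\nu(t_f-c_f)>0$ confine the center of $\nu$ on $\Pbb^2$ to the intersection of the two distinct lines $\{t_e=c_e\}$ and $\{t_f=c_f\}$---a single $k$-point, which lies at infinity precisely when $c_e=c_f$.

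To finish, I will use that $\sigma$ fixes $\{K_0^\times\}_K$ pointwise (Proposition \ref{proposition: mainproof / K0 / fix-K0}(1)) to conclude $\Ufrak_W \cap \{K_0^\times\}_K = \Ufrak_{W^\sigma} \cap \{K_0^\times\}_K$; via Kummer duality and the fact that divisorial valuations are determined by their minimized inertia groups (the $\ell$-divisible convex subgroup criterion in the local theory), this forces $W|_{K_0} = W^\sigma|_{K_0}$, and hence $\nu|_{k_0(x,y)} = \mu|_{k_0(x,y)}$. Since every divisorial extension of $\mu|_{k_0(x,y)}$ to $k(x,y)$ is the exceptional-divisor valuation of the blow-up at some $\Galk$-conjugate $(0:b':1)$ of $P$, the valuation $\nu$ is centered at a point at infinity on $\Pbb^2$; combined with the geometric analysis of the previous paragraph this forces $c_e = c_f = b'$, and uniqueness is immediate from Proposition \ref{proposition: mainproof / K0 / fix-K0}(2). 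The main technical hurdle will be the careful verification that essential unramifiedness at $X_P \subset X$ transfers to the preimage of the exceptional divisor in $\tilde X$, which requires analyzing how branch loci behave under the base change by the blow-up $\tilde\Pbb^2 \to \Pbb^2$.
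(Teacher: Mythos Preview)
Your approach is correct and genuinely different from the paper's. The paper works with a \emph{flag} $(v,v_a)$ of divisorial valuations on $k(x,y)$, where $v$ is the valuation along the line at infinity $Z$ and $v_a$ is the refinement at the point $(0:a:1)$; it then proves the key claim $v^\sigma=v$ by testing the center of $v^\sigma$ against a handful of specific elements of $K_0$ (namely $x$, $x-1$, $t-a_0$, and $(t-a_0)/(t-a_1)$ for $a_0,a_1\in k_0$), and finally reads off $b_e=b_f$ from the location of the center of $v_b^\sigma$ on $Z$. You instead use a \emph{single} divisorial valuation $\mu$ (the exceptional divisor of the blow-up at $(0:b:1)$), and replace the paper's case-by-case center analysis with one structural step: from $\sigma$ fixing $\{K_0^\times\}_K$ pointwise you get $\{z\in K_0^\times:\ell\mid W(z)\}=\{z\in K_0^\times:\ell\mid W^\sigma(z)\}$, and since $1/y\in K_0$ has $W(1/y)$ prime to $\ell$ this forces $\ell\nmid [W^\sigma(K^\times):W^\sigma(K_0^\times)]$, whence $\I_{W|_{K_0}}=\I_{W^\sigma|_{K_0}}$ and so $W|_{K_0}=W^\sigma|_{K_0}$ by the rank-$1$ case of the inertia criterion (two independent rank-$1$ valuations would surject onto $\Z/\ell\times\Z/\ell$ by approximation). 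Your route is shorter and more conceptual, exploiting Proposition~\ref{proposition: mainproof / K0 / fix-K0}(1) in one stroke; the paper's route is more elementary and uses only finitely many test elements, at the cost of the longer center-chasing argument. The ``technical hurdle'' you flag is mild: since $Y\to X$ is \'etale over a neighborhood of the generic point of $X_P$, base-changing along the blow-up $\tilde X\to X$ (which maps the exceptional divisor into $X_P$) shows $\tilde Y\to\tilde X$ is \'etale over the generic point of the exceptional divisor.
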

\begin{proof}
  For $f_1,\ldots,f_s \in k[x,y]$, we denote by $V(f_1,\ldots,f_s)$ the (reduced) closed subvariety of $\Abb^2_{x,y}$ given by the ideal $\sqrt{(f_1,\ldots,f_s)}$.
  We will abuse the notation, and also write $V(f_1,\ldots,f_r)$ for the closure of this subvariety in $\Pbb^2_{x,y}$, also considered as a (reduced) subvariety.
  Following our notational conventions, we denote the closed points of $\Pbb^2_{x,y}$ by their associated $k$-rational point, written in homogeneous coordinates.
  In particular, the closed points of $\Abb^2_{x,y}$ are written as $(1:a_1:a_2)$, $a_1,a_2 \in k$, and the points of $\Pbb^2_{x,y} \smallsetminus \Abb^2_{x,y}$ are written as $(0:a_1:a_2)$, $a_1,a_2 \in k$, $(a_1,a_2) \neq (0,0)$.

  Let $v$ denote the divisorial valuation of $k(x,y)|k$ associated to the prime Weil-divisor $Z := \Pbb^2_{x,y} \smallsetminus \Abb^2_{x,y}$ on $\Pbb^2_{x,y}$.
  For any $a \in k$, let $v_a$ be the $2$-divisorial valuation of $k(x,y)|k$ refining $v$ which is associated to the rational point $(0:a:1)$ on $Z$.
  Thus $(v,v_a)$ is a flag of divisorial valuations of $k(x,y)|k$ of length $2$.

  Let $\eta$ denote the generic point of $Z$.
  To simplify the exposition later in the proof, we will say that $a \in k$ is {\bf allowable} if the fiber of $X \rightarrow \Pbb^2_{x,y}$ over the point $(0:a:1)$ is non-empty and essentially unramified in $K$.
  Since $(0:b:1)$ lies on $Z$, assumptions (1) and (3) imply that the fiber $X_\eta$ of $X \rightarrow \Pbb^2_{x,y}$ over $\eta$ is non-empty and essentially unramified in $K$.
  Therefore, we see that all but finitely many $a \in k$ (including $b$) are allowable.

  For $h \in k$, we write $t_h := (x-h)/y$.
  We also write $t := t_0 = x/y$.
  Note that for all $a \in k$, the system $(1/x,t-a)$ is a system of regular parameters for the regular closed point $(0:a:1)$, and the flag of divisorial valuations associated to this system is precisely $(v,v_a)$.
  Thus, by the discussion of \S\ref{subsection: essram / essential-unram} and using Fact \ref{fact: essram / essential-branch-locus / essential-branch-fact}, we see that there exists an $\ell$-unramified prolongation $w$ of $v$ to $K$.
  In other words, $w$ is a divisorial valuation of $K|k$ which prolongs $v$, and the canonical map
  \[ \k_1(k(x,y))/\{\U_v\}_{k(x,y)} \rightarrow \k_1(K)/\Ufrak_w \cong \Z/\ell \]
  is an isomorphism.
  Using the same argument, for any allowable $a \in k$, there exists an $\ell$-unramified prolongation $w_a$ of $v_a$ to $K$ which \emph{refines} $w$.
  In other words, for such $a \in k$, $w_a$ is a $2$-divisorial valuation of $K|k$ which refines $w$ and prolongs $v_a$, and one has a canonical isomorphism
  \[ \{\U_v\}_{k(x,y)}/\{\U_{v_a}\}_{k(x,y)} \rightarrow \Ufrak_w/\Ufrak_{w_a}. \]
  Note also that one has $\{t_h-a\}_{k(x,y)} \in \{\U_v\}_{k(x,y)}\smallsetminus\{\U_{v_a}\}_{k(x,y)}$ for all $h \in k$.
  To summarize, for any allowable $a \in k$, the flag $(w,w_a)$ is an $\ell$-unramified prolongation of $(v,v_a)$ to $K$, and the following conditions hold:
  \begin{enumerate}
    \item One has $\{1/x\}_K \in \k_1(K) \smallsetminus \Ufrak_w$ hence $\{x\}_K \in \k_1(K) \smallsetminus \Ufrak_w$.
    \item For all $h \in k$, one has $\{t_h-a\}_K \in \Ufrak_w \smallsetminus \Ufrak_{w_a}$.
  \end{enumerate}

  By Lemma \ref{lemma: mainproof / K0 / divs-local-thy}, there exists a unique divisorial valuation $w^\sigma$ of $K|k$, and for every allowable $a \in k$ as above, there exists a unique $2$-divisorial valuation $w_a^\sigma$ of $K|k$, such that the following conditions hold:
  \begin{enumerate}
    \item $w_a^\sigma$ refines $w^\sigma$.
    \item $\sigma\Ufrak_w = \Ufrak_{w^\sigma}$ and $\sigma \Ufrak_w^1 = \Ufrak_{w^\sigma}^1$.
    \item $\sigma\Ufrak_{w_a} = \Ufrak_{w_a^\sigma}$ and $\sigma \Ufrak_{w_a}^1 = \Ufrak_{w_a^\sigma}^1$.
  \end{enumerate}
  Finally, we let $v^\sigma$ resp. $v_a^\sigma$ be the restrictions of $w^\sigma$ resp. $w_a^\sigma$ to $k(x,y)$.

  \begin{claim*}
    One has $v^\sigma = v$.
  \end{claim*}
  \begin{proof}
    Recall that $\eta$ denotes the generic point of $Z$ in $\Pbb^2_{x,y}$.
    As $\Pbb^2_{x,y}$ is proper over $k$, the valuation $v^\sigma$ has a unique center on $\Pbb^2_{x,y}$.
    Since $\eta$ is a regular codimension $1$ point of $\Pbb^2_{x,y}$, it suffices to show that the center of $v^\sigma$ on $\Pbb^2_{x,y}$ is $\eta$.

    Recall that one has $\sigma\{x\}_K = \{x\}_K$ by Proposition \ref{proposition: mainproof / K0 / fix-K0}, and that $\sigma\Ufrak_w = \Ufrak_{w^\sigma}$ as noted above.
    Since $\{x\}_K \notin \Ufrak_w$, we see that
    \[ \{x\}_K = \sigma \{x\}_K \notin \Ufrak_{w^\sigma}. \]
    In particular, $v^\sigma(x) \neq 0$.
    This implies that the center of $v^\sigma$ on $\Pbb^2_{x,y}$ is contained in the support of the function $x$, which is $V(x) \cup Z$.

    On the other hand, note that one also has $\{x-1\}_{k(x,y)} \notin \{\U_v\}_{k(x,y)}$, hence $\{x-1\}_K$ is also a generator of $\k_1(K)/\Ufrak_w$.
    By repeating the same argument with $x-1$ in place of $x$, and noting that $\sigma\{x-1\}_K = \{x-1\}_K$ by Proposition \ref{proposition: mainproof / K0 / fix-K0}, we deduce that the center of $v^\sigma$ on $\Pbb^2_{x,y}$ must actually be contained in $Z$, since the support of $x-1$ on $\Pbb^2_{x,y}$ is $V(x-1) \cup Z$.

    Now assume for a contradiction that the center of $v^\sigma$ is a closed point $P$ on $Z$.
    For an allowable $a \in k$, recall that $v_a^\sigma$ refines $v$.
    It therefore follows that the center of $v_a^\sigma$ on $\Pbb^2_{x,y}$ must also be this closed point $P$.

    Now let $a_0 \in k_0$ be allowable -- such an $a_0$ exists since $k_0$ is infinite.
    By Proposition \ref{proposition: mainproof / K0 / fix-K0}, we have $\{t-a_0\}_K = \sigma\{t-a_0\}_K$.
    Therefore $\{t-a_0\}_K \notin \Ufrak_{w_{a_0}^\sigma}$, hence $v_{a_0}^\sigma(t-a_0) \neq 0$.
    By considering the support of $t-a_0$ in $\Pbb^2_{x,y}$, we see that the point $P$ must be either the point $(0:a_0:1)$ or the point $(0:1:0)$.
    In fact, it follows that $P$ must be the point $(0:1:0)$ since we can repeat this argument with a different allowable $a_1 \in k_0$, $a_1 \neq a_0$, which exists since $k_0$ is infinite.

    Now consider the function $(t-a_0)/(t-a_1)$ for $a_1 \in k_0$ distinct from $a_0$.
    By Proposition \ref{proposition: mainproof / K0 / fix-K0}, we have $\{(t-a_0)/(t-a_1)\}_K = \sigma\{(t-a_0)/(t-a_1)\}_K$, while $\{t-a_1\}_K \in \Ufrak_{w_{a_0}}$.
    Therefore, $\{(t-a_0)/(t-a_1)\}_K \notin \Ufrak_{w_{a_0}^\sigma}$, hence $v_{a_0}^\sigma((t-a_0)/(t-a_1)) \neq 0$.
    By looking at the support of $(t-a_0)/(t-a_1)$ on $\Pbb^2_{x,y}$, we obtain a contradiction to the fact that $P = (0:1:0)$.

    Having obtained our contradiction, it follows that the center of $v^\sigma$ cannot be a closed point on $Z$.
    This means that the center of $v^\sigma$ must be the generic point of $Z$, which concludes the proof of the claim.
  \end{proof}

  Recall from Proposition \ref{proposition: mainproof / K0 / fix-K0}(2) that there exist unique constants $b_e,b_f \in k$ such that $\sigma\{t_e-b\}_K = \{t_e-b_e\}_K$ and $\sigma\{t_f-b\}_K = \{t_f-b_f\}_K$.
  We must show that $b_e = b_f$.

  Note that one has $\{t_e-b\}_K \in \Ufrak_w \smallsetminus \Ufrak_{w_b}$, so that by Proposition \ref{proposition: mainproof / K0 / fix-K0}, we have
  \[ \{t_e-b_e\}_K \in \Ufrak_{w^\sigma} \smallsetminus \Ufrak_{w_b^\sigma}. \]
  In particular, $v_b^\sigma$ is a proper refinement of $v^\sigma$.
  Thus, the center $Q$ of $v_b^\sigma$ on $\Pbb^2_{x,y}$ must be a closed point on the line $Z$ at infinity, for otherwise this center would be $\eta$ which would imply that $v^\sigma = v^\sigma_b$ by the claim above.

  Let $a \in k_0$ be different from $b$.
  By Proposition \ref{proposition: mainproof / K0 / fix-K0}, we see that $\sigma\{(t_e-b)/(t_e-a)\}_K = \{(t_e-b_e)/(t_e-a)\}_K$, and thus
  \[ \{(t_e-b_e)/(t_e-a)\}_K \notin \Ufrak_{w_b^\sigma}. \]
  By considering the support of the function $(t_e-b_e)/(t_e-a)$ on $\Pbb^2_{x,y}$, it follows that $Q$ cannot be the point $(0:1:0)$.
  Thus, this center $Q$ must be a point of the form $(0:c:1)$ for some $c \in k$.
  Recalling that
  \[ \{t_e-b_e\}_K \in \Ufrak_{w^\sigma} \smallsetminus \Ufrak_{w_b^\sigma}\]
  so that $v_b^\sigma(t_e-b_e) \neq 0$, we deduce from this that $b_e = c$.
  Finally, by noting that also
  \[ \{t_f-b_f\}_K \in \Ufrak_{w^\sigma} \smallsetminus \Ufrak_{w_b^\sigma}, \]
  we deduce similarly that $b_f = c$.
  Thus, one has $c = b_e = b_f$, as required.
\end{proof}

\subsection{Intersections}
\label{subsection: mainproof / intersections}

We now turn to the second main part of the proof.
In this part, we develop a condition for detecting one-dimensional geometric subgroups as intersections of certain two-dimensional geometric subgroups.
The first lemma in this direction shows that any one-dimensional geometric subgroup can actually be realized as the intersection of two two-dimensional geometric subgroups.
The second lemma in this direction gives a sufficient condition for the intersection of two two-dimensional geometric subgroups to be geometric.

\begin{lemma}
\label{lemma: mainproof / intersections / gen-intersections}
  Assume that $\trdeg(K|k) \geq 4$.
  Let $t \in K \smallsetminus k$ be given, and put $F = \Kbb(t)$.
  Let $t_1,t_2,t_3 \in K$ be algebraically independent over $F$.
  Then there exist $a,b \in k_0$ such that
  \[ \Kfrak(t) = \Kfrak(t,t_1+a t_3) \cap \Kfrak(t,t_2+b t_3). \]
\end{lemma}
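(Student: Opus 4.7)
The plan is to reduce the statement, via the graded-lattice isomorphism $\Gbb^*(K|k) \xrightarrow{\cong} \Gfrak^*(K|k)$ from Proposition~\ref{proposition: lattice / mod-ell / mod-ell-lattice}, to a question about intersections of relatively algebraically closed subfields of $K$, and then to handle the delicate point that the set-theoretic intersection of two geometric subgroups in $\k_1(K)$ may in general be strictly larger than their lattice-theoretic meet.

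First I would show, for \emph{every} pair $a, b \in k$, that $\Kbb(t, t_1+at_3) \cap \Kbb(t, t_2+bt_3) = \Kbb(t)$. Writing $L_i$ for the two relatively algebraically closed subfields of transcendence degree $2$ on the left, the intersection $L_1 \cap L_2$ is relatively algebraically closed in $K$ and contains $\Kbb(t)$. If one had $\trdeg(L_1 \cap L_2 | k) = 2$, then $L_1 \cap L_2$ would equal both $L_i$, forcing $L_1 = L_2$, and this field would then contain $F(t_1+at_3, t_2+bt_3)$, where $F = \Kbb(t)$. But the invertible linear change of variables $(t_1, t_2, t_3) \mapsto (t_1+at_3, t_2+bt_3, t_3)$ over $F$ preserves algebraic independence, so $F(t_1+at_3, t_2+bt_3)$ has transcendence degree $3$ over $k$, contradicting $\trdeg(L_1 | k) = 2$. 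Hence $L_1 \cap L_2 = \Kbb(t)$, and by Proposition~\ref{proposition: lattice / mod-ell / mod-ell-lattice} the meet of $\Kfrak(t, t_1+at_3)$ and $\Kfrak(t, t_2+bt_3)$ in $\Gfrak^*(K|k)$ equals $\Kfrak(t)$ for every $a, b$.

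The hard part will then be to select $(a,b) \in k_0^2$ for which this lattice meet also agrees with the set-theoretic intersection as subgroups of $\k_1(K)$. In general these can differ: the class $\{x\}_K$ lies in $\Kfrak(k(x)) \cap \Kfrak(k(xy^\ell))$ inside $\k_1(k(x,y))$, even though the lattice meet $\Kfrak(k)$ is trivial, because $xy^\ell / x = y^\ell$ is an $\ell$-th power. The containment $\Kfrak(t) \subseteq \Kfrak(t, t_1+at_3) \cap \Kfrak(t, t_2+bt_3)$ is automatic; for the reverse, any $\bar x$ in the set-theoretic intersection admits representatives $x_i \in \Kbb(t, t_i + * t_3)^\times$ with $x_1 / x_2 \in K^{\times \ell}$, and the existence of such an $\bar x$ lying outside $\Kfrak(t)$ would force an unexpected $\ell$-divisibility relation between the hypersurfaces cut out by $t_1+at_3-c_1$ and $t_2+bt_3-c_2$ on a suitable normal model of $K|k$. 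This in turn imposes a non-trivial algebraic condition on $(a,b)$, and the plan is to verify that these ``bad'' pairs form a proper subset of $k_0^2$, so that the infinitude of $k_0$ allows us to pick $(a, b)$ outside the bad locus. Making this explicit would combine the Kummer-theoretic framework of Section~\ref{section: cohom} with the divisorial-valuation techniques of Sections~\ref{section: localthy} and~\ref{section: essram}, tracked through the Milnor-supremum machinery of Section~\ref{section: milnorff}.
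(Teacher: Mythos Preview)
Your first paragraph correctly establishes that the \emph{lattice} meet of $\Kfrak(t,t_1+at_3)$ and $\Kfrak(t,t_2+bt_3)$ in $\Gfrak^*(K|k)$ is $\Kfrak(t)$ for every $(a,b)$, and you rightly identify that the whole difficulty lies in upgrading this to a set-theoretic intersection in $\k_1(K)$.  Your example with $x$ and $xy^\ell$ is exactly the obstruction one has to rule out.  However, the plan you outline for the hard part --- tracking $\ell$-divisibility relations among divisors on a model via the machinery of \S\ref{section: localthy}--\ref{section: essram} --- is not how the paper proceeds, and as stated it is too vague to count as a proof: you never say what condition on $(a,b)$ cuts out the bad locus, nor why it is proper.

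The paper's argument avoids any direct analysis of divisors or bad loci.  The key idea you are missing is to choose $(a,b)$ via the Birational--Bertini Lemma~\ref{lemma: strgen / general / gen-birat-bertini}(1) so that, writing $x=t_1+at_3$ and $y=t_2+bt_3$, the subfield $F(x,y)$ is \emph{relatively algebraically closed} in $K'$ (the maximal separable subextension of $K$ over $F(\tbf)$).  Once this is arranged, $\Kbb(t,x)$ and $\Kbb(t,y)$ are purely inseparable over $F(x)$ and $F(y)$ respectively, so by Lemma~\ref{lemma: milnor / insep / insep-iso} the problem reduces to showing
\[
\{F(x)^\times\}_{F(x,y)} \cap \{F(y)^\times\}_{F(x,y)} = \{F^\times\}_{F(x,y)}
\]
inside $\k_1(F(x,y))$.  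But $F(x,y)$ is a \emph{rational} function field in two variables over $F$, and this intersection is an elementary computation using unique factorization in $F[x,y]$.  Injectivity of $\k_1(F(x,y)) \to \k_1(K')$ (since $F(x,y)$ is relatively algebraically closed in $K'$) then transports the equality to $K'$, and Lemma~\ref{lemma: milnor / insep / insep-iso} pushes it to $K$.  In short: Bertini makes the relevant algebraic closures trivial, which collapses the problem to a two-variable rational function field where your $xy^\ell$-type obstruction cannot occur.
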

\begin{proof}
  Extend $t_1,t_2,t_3$ to a transcendence base $\tbf = (t_1,\ldots,t_r)$ for $K|F$, and let $K'$ denote the maximal separable subextension of $K|F(\tbf)$.
  Note that $K'$ is then a regular extension of $F$.
  By Lemma \ref{lemma: strgen / general / gen-birat-bertini}(1), for all but finitely many $a \in k$, the field $K'$ is regular over $F(t_1+a \cdot t_3)$; since $k_0$ is infinite, we may choose such an $a$ which lies in $k_0$.
  Applying Lemma \ref{lemma: strgen / general / gen-birat-bertini}(1) again, for all but finitely many $b \in k$, the field $K'$ is regular over $F(t_1 + a \cdot t_3,t_2 + b \cdot t_3)$; again, we may choose such a $b$ which lies in $k_0$.
  We put $x = t_1 + a \cdot t_3$ and $y = t_2 + b \cdot t_3$ for $a,b \in k_0$ as above.

  Next, put $F_1 = F(x)$, $F_2 = F(y)$ and $F_{12} = F(x,y)$, and note that $F_*$ is relatively algebraically closed in $K'$ for $* = 1,2,12$.
  Also, for $* = 1,2,12$, let $M_*$ denote the relative algebraic closure of $F_*$ in $K$.
  Since $K|K'$ is purely-inseparable, the extension $M_*|F_*$ is also purely-inseparable.

  Since $F_{12} = F(x,y)$ is rational over $F$ and $F_1 = F(x)$, $F_2 = F(y)$, it is easy to see that one has $\{F_1^\times\}_{F_{12}} \cap \{F_2^\times\}_{F_{12}} = \{F^\times\}_{F_{12}}$ as subgroups of $\k_1(F_{12})$.
  Since $\k_1(F_{12}) \rightarrow \k_1(K')$ is injective, we deduce that
  \[ \{F_1^\times\}_{K'} \cap \{F_2^\times\}_{K'} = \{F^\times\}_{K'}. \]
  Finally, the fact that 
  \[ \{M_1^\times\}_K \cap \{M_2^\times\}_K = \{F^\times\}_K = \Kfrak(t) \]
  follows easily from Lemma \ref{lemma: milnor / insep / insep-iso} using the fact that the extensions $M_*|F_*$, $* = 1,2,12$, and $K|K'$ are purely-inseparable.
  This concludes the proof of the lemma since $\Kfrak(t,x) = \{M_1^\times\}_K$ and $\Kfrak(t,y) = \{M_2^\times\}_K$.
\end{proof}

\begin{lemma}
\label{lemma: mainproof / intersections / geometric-criterion}
  Let $\Kfrak$ be a subgroup of $\k_1(K)$ which is maximal among the subgroups $\Delta$ of $\k_1(K)$ such that $\dimm(\Delta) = 1$.
  Assume furthermore that there exist elements of $\Gfrak^*(K|k)$ denoted as follows
  \begin{itemize}
    \item $A \in \Gfrak^1(K|k)$
    \item $B_1,B_2,C \in \Gfrak^2(K|k)$
    \item $B_1',B_2',D \in \Gfrak^3(K|k)$
  \end{itemize}
  such that the following conditions hold:
  \begin{enumerate}
    \item $B_1 \cup B_2 \subset D$, $B_1 \neq B_2$, and $A \not\subset D$.
    \item $B_1 \cup A \subset B_1'$ and $B_2 \cup A \subset B_2'$.
    \item $C \subset B_1' \cap B_2'$ and $\Kfrak = B_1 \cap B_2$.
  \end{enumerate}
  Then $\Kfrak$ is a one-dimensional geometric subgroup of $\k_1(K)$, i.e. one has $\Kfrak \in \Gfrak^1(K|k)$.
\end{lemma}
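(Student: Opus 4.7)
I would translate the hypotheses into the geometric lattice $\Gbb^*(K|k)$ via the isomorphism of Proposition \ref{proposition: lattice / mod-ell / mod-ell-lattice}, identify a canonical one-dimensional geometric subgroup inside $\Kfrak$, and conclude by a maximality argument based on Corollary \ref{corollary: milnorff / geometric / infinite-subsets}. Accordingly, write $A = \Kfrak(G)$, $B_i = \Kfrak(F_i)$, $B_i' = \Kfrak(F_i')$, $C = \Kfrak(H)$, and $D = \Kfrak(E)$, where $G \in \Gbb^1(K|k)$, $F_i, H \in \Gbb^2(K|k)$, and $F_i', E \in \Gbb^3(K|k)$, with all inclusions among these objects matching the corresponding inclusions in $\Gbb^*(K|k)$.

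From condition (1), the subfields $F_1, F_2$ are distinct and both contained in $E$, so a dimension count using semimodularity of $\Gbb^*(K|k)$ forces $F_1 \vee F_2 = E$ and $\dim(F_1 \cap F_2) \leq 1$. From condition (2), the containments $F_i, G \subseteq F_i'$ together with $\dim F_i' = 3$ and $G \not\subseteq E \supseteq F_i$ force $F_i' = F_i \vee G$. From condition (3), the inclusion $H \subseteq F_1' \cap F_2'$ gives $\dim(F_1' \cap F_2') \geq 2$, while $G \not\subseteq E$ and semimodularity give $\dim(F_1' \vee F_2') = \dim(E \vee G) = 4$, so $\dim(F_1' \cap F_2') \leq 2$. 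Combining, $\dim(F_1' \cap F_2') = 2$.

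The crucial next step invokes the generic-modularity property of the algebraic matroid $\Gbb^*(K|k)$: since $G$ is algebraically independent from $E$ over the algebraically closed base $k$ (as $G \not\subseteq E$ and $\dim G = 1$ force $G \cap E = k$), one has the lattice identity
\[ (F_1 \vee G) \wedge (F_2 \vee G) \,=\, (F_1 \cap F_2) \vee G.\]
Taking dimensions yields $2 = \dim(F_1 \cap F_2) + 1$, so $\dim(F_1 \cap F_2) = 1$, i.e., $F_1 \cap F_2 \in \Gbb^1(K|k)$. Hence $\Kfrak(F_1 \cap F_2) \in \Gfrak^1(K|k)$ is a one-dimensional geometric subgroup, and the inclusion $\Kfrak(F_1 \cap F_2) \subseteq \Kfrak(F_1) \cap \Kfrak(F_2) = B_1 \cap B_2 = \Kfrak$ exhibits it inside $\Kfrak$.

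To finish, choose a transcendental $t \in K$ with $F_1 \cap F_2 = \Kbb(t)$. I then claim that $\Kfrak(t) = \Kfrak(F_1 \cap F_2)$ is itself maximal among subgroups of $\k_1(K)$ of Milnor dimension one: if $\Delta \supseteq \Kfrak(t)$ satisfies $\dimm(\Delta) = 1$, then for every $x \in \Delta$ and every $a \in k_0$ the element $\{t-a\}_K$ lies in $\Kfrak(t) \subseteq \Delta$ and therefore $\{x, t-a\}_K = 0$; Corollary \ref{corollary: milnorff / geometric / infinite-subsets} (applied with $r = 1$ and the infinite set $S_1 = k_0$) then forces $x \in \bigcap_{a \in k_0} \ker\{t-a, \bullet\}_K = \Kfrak(t)$, hence $\Delta = \Kfrak(t)$. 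Applying this to $\Delta = \Kfrak$ yields $\Kfrak = \Kfrak(t) \in \Gfrak^1(K|k)$. The main technical obstacle is the generic-modularity identity above; I expect it can be proved directly by writing an element of $F_1(g) \cap F_2(g)$ in coprime rational form with $g$ a transcendence generator of $G$, using that coprimality in $F_i[g]$ survives base change to $E[g]$ (since $k$ is algebraically closed and $g$ is transcendental over $E$), and then passing to relative algebraic closures in $K$.
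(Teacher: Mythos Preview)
Your proposal is correct and follows essentially the same route as the paper: translate to $\Gbb^*(K|k)$ via Proposition~\ref{proposition: lattice / mod-ell / mod-ell-lattice}, use the identity $\Kbb(F_1,x)\cap\Kbb(F_2,x)=\Kbb(F_1\cap F_2,x)$ for $x$ transcendental over $F_1\cdot F_2$ (your ``generic modularity''), combine with the existence of $C$ to force $\trdeg(F_1\cap F_2|k)=1$, and conclude by maximality of $\Kfrak(t)$. The paper states the modularity identity without proof and uses Proposition~\ref{proposition: milnorff / geometric / geometric-maximal} rather than Corollary~\ref{corollary: milnorff / geometric / infinite-subsets} for the maximality step, but these are cosmetic differences; your more lattice-theoretic packaging and your explicit sketch for the modularity identity are fine.
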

\begin{proof}
  First suppose that $t \in K \smallsetminus k$ is given, and consider the geometric subgroup $\Kfrak(t)$.
  Then for all $a,b \in \Kfrak(t)$, one has $\{a,b\}_K = 0$ by Fact \ref{fact: milnorff / vanishing / vanishing-B-K}.
  Moreover, by Proposition \ref{proposition: milnorff / geometric / geometric-maximal}, if $c \in \k_1(K) \smallsetminus \Kfrak(t)$, then there exist (many) elements $d \in \Kfrak(t)$ such that $\{c,d\}_K \neq 0$.
  In particular, $\Kfrak(t)$ is maximal among subgroups $\Delta$ of $\k_1(K)$ such that $\dimm(\Delta) = 1$.

  Now suppose that $\Kfrak$ satisfies the assumptions on the lemma.
  The goal of this proof will be to show that there exists some $t \in K \smallsetminus k$ such that $\Kfrak(t) \subset \Kfrak$.
  Then the ``maximality'' in the observation above would imply that $\Kfrak(t) = \Kfrak$.
  We will tacitly use Proposition \ref{proposition: lattice / mod-ell / mod-ell-lattice}(1), which says that for $E_1,E_2 \in \G^*(K|k)$, one has $E_1 \subset E_2$ if and only if $\Kfrak(E_1) \subset \Kfrak(E_2)$ as subgroups of $\k_1(K)$.

  Let $F_1,F_2 \in \G^2(K|k)$ be such that $\Kfrak(F_i) = B_i$ and put $F = F_1 \cap F_2$.
  Then condition (1) implies that $F_1 \neq F_2$ hence $\trdeg(F|k) \leq 1$.
  Let $F_{12} \in \G^3(K|k)$ be such that $\Kfrak(F_{12}) = D$, then condition (1) implies that $F_1 \cdot F_2 \subset F_{12}$.
  Let $x \in K \smallsetminus k$ be such that $A = \Kfrak(x)$.
  Condition (1) implies that $x$ is transcendental over $F_{12}$, hence it is also transcendental over $F_1$ and $F_2$.
  Thus, one has $B_i' = \Kfrak(F_i,x)$ by condition (2).

  On the other hand, $x$ being transcendental over $F_{12}$, and $F_1 \cdot F_2 \subset F_{12}$, implies that 
  \[ \Kbb(F_1,x) \cap \Kbb(F_2,x) = \Kbb(F_1 \cap F_2,x) = \Kbb(F,x). \]
  On the other hand, letting $M \in \G^2(K|k)$ be such that $\Kfrak(M) = C$, we deduce from condition (3) that $M \subset \Kbb(F_1,x)$ and $M \subset \Kbb(F_2,x)$, hence $\trdeg(\Kbb(F,x)|k) \geq 2$.
  Since $\trdeg(F|k) \leq 1$ holds, we deduce that $\trdeg(\Kbb(F,x)|k) = 2$ and therefore $\trdeg(F|k) = 1$.

  Finally, one has 
  \[ \Kfrak(F) = \Kfrak(F_1 \cap F_2) \subset \Kfrak(F_1) \cap \Kfrak(F_2) = B_1 \cap B_2. \]
  Therefore, $\Kfrak(F) \subset \Kfrak$ by condition (3).
  Letting $t \in K \smallsetminus k$ be such that $\Kbb(t) = F$, we deduce that $\Kfrak(t) \subset \Kfrak$, hence $\Kfrak = \Kfrak(t)$ as noted in the beginning of the proof.
\end{proof}

\subsection{The Base Case}
\label{subsection: mainproof / base-case}

Recall that our primary goal is to show that every \emph{element} of $K$ is $\sigma$-acceptable.
Also, recall that one has $K = K_0 \otimes_{k_0} k$, and therefore every element $t$ of $K$ can be written as
\[ t = a_0 x_0 + \cdots + a_r x_r \]
for some $x_0,\ldots,x_r \in K_0$ and some $a_0,\ldots,a_r \in k$.
We have already proved in Lemma \ref{lemma: mainproof / K0 / fix-Kfrakt0} that elements of $K_0$ are $\sigma$-acceptable, hence elements of the form $a_0 x_0$ where $a_0 \in k$ and $x_0 \in K_0$ are also $\sigma$-acceptable.
The proof that every element of $K$ is $\sigma$-acceptable follows by induction on the length $r$ of the expression $a_0 x_0 + \cdots + a_r x_r$ above.
The base case for our induction is the case $r = 1$, which is the focus of this subsection.

We begin by proving that ``many'' elements of the form $a_0 x_0 + a_1 x_1$ are $\sigma$-acceptable, and we will then use the ``intersection'' results proved in the previous subsection to deduce the full base case.
\begin{lemma}
\label{lemma: mainproof / base-case / pre-base-case}
  Let $t_1,t_2,t_3 \in K_0$ be algebraically independent over $k$.
  Then there exists a non-empty open subset $U$ of $\Abb^3_k$ such that for all $(a_1,a_2,a_3) \in U(k_0)$, and for all $b \in k$, the element
  \[ \frac{t_2-a_2}{t_1-a_1} + b \cdot \frac{t_3-a_3}{t_1-a_1}\]
  is $\sigma$-acceptable.
\end{lemma}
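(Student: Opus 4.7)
The plan is to compute $\sigma\{\alpha - d\}_K$ explicitly for $d$ in an infinite subset of $k_0$, in a form from which $\sigma\Kfrak(\alpha)$ can be recognized as a one-dimensional geometric subgroup via Corollary~\ref{corollary: milnorff / geometric / infinite-subsets}. Write $\tilde x_0 := (t_2 - a_2)/(t_1 - a_1)$ and $\tilde x_1 := (t_3 - a_3)/(t_1 - a_1)$, both in $K_0^\times$, so that $\alpha = \tilde x_0 + b\tilde x_1$. For generic $(a_1, a_2, a_3) \in k_0^3$ the elements $\tilde x_0$ and $\tilde x_1$ are algebraically independent over $k$; applying Proposition~\ref{proposition: strgen / strgen / strgen-birat-bertini} to $(\tilde x_0, \tilde x_1)$, I may further shrink $U$ so that $t_d := (\tilde x_0 - d)/\tilde x_1$ is strongly-general in $K|k$ for all $d$ in some cofinite subset of $k$, and in particular for infinitely many $d \in k_0$. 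The case $b = 0$ reduces to Lemma~\ref{lemma: mainproof / K0 / fix-Kfrakt0} (since then $\alpha = \tilde x_0 \in K_0^\times$), so henceforth I assume $b \neq 0$.

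The key identity comes from the factorization $\alpha - d = (\tilde x_0 - d)(t_d + b)/t_d$: since $\{b\}_K = 0$ (as $k$ is algebraically closed), this yields
\[ \{\alpha - d\}_K = \{\tilde x_0 - d\}_K + \{t_d + b\}_K - \{t_d\}_K. \]
For $d \in k_0$ with $t_d$ strongly-general, Proposition~\ref{proposition: mainproof / K0 / fix-K0}(1) fixes $\{\tilde x_0 - d\}_K$ and $\{t_d\}_K$ pointwise, while Proposition~\ref{proposition: mainproof / K0 / fix-K0}(2) applied to $t_d$ with parameter $-b$ produces a unique $c_d \in k$ satisfying $\sigma\{t_d + b\}_K = \{t_d - c_d\}_K$. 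Substituting these and using $(\tilde x_0 - d)/t_d = \tilde x_1$, a short calculation collapses the result to the clean form
\[ \sigma\{\alpha - d\}_K = \{(\tilde x_0 - d) - c_d \tilde x_1\}_K. \]
The decisive step is to show that $c_d$ does not depend on $d$ on some infinite subset $S_0 \subseteq k_0$, and this is precisely the content of Proposition~\ref{proposition: mainproof / K0 / independence-proposition} applied with $(x,y) = (\tilde x_0, \tilde x_1)$, parameter $-b$, and pairs $(e,f) = (d_1,d_2) \in k_0^2$.

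Writing $c$ for this common value and setting $\beta := \tilde x_0 - c\tilde x_1$, the display becomes $\sigma\{\alpha - d\}_K = \{\beta - d\}_K$ for all $d \in S_0$. After shrinking $U$ further so that $\alpha$ and $\beta$ are general in $K|k$ (via Lemma~\ref{lemma: strgen / general / gen-birat-bertini}(1)), Corollary~\ref{corollary: milnorff / geometric / infinite-subsets} yields
\[ \sigma\Kfrak(\alpha) = \bigcap_{d \in S_0} \ker\{\sigma\{\alpha - d\}_K, \bullet\}_K = \bigcap_{d \in S_0} \ker\{\beta - d, \bullet\}_K = \Kfrak(\beta), \]
so $\alpha$ is $\sigma$-acceptable with witness $\beta = \tilde x_0 - c\tilde x_1$. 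The main obstacle is discharging the essential-unramification hypothesis of Proposition~\ref{proposition: mainproof / K0 / independence-proposition} for each $b \in k$: one must produce, for each such $b$, a regular $k$-variety $X_b$ with a dominant smooth morphism $X_b \to \Pbb^2_{\tilde x_0, \tilde x_1}$ having geometrically integral fibers, such that $K$ is finite over $k(X_b)$ and the fiber over $(0:-b:1)$ is essentially unramified in $K$. My intended choice is $X_b$ a suitable open of a smooth proper model of $K|k$, so that $k(X_b) = K$ and the essential branch locus is automatically empty; it then suffices (by resolving the resulting rational map and shrinking $X_b$) to arrange that the morphism is smooth with geometrically integral fibers at the single point $(0:-b:1)$, which can be handled $b$-by-$b$. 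A small amount of further case analysis will be needed for the exceptional values of $b$ at which the necessary genericity conditions on $\alpha$, $\beta$, or on $c$ (e.g., $c$ landing in the bad set for Lemma~\ref{lemma: strgen / general / gen-birat-bertini}(1)) fail.
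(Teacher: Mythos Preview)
Your core computation is the same as the paper's: set $t_d = (\tilde x_0 - d)/\tilde x_1$, apply Proposition~\ref{proposition: mainproof / K0 / fix-K0}(2), invoke Proposition~\ref{proposition: mainproof / K0 / independence-proposition} for independence of $c_d$ from $d$, and finish with Corollary~\ref{corollary: milnorff / geometric / infinite-subsets}. (Your worry about $\alpha, \beta$ being general is moot; Corollary~\ref{corollary: milnorff / geometric / infinite-subsets} only needs them transcendental over $k$, which is automatic since $\tilde x_0, \tilde x_1$ are algebraically independent.) The gap is exactly where you flag ``the main obstacle'': producing the variety $X$ required by Proposition~\ref{proposition: mainproof / K0 / independence-proposition}. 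Your proposed $X_b$ --- an open in a smooth model of $K$ --- does not work: beyond resolution issues in positive characteristic, for the fibers of $X_b \to \Pbb^2$ to be geometrically integral one needs $K$ regular over $k(\tilde x_0, \tilde x_1)$ (not arranged), and even granting that, smoothness with integral fibers holds only over some uncontrolled open $V \subset \Pbb^2$, with no mechanism forcing $(0:-b:1) \in V$ for a given $b$.

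The paper's $X$ is an open of \emph{affine space}, not of a model of $K$. Extend $t_1,t_2,t_3$ to a transcendence base $\tbf = (t_1,\ldots,t_d)$ of $K_0|k_0$; take $U$ to be the image in $\Abb^3$ of the complement $\tilde U \subset \Abb^d_\tbf$ of the essential branch locus in $K$. For $(a_1,a_2,a_3) \in U(k_0)$, lift to $P = (a_1,\ldots,a_d) \in \tilde U(k_0)$ and set $X = \Abb^d_\tbf$ minus the fiber over $(a_1,a_2,a_3)$, mapping to $\Pbb^2_{\tilde x_0,\tilde x_1}$ via the coordinate projection $\Abb^d \to \Abb^3$ followed by the linear projection from $(a_1,a_2,a_3)$. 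The decisive point: the closure of every fiber of $X \to \Pbb^2$ contains $P$, and $P$ is essentially unramified, so \emph{every} fiber is essentially unramified --- uniformly in $b$, with a single $X$. Projecting further to $\Pbb^1_{t_e}$ and applying Lemma~\ref{lemma: essram / mod-ell-divs / fibers} also shows $t_e$ is strongly-general once it is general (Lemma~\ref{lemma: strgen / general / gen-birat-bertini}(1)), bypassing your appeal to Proposition~\ref{proposition: strgen / strgen / strgen-birat-bertini}, which as stated only gives an open condition on pairs and does not directly yield cofinitely many good $d$ with the second parameter fixed at $0$.
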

\begin{proof}
  Extend $t_1,t_2,t_3$ to a transcendence base $\tbf := (t_1,\ldots,t_d)$ for $K|k$ with $t_i \in K_0$.
  Let $\tilde U$ denote the (non-empty, open) complement of the essential branch locus of $\Abb^d_\tbf$ in $K$, and let $U$ denote the (non-empty, open) image of $\tilde U$ under the natural coordinate projection
  \[ \Abb^d_\tbf \rightarrow \Abb^3_{t_1,t_2,t_3}. \]
  Since the map $\tilde U \rightarrow U$ is just a coordinate projection of affine spaces, and since $k_0$ is infinite, we note that the map $\tilde U(k_0) \rightarrow U(k_0)$ is surjective.
  By Lemma \ref{lemma: strgen / general / gen-birat-bertini}(2), we may replace $U$ with a smaller non-empty open subset, if needed, and assume without loss of generality that for all $(a_1,a_2,a_3) \in U(k_0)$, the element $(t_1-a_1)/(t_3-a_3)$ is general (hence separable) in $K|k$.
  We will show that this open set $U$ satisfies the assertion of the lemma.
  
  Let $p := (a_1,a_2,a_3) \in U(k_0)$ be given, and let $P := (a_1,\ldots,a_d) \in \tilde U(k_0)$ be a point lying above $p$.
  Put $x = (t_2-a_2)/(t_1-a_1)$ and $y = (t_3-a_3)/(t_1-a_1)$, and consider the (rational) projection about $p$,
  \[ \Abb^3_{t_1,t_2,t_3} \rightarrow \Pbb^2_{x,y}, \]
  which is defined on points by $(X,Y,Z) \mapsto (X-a_1:Y-a_2:Z-a_3)$.
  Let $Z$ be the fiber of $p$ with respect to $\Abb^d_\tbf \rightarrow \Abb^3_{t_1,t_2,t_3}$, and put $X := \Abb^d_\tbf \smallsetminus Z$.
  The composition of the following maps
  \[ X \hookrightarrow \Abb^d_\tbf \rightarrow \Abb^3_{t_1,t_2,t_3} \rightarrow \Pbb^2_{x,y} \]
  is a (regular) smooth surjective morphism with geometrically integral fibers.
  Moreover, the point $P = (a_1,\ldots,a_d)$ is contained in the closure of every fiber of $X \rightarrow \Pbb^2_{x,y}$.
  Since $P$ is essentially unramified in $K$, it follows that all the fibers of $X \rightarrow \Pbb^2_{x,y}$ must be essentially unramified in $K$.
  Thus, the morphism $X \rightarrow \Pbb^2_{x,y}$ satisfies the assumptions of Proposition \ref{proposition: mainproof / K0 / independence-proposition}.

  Now suppose that $e \in k_0$ is given and put $t_e = (x-e)/y$.
  Consider the (rational) projection $\Pbb^2_{x,y} \rightarrow \Pbb^1_{t_e}$ defined by the inclusion of function fields $k(t_e) \hookrightarrow k(x,y)$.
  The fibers of this morphism are the lines in $\Pbb^2_{x,y}$ passing through the point $(1:e:0)$, and so this map is surjective onto $\Pbb^1_{t_e}$.
  Letting $Y_e$ denote the preimage of the point $(1:e:0)$ in $X$, and putting $X_e := X \smallsetminus Y_e$, we find that the composition 
  \[ X_e \hookrightarrow X \rightarrow \Pbb^2_{x,y} \rightarrow \Pbb^1_{t_e} \]
  is a (regular) smooth surjective morphism with geometrically integral fibers which are all essentially unramified in $K$.

  In particular, if $t_e = (x-e)/y$ is general in $K|k$, then we see that $t_e$ is automatically \emph{strongly-general} in $K|k$ by applying Lemma \ref{lemma: essram / mod-ell-divs / fibers} to the morphism $X_e \rightarrow \Pbb^1_{t_e}$.
  Moreover, as $1/y$ is separable in $K|k$, it follows from Lemma \ref{lemma: strgen / general / gen-birat-bertini}(1) that $t_e$ is general (hence strongly-general) for all but finitely many $e \in k_0$.

  Now let $b \in k$ be arbitrary as in the statement of the lemma.
  By Proposition \ref{proposition: mainproof / K0 / fix-K0}(2), for every $e \in k_0$ as above (i.e. such that $t_e$ is strongly-general), there exists a unique $c$ (which \emph{a priori} might depend on $e$) such that
  \[ \sigma\left\{ \frac{x-e}{y} + b \right\}_K = \left\{\frac{x-e}{y} + c\right\}_K. \]
  Fortunately, as noted above, we may directly use Proposition \ref{proposition: mainproof / K0 / independence-proposition} which implies that this $c$ doesn't depend on the given $e \in k_0$ such that $t_e$ is strongly-general.

  In other words, there exists a single $c \in k$ such that, for all $e \in k_0$ as above, one has
  \[ \sigma\left\{ \frac{x-e}{y} + b \right\}_K = \left\{\frac{x-e}{y} + c\right\}_K. \]
  Finally, by Lemma \ref{proposition: mainproof / K0 / fix-K0}(1), we may multiply both sides by $\sigma\{y\}_K = \{y\}_K$ to deduce that
  \[ \sigma\{x-e+b \cdot y\}_K = \{x-e + c \cdot y\}_K. \]
  We conclude that $\sigma\Kfrak(x+b \cdot y) = \Kfrak(x+ c \cdot y)$ by Corollary \ref{corollary: milnorff / geometric / infinite-subsets}, since the above equality holds true for all but finitely many elements $e$ of $k_0$.
\end{proof}

The following lemma concludes the base case for our induction.

\begin{lemma}
\label{lemma: mainproof / base-case / main-base-case}
  Assume that $\trdeg(K|k) \geq 5$ and let $x_0,y_0 \in K_0$ be given.
  Then for all $d \in k$, the element $x_0+d\cdot y_0$ is $\sigma$-acceptable.
\end{lemma}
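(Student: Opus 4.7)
The plan is to split the argument according to whether $x_0, y_0$ are algebraically dependent over $k$. In the algebraically dependent case, either $x_0 + dy_0 \in k$---in which case $\Kfrak(x_0 + dy_0) = \{0\}$ is trivially acceptable---or $\Kbb(x_0, y_0)$ is a one-dimensional function field over $k$ containing $x_0 + dy_0 \notin k$. In the latter situation, since every transcendence-degree-one subfield of a transcendence-degree-one function field over an algebraically closed field equals the whole field, one obtains $\Kbb(x_0 + dy_0) = \Kbb(x_0, y_0)$, so $\Kfrak(x_0+dy_0) = \Kfrak(x_0, y_0)$, which is fixed by $\sigma$ via Lemma \ref{lemma: mainproof / K0 / fix-Kfrakt0}.

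For the main case, in which $x_0, y_0$ are algebraically independent over $k$, I will combine Lemmas \ref{lemma: mainproof / intersections / gen-intersections}, \ref{lemma: mainproof / intersections / geometric-criterion}, and \ref{lemma: mainproof / base-case / pre-base-case}. The first step is to show that for every $t \in K_0$ algebraically independent from $\{x_0, y_0\}$ over $k$ (such $t$ exist since $\trdeg(K_0|k_0) \geq 5$), the subgroup $\sigma\Kfrak(x_0+dy_0, t)$ is a two-dimensional geometric subgroup. Applying Lemma \ref{lemma: mainproof / base-case / pre-base-case} to the triple $(t, x_0, y_0)$ with $b = d$ yields the $\sigma$-acceptability of $[(x_0+dy_0) - (\alpha + d\beta)]/(t-\gamma)$ for any $(\alpha, \beta, \gamma)$ in the open subset of $k_0^3$ supplied by that lemma; combining this with Proposition \ref{proposition: mainproof / K0 / fix-K0} (which gives $\sigma\Kfrak(t-\gamma) = \Kfrak(t-\gamma)$) and the compatibility of the Milnor supremum with $\sigma$ (via Fact \ref{fact: mainproof / acceptable / acceptable} and Lemma \ref{lemma: milnorff / milnordim / milnorsup}) forces $\sigma\Kfrak(x_0+dy_0, t) \in \Gfrak^2(K|k)$.

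Second, I will apply Lemma \ref{lemma: mainproof / intersections / gen-intersections} with $t_1, t_2, t_3 \in K_0$ algebraically independent over $k(x_0, y_0)$ to produce $a, b \in k_0$ satisfying $\Kfrak(x_0+dy_0) = \Kfrak(x_0+dy_0, t_1+at_3) \cap \Kfrak(x_0+dy_0, t_2+bt_3)$. Since each $t_i + c_i t_3$ lies in $K_0$ and is algebraically independent from $\{x_0, y_0\}$ over $k$, the first step gives $\sigma\Kfrak(x_0+dy_0, t_i+c_i t_3) = \Kfrak(F_i)$ for some relatively algebraically closed subfields $F_i \subset K$ of transcendence degree two over $k$; applying $\sigma$ to the intersection then yields $\sigma\Kfrak(x_0+dy_0) = \Kfrak(F_1) \cap \Kfrak(F_2)$. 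The plan concludes by applying Lemma \ref{lemma: mainproof / intersections / geometric-criterion} to $\Kfrak := \sigma\Kfrak(x_0+dy_0)$, $B_i := \Kfrak(F_i)$, $D := \msup(B_1 \cup B_2)$, a suitable $A \in \Gfrak^1(K|k)$ with $A \not\subset D$, $B_i' := \msup(B_i \cup A) \in \Gfrak^3(K|k)$, and $C := \msup(A \cup \Kfrak(F_1 \cap F_2)) \in \Gfrak^2(K|k)$ contained in $B_1' \cap B_2'$; the required maximality of $\sigma\Kfrak(x_0+dy_0)$ among Milnor-dimension-one subgroups transfers from that of $\Kfrak(x_0+dy_0)$ via $\sigma$.

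The main obstacle will be verifying $D \in \Gfrak^3(K|k)$. Using the classical fact that relatively algebraically closed subfields $F_1, F_2 \subset K$ are automatically algebraically independent over $F_1 \cap F_2$, one has the transcendence-degree identity $\trdeg(F_1 F_2 | k) + \trdeg(F_1 \cap F_2 | k) = \trdeg(F_1|k) + \trdeg(F_2|k) = 4$, so $D$ has Milnor dimension three precisely when $\trdeg(F_1 \cap F_2|k) = 1$. The degenerate possibility $F_1 \cap F_2 = k$ must therefore be ruled out: if it occurred, a standard valuation-theoretic factorization argument would force $\Kfrak(F_1) \cap \Kfrak(F_2) = \{0\}$, contradicting $\dimm(\sigma\Kfrak(x_0+dy_0)) = 1$ (which is preserved by $\sigma \in \Autm(\k_1(K))$).
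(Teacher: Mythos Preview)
Your overall architecture matches the paper's, but the verification that $D\in\Gfrak^3(K|k)$ contains a genuine gap.

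The ``classical fact'' you invoke---that relatively algebraically closed subfields $F_1,F_2\subset K$ are algebraically independent over $F_1\cap F_2$, giving the identity $\trdeg(F_1F_2|k)+\trdeg(F_1\cap F_2|k)=\trdeg(F_1|k)+\trdeg(F_2|k)$---is \emph{false}. Algebraic matroids satisfy only the \emph{submodular inequality} $\trdeg(\Kbb(F_1,F_2)|k)+\trdeg(F_1\cap F_2|k)\le\trdeg(F_1|k)+\trdeg(F_2|k)$; the lattice $\Gbb^*(K|k)$ is not modular in general. Even using submodularity together with $F_1\neq F_2$, you would still need $\trdeg(F_1\cap F_2|k)=1$ to force $\trdeg(\Kbb(F_1,F_2)|k)=3$. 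Your proposed way to rule out $F_1\cap F_2=k$---that this would imply $\Kfrak(F_1)\cap\Kfrak(F_2)=\{0\}$ by a ``standard valuation-theoretic factorization argument''---is neither standard nor clear: for arbitrary relatively algebraically closed $F_1,F_2\subset K$ with $F_1\cap F_2=k$ there is no general reason why $\{F_1^\times\}_K\cap\{F_2^\times\}_K$ should be trivial in $\k_1(K)$. (Such a statement is established in Lemma~\ref{lemma: mainproof / intersections / gen-intersections} only under the very special hypothesis that $F_1,F_2$ sit inside a field \emph{rational} over $F_1\cap F_2$.) The same gap undermines your construction of $C$, since $C\in\Gfrak^2(K|k)$ also requires $\trdeg(F_1\cap F_2|k)=1$.

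The paper avoids this problem entirely by never working with the unknown fields $F_i$. Instead it observes that $x=t_1+at_3$, $y=t_2+bt_3$, and $t_3$ all lie in $K_0$, so once you know that $\{t,z\}$ is $\sigma$-acceptable for every $z\in K_0$ algebraically independent from $\{x_0,y_0\}$ (your first step), Fact~\ref{fact: mainproof / acceptable / acceptable} immediately gives that each of $\{t,x\}$, $\{t,y\}$, $\{t,x,y\}$, $\{t,t_3\}$, $\{t,x,t_3\}$, $\{t,y,t_3\}$ is $\sigma$-acceptable. One then takes $A=\sigma\Kfrak(t_3)$, $B_i=\sigma\Kfrak(t,\ldots)$, $D=\sigma\Kfrak(t,x,y)$, $B_i'=\sigma\Kfrak(t,\ldots,t_3)$, $C=\sigma\Kfrak(t,t_3)$; their membership in the correct $\Gfrak^r(K|k)$ is automatic because $\sigma$ preserves Milnor dimension. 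No lattice-theoretic identity for $F_1,F_2$ is ever needed.
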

\begin{proof}
  We may assume without loss of generality that $x_0,y_0$ are algebraically independent over $k$, for otherwise the claim is either trivial (if $x_0+d \cdot y_0 \in k$) or it follows from Lemma \ref{lemma: mainproof / K0 / fix-Kfrakt0}.

  Put $t = x_0+d \cdot y_0$.
  Let $t_1,t_2,t_3 \in K_0$ be algebraically independent over $\Kbb(x_0,y_0)$, and let $a,b \in k_0$ be as in Lemma \ref{lemma: mainproof / intersections / gen-intersections}, i.e., letting $x = t_1+at_3$ and $y = t_2 + b t_3$, one has
  \[ \Kfrak(t) = \Kfrak(t,x) \cap \Kfrak(t,y). \]

  Let $z \in K_0$ be algebraically independent over $\Kbb(x_0,y_0)$, and let $e,f,g \in k_0$ be such that 
  \[ \frac{x_0-e}{z-g}+d\cdot\frac{y_0-f}{z-g} \]
  is $\sigma$-acceptable; such $e,f,g \in k_0$ exist by Lemma \ref{lemma: mainproof / base-case / pre-base-case}.
  Now note that
  \[ \Kfrak(t,z) = \Kfrak\left(\frac{x_0-e}{z-g}+d \cdot \frac{y_0-f}{z-g},z\right). \]
  Since $z \in K_0$ is $\sigma$-acceptable by Lemma \ref{lemma: mainproof / K0 / fix-Kfrakt0}, it follows that $\{t,z\}$ is $\sigma$-acceptable by Lemma \ref{lemma: mainproof / base-case / pre-base-case} and Fact \ref{fact: mainproof / acceptable / acceptable}.
  Using similar arguments for various $z \in K_0$, we see that the following subsets of $K$ are all $\sigma$-acceptable as well by Fact \ref{fact: mainproof / acceptable / acceptable} and Lemma \ref{lemma: mainproof / K0 / fix-Kfrakt0}:
  \[ \{t_3\}, \ \{t,x\}, \ \{t,y\}, \ \{t,x,y\}, \ \{t,t_3\}, \ \{t,x,t_3\}, \ \{t,y,t_3\}. \]

  Moreover, recall that if $S$ is $\sigma$-acceptable and $T$ is a subset such that $\sigma\Kfrak(S) = \Kfrak(T)$, then one has 
  \[ \trdeg(\Kbb(S)|k) = \dimm(\Kfrak(S)) = \dimm(\Kfrak(T)) = \trdeg(\Kbb(T)|k) \]
  by Fact \ref{fact: milnorff / milnordim / geometric-dim} and Fact \ref{fact: mainproof / autm-condition}.
  It follows from these observations and Proposition \ref{proposition: lattice / mod-ell / mod-ell-lattice} that $\sigma\Kfrak(t)$ satisfies the assumptions of Lemma \ref{lemma: mainproof / intersections / geometric-criterion}, as follows.
  Indeed, recall that $\Kfrak(t)$ is maximal among subgroups $\Delta$ of $\k_1(K)$ such that $\dimm(\Delta) = 1$ by Fact \ref{fact: milnorff / vanishing / vanishing-B-K} and Proposition \ref{proposition: milnorff / geometric / geometric-maximal}.
  Thus, $\sigma\Kfrak(t)$ is also maximal with this property by Fact \ref{fact: mainproof / autm-condition}(1).
  Finally, in the notation of Lemma \ref{lemma: mainproof / intersections / geometric-criterion}, we can take
  \begin{enumerate}
    \item $A = \sigma\Kfrak(t_3)$ and $C = \sigma\Kfrak(t,t_3)$.
    \item $B_1 = \sigma\Kfrak(t,x)$, $B_2 = \sigma\Kfrak(t,y)$ and $D = \sigma\Kfrak(t,x,y)$.
    \item $B_1' = \sigma\Kfrak(t,x,t_3)$ and $B_2' = \sigma\Kfrak(t,y,t_3)$.
  \end{enumerate}
  Applying Lemma \ref{lemma: mainproof / intersections / geometric-criterion}, we deduce that $\sigma\Kfrak(t) \in \Gfrak^1(K|k)$, and therefore $t$ is $\sigma$-acceptable, as required.
\end{proof}

\subsection{The General Case}
\label{subsection: mainproof / general-case}

We are now ready to prove the final main step in our proof, that every element $t$ of $K$ is $\sigma$-acceptable.
As noted above, this will proceed by induction on the length of the expression
\[ t = a_0 x_0 + \cdots + a_r x_r, \ a_i \in k, \ x_i \in K_0, \]
with the base case $r = 1$ taken care of by Lemma \ref{lemma: mainproof / base-case / main-base-case}.

\begin{lemma}
\label{lemma: mainproof / general-case / all-acceptable}
  Assume that $\trdeg(K|k) \geq 5$.
  Then every element of $K$ is $\sigma$-acceptable.
\end{lemma}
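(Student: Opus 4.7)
The plan is to prove the lemma by induction on the length $r+1$ of a minimal representation $t = a_0 x_0 + \cdots + a_r x_r$ of $t \in K$ with $a_i \in k$ and $x_i \in K_0$. The case $r = 0$ (scalar multiples of elements of $K_0$) follows from Lemma~\ref{lemma: mainproof / K0 / fix-Kfrakt0}, and $r = 1$ is exactly Lemma~\ref{lemma: mainproof / base-case / main-base-case}. The inductive step passes from ``every element of length $\leq r$ is $\sigma$-acceptable'' to the analogous statement for length $r+1$, running the argument of Lemma~\ref{lemma: mainproof / base-case / main-base-case} \emph{mutatis mutandis} at higher length.

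Fix $t = a_0 x_0 + \cdots + a_r x_r$ of length $r+1 \geq 3$, where I may assume the $x_i \in K_0$ are algebraically independent over $k$. Choose $t_1, t_2, t_3 \in K_0$ algebraically independent over $\Kbb(x_0, \ldots, x_r)$ and, via Lemma~\ref{lemma: mainproof / intersections / gen-intersections}, select $a, b \in k_0$ so that $\Kfrak(t) = \Kfrak(t, x) \cap \Kfrak(t, y)$, where $x := t_1 + a t_3 \in K_0$ and $y := t_2 + b t_3 \in K_0$. The plan is then to apply Lemma~\ref{lemma: mainproof / intersections / geometric-criterion} to $\sigma\Kfrak(t)$ with the configuration $A := \sigma\Kfrak(t_3)$, $B_1 := \sigma\Kfrak(t, x)$, $B_2 := \sigma\Kfrak(t, y)$, $C := \sigma\Kfrak(t, t_3)$, $D := \sigma\Kfrak(t, x, y)$, $B_1' := \sigma\Kfrak(t, x, t_3)$, and $B_2' := \sigma\Kfrak(t, y, t_3)$, concluding $\sigma\Kfrak(t) \in \Gfrak^1(K|k)$. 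By Fact~\ref{fact: mainproof / acceptable / acceptable}, this reduces to verifying that $\{t, z\}$ is $\sigma$-acceptable for each $z \in \{x, y, t_3\} \subset K_0$.

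For the acceptability of such a pair $\{t, z\}$, I reuse the substitution trick from the proof of Lemma~\ref{lemma: mainproof / base-case / main-base-case}: for any $e_0, \ldots, e_r, g \in k_0$, one has $\Kfrak(t, z) = \Kfrak(w, z)$ where
\[
w \;=\; \frac{t - \sum_i a_i e_i}{z - g} \;=\; \sum_{i=0}^r a_i \, y_i, \qquad y_i := \frac{x_i - e_i}{z - g} \in K_0.
\]
Since $\{z\}$ is acceptable by Lemma~\ref{lemma: mainproof / K0 / fix-Kfrakt0}, Fact~\ref{fact: mainproof / acceptable / acceptable} reduces acceptability of $\{t, z\}$ to that of the single length-$(r+1)$ element $w$, for \emph{some} admissible choice of parameters. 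The remaining task is thus a length-$(r+1)$ analog of Lemma~\ref{lemma: mainproof / base-case / pre-base-case}: for $(e_0, \ldots, e_r, g) \in k_0^{r+2}$ lying in an appropriate non-empty Zariski-open subset of $\Abb^{r+2}_k$, the element $w$ is $\sigma$-acceptable.

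This length-$(r+1)$ analog of Lemma~\ref{lemma: mainproof / base-case / pre-base-case} is the principal obstacle. The approach is to replace the planar projection $\Abb^d_\tbf \dashrightarrow \Pbb^2_{x,y}$ used in the proof of Lemma~\ref{lemma: mainproof / base-case / pre-base-case} by an analogous rational map onto $\Pbb^{r+1}_{y_0, \ldots, y_r}$, which is regular on a suitable open $X \subset \Abb^d_\tbf$ and whose generic fibers are geometrically integral and essentially unramified in $K$; then to invoke Proposition~\ref{proposition: mainproof / K0 / independence-proposition} iteratively, once per coefficient $a_i$. At each iteration, the inductive hypothesis applied to the length-$r$ partial sum $\sum_{j \neq i} a_j y_j$ provides the acceptability needed to ``freeze'' the remaining coordinates, while Proposition~\ref{proposition: mainproof / K0 / independence-proposition} supplies the consistency for the varying coefficient. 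Patching the resulting consistencies over the dense locus of admissible $k_0$-points of $\Abb^{r+2}_k$ via Corollary~\ref{corollary: milnorff / geometric / infinite-subsets} yields acceptability of $w$, completing the inductive step in exact parallel with Lemma~\ref{lemma: mainproof / base-case / main-base-case}.
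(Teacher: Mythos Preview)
Your approach diverges from the paper's at exactly the point you flag as ``the principal obstacle'', and that obstacle is real. Rerunning the base-case argument verbatim at length $r+1$ forces you to prove a length-$(r+1)$ analogue of Lemma~\ref{lemma: mainproof / base-case / pre-base-case}, and your sketch for it is not a proof. Proposition~\ref{proposition: mainproof / K0 / independence-proposition} is a genuinely two-variable statement about a projection to $\Pbb^2$; there is no mechanism in the paper by which ``iterating it once per coefficient'' and ``freezing coordinates via acceptability of length-$r$ partial sums'' combine to yield the desired conclusion, and you have not supplied one. Separately, your setup assumes the $x_i$ algebraically independent and then chooses $t_1,t_2,t_3 \in K_0$ algebraically independent over $\Kbb(x_0,\ldots,x_r)$; neither is available in general under the fixed hypothesis $\trdeg(K|k)\geq 5$, since $r$ is unbounded.

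The paper's inductive step sidesteps all of this with a single idea: do not take the auxiliary elements in $K_0$, but build the last term $a_r x_r$ into them. After normalizing $a_0=1$ and reducing to $t,x_r$ algebraically independent, choose $t_1,t_2 \in K_0$ algebraically independent over $\Kbb(t,x_r)$ (this needs only $\trdeg\geq 4$, independent of $r$) and apply Lemma~\ref{lemma: mainproof / intersections / gen-intersections} to obtain $x,y$ of the form $(\text{element of }K_0)+a_r x_r$ with $\Kfrak(t)=\Kfrak(t,x)\cap\Kfrak(t,y)$. Then $x,y$ have length $2$ (acceptable by Lemma~\ref{lemma: mainproof / base-case / main-base-case}) while $t-x,\ t-y,\ t-a_r x_r$ have length $\leq r$ (acceptable by induction), and the rewriting $\Kfrak(t,x)=\Kfrak(t-x,x)$, $\Kfrak(t,x_r)=\Kfrak(t-a_r x_r,x_r)$, etc., makes every subgroup required by Lemma~\ref{lemma: mainproof / intersections / geometric-criterion} acceptable outright, with $A=\sigma\Kfrak(x_r)$ in place of your $\sigma\Kfrak(t_3)$. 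No new ``pre-base-case'' input is needed beyond what was already proved for $r=1$.
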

\begin{proof}
  Recall that every element of $K$ is of the form $a_0 x_0 + \cdots + a_r x_r$ for $a_i \in k$ and $x_i \in K_0$.
  We proceed by induction on $r$, with the case $r = 0$ being Lemma \ref{lemma: mainproof / K0 / fix-Kfrakt0} and the case $r = 1$ being Lemma \ref{lemma: mainproof / base-case / main-base-case}.
  So assume that $r$ is fixed and that all elements of $K$ which can be written as
  \[ a_0 x_0 + \cdots + a_s x_s, \ a_i \in k, \ x_i \in K_0, \]
  with $s < r$, are $\sigma$-acceptable.
  Let $t \in K$ be an element of the form
  \[ t = a_0 x_0 + \cdots + a_r x_r \]
  with $a_i \in k$ and $x_i \in K_0$.
  As a first reduction, divide by $a_0$ to assume without loss of generality that $a_0 = 1$, so that
  \[ t = x_0 + a_1 x_1 + \cdots + a_r x_r. \]
  Also, we may assume that $a_r \in k \smallsetminus k_0$, for otherwise $x_0 + a_r x_r \in K_0$, hence $t$ is $\sigma$-acceptable by the inductive hypothesis.
  We may further assume without loss of generality that $x_r$ and $t$ are algebraically independent, for otherwise $\Kfrak(t) = \Kfrak(x_r)$ and so $t$ is $\sigma$-acceptable by Lemma \ref{lemma: mainproof / K0 / fix-Kfrakt0}.

  Choose $t_1,t_2 \in K_0$ which are algebraically independent over $\Kbb(t,x_r)$.
  By Lemma \ref{lemma: mainproof / intersections / gen-intersections}, we can choose $a,b \in k_0$ such that
  \[ \Kfrak(t) = \Kfrak(t,at_1+a_rx_r) \cap \Kfrak(t,bt_1+a_rx_r).\]
  Put $x = a t_1 + a_r x_r$ and $y = b t_1 + a_rx_r$.
  Then $t-x$, $t-y$, $t-a_r x_r$, $x$, $y$ and $x_r$ are all $\sigma$-acceptable by the inductive hypothesis, and so any subset of $\{t-x,t-y,t-a_r x_r,x,y,x_r\}$ is also $\sigma$-acceptable by Fact \ref{fact: mainproof / acceptable / acceptable}.
  Now we apply Lemma \ref{lemma: mainproof / intersections / geometric-criterion} to deduce the claim, as follows.
  First, $\sigma\Kfrak(t)$ is maximal among subgroups $\Delta$ of $\k_1(K)$ such that $\dimm(\Delta) = 1$, arguing as in the proof of Lemma \ref{lemma: mainproof / base-case / main-base-case}.
  Also, we may consider the following geometric subgroups of $\k_1(K)$ following the notation of Lemma \ref{lemma: mainproof / intersections / geometric-criterion}:
  \begin{enumerate}
    \item $A = \sigma \Kfrak(x_r) = \Kfrak(x_r)$ and $C = \sigma\Kfrak(t,x_r) = \sigma \Kfrak(t-a_r x_r,x_r)$.
    \item $B_1 = \sigma\Kfrak(t,x) = \sigma \Kfrak(t-x,x)$, $B_2 = \sigma\Kfrak(t,y) = \sigma \Kfrak(t-y,y)$.
    \item $D = \sigma\Kfrak(t,x,y) = \sigma\Kfrak(t-x,x,y)$.
    \item $B_1' = \sigma\Kfrak(t,x,x_r) = \sigma\Kfrak(t-x,x,x_r)$ and $B_2' = \sigma\Kfrak(t,y,x_r) = \sigma\Kfrak(t-y,y,x_r)$.
  \end{enumerate}
  By Proposition \ref{proposition: lattice / mod-ell / mod-ell-lattice} and the observations made above, we see that these subgroups of $\k_1(K)$ satisfy the assumptions of Lemma \ref{lemma: mainproof / intersections / geometric-criterion}.
  It therefore follows from Lemma \ref{lemma: mainproof / intersections / geometric-criterion} that $\sigma\Kfrak(t) \in \Gfrak^1(K|k)$ is geometric, hence $t$ is $\sigma$-acceptable, as required.
\end{proof}

\subsection{Concluding the Proofs of Theorems \ref{maintheorem: intro / galois-variant / galois-main} and \ref{maintheorem: intro / milnor-variant / milnor-main}}
\label{subsection: mainproof / conclusion}

For the rest of this section, we assume that $\trdeg(K|k) \geq 5$ so that we can use Corollary \ref{corollary: lattice / galois-mod-ell / galois-lattice-iso} and Lemma \ref{lemma: mainproof / general-case / all-acceptable}.

By Lemma \ref{lemma: mainproof / general-case / all-acceptable}, any element of $K$ is $\sigma$-acceptable.
Thus any \emph{subset} of $K$ is $\sigma$-acceptable by Fact \ref{fact: mainproof / acceptable / acceptable}.
Moreover, by Fact \ref{fact: milnorff / milnordim / geometric-dim} and Fact \ref{fact: mainproof / autm-condition}(1), if $S,T$ are two subsets of $K$ such that $\sigma\Kfrak(S) = \Kfrak(T)$, then one has $\trdeg(\Kbb(S)|k) = \trdeg(\Kbb(T)|k)$.
In particular, the map $A \mapsto \sigma A$ induces an automorphism of the graded lattice $\Gfrak^*(K|k)$ of geometric subgroups of $\k_1(K)$.
In other words, we obtain a canonical homomorphism
\[ \sigma \mapsto (\Kfrak \mapsto \sigma \Kfrak) : \Autm_\abf(\k_1(K)) \rightarrow \Aut^*(\Gfrak^*(K|k)) \]
which factors through $\UAutm_\abf(\k_1(K))$.
Furthermore, by Lemma \ref{lemma: mainproof / K0 / fix-Kfrakt0}, it follows that $\sigma\Kfrak(S) = \Kfrak(S)$ for subsets $S$ of $K_0$.
In other words, the image of this canonical map $\UAutm_\abf(\k_1(K)) \rightarrow \Aut^*(\Gfrak^*(K|k))$ actually lands in the subgroup $\Aut^*(\Gfrak^*(K|k)|K_0)$ which was defined in \S\ref{subsection: lattice / galois-mod-ell}.

Finally, it is easy to see that the map $\UAutm_\abf(\k_1(K)) \rightarrow \Aut^*(\Gfrak^*(K|k)|K_0)$ is compatible with $\rho_{k_0}$.
Namely, the following diagram commutes:
\[
\xymatrix{
  \Galk \ar[r]^-{\rho_{k_0}} \ar[dr]_{\rho_{k_0}} & \UAutm_\abf(\k_1(K))\ar[d] \\
  & \Aut^*(\Gfrak^*(K|k)|K_0).
}
\]
By Corollary \ref{corollary: lattice / galois-mod-ell / galois-lattice-iso}, the map $\rho_{k_0} : \Galk \rightarrow \Aut^*(\Gfrak^*(K|k)|K_0)$ is an isomorphism.
On the other hand, it immediately follows from Proposition \ref{proposition: strgen / rational-like / ident-criterion} that the map
\[ \UAutm_\abf(\k_1(K)) \rightarrow \Aut^*(\Gfrak^*(K|k)|K_0) \subset \Aut^*(\Gfrak^*(K|k)) \]
is \emph{injective}.
Hence the map
\[ \rho_{k_0} : \Galk \rightarrow \UAutm_\abf(\k_1(K)) \]
is an isomorphism as well.
This concludes the proof of Theorem \ref{maintheorem: intro / milnor-variant / milnor-main}.

Finally, it easily follows from Kummer theory that the isomorphism $\UAutc(\Gc_K^a) \cong \UAutm(\k_1(K))$ of Theorem \ref{theorem: cohom / kummer / galois-to-milnor} restricts to an isomorphism $\UAutc_\abf(\Gc_K^a) \cong \UAutm_\abf(\k_1(K))$.
Thus, Theorem \ref{maintheorem: intro / galois-variant / galois-main} follows immediately from Theorem \ref{maintheorem: intro / milnor-variant / milnor-main} by applying Theorem \ref{theorem: cohom / kummer / galois-to-milnor}.

\section{Concluding the Proof of the mod-$\ell$ I/OM}
\label{section: finalproof}


We now turn to the proof of Theorems \ref{maintheorem: intro / main-result / iom-main-opens} and \ref{maintheorem: intro / main-result / iom-main-connected}.
As we will see, Theorem \ref{maintheorem: intro / main-result / iom-main-opens} follows rather easily from Theorem \ref{maintheorem: intro / galois-variant / galois-main}.
On the other hand, Theorem \ref{maintheorem: intro / main-result / iom-main-connected} follows from Theorem \ref{maintheorem: intro / main-result / iom-main-opens} more-or-less because of our definition of a \emph{$5$-connected subcategory} of $\Var_{k_0}$.

\subsection{Proof of Theorem \ref{maintheorem: intro / main-result / iom-main-opens}}
\label{subsection: finalproof / ion-main-opens}

Let $X$ be a normal $k_0$-variety of dimension $\geq 5$, and let $\Ucal$ be a birational system of $X$.
Put $K = k(X)$.
First, since $X$ is geometrically normal, we recall that for every $U \in \Ucal$, one has canonical \emph{surjective} morphisms
\[ \Gc_K^c \twoheadrightarrow \pi^c(U), \ \ \Gc_K^a \twoheadrightarrow \pi^a(U). \]
Moreover, taking limits over $U \in \Ucal$, one has
\[ \Gc_K^c = \varprojlim_{U \in \Ucal} \pi^c(U), \ \ \Gc_K^a = \varprojlim_{U \in \Ucal} \pi^a(U). \]
Therefore, we obtain a canonical morphism
\[ \Autc(\pi^a|_\Ucal) \rightarrow \Autc(\Gc_K^a) \]
which is defined by sending a system of automorphisms $(\phi_U)_{U \in \Ucal}$ in $\Autc(\pi^a|_\Ucal)$ to its projective limit $\varprojlim_U \phi_U \in \Autc(\Gc_K^a)$.
This map clearly induces a corresponding map on $(\Z/\ell)^\times$-classes of automorphisms
\[ \UAutc(\pi^a|_\Ucal) \rightarrow \UAutc(\Gc_K^a). \]

Since the maps $\Gc_K^a \rightarrow \pi^a(U)$ are all surjective for $U \in \Ucal$, it follows that the map $\Autc(\pi^a|_\Ucal) \rightarrow \Autc(\Gc_K^a)$ is injective, hence $\UAutc(\pi^a|_\Ucal) \rightarrow \UAutc(\Gc_K^a)$ is injective as well.
Finally, for a finite tuple $\abf$ of elements of $k_0^\times$, it follows immediately from the definitions that the image of the canonical map
\[ \Autc(\pi^a|_{\Ucal_\abf}) \xrightarrow{\phi \mapsto \phi|_\Ucal} \Autc(\pi^a|_\Ucal) \hookrightarrow \Autc(\Gc_K^a)\]
lands in the subgroup $\Autc_\abf(\Gc_K^a)$.

To conclude, we note that if we choose $t \in K_0^\times$ which is general in $K|k$ (such a $t$ exists by Lemma \ref{lemma: strgen / general / gen-birat-bertini}), then the canonical map $\Gc_K^a \rightarrow \pi^a(\U_\abf)$ induced by $t$ is \emph{surjective}.
Moreover, this map factors through $\pi^a(U)$ for some $U \in \Ucal$ which is sufficiently small.
From this observation, it follows that the map $\Autc(\pi^a|_{\Ucal_\abf}) \rightarrow \Autc(\pi^a|_\Ucal)$ is injective, hence the induced maps
\[ \Autc(\pi^a|_{\Ucal_\abf}) \rightarrow \Autc_\abf(\Gc_K^a), \ \ \UAutc(\pi^a|_{\Ucal_\abf}) \rightarrow \UAutc_\abf(\Gc_K^a) \]
are both injective as well.
These injections are clearly compatible with $\rho_{k_0}$, and therefore Theorem \ref{maintheorem: intro / main-result / iom-main-opens} follows from Theorem \ref{maintheorem: intro / galois-variant / galois-main}.

\subsection{Proof of Theorem \ref{maintheorem: intro / main-result / iom-main-connected}}
\label{subsection: finalproof / ion-main-connected}

Before we conclude the proof of Theorem \ref{maintheorem: intro / main-result / iom-main-connected}, we need a small lemma concerning the {\bf domination} condition between two birational systems, as defined in \S\ref{subsection: intro / main-result-connected}.

\begin{lemma}
\label{lemma: finalproof / iom-main-connected / dominant-lemma}
  Let $\Vc$ be a subcategory of $\Var_{k_0}$, and let $\Ucal_1$ and $\Ucal_2$ be two positive-dimensional birational systems, such that $\Ucal_1$ dominates $\Ucal_2$ in $\Vc$.
  Let $\phi \in \Autc(\pi^a|_\Vc)$ be given, and assume that $\phi|_{\Ucal_1} \in \Autc(\pi^a|_{\Ucal_1})$ is defined by $\tau \in \Galk$.
  Then the following hold:
  \begin{enumerate}
    \item The restriction $\phi|_{\Ucal_2} \in \Autc(\pi^a|_{\Ucal_2})$ is also defined by $\tau$. 
    \item If there exists some $\epsilon \in (\Z/\ell)^\times$ and some $\tau' \in \Galk$ such that $\epsilon \cdot \phi|_{\Ucal_2}$ is defined by $\tau'$, then $\tau' = \tau$ and $\epsilon = 1$.
  \end{enumerate}
\end{lemma}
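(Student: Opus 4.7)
My plan is to attack (1) via naturality of $\phi$ and $\rho^c_{k_0}$ along the morphisms supplied by the domination hypothesis, and (2) by reducing to the faithfulness result of Lemma \ref{lemma: strgen / galois-action / faithfulness} on the Milnor side.

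For (1), fix $V \in \Ucal_2$. The domination of $\Ucal_1$ over $\Ucal_2$ in $\Vc$ provides $U \in \Ucal_1$ together with a dominant morphism $f : U \to V$ lying in $\Vc$. Since $\phi \in \Autc(\pi^a|_\Vc)$ is a natural transformation of the functor $\pi^a|_\Vc$, the square relating $\phi_U$ and $\phi_V$ along $\pi^a(f)$ commutes; the geometric $\Galk$-action is functorial in the variety, so the analogous square for $\rho^c_{k_0,*}(\tau)$ commutes as well. Combining these with the hypothesis $\phi_U = \rho^c_{k_0,U}(\tau)$ gives
\[
\phi_V \circ \pi^a(f) \;=\; \rho^c_{k_0,V}(\tau) \circ \pi^a(f),
\]
so $\phi_V$ and $\rho^c_{k_0,V}(\tau)$ agree on the image of $\pi^a(f)$. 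When $\dim \Ucal_2 = 1$, the domination hypothesis is strengthened to require that $f$ have geometrically integral fibers; the standard homotopy exact sequence for the geometric \'etale fundamental group under such a morphism yields surjectivity of $\pi^a(f)$, so agreement on the image is agreement everywhere. For $\dim \Ucal_2 > 1$ the map $\pi^a(f)$ can genuinely have a proper image, so one passes to the projective limit over $\Ucal_2$ to land inside $\Autc(\Gc_{K_2}^a)$ (where $K_2$ is the common function field of the objects of $\Ucal_2$), translates to the Milnor side using Theorem \ref{theorem: cohom / kummer / galois-to-milnor}, and uses that the corresponding automorphism of $\k_1(K_2)$ is $\Autc$-compatible and trivial on the geometric subgroups coming from $K_2^\times$ viewed inside $\k_1(K_1)$. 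Invoking the multiplicative generation of $K_2^\times$ by elements that remain sufficiently general in $K_1$ (Lemmas \ref{lemma: strgen / general / gen-birat-bertini} and \ref{lemma: strgen / strgen / strgen-generate}) then propagates the equality to all of $\k_1(K_2)$.

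For (2), I appeal to the faithfulness of the Galois action on the Milnor K-theory. By (1), $\phi|_{\Ucal_2} = \rho^c_{k_0,\Ucal_2}(\tau)$, so the hypothesis rearranges to $\rho^c_{k_0,\Ucal_2}(\tau'\tau^{-1}) = \epsilon \cdot \one$. Under the canonical embedding $\Autc(\pi^a|_{\Ucal_2}) \hookrightarrow \Autc(\Gc_{K_2}^a)$, together with the Kummer-dual isomorphism $\UAutc(\Gc_{K_2}^a) \cong \UAutm(\k_1(K_2))$ of Theorem \ref{theorem: cohom / kummer / galois-to-milnor}, the element $\tau'\tau^{-1}$ has trivial image in $\UAutm(\k_1(K_2))$. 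Since $\Ucal_2$ is positive-dimensional we have $\trdeg(K_2|k) \geq 1$, so Lemma \ref{lemma: strgen / galois-action / faithfulness} applies and forces $\tau'\tau^{-1} = 1$. Substituting back into $\epsilon \cdot \rho^c_{k_0,\Ucal_2}(\tau) = \rho^c_{k_0,\Ucal_2}(\tau')$ then yields $\epsilon = 1$.

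The main obstacle is the $\dim \Ucal_2 > 1$ half of (1). The domination hypothesis supplies only a single dominant morphism per $V$, and without the geometrically integral fibers condition the induced map on mod-$\ell$ abelian geometric fundamental groups need not be surjective: a fiber-by-fiber degree-$\ell$ \'etale cover already kills classes on $V$. The bridge from ``agreement on the image of $\pi^a(f)$'' to ``agreement on all of $\pi^a(V)$'' must therefore go through the birational Milnor picture and exploit $\Autc$-compatibility together with the geometric-subgroup structure developed earlier in the paper. The remaining pieces of the proof --- the naturality set-up, the birational limits, and the reduction of (2) to Milnor faithfulness --- are then relatively direct unwindings.
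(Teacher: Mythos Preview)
Your treatment of part (2) and of the $\dim \Ucal_2 = 1$ case of (1) matches the paper's argument. The gap is in the $\dim \Ucal_2 > 1$ case of (1), and it is precisely at the point you yourself flag as ``the main obstacle.''

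Your proposed bridge---``trivial on the geometric subgroups coming from $K_2^\times$ viewed inside $\k_1(K_1)$'' followed by an appeal to Lemmas~\ref{lemma: strgen / general / gen-birat-bertini} and~\ref{lemma: strgen / strgen / strgen-generate}---does not work. After passing to limits and dualizing, what you actually know about $\psi^* := (\tau^{-1}\phi_K)^* \in \Autm(\k_1(K_2))$ is only that $\psi^* x - x$ lies in a fixed \emph{finite} subgroup $H_0$ for every $x$, equivalently that $\psi^*$ is the identity on some finite-index subgroup $H \subset \k_1(K_2)$. You have no control over which specific elements lie in $H$: there is no reason any particular strongly-general $t$ satisfies $\{t\}_{K_2} \in H$, so Lemma~\ref{lemma: strgen / strgen / strgen-generate} gives you nothing to propagate. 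The ambient field $K_1$ plays no further role once this finite-index statement is extracted; the remaining work is entirely internal to $K_2$.

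The paper's actual mechanism is different. For an arbitrary $t \in K_2 \smallsetminus k$, one pulls $H$ back to $\k_1(k(t))$ and uses a pigeonhole argument on cosets to produce a fractional-linear change of parameter $x \in k(t) \smallsetminus k$ such that $\{x-c\}_{K_2} \in H$ for \emph{infinitely many} $c \in k$. Corollary~\ref{corollary: milnorff / geometric / infinite-subsets} then forces $\psi^*\Kfrak(t) = \Kfrak(x) = \Kfrak(t)$. Since $t$ was arbitrary, Proposition~\ref{proposition: strgen / rational-like / ident-criterion} yields $\psi^* \in (\Z/\ell)^\times \cdot \one$, and the scalar is pinned to $1$ because $\psi$ restricts to the identity on the nontrivial image of $\Gc_{K_1}^a \to \Gc_{K_2}^a$. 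The tools you need are thus Corollary~\ref{corollary: milnorff / geometric / infinite-subsets} and Proposition~\ref{proposition: strgen / rational-like / ident-criterion}, not the strongly-general generation lemmas.
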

\begin{proof}
  Put $K = k(X)$ for some $X \in \Ucal_2$ and $L = k(Y)$ for some $Y \in \Ucal_1$.
  Since $\Ucal_1$ dominates $\Ucal_2$ in $\Vc$, we see that $K$ is a subfield of $L$.
  \vskip 5pt
  \noindent\emph{Proof of (1):}
  Arguing as in \S\ref{subsection: finalproof / ion-main-opens}, it suffices to show that the image of $\phi$ under the map
  \[ \Autc(\pi^a|_\Vc) \rightarrow \Autc(\pi^a|_{\Ucal_2}) \hookrightarrow \Autc(\Gc_K^a) \]
  is defined by $\tau$.
  Let $\phi_L$ resp. $\phi_K$ denote the image of $\phi$ in $\Autc(\Gc_L^a)$ resp. $\Autc(\Gc_K^a)$.
  Since $\phi|_{\Ucal_1}$ is defined by $\tau$, we see that $\phi_L$ is defined by $\tau$ as well.
  
  Note that the assumptions of the lemma imply that $\phi_L$ and $\phi_K$ are compatible with the canonical morphism $\Gc_L^a \rightarrow \Gc_K^a$, i.e., the following diagram commutes:
  \[ 
  \xymatrix{
    \Gc_L^a \ar[d] \ar[r]^{\phi_L} & \Gc_L^a \ar[d] \\
    \Gc_K^a \ar[r]_{\phi_K} & \Gc_K^a.
  }
  \]
  We will now deal with the cases $\dim \Ucal_2 \neq 1$ and $\dim \Ucal_2 = 1$ separately.
  \vskip 5pt
  \noindent\underline{Case $\dim \Ucal_2 = 1$:} In this case, our assumptions ensure that $\Gc_L^a \rightarrow \Gc_K^a$ is \emph{surjective}.
  Since $\phi_L$ is defined by $\tau$, and since $\phi_L$ and $\phi_K$ are compatible with the projection $\Gc_L^a \twoheadrightarrow \Gc_K^a$, it follows that $\phi_K$ must also be defined by $\tau$.

  \vskip 5pt
  \noindent\underline{Case $\dim \Ucal_2 > 1:$} Let $\psi = \tau^{-1} \cdot \phi_K$ denote the composition of $\phi_K$ with the element of $\Autc(\Gc_K^a)$ induced by $\tau^{-1}$.
  Furthermore, let $\psi^* \in \Autm(\k_1(K))$ be the element associated to $\psi$ via the Kummer pairing (see Theorem \ref{theorem: cohom / kummer / galois-to-milnor}).
  Finally, note that the image of $\Gc_L^a \rightarrow \Gc_K^a$ is an open subgroup of $\Gc_K^a$, on which $\psi$ acts as the identity.
  Thus, there exists a \emph{finite} subgroup $H_0$ of $\k_1(K)$ such that for every $x \in \k_1(K)$, one has $\psi^* x \in x + H_0$.
  Therefore, there is a subgroup $H$ of $\k_1(K)$ such that $\psi^*$ acts as the identity on $H$, and such that $H$ has finite index in $\k_1(K)$.

  We will show that $\psi^*$ is some $(\Z/\ell)^\times$-multiple of the identity on $\k_1(K)$.
  First, suppose that $t \in K \smallsetminus k$ is given and consider the one-dimensional geometric subgroup $\Kfrak(t)$.
  The inclusion $k(t) \hookrightarrow K$ induces a (possibly non-injective) map $\k_1(k(t)) \rightarrow \k_1(K)$ and we let $M$ denote the preimage of $H$ in $\k_1(k(t))$.
  Since $H$ has finite index in $\k_1(K)$, it follows that $M$ must have finite index in $\k_1(k(t))$.

  \begin{claim*}
    There exists some $x \in k(t) \smallsetminus k$ such that $\{x-c\}_{k(t)} \in M$ for infinitely many $c \in k$.
  \end{claim*}
  \begin{proof}
    Since $M$ has finite index in $\k_1(k(t))$, there exists some $f \in k(t)^\times$ such that the set 
    \[ \mathcal{S} := \{a \in k \ : \ \{f \cdot (t-a)\}_{k(t)} \in M\} \]
    is infinite.
    Note that for all $a,b \in \mathcal{S}$ one has $\{(t-a)/(t-b)\}_{k(t)} \in M$.
    Let $a,b \in \mathcal{S}$ be two different elements with $b \neq 0$.
    Put $x := (t-a)/(t-b)$, and let $c \in k$ be a constant such that $c \neq 1$.
    Since $\{1-c\}_{k(t)} = 0$, we find that
    \[ \{x-c\}_{k(t)} = \left\{\frac{t-a-ct+cb}{t-b}\right\}_{k(t)} = \left\{\frac{t-\frac{a-cb}{1-c}}{t-b}\right\}_{k(t)}.\]
    In particular, $\{x-c\}_{k(t)} \in M$ for every $c \in k \smallsetminus \{1\}$ such that $(a-cb)/(1-c) \in \mathcal{S}$.
    Since $\mathcal{S}$ was infinite, we see that there are infinitely many such constants $c$, which proves the claim.
  \end{proof}

  Let $x \in k(t) \smallsetminus k$ be as in the claim above.
  Then one has $\psi^*\{x-c\}_K = \{x-c\}_K$ for infinitely many $c \in k$.
  As $\Kfrak(t) = \Kfrak(x)$, it follows from Corollary \ref{corollary: milnorff / geometric / infinite-subsets} that $\psi^* \Kfrak(t) = \Kfrak(t)$.
  Finally, since $t \in K \smallsetminus k$ was arbitrary, Proposition \ref{proposition: strgen / rational-like / ident-criterion} shows that $\psi^* \in (\Z/\ell)^\times \cdot \one_{\k_1(K)}$.
  In particular, $\psi = \tau^{-1} \cdot \phi_K$ is contained in $(\Z/\ell)^\times \cdot \one_{\Gc_K^a}$.

  Now note that we have a commutative diagram:
  \[ 
  \xymatrix{
    \Gc_L^a \ar[d] \ar@{=}[r] & \Gc_L^a \ar[d] \\
    \Gc_K^a \ar[r]_{\tau^{-1} \cdot \phi_K} & \Gc_K^a
  }
  \]
  and recall that the vertical arrows in this diagram have an open (hence non-trivial) image in $\Gc_K^a$.
  Since $\tau^{-1} \cdot \phi_K \in (\Z/\ell)^\times \cdot \one_{\Gc_K^a}$, it follows that $\tau^{-1} \cdot \phi_K = \one_{\Gc_K^a}$.
  This proves assertion (1).
  \vskip 5pt
  \noindent\emph{Proof of (2):}
  Recall that $\phi_K \in \Autc(\Gc_K^a)$ is defined by $\tau$, and the assumption of (2) implies that $\epsilon \cdot \phi_K \in \Autc(\Gc_K^a)$ is defined by $\tau'$.
  Since $\phi_K$ and $\epsilon \cdot \phi_K$ represent the same element of $\UAutc(\Gc_K^a)$, and since one has a $\rho_{k_0}$-compatible isomorphism 
  \[ \UAutc(\Gc_K^a) \cong \UAutm(\k_1(K)) \]
  by Theorem \ref{theorem: cohom / kummer / galois-to-milnor}, it follows from Lemma \ref{lemma: strgen / galois-action / faithfulness} that $\tau = \tau'$.
  Finally, since the map 
  \[ \rho_{k_0} : \Galk \rightarrow \Autc(\Gc_K^a) \twoheadrightarrow \UAutc(\Gc_K^a) \cong \UAutm(\k_1(K)) \]
  is injective by Lemma \ref{lemma: strgen / galois-action / faithfulness}, we see that $\epsilon = 1$, as required.
\end{proof}

We will now assume that $\Vc$ is a $5$-connected subcategory of $\Var_{k_0}$, and we will conclude the proof of Theorem \ref{maintheorem: intro / main-result / iom-main-connected}, which states that the canonical map 
\[ \rho_{k_0} : \Galk \rightarrow \UAutc(\pi^a|_\Vc) \]
is an isomorphism.

First we show that this map is injective.
Since $\Vc$ contains a positive-dimensional object, it follows from the definition of $\Vc$ being $5$-connected that $\Vc$ contains a positive-dimensional birational system $\Ucal$.
By first restricting to $\Ucal$ and then taking limits over the objects of $\Ucal$ as above, we obtain a canonical map,
\[ \Autc(\pi^a|_\Vc) \rightarrow \Autc(\pi^a|_\Ucal) \rightarrow \Autc(\Gc_{k(U)}^a), \]
where $U$ is some object of $\Ucal$.
This map is clearly compatible with $\rho_{k_0}$.
Finally, recall that one has a $\rho_{k_0}$-compatible isomorphism 
\[ \UAutc(\Gc_{k(U)}^a) \cong \UAutm(\k_1(k(U))) \]
by Theorem \ref{theorem: cohom / kummer / galois-to-milnor}.
Thus, injectivity follows from Lemma \ref{lemma: strgen / galois-action / faithfulness}.

The proof of the surjectivity of this map is more difficult, but it essentially follows from the technical definition of $\Vc$ being $5$-connected and Theorem \ref{maintheorem: intro / main-result / iom-main-opens}, as follows.
First, let us fix an element $\phi$ of $\Autc(\pi^a|_\Vc)$.
Recall that this element $\phi$ is represented by a system $(\phi_X)_{X \in \Vc}$ with $\phi_X \in \Autc(\pi^a(X))$ which is compatible with the morphisms from $\Vc$.
To prove surjectivity, we must show that there exists some $\tau \in \Galk$ and some $\epsilon \in (\Z/\ell)^\times$, such that for all positive-dimensional $X \in \Vc$, $\epsilon \cdot \phi_X$ is defined by $\tau$.

By the definition of $\Vc$ being $5$-connected, we see that there exists some birational system $\Ucal$ and some finite tuple $\abf$ of elements of $k_0^\times$ such that $\dim \Ucal \geq 5$ and such that $\Ucal_\abf$ is contained in $\Vc$.
Thus, by Theorem \ref{maintheorem: intro / main-result / iom-main-opens}, there exists a unique $\tau \in \Galk$ and an $\epsilon \in (\Z/\ell)^\times$ such that $\epsilon \cdot \phi|_{\Ucal_\abf}$ is defined by $\tau$, and thus $\epsilon \cdot \phi|_\Ucal$ is defined by $\tau$ as well.
To simplify the notation, we replace $\phi$ by $\epsilon \cdot \phi$, so we must prove that $\phi$ itself is defined by $\tau$.

Let $X$ be any positive-dimensional object of $\Vc$ and put $\Ucal = \Ucal_0$.
Since $\Vc$ is $5$-connected, we see that there exists a birational system $\Ucal_{2r} = \Ucal_X^+$ which contains $X$, and birational systems $\Ucal_1,\ldots,\Ucal_{2r-1}$ such that, for all $i = 0,\ldots,r-1$, the following conditions hold:
\begin{enumerate}
  \item One has $\dim \Ucal_{2i+1} \geq 5$. 
  \item The birational system $\Ucal_{2i+1}$ attaches $\Ucal_{2i}$ to $\Ucal_{2i+2}$ in $\Vc$.
\end{enumerate}
Applying Theorem \ref{maintheorem: intro / main-result / iom-main-opens} again, we find that there exist $\tau_0,\ldots,\tau_{r-1} \in \Galk$ and $\epsilon_0,\ldots,\epsilon_{r-1} \in (\Z/\ell)^\times$ such that $\epsilon_i \cdot \phi|_{\Ucal_{2i+1}}$ is defined by $\tau_i$ for all $i = 0,\ldots,r-1$.
By Lemma \ref{lemma: finalproof / iom-main-connected / dominant-lemma}(1), we see that for all $i = 0,\ldots,r-1$, $\epsilon_i \cdot \phi|_{\Ucal_{2i}}$ and $\epsilon_i \cdot \phi|_{\Ucal_{2i+2}}$ are both defined by $\tau_i$.
Since $\phi|_{\Ucal_0}$ was defined by $\tau$, we see that $\tau_0 = \tau$ and that $\epsilon_0 = 1$ by Lemma \ref{lemma: finalproof / iom-main-connected / dominant-lemma}(2).
Hence $\phi|_{\Ucal_2}$ is also defined by $\tau$ by Lemma \ref{lemma: finalproof / iom-main-connected / dominant-lemma}.
Proceeding inductively on $i = 0,\ldots,r-1$, we find that $\tau_i = \tau$ and that $\epsilon_i = 1$ for all $i = 0,\ldots,r-1$.
In particular, $\phi|_{\Ucal_{2r-1}}$ is defined by $\tau$, so that $\phi|_{\Ucal_{2r}}$ is defined by $\tau$ as well by Lemma \ref{lemma: finalproof / iom-main-connected / dominant-lemma}.

Since $X$ is contained in $\Ucal_{2r} = \Ucal_X^+$, we deduce that $\phi_X$ is defined by $\tau$ as well.
As $X \in \Vc$ was arbitrary, we see that the original element $\phi$ is defined by $\tau$.
In other words, the map
\[ \rho_{k_0} : \Galk \rightarrow \UAutc(\pi^a|_\Vc) \]
is surjective.
This concludes the proof of Theorem \ref{maintheorem: intro / main-result / iom-main-connected}.


\bibliography{TOPAZ_elliom_refs}

\end{document}